\newtheorem{prop}{Proposition}[section]
\newtheorem{theorem}[prop]{Theorem}
\newtheorem{cor}[prop]{Corollary}
\newtheorem{lemma}[prop]{Lemma}
\theoremstyle{definition}
\newtheorem{Def}[prop]{Definition}
\newtheorem{rem}[prop]{Remark}
\newtheorem{example}[prop]{Example}
\numberwithin{equation}{section}
\def\R{\Bbb R}
\def\T{\Bbb T}
\def\N{\Bbb N}
\def\Dx{\Delta_x}
\def\Nx{\nabla_x}
\def\Dt{\partial_t}
\def\({\left(}
\def\){\right)}
\def\eb{\varepsilon}
\def\Cal{\mathcal}
\def\l{\lambda}
\def\E{\mathcal{E}}
\def\Var{\operatorname{Var}}
\def\supp{\operatorname{supp}}
\def\l2l2{\Cal{L}(L^2(\Omega))}
\def\Tau{\Upsilon}
\def\Bbb{\mathbb}
\begin{document}
\title[Measure driven wave equation]{Uniform attractors for measure-driven quintic  wave equation with periodic boundary conditions}
\author[Savostianov and Zelik] {Anton Savostianov${}^1$ and  Sergey Zelik${}^2$}

\begin{abstract} We give a detailed study of  attractors for measure driven quintic damped wave equations with periodic boundary conditions. This includes uniform energy-to-Strichartz estimates, the existence of uniform attractors in a weak or strong topology in the energy phase space, the possibility to present them as a union of all complete trajectories, further regularity, etc.
\end{abstract}

\subjclass[2000]{35B40, 35B45, 35L70}

\keywords{quintic wave equation, vector measures, Strichartz estimates, uniform attractor, smoothness}
\thanks{
 This work is partially supported by  the grants  14-41-00044 and 14-21-00025 of RSF as well as  grants 14-01-00346  and 15-01-03587 of RFBR and the EPSRC grant EP/P024920/1.
    Part of this work was done while the first author held a postdoctoral position in the University of Cergy-Pontoise. He  would like to thank N. Tzvetkov and  A. Shirikyan for hospitality, interesting and stimulating discussions.
  The authors are also would like to thank V. Chepyzhov, A. Mielke and O. Smolyanov for stimulating discussions.}

\address{${}^1$ Durham University, Department of Mathematical Sciences, Durham, DH1 3LE, United Kingdom.}
\email{anton.savostianov@durham.ac.uk}
\address{${}^2$ University of Surrey, Department of Mathematics, Guildford, GU2 7XH, United Kingdom.}
 \email{s.zelik@surrey.ac.uk}

\maketitle
\tableofcontents
\section{Introduction}\label{s.i}
We study the following nonautonomous damped wave equation:
\begin{equation}\label{eq.qdw}
\Dt^2 u+\gamma\Dt u+(1-\Dx)u+f(u)=\mu(t),\ \ t\ge\tau,\ \
\{u,\Dt u\}\big|_{t=\tau}=\{u_\tau,u_\tau'\}
\end{equation}
in a bounded domain $\T^3:=(-\pi,\pi)^3$ of $\R^3$ endowed with periodic boundary conditions.
Here $u(t,x)$ is the unknown function, $\Dx$ is the  Laplacian with respect to variable $x$, $\gamma$ is a  positive constant, $f:\R\to\R$ is a given non-linearity which is assumed to be of \emph{quintic} growth ($f(u)\sim u^5$) and to satisfy some natural conditions (stated in \eqref{4.f}) and $\mu$ is a given external force which is a $L^2$-valued measure of finite total variation which is assumed to be  uniformly bounded on bounded time intervals: $\mu\in M_b(\R,H)$, see Section \ref{s.p} for definitions of key functional spaces.
\par
Dispersive or/and dissipative semilinear wave equations of the form \eqref{eq.qdw}  model various oscillatory processes
in many areas of  modern mathematical physics including  electrodynamics, quantum mechanics, nonlinear elasticity,  etc. and are of a big permanent interest, see \cite{lions,BV,Te,CV,straus,tao,sogge} and references therein.
\par
The basic property of this equation is the so-called energy identity:
\begin{equation}\label{0.energy}
\mathcal E(\xi_u(t))-\mathcal E(\xi_u(\tau))=-\gamma\|\Dt u\|^2_{L^2}+\int_\tau^t(\Dt u(s),\mu(ds)), \ \ \xi_u(t):=\{u(t),\Dt u(t)\}
\end{equation}
which can be formally obtained by multiplying equation \eqref{eq.qdw} by $\Dt u$ and integrating over $t$ and $x$. Here
$$
\E(\xi_u):=\frac12\(\|\Dt u\|^2_{L^2}+\|\Nx u\|^2_{L^2}+\|u\|^2_{L^2}+2(F(u),1)\),\ \ F(u):=\int_0^u f(z)\,dz
$$
and $(u,v):=\int_{\T^3}u(x)v(x)\,dx$. This identity motivates the natural choice of the energy phase space and the class of energy solutions (as the solutions for which the energy functional is finite) and also gives the control of the energy norm of the solution. Namely, if the non-linearity has a sub-quintic or quintic growth rate, due to the Sobolev embedding theorem $H^1\subset L^6$, the energy space is given by $\E:=H^1(\T^3)\times L^2(\T^3)$ and in the supercritical case $f(u)\sim u|u|^q$ with $q>4$, we need to take $\E:=(H^1(\T^3)\cap L^{q+2}(\T^3))\times L^2(\T^3)$ in order to guarantee the finiteness of the energy functional.
\par
It is believed that the analytic properties and the dynamics as $t\to\infty$ of solutions for damped wave equations \eqref{eq.qdw} strongly depend on the growth rate of the non-linearity $f(u)$ as $u\to\infty$. Indeed, in the most studied case of cubic and sub-cubic growth rate, the control of the energy norm is sufficient to get the well-posedness, dissipativity and further regularity of solutions as well as to develop the corresponding attractors theory in both autonomous and non-autonomous cases, see \cite{HR,BV,CV,Lad,lions,MirZel,Te,ZCPAA2004} and references therein.
\par
The case of super-cubic but sub-quintic growth rate ($2<q<4$) is a bit more complicated since the well-posedness of energy solutions is still an open problem here. However, this problem can be overcome using slightly more regular solutions than the energy ones for which, say, the mixed $L^4(\tau,T;L^{12}(\T^3))$ space-time norm is finite for every $T>\tau$. These are  the so-called Shatah-Struwe (or Strichartz) solutions. The existence of such solutions is strongly based on the Strichartz estimates for the {\it linear} wave equation (see Theorem \ref{th.linstr} below) which are now available not only for the whole space $\R^3$ or the torus $\T^3$, but also for bounded domains with Dirichlet or Neumann boundary conditions, see \cite{Chem,Sogge2009,plan1,plan2,straus,tao}. Moreover, crucial for the attractor theory is the following energy-to-Strichartz estimate for such solutions
\begin{equation}\label{0.es}
\|u\|_{L^4(T,T+1;L^{12})}\le Q(\|\xi_u(T)\|_{\E})+Q(\|\mu\|_{L^1(T,T+1;L^2)}),
\end{equation}
where $Q$ is monotone increasing function which is independent of $T$ and the solution $u$. In the sub-quintic case this estimate is a straightforward corollary of the linear Strichartz estimate and perturbation arguments. Energy-to-Strichartz estimate \eqref{0.es} allows us to deduce the control and establish the dissipativity of $u$ in the Strichartz norm based on the standard energy estimate. Since the control of this norm is enough for the uniqueness, the obtained control gives the well-posedness, dissipativity and the existence of global/uniform attractors in the way which is similar to the clasical cubic case, see \cite{feireisl},\cite{kap4} and \cite{KSZ} for the case of $\R^3$, $\T^3$ and a bounded domain endowed with the Dirichlet boundary conditions respectively (see also \cite{SADE} for the case of damped wave equations with fractional damping).
\par
In contrast to this, very few is known about the solutions of \eqref{eq.qdw} in the supercritical (superquintic) growth rate of the non-linearity $f$. In this case the situation is somehow close to 3D Navier-Stokes problem, namely, we have global existence of weak energy solutions for which we do not know the uniqueness theorem and the local existence of more regular solutions for which we do not know the global existence. It is expected that smooth solutions may blow up in finite time even in the defocusing case, but to the best of our knowledge there are no such examples. In this case the existing attractor theory  is related to multilavued semigroups or/and the so-called trajectory dynamical  systems and trajectory attractors, see \cite{CV,CV1,MirZel,ZelDCDS} (see also references therein).
\par
We now turn to the most interesting  borderline case of critical quintic non-linearity $f$ which is our main object of our study in this paper. In this case, the energy-to-Strichartz estimate \eqref{0.es} does not follow any more from the Strichartz estimate for the linear equation (at least in a straightforward way), so the proof of global existence for Shatah-Struwe solutions is usually based on the so-called non-concentration arguments and Pohozhaev-Morawetz equality, see \cite{Chem,Grill,kap1,kap2,kap3,SS1,SS,sogge,tao} (see also \cite{plan1,plan2} for the case of bounded domains with Dirichlet or Neumann boundary conditions). This approach allows us to construct a Shatah-Struwe solution $u$ such that the $L^4(\tau,T;L^{12})$-norm is finite for all $T$, but does not allow to get any control of this norm through the energy norm or to verify that the Strichartz norm does not grow as $T\to\infty$. This is clearly not sufficient for the attractors. Indeed, without the uniform control of the Strichartz norm as $T\to\infty$, this extra regularity may a priori be lost in the limit and the attractor may contain the solutions which are less regular that the Shatah-Struwe ones (for which we do not have the uniqueness theorem). Thus,  the uniform control of the Strichartz norm is crucial for the attractor theory.
\par
This problem has been overcome in \cite{KSZ} where the asymptotic regularity and existence of global attractors for {\it autonomous} quintic wave equiation in bounded domains of $\R^3$ has been established. The method sugested there is heavily based one existence of global Lyapunov function and on the related convergence of the trajectories to the set of equilibria and, by this reason cannot be extended to the non-autonomous case. Moreover, to the best of our knowledge, up to the moment there were no results on the attractor theory for quintic wave equations in the non-autonomous case.
\par
The main aim of the present paper is to give a comprehensive study of the {\it non-autonomous} quintic wave equation in the case of periodic boundary conditions. In order to do so, we first prove the energy-to-Strichartz estimate \eqref{0.es} for the Shatah-Struwe solutions of \eqref{eq.qdw} for the quintic case as well. Therefore, the following theorem can be considered as our first main result.
\begin{theorem}\label{Th0.estr} Let the non-linearity $f$ satisfy the assumptions \eqref{4.f}. Then, problem \eqref{eq.qdw} is globally well-posed in the class of Shatah-Struwe solutions,  any such soultion $u(t)$ satisfies the energy-to-Strichartz estimate \eqref{0.es} and the following dissipative estimate holds:
\begin{equation}\label{0.enst}
\|\xi_u(t)\|_\E+\|u\|_{L^4(t,t+1;L^{12})}\le Q(\|\xi_u(\tau)\|_\E)e^{-\delta (t-\tau)}+Q(\|\mu\|_{M_b(\R,L^2)}),
\end{equation}
where the positive constant $\delta$ and monotone function $Q$ are independent of $t$, $\tau$, and the solution $u$.
\end{theorem}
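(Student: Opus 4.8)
The plan is to prove the three assertions --- global well-posedness in the Shatah-Struwe class, the energy-to-Strichartz estimate \eqref{0.es}, and the dissipative estimate \eqref{0.enst} --- in a specific order, since each feeds into the next. The central difficulty, as the introduction stresses, is that in the quintic (critical) case the energy-to-Strichartz estimate does \emph{not} follow from the linear Strichartz estimate (Theorem \ref{th.linstr}) by a perturbative argument: the nonlinearity $f(u)\sim u^5$ scales exactly like the Strichartz norm it must be controlled by, so a naive Gronwall/bootstrap loop closes only on short time intervals whose length depends on the size of the Strichartz norm itself, which is precisely what we are trying to bound. The first step will therefore be to record the local theory: using the linear estimate one shows that on a sufficiently short interval $[\tau,\tau+\sigma]$ with $\sigma=\sigma(\|\xi_u(\tau)\|_\E,\|\mu\|)$ the map defining the Duhamel formula is a contraction in the space $C([\tau,\tau+\sigma];\E)\cap L^4(\tau,\tau+\sigma;L^{12})$, giving existence, uniqueness and continuous dependence of the Shatah-Struwe solution. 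The measure-valued forcing $\mu\in M_b(\R,H)$ enters the Duhamel term through $\int_\tau^t e^{(t-s)\mathcal{A}}\,\mu(ds)$, so care is needed to verify that this term lies in the Strichartz space with norm controlled by $\|\mu\|_{L^1(T,T+1;L^2)}$ (interpreting the measure's variation in place of an $L^1$ density).

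Next I would establish the qualitative global existence, i.e. that the maximal existence time is infinite and the $L^4(\tau,T;L^{12})$-norm is finite for every finite $T$. This is where the non-concentration / Pohozhaev--Morawetz machinery referenced in the introduction (\cite{Chem,Grill,kap1,kap2,kap3,SS1,SS}) comes in: it rules out blow-up of the Strichartz norm in finite time and thus, via the local theory, extends the solution to all $t\ge\tau$. I would adapt the periodic-domain version of these arguments, combined with the energy identity \eqref{0.energy}, which already furnishes an a priori bound on $\sup_{t}\|\xi_u(t)\|_\E$ in terms of the initial energy and $\|\mu\|_{M_b}$ after absorbing the damping term and estimating $\int_\tau^t(\Dt u,\mu(ds))$ by $\|\Dt u\|_{L^\infty_t L^2}\|\mu\|_{M_b}$ and Young's inequality.

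The crux is upgrading the \emph{finite}-time Strichartz bound to the \emph{uniform} (in $T$) energy-to-Strichartz estimate \eqref{0.es}, and this is the step I expect to be the main obstacle. The strategy I would follow is a compactness-contradiction scheme together with a careful splitting of the nonlinearity: write $f(u)=f(v)+(f(u)-f(v))$ where $v$ solves the linear problem with the same data, estimate the good (lower-order) part perturbatively via the linear Strichartz estimate, and control the genuinely quintic part on a finite partition of $[T,T+1]$ into subintervals on which the local-in-time Strichartz norm is small, bounding the number of such subintervals uniformly in $T$ by means of the energy a priori bound and a non-concentration estimate. The key point is that the number of subintervals and hence the final constant depend only on $\|\xi_u(T)\|_\E$ and $\|\mu\|$, not on $T$ --- which is exactly what delivers a $T$-independent monotone function $Q$ in \eqref{0.es}. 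Once \eqref{0.es} is in hand, the dissipative estimate \eqref{0.enst} follows in the classical manner: one derives a differential inequality of the form $\tfrac{d}{dt}\mathcal{E}+\delta\mathcal{E}\le C(1+\|\mu\|_{M_b}^2)$ from the energy identity \eqref{0.energy} (using the damping $\gamma\|\Dt u\|^2$ to produce the dissipation and the structural assumptions \eqref{4.f} on $f$ to bound $\mathcal{E}$ from below by the energy norm up to constants), integrate via Gronwall to get the exponential-decay bound on $\|\xi_u(t)\|_\E$, and then invoke \eqref{0.es} on the interval $[t,t+1]$ with the decayed energy to obtain the stated bound on the Strichartz norm, yielding \eqref{0.enst}.
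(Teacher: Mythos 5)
Your outline correctly isolates where the difficulty sits, but the resolution you propose for that difficulty does not close: the step ``control the genuinely quintic part on a finite partition of $[T,T+1]$ into subintervals on which the local-in-time Strichartz norm is small, bounding the number of such subintervals uniformly in $T$ by means of the energy a priori bound and a non-concentration estimate'' is circular. The number of subintervals on which $\|u\|_{L^4(I_j;L^{12})}\le\eb$ is of order $\eb^{-4}\|u\|_{L^4(T,T+1;L^{12})}^4$, i.e.\ it is controlled precisely by the Strichartz norm you are trying to estimate, and the non-concentration/Pohozhaev--Morawetz machinery gives only \emph{qualitative} finiteness of that norm, not a bound in terms of the energy. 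This is exactly the obstruction the paper points out: the non-concentration approach ``does not allow to get any control of this norm through the energy norm.'' Likewise, a compactness-contradiction scheme in the non-autonomous, measure-driven setting has no obvious compactness to exploit (there is no Lyapunov function, and the hull of $\mu$ is only weak-star compact), which is why the autonomous argument of \cite{KSZ} does not transfer. So as written, the central claim \eqref{0.es} is asserted rather than proved.

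The paper's actual mechanism is different and is the real content of Theorem \ref{Th0.estr}. It takes as a black box the quantitative global bound of Bahouri--G\'erard (Proposition \ref{Prop.tao}): any Shatah--Struwe solution of the \emph{homogeneous} quintic equation on $\R^3$ satisfies $\|v\|_{L^4(\R,L^{12})}\le Q(\text{energy})$; by finite speed of propagation this yields the finite-time analogue on $\T^3$ (Corollary \ref{Cor.tor}). The forced equation is then reduced to the homogeneous one by approximating $\mu$ by finite sums of Dirac measures $\mu_N=\sum_k h_{k,N}\delta_{t_{k,N}}$ with $\sum_k\|h_{k,N}\|_H\le\|\mu\|_{M(0,1;H)}$ (Theorem \ref{bv.th.delta-conv}); between consecutive atoms the solution is homogeneous, and the telescoping decomposition $u_N=v_0+\sum_l v_l$ with $v_{l+1}=u_N^{l+1}-u_N^l$ lets one bound each increment's energy and Strichartz norm by $Q(\|\xi_0\|_\E)\|h_{l+1,N}\|_H$ via Gronwall and the linear Strichartz estimate, so that the sum over $l$ is controlled by the total variation of $\mu$ --- a ``nonlinear Duhamel formula.'' None of this appears in your proposal. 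Your local theory, your treatment of the measure forcing in the Duhamel term, and your derivation of \eqref{0.enst} from \eqref{0.es} plus the energy identity are all fine and match the paper (Corollary \ref{th.str-f2}), but the bridge from qualitative global existence to the $T$-uniform estimate \eqref{0.es} is missing.
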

The non-trivial part here is exactly to establish the energy-to-Strichartz estimate \eqref{0.es} (the rest is a standard corollary of this estimate and the classical dissipative energy estimate). To do so we start with the analogous energy-to-Strichartz estimate for the Shatah-Struwe solutions for the quintic wave equation in the whole space $\R^3$:
\begin{equation}\label{0.quin}
\Dt^2v-\Dx v+v^5=0
\end{equation}
proved in \cite{BG1999} (see also \cite{tao1} for the explicit bounds for the function $Q$) and extend it to the non-autonomous case
$$
\Dt^2 v-\Dx v+v^5=\mu(t).
$$
This extension uses the approximation of the external force $\mu(t)$ by sums of Dirac $\delta$-measures and  presentation of the solution $v$ for such external forces via the solutions of the autonomous equation. This approach can be interpreted as the analogue of the Duhamel formula for the non-linear equation and has an independent interest. We would like to emphasize that this method requires to consider {\it measure-driven} equations of the form \eqref{0.es} as an intermediate step even if we finally want to verify estimate \eqref{0.enst} for regular external forces $\mu\in L^1_b(\R,L^2)$, see Section \ref{s.strna} for details. This is one of the sources of motivation for us to consider measure driven damped wave equations. Of course, measure driven equations are interesing and important by themselves, we metion here only that they are widely used in the theory of stochastic PDEs, see \cite{kuksin} and references therein. Note also that the analogue of the energy-to-Strichartz estimate for the case of equation \eqref{0.quin} in bounded domains (with Dirichlet or Neumann boundary conditions) is not known so far and this is the main reason for our choice of periodic boundary conditions.
\par
We now turn to the attractor theory. We first note that the dissipative estimate \eqref{0.enst} implies in a standard way the existence of a {\it uniform} attractor $\mathcal A_{un}$ for equation \eqref{eq.qdw} in a weak topology of the energy space $\E$, see Section \ref{s.ua}. However, new features arise when we try to describe the uniform attractor in terms of bounded complete trajectories related to equation \eqref{eq.qdw}. We recall that, following the general theory developed in \cite{CV,CV1}, in order to obtain such a description we need to study not only equation \eqref{eq.qdw}, but also all its time shifts as well as their limits in the proper topology. In our case it is natural to take the closure of all time shifts of the initial measure $\mu$ in a weak star toplogy, generated by the duality
$$
M_{loc}(\R,L^2)=[C_{00}(\R,L^2)]^*,
$$
where $C_{00}$ stands for continuous functions with compact support. Namely, we introduce the group of time shifts $T(h):M_b(\R,L^2)\to M_b(\R,L^2)$ via $(T(h)\mu)(t)=\mu(t+h)$ and define the hull of the given measure $\mu$ as follows:
$$
\mathcal H(\mu):=[T(h)\mu,\, h\in\R]_{M_{loc}^{w^*}(\R,L^2)},
$$
see Section \ref{s.ua} for more details. Then, the general theory predicts the representation
\begin{equation}\label{0.astr}
\mathcal A_{un}=\cup_{z\in\mathcal H(\mu)}\mathcal K_z\big|_{t=0},
\end{equation}
where $\mathcal K_z$ is a set of complete (defined for all $t\in\R$) bounded (in $\E$) solutions of equation \eqref{eq.qdw} with the right-hand side $z$. Again, according to the general theory, this reperesentation will hold if the solution operators $U_z(t,\tau)$ (which map the initial data $\xi_\tau$ to the Shatah-Struwe solution $\xi_u(t)$ of problem \eqref{eq.qdw} with the right-hand side $z\in\mathcal H(\mu)$) are weakly star continuous as maps from $\mathcal E\times\mathcal H(\mu)$ to $\E$.
\par
Unfortunately, in contrast to the standard situations, considered in \cite{CV}, the map $z\to U_z(t,\tau)$ may be {\it discontinuous} for the case of measure driven equations. As shown in Section \ref{s.ua} this may destroy (and destroys in concrete examples given there) the representation formula \eqref{0.astr}. Actually the attractor $\mathcal A_{un}$ may become {\it larger} than the union of all bounded complete trajectories. In order to avoid this pathology, we found necessary and sufficient conditions for the measure $\mu$ which guarantee the continuity of the map $z\to U_z(t,\tau)$. Particulalrly, these restrictions forbid the measures $z\in \mathcal H(\mu)$ to have non-zero discrete parts. By this reason, we refer to these measures as to weakly uniformly non-atomic, see Section \ref{s.ua} for the details. Thus, we have proved the following result.

\begin{theorem}\label{Th0.nona} Let the assumptions of Theorem \ref{Th0.estr} hold and let the measure $\mu$ be weakly uniformly non-atomic. Then the weak uniform attractor $\mathcal A_{un}$ possesses the representation formula \eqref{0.astr}.
\end{theorem}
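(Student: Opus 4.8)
The plan is to derive the representation formula \eqref{0.astr} from the abstract theory of uniform attractors for families of processes developed in \cite{CV,CV1}. The existence of the weak uniform attractor $\mathcal A_{un}$ is already guaranteed by the uniform dissipative estimate \eqref{0.enst}, so the only task is to justify the kernel representation. That abstract theory tells us that the representation $\mathcal A_{un}=\cup_{z\in\mathcal H(\mu)}\mathcal K_z|_{t=0}$ holds as soon as three structural properties are in place: (a) the symbol space $\mathcal H(\mu)$ is compact and invariant under the shifts $T(h)$; (b) the family obeys the translation identity $U_{T(h)z}(t,\tau)=U_z(t+h,\tau+h)$; and (c) the solution map $(\xi,z)\mapsto U_z(t,\tau)\xi$ is continuous from $\E\times\mathcal H(\mu)$ into $\E$, where $\E$ carries its weak topology and $\mathcal H(\mu)$ the weak-star topology of $M_{loc}(\R,L^2)$. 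The bulk of the work lies in (c); properties (a) and (b) are essentially formal.

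I would first dispose of (a) and (b). Invariance of $\mathcal H(\mu)$ under $T(h)$ is immediate from its definition as the weak-star closure of the orbit $\{T(h)\mu\}_{h\in\R}$, while its compactness follows from the Banach--Alaoglu theorem together with the uniform bound on the total variation of $\mu$ on bounded intervals, a bound inherited by every element of the hull; since $C_{00}(\R,L^2)$ is separable, the induced weak-star topology on $\mathcal H(\mu)$ is metrizable, so (c) may be tested along sequences. The translation identity (b) is a direct consequence of the autonomy of the differential operators in \eqref{eq.qdw} and of the uniqueness of Shatah--Struwe solutions. Uniform dissipativity of the family, needed to apply the abstract theorem, is read off from \eqref{0.enst}, whose right-hand side depends on the external force only through $\|z\|_{M_b(\R,L^2)}$, a quantity uniformly bounded over $z\in\mathcal H(\mu)$.

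The heart of the proof is the continuity statement (c), and this is exactly where the weakly uniformly non-atomic hypothesis enters. Given sequences $z_n\to z$ weak-star and $\xi_n\rightharpoonup\xi$ weakly in $\E$, I would set $u_n:=U_{z_n}(\cdot,\tau)\xi_n$ and invoke \eqref{0.enst} to bound $\{u_n\}$ uniformly in $L^\infty(\tau,T;\E)\cap L^4(\tau,T;L^{12})$. Passing to a subsequence gives a weak limit $u$; a standard compactness argument (Aubin--Lions combined with the uniform Strichartz bound) upgrades this to strong convergence in $L^4(\tau,T;L^{12})$, which suffices to pass to the limit in the quintic nonlinearity $f(u_n)$. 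The forcing term is handled through the weak formulation: testing \eqref{eq.qdw} against a continuous compactly supported function and using $z_n\rightharpoonup z$ weak-star identifies $u$ as a Shatah--Struwe solution driven by $z$ with initial data $\xi$. By the uniqueness part of Theorem \ref{Th0.estr} this limit coincides with $U_z(\cdot,\tau)\xi$, and since it is independent of the chosen subsequence the whole sequence converges, which is precisely (c).

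The main obstacle is the final identification step: one must control the energy of the limit and rule out an instantaneous jump of $\xi_u$. For a general measure-driven equation a concentrating sequence of atomic parts of $z_n$ can deposit a finite amount of energy into the solution at a single instant, so that $\xi_{u_n}(t)$ fails to converge to $\xi_u(t)$ and the limit ceases to be a Shatah--Struwe solution; this is exactly the discontinuity phenomenon exhibited in Section \ref{s.ua}. The weakly uniformly non-atomic assumption is tailored to forbid nonzero discrete parts in every $z\in\mathcal H(\mu)$, and to do so uniformly along the hull, thereby excluding any such concentration in the limit. With this hypothesis in force the energy identity \eqref{0.energy}, in particular the forcing integral $\int_\tau^t(\Dt u(s),\mu(ds))$, passes to the limit without a defect term, the limiting trajectory retains its full Shatah--Struwe regularity, and (c) --- hence the representation formula \eqref{0.astr} --- follows.
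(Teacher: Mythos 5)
Your overall strategy---reduce to the abstract criterion of \cite{CV} (Theorem \ref{Th5.cont-attr}) and verify weak continuity of $(\xi_0,z)\mapsto U_z(t,\tau)\xi_0$---is the same as the paper's, and your points (a) and (b) are fine. The gap is in (c), at exactly the step you call ``the main obstacle''. Testing the equation against compactly supported test functions and using $z_n\rightharpoonup z$ weak-star only identifies the limit as a distributional solution on the open interval; it gives no control on the pointwise values $\xi_{u_n}(t)$, which is what continuity of $U_{z_n}(t,\tau)$ at a fixed time $t$ requires. The paper instead passes to the limit in the integrated identity \eqref{4.intdef}, whose right-hand side contains $z_n([\tau,t])$ explicitly, and for this one needs the pointwise-in-$t$, weak-in-$H$ convergence $z_n([\tau,t])\to z([\tau,t])$. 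That convergence does \emph{not} follow from weak-star convergence of the measures: as recalled in Remark \ref{bv.rem.2}, a gliding-hump sequence converges weak-star to zero while its distribution functions converge at no point. This is precisely condition \eqref{6.mes-cont}, which your argument never establishes.

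Moreover, your stated reason why the hypothesis closes the gap---that weak uniform non-atomicity ``forbids nonzero discrete parts in every $z\in\mathcal H(\mu)$, thereby excluding any such concentration in the limit''---is demonstrably insufficient: in Example \ref{Ex6.bad} every element of the hull is non-atomic (indeed absolutely continuous or constant), yet the representation formula fails, because concentrating smooth bumps deposit a fixed kick on the trajectories without ever producing an atom in the hull. What the hypothesis actually delivers, and what your proof must extract from it, is the \emph{equicontinuity} of $t\mapsto(z([\tau,t]),\psi)$ uniformly over $z\in\mathcal H(\mu)$ (Proposition \ref{Prop6.cont}, obtained via the Helly selection theorem and a joint-continuity argument on the compact hull); Arzel\`a--Ascoli then yields both the absence of atoms and the convergence of the distribution functions, and these two facts are fed into \eqref{6.intdef}. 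A secondary point: your claimed strong convergence of $u_n$ in $L^4(\tau,T;L^{12})$ via Aubin--Lions does not follow from the stated bounds (one cannot interpolate the critical Strichartz norm between the compactly embedded space and the $L^4(L^{12})$ bound itself); fortunately it is also unnecessary, since a.e.\ convergence of $u_n$ together with the uniform $L^{6/5}$ bound on $f(u_n)$ already suffices to pass to the limit in the nonlinearity, as in the proof of Theorem \ref{th.ex1}.
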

We would like to recall that the representation formula \eqref{0.astr} is one of the key tools for further study of the attractor (and is crucial for our study of the compactness of weak attractors in stronger topologies, see Section \ref{s.str}). Unfortunately, this formula fails for generic measures $\mu\in M_b(\R,L^2)$ which makes the constructed theory not entirely satisfactory. We expect that the problem may be resolved using the trajectory approach and will return to this question in the forthcoming paper. We also would like to mention that measure driven equations naturally appears in the attractor theory even if we start from the regular external force $\mu\in L^1_b(\R,L^2)$ (the natural class of external forces from the point of view of Strichartz estimates). Indeed,
we cannot guarantee in general that the hull $\mathcal H(\mu)$ will be a subset of $L^1_b(\R,L^2)$ and the appearance of  Borel measures which are not absolutely continuous with respect to the Lebesgue measure in the hull $\mathcal H(\mu)$ looks unavoidable. This is the second source of motivation for us to consider measure driven damped wave equations from the very beginning.
\par
As the next step, we study existence of a uniform attractor for equation \eqref{eq.qdw} in the {\it strong} topology of the energy space $\E$. Clearly, the only assumption $\mu\in M_b(\R,L^2)$ is not enough for this, see examples given in \cite{Zntrc}, so we need to impose some extra conditions for the measure $\mu$ to get this result. In this paper we introduce, following \cite{Zntrc}, two classes of right-hand sides, the so-called space regular and time regular measures. Roughly speaking, these classes consist of measures which can be approximated (in $M_b(\R,L^2)$) by measures which are smooth in space or time respectively, see Definition \ref{Def7.ext}. The intersection of these classes coincide with class of translation compact external forces introduced in \cite{CV}. On the other hand, the following result is verified in Section \ref{s.str}.
\begin{theorem}\label{Th0.sa} Let the assumptions of Theorem \ref{Th0.nona} hold and let, in addition,
the measure $\mu$ be space or time regular. Then there exists a uniform attractor for equation \eqref{eq.qdw} in a strong topology of the energy space $\E$ which coincides with the weak attractor $\mathcal A_{un}$ constructed above.
\end{theorem}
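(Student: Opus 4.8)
The weak uniform attractor $\mathcal A_{un}$ is already available: the dissipative estimate \eqref{0.enst} furnishes a bounded set $\mathcal B\subset\E$ absorbing bounded sets uniformly in $z\in\mathcal H(\mu)$, and weak-$*$ compactness of the hull together with weak compactness of bounded subsets of $\E$ gives the existence of $\mathcal A_{un}$ and its attraction in the weak topology. The plan is to show that this very set is also a \emph{strong} uniform attractor, by proving the strong uniform asymptotic compactness of the family $\{U_z(t,\tau)\}_{z\in\mathcal H(\mu)}$: every sequence $\xi_n=U_{z_n}(t,\tau_n)\xi_{0,n}$ with $\tau_n\to-\infty$, $\xi_{0,n}\in\mathcal B$ and $z_n\in\mathcal H(\mu)$ must be precompact in the strong topology of $\E$. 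By the standard criterion for uniform attractors this yields at once the strong compactness of $\mathcal A_{un}$ and the strong attraction property; the coincidence with the weak attractor is then automatic, since a strong attractor is a fortiori weak and, by Theorem \ref{Th0.nona}, both necessarily equal $\cup_{z\in\mathcal H(\mu)}\mathcal K_z|_{t=0}$.

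First I would extract, along a subsequence, weak limits $z_n\to z$ in $\mathcal H(\mu)$ and $\xi_n\rightharpoonup\xi$ in $\E$. Repeating the limit-solution construction from the proof of Theorem \ref{Th0.nona} and invoking the uniform energy-to-Strichartz bound \eqref{0.es}, I identify $\xi=\xi_u(t)$ as the value of a complete bounded Shatah-Struwe trajectory $u$ of the limit problem driven by $z$, with $\xi_n=\xi_{u_n}(t)$, where the $u_n$ converge to $u$ weakly and have uniformly bounded Strichartz norms on every fixed window. Since $\E=H^1(\T^3)\times L^2(\T^3)$ is a Hilbert space with weakly lower semicontinuous norm, strong convergence $\xi_n\to\xi$ is equivalent to the weak convergence already at hand together with $\|\xi_n\|_\E\to\|\xi\|_\E$; the whole problem thus reduces to the single inequality $\limsup_n\|\xi_n\|_\E\le\|\xi\|_\E$.

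To establish this inequality I would run the energy-equation method of Ball, adapted to the non-autonomous measure-driven setting. As $\tau_n\to-\infty$, all $u_n$ are eventually defined on a fixed window $[t-1,t]$, where they converge to $u$ weakly in $\E$ and, by interpolation with the uniform bound on $\Dt u_n$, strongly in $C([t-1,t];H^{1-\delta})$ for every $\delta>0$. Writing the energy identity \eqref{0.energy} on subintervals of this window and letting $n\to\infty$, the damping term is controlled via the weak lower semicontinuity of $\int\|\Dt u_n\|^2$, while the critical potential term $(F(u_n),1)$ is passed to the limit using the uniform Strichartz bounds \eqref{0.es} together with the strong $H^{1-\delta}$ convergence, which preclude the concentration of energy that the non-compactness of $H^1\subset L^6$ would otherwise permit. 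The step I expect to be the genuine obstacle is the forcing term $\int(\Dt u_n,z_n(ds))$: it is a pairing of two merely weakly convergent factors and therefore need not converge to $\int(\Dt u,z(ds))$; this is exactly why the mere membership $\mu\in M_b(\R,L^2)$ fails to yield strong attraction, in agreement with the counterexamples of \cite{Zntrc}.

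The extra hypothesis is what repairs this term. Exploiting the space or time regularity of $\mu$, I would split, for each $\eta>0$, $\mu=\mu^\eta+r^\eta$ with $\mu^\eta$ smooth in space (respectively in time) and $\|r^\eta\|_{M_b(\R,L^2)}\le\eta$, and carry this splitting through the hull and the corresponding solutions. For the smooth part the spatial (respectively temporal) regularity provides the compactness that upgrades the convergence of the force to a strong one, so that the associated pairing passes to the limit; the contribution of the remainder $r^\eta$ to the energy balance is majorized by $CQ(\|\mu\|_{M_b(\R,L^2)})\,\eta$ through the dissipative estimate \eqref{0.enst}, uniformly in $n$, while the uniform Strichartz control \eqref{0.es} keeps the quintic coupling between the two components in check. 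Passing first to the limit $n\to\infty$ and then letting $\eta\to0$ closes the inequality $\limsup_n\|\xi_n\|_\E\le\|\xi\|_\E$, hence delivers the strong convergence. Together with the reduction of the first paragraph this establishes the existence of the strong uniform attractor and its identity with $\mathcal A_{un}$.
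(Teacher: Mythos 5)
Your proposal follows essentially the same route as the paper's proof of Theorem \ref{Th7.main} in Section \ref{s.str}: reduction to uniform asymptotic compactness of the sequence $U_{z_n}(0,\tau_n)\xi_{\tau_n}$, weak identification of the limit point as the time-zero value of a complete bounded Shatah--Struwe trajectory from $\mathcal K_z$, the energy method reducing everything to $\limsup_n\|\xi_n\|_\E\le\|\xi\|_\E$, the correct diagnosis of the pairing $\int(\Dt u_n(s),z_n(ds))$ as the only genuinely problematic term, and its repair by splitting $\mu$ into a space- (or time-) smooth part plus an $M_b$-small remainder carried through the hull, exactly as in the paper's concluding lemma. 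One point where your write-up, taken literally, would not close: applying the energy identity \eqref{0.energy} on a fixed finite window $[t-1,t]$ leaves the left-endpoint term $\E(\xi_{u_n}(t-1))$, for which only weak lower semicontinuity --- the wrong direction --- is available, so the desired $\limsup$ inequality at time $t$ is reduced to the same unproved inequality at time $t-1$ and the argument is circular. The paper avoids this by multiplying the equation by $\Dt u_n+\delta u_n$, weighting by $e^{\delta s}$ and integrating over the whole past $(\tau_n,0]$, so that the initial term $e^{\delta\tau_n}\E(\xi_{\tau_n})$ vanishes as $\tau_n\to-\infty$; this weighted version is the form of Ball's method you actually need. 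A second, smaller caveat: the uniform $L^4(t,t+1;L^{12})$ bound gives no pointwise-in-time $L^{12}$ control, so at the single instant $t=0$ one cannot upgrade $u_n(0)\to u(0)$ in $H$ to convergence of the quintic potential $(F(u_n(0)),1)$; the paper only needs, and only proves, the Fatou inequality $(F(u(0)),1)\le\liminf_n(F(u_n(0)),1)$, which suffices because this term enters the energy balance with the favourable sign.
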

Analogously to \cite{Zntrc}, we utilize the energy equality and the so-called energy method (see also \cite{ball,rosa}) to verify the asymptotic compactness.
\par
Furthermore, we also verify that the uniform attractor is more smooth if the external forces are more smooth. As usual, in order to do so it is enough to verify that $\mathcal A_{un}$ belongs to the higher energy space $\E^\alpha$ for some small positive $\alpha$. The further regularity can be obtained by standard bootstrapping arguments. To get this higher regularity, we follow mainly \cite{ZCPAA2004} and use the following corollary of the Kato-Ponce inequality:
$$
\|f(v+w)-f(v)\|_{H^\alpha}\le C(1+\|v\|_{L^{12}}+\|w\|_{L^{12}})^{4-\alpha}(1+\|v\|_{H^1}+\|w\|_{H^1})^{\alpha}
\|w\|_{H^{1+\alpha}}^{1-\alpha}\|w\|_{H^{\alpha,12}}^{\alpha}
$$
which holds for $\alpha\in[0,\frac25]$, see Section \ref{s.ap}. This allows us to prove (in Section \ref{s.sm}) the following result.
\begin{theorem}\label{Th0.sm} Let the assumptions of Theorem \ref{Th0.estr} holds and let, in addition,
$$
\mu\in M_b(\R,H^\alpha)
$$
for some $\alpha\in(0,\frac25]$. Then the attractor $\mathcal A_{un}$ is a bounded set in the higher energy space $\E^\alpha$. Moreover, the analogous result holds also if $\mu$ is sufficiently smooth in time.
\end{theorem}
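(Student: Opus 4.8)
The plan is to follow the scheme of \cite{ZCPAA2004}: as already noted, it suffices to prove a uniform bound $\sup_{t\in\R}\|\xi_u(t)\|_{\E^\alpha}\le C$ for every complete bounded trajectory $u$ belonging to $\A_{un}$, the higher regularity being then obtained by the standard bootstrapping. By Theorem \ref{Th0.estr} any such trajectory satisfies the uniform estimates $\|\xi_u(t)\|_\E\le R$ and $\|u\|_{L^4(t,t+1;L^{12})}\le R$ for all $t$, and these are the only a priori bounds at our disposal. The principal difficulty is that we do \emph{not} know a priori that $\xi_u(\tau)\in\E^\alpha$, so a fractional energy estimate cannot be launched forward from a finite initial time in a naive way.

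To circumvent this, I would split $u=v+w$ on $[\tau,t]$, where $v$ solves the \emph{unforced} nonlinear equation $\Dt^2v+\gamma\Dt v+(1-\Dx)v+f(v)=0$ with $\xi_v(\tau)=\xi_u(\tau)$, and where the increment $w=u-v$ solves
\begin{equation*}
\Dt^2w+\gamma\Dt w+(1-\Dx)w=\mu(t)-\bigl(f(v+w)-f(v)\bigr),\qquad \xi_w(\tau)=0 .
\end{equation*}
This decomposition is chosen for two reasons. First, $w$ starts from zero data, which lies trivially in $\E^\alpha$, so the fractional estimate may be started at $t=\tau$. Second, by the dissipativity of the unforced equation under the structural assumptions \eqref{4.f} one has exponential decay $\|\xi_v(t)\|_\E\le Q(R)\,e^{-\delta(t-\tau)}$, so that $\xi_v(t)\to0$ in $\E$ as $\tau\to-\infty$ for each fixed $t$. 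Consequently, once a bound for $w$ in $\E^\alpha$ uniform in $\tau$ is established, letting $\tau\to-\infty$ identifies $\xi_u(t)$ as the limit of the $\E^\alpha$-bounded family $\xi_w(t)$ and yields $\xi_u(t)\in\E^\alpha$ with the same bound. Note that it is crucial here that the difference $f(v+w)-f(v)$ (rather than $f(u)$ itself) appears, so that no uncontrolled term $f(v)$ with only energy-regular $v$ is left over.

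The heart of the matter is the fractional estimate for $w$. Treating $\mu-(f(v+w)-f(v))$ as a source in Duhamel's formula for the damped linear wave group and applying $\L^\alpha=(1-\Dx)^{\alpha/2}$, one gets
\begin{equation*}
\|\xi_w(t)\|_{\E^\alpha}\le C\int_\tau^t e^{-\delta(t-s)}\,\|\mu(ds)\|_{H^\alpha}
+C\int_\tau^t e^{-\delta(t-s)}\,\|f(v+w)-f(v)\|_{H^\alpha}\,ds,
\end{equation*}
where the first term is bounded by $C\|\mu\|_{M_b(\R,H^\alpha)}$. For the nonlinear term I would insert the corollary of the Kato--Ponce inequality quoted above, whose exponents are tailored precisely for this purpose: the prefactor carries $(1+\|v\|_{L^{12}}+\|w\|_{L^{12}})^{4-\alpha}(1+\|v\|_{H^1}+\|w\|_{H^1})^{\alpha}$, while the increment is weighted by $\|w\|_{H^{1+\alpha}}^{1-\alpha}\|w\|_{H^{\alpha,12}}^{\alpha}$. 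Integrating the product over a unit interval and applying Hölder in time with the conjugate exponents $\tfrac{4}{4-\alpha}$ and $\tfrac{4}{\alpha}$, whose reciprocals sum to exactly $1$, matches the $L^4_t$-integrability of the Strichartz norms of $u$ and $w$; together with the boundedness of the $H^1$-factor this controls $\int_T^{T+1}\|f(v+w)-f(v)\|_{H^\alpha}\,ds$ by $C\,R^{4-\alpha}\,\|w\|_{L^4(T,T+1;H^{\alpha,12})}^{\alpha}\,\sup_{[T,T+1]}\|\xi_w\|_{\E^\alpha}^{1-\alpha}$. Combined with the fractional energy-to-Strichartz estimate (the linear Strichartz theory applied to $\L^\alpha w$), which controls $\|w\|_{L^4(T,T+1;H^{\alpha,12})}$ by $\sup\|\xi_w\|_{\E^\alpha}$ plus the forcing, this produces a closed inequality for $N:=\sup_t\|\xi_w(t)\|_{\E^\alpha}$.

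The hard part will be closing this inequality, since the exponents $1-\alpha$ and $\alpha$ combine to a term that is \emph{linear} in $N$ with a coefficient of order $R^{4-\alpha}$ which is not small. To absorb it, I would localize to short intervals $[T,T+\kappa]$ on which $\|u\|_{L^4(T,T+\kappa;L^{12})}$ is small; such uniform-in-$T$ smallness follows from the short-time smallness of the linear Strichartz norm together with the uniform absolute continuity of the Strichartz integral over the bounded set of trajectories on $\A_{un}$. On each such interval the dangerous term carries a small factor $\eta^{4-\alpha}$ and can be absorbed into $\tfrac12\sup\|\xi_w\|_{\E^\alpha}$ by Young's inequality; iterating over consecutive intervals and summing against the exponential weight $e^{-\delta(t-s)}$ then gives the uniform bound $N\le C(\|\mu\|_{M_b(\R,H^\alpha)})$. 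Two subsidiary points must be handled: the local propagation of $\E^\alpha$-regularity for $w$, needed to justify that the fractional estimate is run on a genuinely $\E^\alpha$-valued solution (proved via the same Strichartz/Kato--Ponce machinery, or by regularizing and passing to the limit), and the careful treatment of the measure $\mu$ in the fractional Duhamel term, for which the measure-approximation arguments of Section~\ref{s.strna} are used. Finally, for the time-regular case one argues analogously, replacing $\L^\alpha$ by difference quotients in time: one first controls $\Dt u$ in $\E$ using the temporal regularity of $\mu$ and the quintic term $f'(u)\Dt u$, and then recovers the spatial smoothness of $u$ from the elliptic relation $(1-\Dx)u=\mu-\Dt^2u-\gamma\Dt u-f(u)$, whose right-hand side now lies in $L^2$.
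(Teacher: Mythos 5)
Your overall architecture (a decomposition in which the smooth component starts from zero data, Kato--Ponce for the nonlinear difference, and a Gronwall closure) is the same as the paper's, but two of your key steps do not hold as stated. First, the solution $v$ of the \emph{unforced} equation $\Dt^2v+\gamma\Dt v+(1-\Dx)v+f(v)=0$ does \emph{not} in general decay to zero: the dissipative estimate \eqref{est.disen} with $\mu=0$ only gives $\|\xi_v(t)\|_\E\le Q(R)e^{-\beta(t-\tau)}+Q(0)$, and the unforced equation may have nontrivial equilibria (e.g. $f(u)=u^5-2u$ satisfies \eqref{4.f} and admits the constants $\pm1$). Hence your limit $\tau\to-\infty$ does not identify $\xi_u(t)$ with a limit of the $\E^\alpha$-bounded family $\xi_w(t)$. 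This part is fixable: the paper replaces $f$ by $f_L(v)=f(v)+Lv$ with $L$ large in the $v$-equation (so that $F_L\ge0$ and $f_L(v)v-F_L(v)\ge0$, giving genuine exponential decay) and compensates with the source $Lv$ in the $w$-equation, while peeling the measure off into a separate linear component $\theta$ handled by Lemma \ref{Lem8.theta}.

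Second, and more seriously, your mechanism for absorbing the term that is linear in $N=\sup_t\|\xi_w(t)\|_{\E^\alpha}$ rests on the claim that $\|u\|_{L^4(T,T+\kappa;L^{12})}$ is small uniformly in $T$ when $\kappa$ is small. This does not follow from the uniform bound $\|u\|_{L^4(T,T+1;L^{12})}\le R$ (the $L^4_t$ mass may concentrate on arbitrarily short intervals located at arbitrary times), nor from ``short-time smallness of the linear Strichartz norm'': for data merely bounded in $\E$ the constant in \eqref{est.strlw.R} does not tend to zero as the interval shrinks --- if it did, the critical local theory and the energy-to-Strichartz estimate of Section \ref{s.strna} would be nearly trivial. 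Appealing to compactness of the set of trajectories on the attractor is circular, since strong compactness is part of what Theorem \ref{Th8.main} establishes. The paper obtains the needed smallness from a different source: restarting the decomposition at times $\tau+nT$ produces a splitting $w=\bar w+\tilde w$ (Corollary \ref{Cor8.split}) in which $\tilde w$ inherits the exponential decay of $v$ and therefore has time-averaged Strichartz norm bounded by $C_\eb+\eb(t-s)$, while $\bar w$ is already bounded in $\E^\alpha$; only the small part multiplies the dangerous factor $\|\xi_w\|_{\E^\alpha}^{1-\alpha}\|w\|_{H^{\alpha,12}}^{\alpha}$ in the refined inequality \eqref{8.fine}, which then closes via Lemma \ref{Lem8.gr}. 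Finally, your sketch for the time-regular case (differentiating the full nonlinear equation in time and controlling $f'(u)\Dt u$ in $L^2$) aims at $\E^1$-regularity in one step, which the paper explicitly states it cannot achieve; the paper only differentiates the \emph{linear} $\theta$-equation in time and feeds the result into the same $\E^\alpha$, $\alpha\le\frac25$, machinery.
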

Finally, for the convenience of the reader, we collect in Section \ref{s.bv} some standard facts and concepts of the theory of vector valued measures and related functions of bounded variation.

\section{Function spaces and preliminaries}\label{s.p}

In this Section, we introduce some notations which will be used throughout  the paper and state some classical results for the solutions of linear wave equations. We start with functional spaces.
\par
Let $\Omega$ be a domain of $\R^3$ with a smooth boundary. As usual, the Lebesgue spaces of $p$-integrable functions in $\Omega$ are denoted by $L^p(\Omega)$, $1\le p\le\infty$. In the particular case $p=2$ we will use the notation $H:=L^2(\Omega)$. For every $l\in\Bbb N$, we denote by $H^{l,p}(\Omega)=W^{l,p}(\Omega)$ the classical Sobolev space of distributions whose derivative up to order $l$ belong to $L^p(\Omega)$. The closure of $C_0^\infty(\Omega)$ in the space $H^{l,p}(\Omega)$ is denoted by $H^{l,p}_0(\Omega)$. In the case $p=2$, we will write $H^l$ instead of $H^{l,2}$ in order to simplify the notations. The negative Sobolev spaces $H^{-l,p}(\Omega)$ are defined as dual spaces:
$$
H^{-l,p}(\Omega)=\(H^{l,q}_0(\Omega)\)^*,\ \ \frac1p+\frac1q=1.
$$
For the case $l>0$ and $l\notin\N$, we define the fractional space $H^{l,p}(\Omega)$, $1<p<\infty$ as the restriction of the Bessel potentials space $H^{l,p}(\R^3)$ to the domain $\Omega$. We recall that the norm in the space $H^{l,p}(\R^3)$ is defined by
$$
\|u\|_{H^{l,p}}:=\|(1+|\xi|^2)^{l/2}\widehat u(\xi)\|_{L^p},
$$
where $\widehat u$ stands for the Fourier transform of $u$, see e.g., \cite{Tri} for more details. In particular, the fractional Laplacian gives an isomorphism between spaces $H^{l,p}(\R^3)$ and $L^p(\R^3)$:
\begin{equation}\label{1.i}
(1-\Dx)^{l/2}H^{l,p}(\R^3)=L^p(\R^3).
\end{equation}
Note that this formula remains true in the spatially periodic case when $\Omega=\T^3$. In the general case where $\Omega$ is a bounded domain some restrictions appear due to the boundary conditions, see \cite{Tri}. We will also widely use in the sequel the classical Sobolev embedding theorem:
$$
H^{\alpha,p}(\T^3)\subset L^q(\T^3),\ \ \frac1q\ge \frac1p-\frac\alpha3,\ \ \alpha\ge0,\ q\ne\infty
$$
and the interpolation inequality:
$$
\|u\|_{H^{\alpha,p}}\le C\|u\|_{H^{\alpha_1,p_1}}^s\|u\|_{H^{\alpha_2,p_2}}^{1-s},
$$
where $\alpha_1,\alpha_2\in\R$, $1<p_1,p_2<\infty$, $s\in[0,1]$ and
$$
\alpha=s\alpha_1+(1-s)\alpha_2,\ \ \frac1p=\frac s{p_1}+\frac{1-s}{p_2}.
$$
We will also need the spaces of functions of mixed space-time regularity. For instance, the natural norms in the spaces $L^p(a,b;\,H^{\alpha,q})$ and $H^{1,p}(a,b; H^{\alpha,q})$ are given by
$$
\|u\|_{L^p(a,b;H^{\alpha,q})}^p:=\int_a^b\|u(t,\cdot)\|_{H^{\alpha,q}}^p\,dt\ \ \text{and }\
\|u\|_{H^{1,p}(a,b;H^{\alpha,q})}^p:=\int_a^b\|u(t,\cdot)\|_{H^{\alpha,q}}^p+\|\Dt u(t,\cdot)\|_{H^{\alpha,q}}^p\,dt
$$
respectively. The index "loc" or "b" will stand for the local or uniformly local topology respectively. For instance,
$$
L^p_{loc}(\R, H^{\alpha,q})=\big\{u:\R\to H^{\alpha,q},\ \|u\|_{L^p(a,b;H^{\alpha,q})}<\infty, \ \forall [a,b]\subset\R\big\}
$$
and
$$
L^p_b(\R, H^{\alpha,q}):=\big\{u\in L^p_{loc}(\R,H^{\alpha,q})\,:\, \|u\|_{L^p_b(\R,H^{\alpha,q})}:=\sup_{a\in\R}\|u\|_{L^p_b(a,a+1;H^{\alpha,q})}<\infty\big\}.
$$
Finally, to treat the external forces, we will need the space $M(a,b;H)$ of vector measures with values in $H$ and with finite total variation and the associated spaces $BV(a,b;H)$ of functions of bounded variation, see Section \ref{s.bv} for more details. Namely, the locally convex space of $H$-valued Borel measures $\nu$ on $\R$ such that the restrictions of $\nu$ to every finite segment $[s,t]$ belong to $M(s,t;H)$ is denoted by $M_{loc}(\R,H)$. Analogously
$$
M_b(\R,H)=\big\{\nu\in M_{loc}(\R,H)\,:\, \|\nu\|_{M_b(\R,H)}:=\sup_{t\in\R}\|\nu\|_{M(t,t+1;H)}<\infty\big\}.
$$
The  spaces  $BV_{loc}(\R,H)$ and $BV_b(\R,H)$ are also defined analogously.
\par
We now recall the standard results about the solutions of the linear wave equation
\begin{equation}\label{eq.lw}
\Dt^2 v-\Dx v=g(t),\ \ \xi_v\big|_{t=0}=\xi_0
\end{equation}
in the energy phase spaces
$$
\xi_v(t):=\{v(t),\Dt v(t)\}\in \E^\alpha:=H^{1+\alpha}(\T^3)\times H^\alpha(\T^3).
$$
For simplicity, we state the results for the spatially periodic case $\Omega=\T^3$ although most part of the results stated below remain true for the case of bounded domains as well.

\begin{theorem}\label{th.linstr} Let the initial data $\xi_0\in\mathcal E^\alpha$, $T>0$ and
  $g(t)\in L^1([0,T];H^\alpha)$ for some $\alpha\in\R$. Then, there is a unique solution $\xi_v\in C(0,T;\E^\alpha)$ of problem \eqref{eq.lw}. In addition, the solution $v$
belongs to the space $L^4(0,T;H^{\alpha,12}(\T^3))$  and the following estimate holds:
\begin{equation}\label{est.strlw.R}
\|\xi_v\|_{C(0,T;\E^\alpha)}+\|v\|_{L^4(0,T;H^{\alpha,12}(\Omega))}\leq C_T\left(\|\xi_0\|_{\E^\alpha}+\|g\|_{L^1(0,T;H^\alpha)}\right),
\end{equation}
where constant $C_T$ does not depend on $\xi_0\in\E^\alpha$ and $g(t)\in L^1([0,T];H^\alpha)$.
\end{theorem}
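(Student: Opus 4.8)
The plan is to treat the three assertions---existence and uniqueness in $C(0,T;\E^\alpha)$, the extra Strichartz regularity $v\in L^4(0,T;H^{\alpha,12})$, and the quantitative bound \eqref{est.strlw.R}---after first reducing to the case $\alpha=0$. Since on the torus the Bessel potential $(1-\Dx)^{\alpha/2}$ commutes with the wave operator $\Dt^2-\Dx$ (both being Fourier multipliers), the function $w:=(1-\Dx)^{\alpha/2}v$ solves \eqref{eq.lw} with data $(1-\Dx)^{\alpha/2}\xi_0\in\E$ and right-hand side $(1-\Dx)^{\alpha/2}g\in L^1(0,T;H)$; by the isomorphism \eqref{1.i} every $\E^\alpha$, $H^\alpha$ and $H^{\alpha,12}$ norm in the statement becomes the corresponding $\E$, $H=L^2$ and $L^{12}$ norm of $w$. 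Hence it suffices to prove the theorem for $\alpha=0$.

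First I would settle existence, uniqueness and the energy part of the estimate. Expanding in the Fourier series on $\T^3$ and solving the resulting family of ODEs mode by mode gives the explicit Duhamel representation $v(t)=\cos(t\sqrt{-\Dx})v_0+\tfrac{\sin(t\sqrt{-\Dx})}{\sqrt{-\Dx}}v_1+\int_0^t\tfrac{\sin((t-s)\sqrt{-\Dx})}{\sqrt{-\Dx}}g(s)\,ds$, with the spatially constant zero mode handled separately as the trivial ODE $\Dt^2v_0=g_0$. This produces the unique solution $\xi_v\in C(0,T;\E)$ and yields uniqueness. Multiplying \eqref{eq.lw} by $\Dt v$ and integrating over $\T^3$ gives $\tfrac{d}{dt}(\|\Dt v\|^2_{L^2}+\|\Nx v\|^2_{L^2})=2(g,\Dt v)\le 2\|g\|_{L^2}\|\Dt v\|_{L^2}$, and Gronwall's inequality then bounds $\|\xi_v\|_{C(0,T;\E)}$ by $C_T(\|\xi_0\|_\E+\|g\|_{L^1(0,T;L^2)})$.

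The genuinely nontrivial ingredient, and the step I expect to be the main obstacle, is the homogeneous Strichartz estimate $\|v\|_{L^4(0,T;L^{12})}\le C_T\|\xi_0\|_\E$ for the free evolution on $\T^3$. The pair $(q,r)=(4,12)$ is wave-admissible at energy regularity in dimension three, since the scaling identity $\tfrac1q+\tfrac3r=\tfrac12$ and the admissibility inequality $\tfrac2q+\tfrac2r\le 1$ both hold; on the whole space $\R^3$ the corresponding estimate is classical, see \cite{sogge,tao,straus}. To transfer it to the torus I would use that $\T^3$ is flat, hence locally isometric to $\R^3$, together with the finite propagation speed of the wave equation: localizing $v$ by a finite partition of unity and observing that on a fixed interval $[0,T]$ each local piece agrees with a solution of the $\R^3$ wave equation on a ball of radius comparable to $T$, one obtains the torus estimate with a constant $C_T$ depending on $T$. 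This is exactly the place where the available Strichartz theory for the periodic problem enters, cf. \cite{Chem,Sogge2009}; note that, in contrast with the Schr\"odinger case, finite speed of propagation makes this transfer loss-free. The nonzero-mode restriction needed to invert $\sqrt{-\Dx}$ causes no difficulty, since the zero mode is already controlled in $L^4(0,T;L^{12})$ by the energy estimate.

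Finally, I would combine the free estimate with the Duhamel term. Applying Minkowski's integral inequality to $\int_0^t\tfrac{\sin((t-s)\sqrt{-\Dx})}{\sqrt{-\Dx}}g(s)\,ds$ pulls the $L^4_tL^{12}_x$ norm inside the $s$-integral; each integrand is a free solution with data $(0,g(s))$, so the homogeneous estimate bounds it by $C_T\|g(s)\|_{L^2}$, and integration in $s$ yields the $C_T\|g\|_{L^1(0,T;L^2)}$ contribution. Here the hypothesis $g\in L^1_t$ is precisely what makes Minkowski's inequality applicable and lets one avoid the Christ--Kiselev machinery. Adding this to the energy and free Strichartz bounds gives \eqref{est.strlw.R} for $\alpha=0$, and undoing the reduction through $(1-\Dx)^{\alpha/2}$ completes the proof for general $\alpha$.
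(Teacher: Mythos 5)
Your argument is correct and is essentially the standard proof that the paper itself does not reproduce but delegates to the cited references (\cite{Chem,sogge,tao}); moreover, your two key reductions are exactly the ones the paper uses elsewhere, namely the reduction to $\alpha=0$ via the isomorphism \eqref{1.i} (as in the proof of Corollary \ref{Cor1.damp}) and the transfer of the $\R^3$ Strichartz estimate to $\T^3$ by finite speed of propagation (as invoked for Corollary \ref{Cor.tor}). The only point to phrase with a little care is the localization step: one should cut off the \emph{data} on a ball of radius $1+T$ and compare $v$ with the resulting $\R^3$ solution on the smaller ball (rather than multiply the solution itself by a cutoff, which would produce commutator terms), but this is clearly what you intend and the rest, including the Minkowski treatment of the Duhamel term that exploits $g\in L^1_t$ to bypass Christ--Kiselev, is sound.
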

The proof of this theorem can be found, e.g., \cite{Chem,sogge,tao}.
\par
To conclude this Section, we state the analogue of the above estimate for the damped linear wave equation:
\begin{equation}\label{1.damp}
\Dt^2 v+\gamma\Dt v+(1-\Dx)v=g(t),\ \ \xi_v\big|_{t=\tau}=\xi_\tau.
\end{equation}
where $\gamma>0$ and obtain an estimate which will be crucially used for later in order to obtain the further regularity of uniform attractors.
\begin{cor}\label{Cor1.damp} Let $\xi_\tau\in \E^\alpha$ and $g\in L^1_{loc}(\R,H^\alpha)$ for some $\alpha\in\R$. Then, the solution $\xi_v(t)$ of problem \eqref{1.damp} possesses the following estimate:
\begin{equation}\label{1.main}
\|\xi_v(t)\|_{\E^\alpha}+\(\int_\tau^t e^{-4\delta(t-s)}\|v(s)\|^4_{H^{\alpha,12}}\,ds\)^{1/4}\le C\|\xi_\tau\|_{\E^{\alpha}}e^{-\delta(t-\tau)}+C\int_\tau^te^{-\delta(t-s)}\|g(s)\|_{H^\alpha}\,ds,
\end{equation}
where the positive constants $C$ and  $\delta=\delta(\gamma)$ are independent of $t\ge\tau$ and $\xi_\tau$ and $g$.
\end{cor}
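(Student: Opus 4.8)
The plan is to derive the damped-wave estimate \eqref{1.main} from the undamped Strichartz estimate \eqref{est.strlw.R} of Theorem \ref{th.linstr}, by moving the damping term and the zero-order term to the right-hand side, localising in time on unit intervals, and feeding in the exponential decay supplied by a standard modified-energy argument.

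First I would prove the purely energetic half of \eqref{1.main}, i.e. the dissipative bound
\[
\|\xi_v(t)\|_{\E^\alpha}\le C\|\xi_\tau\|_{\E^\alpha}e^{-\delta(t-\tau)}+C\int_\tau^t e^{-\delta(t-s)}\|g(s)\|_{H^\alpha}\,ds .
\]
Applying $(1-\Dx)^{\alpha/2}$ to \eqref{1.damp} reduces everything to the case $\alpha=0$; writing $w:=(1-\Dx)^{\alpha/2}v$ and multiplying by $\Dt w$ in $H$ gives the energy identity $\frac12\frac{d}{dt}\(\|\Dt w\|^2+\|w\|^2_{H^1}\)=-\gamma\|\Dt w\|^2+((1-\Dx)^{\alpha/2}g,\Dt w)$. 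To extract decay I introduce the usual modified energy $\Phi:=\frac12\(\|\Dt w\|^2+\|w\|^2_{H^1}\)+\e(\Dt w,w)$ with $\e>0$ small, which is equivalent to $\|\xi_v\|^2_{\E^\alpha}$ and satisfies $\frac{d}{dt}\Phi+2\delta\Phi\le C\|g\|_{H^\alpha}\sqrt\Phi$ for an appropriate $\delta=\delta(\gamma)>0$. Dividing by $\sqrt\Phi$ and integrating with the factor $e^{\delta t}$ gives the energy bound above.

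Next I localise the Strichartz norm. On each unit interval $[m,m+1]$ I rewrite \eqref{1.damp} as $\Dt^2 v-\Dx v=G$ with $G:=g-\gamma\Dt v-v$ and apply \eqref{est.strlw.R} with $T=1$, obtaining
\[
\|v\|_{L^4(m,m+1;H^{\alpha,12})}\le C_1\(\|\xi_v(m)\|_{\E^\alpha}+\|G\|_{L^1(m,m+1;H^\alpha)}\).
\]
Since $\|\Dt v\|_{H^\alpha}+\|v\|_{H^\alpha}\le C\|\xi_v\|_{\E^\alpha}$, the correction terms are absorbed into the energy norm, so $\|v\|_{L^4(m,m+1;H^{\alpha,12})}\le C\|g\|_{L^1(m,m+1;H^\alpha)}+C\sup_{s\in[m,m+1]}\|\xi_v(s)\|_{\E^\alpha}$, with a constant $C$ now independent of $m$. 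The essential point is that fixing the interval length freezes the constant $C_1$ of Theorem \ref{th.linstr}, so all global decay must be imported from the energy estimate.

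Finally I assemble the weighted integral by summing over the unit intervals in $[\tau,t]$. On $[m,m+1]$ the weight $e^{-4\delta(t-s)}$ is comparable to $e^{-4\delta(t-m)}$, whence $\int_\tau^t e^{-4\delta(t-s)}\|v(s)\|^4_{H^{\alpha,12}}\,ds\le C\sum_m e^{-4\delta(t-m)}\|v\|^4_{L^4(m,m+1;H^{\alpha,12})}$; taking fourth roots and using $\|\cdot\|_{\ell^4}\le\|\cdot\|_{\ell^1}$ converts this into $C\sum_m e^{-\delta(t-m)}\|v\|_{L^4(m,m+1;H^{\alpha,12})}$. Substituting the local bound and then the energy estimate for $\sup_{[m,m+1]}\|\xi_v\|_{\E^\alpha}$, the remaining work is a convolution estimate: the nested sum of products of exponentially decaying kernels must be shown, by (discrete and continuous) Young's inequality, to be controlled by $C\|\xi_\tau\|_{\E^\alpha}e^{-\delta(t-\tau)}+C\int_\tau^t e^{-\delta(t-s)}\|g(s)\|_{H^\alpha}\,ds$, possibly after shrinking $\delta$. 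This last bookkeeping --- matching the local Strichartz constant against the global exponential weight and collapsing the nested sums back to the single kernel $e^{-\delta(t-s)}$ without degrading the decay rate --- is the only genuinely delicate step; the energy identity, the absorption of the lower-order terms and the $\ell^1\subset\ell^4$ inequality are all routine.
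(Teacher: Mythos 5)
Your proposal is correct and follows essentially the same route as the paper: reduction to $\alpha=0$ via the isomorphism \eqref{1.i}, the dissipative energy estimate obtained by testing with $\Dt v+\beta v$ (your modified energy $\Phi$ is the same device), the local Strichartz bound from Theorem \ref{th.linstr} on unit intervals with $\tilde g=g-\gamma\Dt v-v$, and finally the summation of exponentially weighted unit-interval norms using the concavity of $z^{1/4}$ (your $\ell^1\subset\ell^4$ step) and a discrete/continuous convolution estimate that forces the slight loss $\delta\to\delta'<\delta$. The "delicate bookkeeping" you flag at the end is exactly the computation the paper carries out in the displayed chain of inequalities following \eqref{1.int}.
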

\begin{proof}
Indeed, due to isomorphism \eqref{1.i}, it is sufficient to verify \eqref{1.main} for $\alpha=0$ only.
For simplicity we also assume that $\tau=0$. Multiplying equation \eqref{1.damp} by $\Dt v+\beta v$, where $\beta>0$ is small enough and arguing in a standard way (see e.g., \cite{CV}), we arrive at
\begin{equation}\label{1.en}
\|\xi_v(t)\|_{\E}\le C\|\xi_\tau\|_{\E}e^{-\delta t}+C\int_0^te^{-\delta(t-s)}\|g(s)\|_H\,ds
 \end{equation}
for some positive constants $C$ and $\delta$. After that, we rewrite equation \eqref{1.damp} in the form of equation \eqref{eq.lw} with the right-hand side $\tilde g(t)=g(t)-\gamma\Dt v(t)-v(t)$ and apply estimate \eqref{est.strlw.R} for the Strichartz norm on the time interval $[t,t+1]$, $t\ge0$, to get
\begin{multline}\label{1.int}
\|v\|_{L^4(t,t+1;L^{12})}\le C\(\sup_{s\in[t,t+1]}\|\xi_v(s)\|_{\mathcal E}+\|g\|_{L^1(t,t+1;H)}\)\le\\\le
C\|\xi_0\|_{\E}e^{-\delta t}+C\int_0^{t+1}e^{-\delta(t-s)}\|g(s)\|_H\,ds.
\end{multline}
We claim that \eqref{1.int} implies \eqref{1.main} for the Strichartz norm. Indeed, we may assume without loss of generality that $t=n\in\N$ (if this condition is not satisfied, we always can increase $t$ by the proper $\kappa\in(0,1)$  to satisfy this assumption and put $g(s)=0$ for $s\ge t$). In this case, using the concavity of the function $z^{1/4}$  and \eqref{1.int}, we obtain that, for $0<\delta'<\delta$,
\begin{multline}
\(\int_0^te^{-4\delta'(t-s)}\|v\|^4_{L^{12}}\,ds\)^{1/4}\le C\(\sum_{k=0}^{n-1}e^{-4\delta'(n-k)}\int_{k}^{k+1}\|v(s)\|_{L^{12}}^4\,ds\)^{1/4}\le\\\le C\sum_{k=0}^{n-1}e^{-\delta'(n-k)}\|v\|_{L^4(k,k+1;L^{12})}\le C\sum_{k=0}^{n-1}e^{-\delta'(n-k)}e^{-\delta k}\|\xi_v(0)\|_{\E}+
\\+C\sum_{k=0}^{n-1}e^{-\delta'(n-k)}\int_0^{k+1}e^{-\delta(k+1-s)}\|g(s)\|_H\,ds\le
Ce^{-\delta't}\|\xi_v(0)\|_\E+\\+C\int_0^te^{-\delta'(t-s)}
\sum_{k=1}^{n-1}e^{-(\delta-\delta')|k-s|}\|g(s)\|_H\,ds\le C\|\xi_v(0)\|_{\E} e^{-\delta't}+\\+C(\delta-\delta')^{-1}\int_0^te^{-\delta'(t-s)}\|g(s)\|_H\,ds.
\end{multline}
Finally, replacing $\delta$ by $\delta'$ we get the desired estimate for the Strichartz norm and finish the proof of the corollary.
\end{proof}
\section{Measure driven damped wave equation: the linear case}\label{s.lwmf}

In this Section we consider the following linear wave equation:
\begin{equation}\label{lwmf.pr}
\Dt^2 w+\gamma\Dt w+(-\Dx+1)w=\mu(t),\ \ \
\xi_w\big|_{t=\tau}=\xi_\tau:=\{w_\tau,w'_\tau\}
\end{equation}
on a three dimensional torus $x\in\T^3$  where damping parameter $\gamma\geq 0$ and, in contrast to the previous Section, $\mu$ is a {\it measure}. All of the results of this Section are actually valid not only for the case of periodic boundary conditions, but also for the case of Dirichlet or Neumann boundary conditions when $\Omega\subset \R^3$ is a smooth bounded domain (although this result is not necessary for our purposes). We suppose here that
\begin{equation}
\mu \in M(\tau,T;H),
\end{equation}
where $M(\tau,T;H)$ is the space of $H$-valued Borel vector measures on $[\tau,T]$ with values in $H$ and with bounded total variation  (see Section \ref{s.bv} for more details).
\par
We start with the definition of an energy solution for equation \eqref{lwmf.pr} which is a bit more delicate since in contrast to the usual case, the time derivative $\Dt w(t)$ may have jumps produced by the atoms of the measure $\mu$.

\begin{Def}\label{def.sol.lwm} A function $w(t)$ such that  $\xi_w(t)\in L^\infty(\tau,T;\E)$ (where $\xi_w(t):=\{w(t),\Dt w(t)\}$) is an  \emph{energy} solution of problem \eqref{lwmf.pr} on $[\tau,T]$ if
\par
1) It satisfies the equation in the sense of distributions, i.e.,
 for any test function $\phi\in C^\infty_0((\tau,T)\times \T^3)$, the following equality holds
\begin{multline}\label{lwmf.w.sol}
-\int_\tau^T(\Dt w(t),\Dt \phi(t))\,dt+\int_\tau^T(\nabla w(t),\nabla \phi(t))\,dt+\int_\tau^T(w(t),\phi(t))\,dt+\\+
\gamma\int_\tau^T(\Dt w(t),\phi(t))\,dt=\int_\tau^T(\phi(t),\mu(dt)),
\end{multline}
\par
2) It is left-weakly semicontinuous at every point $t\in[\tau,T]$ as $\mathcal E$-valued function.
\par
3) The initial conditions are satisfied in the following sense:
$$
\xi_w(\tau)=\Big\{w_\tau,w'_\tau\Big\},\ \ \xi_w(\tau+0):={\rm w}\!-\!\lim_{s\to\tau+0}\xi_w(s)=\Big\{w_\tau,w_\tau+\mu(\{\tau\})\Big\}.
$$
\end{Def}

\begin{rem}\label{Rem3.1} Since $w\in L^\infty(\tau,T;H^1)$ and $\Dt w\in L^\infty(\tau,T;H)$, the function $w(t)$ is weakly continuous as a  function with values in $H^1$: $w\in C_w(\tau,T;H^1)$, so the initial data for $w(t)$ is well-defined. The situation with the derivative $\Dt w$ is a bit more delicate since it may be discontinuous. Namely, from Definition \ref{def.sol.lwm} we see that the distributional derivative $\Dt^2w$ satisfies
\begin{multline}
\langle\Dt^2 w,\phi\rangle=-\langle\Dt w,\Dt\phi\rangle=-\int_\tau^T(\Dt w(t),\Dt \phi(t))\,dt=\\-
\int_\tau^T(\nabla w(t),\nabla \phi(t))\,dt-\int_\tau^T(w(t),\phi(t))\,dt-
\gamma\int_\tau^T(\Dt w(t),\phi(t))\,dt+\int_\tau^T(\phi(t),\mu(dt))
\end{multline}
and this functional clearly  can be extended by continuity to any $\phi\in L^1(\tau,T;H^1)\cap C_0(\tau,T;H)$. By this reason,
$$
\Dt^2 w\in L^\infty(\tau,T;H^{-1})+ M(\tau,T;H).
$$
 This, together with the fact that $\Dt w\in L^\infty(\tau,T;H)$, implies
 $$
 \Dt w\in C_w(\tau,T;H)+BV(\tau,T;H).
 $$
Since any BV function has left and right limits at every point (see Section \ref{s.bv}), the function $t\to\Dt w(t)$ also possesses left and right limits $\Dt w(t+0)$ and $\Dt w(t-0)$ at any point $t\in(\tau,T)$ (in a weak topology of $H:=L^2$) as well as the limits $\Dt w(\tau+0)$ and $\Dt w(T-0)$. Thus, assumption 2) of the definition makes sense and the second part of the initial conditions 3)  for $\Dt w$ is also well-defined. However, since $C_0(\tau,T;H)$ is not dense in $C(\tau,T;H)$ the values $\Dt w(\tau)$ and $\Dt w(T)$ remain undefined (as well as the values of $\Dt w$ at jump points).
\par
In order to avoid this ambiguity and to be able to define the dynamical process associated with our problem (see Section \ref{s.ua}), we choose weakly-left semicontinuous representative on $[\tau,T]$ from the class of equivalence of $\Dt w$ by default. Then, the value $\Dt w(T)=\Dt w(T-0)$ is also well-defined and the value $\Dt w(\tau)$ is determined by the first part of initial conditions.
\end{rem}

\begin{rem}\label{Rem3.2} Note that  energy solution $w$ of problem \eqref{lwmf.pr} possesses the following property:
\begin{equation}\label{lwmf.j}
\Dt w(t+0)-\Dt w(t)=\mu(\{t\}),\ \ t\in[\tau,T]
\end{equation}
(in the case $t=T$ we just assume that $\xi_w(T+0):=\{w(T),\Dt w(T)+\mu(\{T\})\}$).
Indeed, integrating by parts in \eqref{lwmf.w.sol} and  using \eqref{bv.ByParts} to handle with the most complicated term which involves measures, we get
\begin{equation}
\int_\tau^T\left(W(t),\Dt \phi(t)\right)\,dt=0,\ \text{for all }\phi\in C^\infty_0((\tau,T)\times\T^3),
\end{equation}
where $W(t):=-\Dt w(t)-\int_\tau^t(-\Dx+1)w(s)\,ds-\gamma w(t)+\mu([\tau,t))$. Therefore, $W(t)=\Psi$ almost everywhere for some $\Psi\in H^{-1}$. Using now the assumption that $\Dt w$ is left-continuous together with the obvious fact that $t\to\mu([\tau,t))$ is also left-semicontinuous and taking into the account the  initial data, we conclude that
\begin{equation}\label{eq-sol}
\Dt w(t)=-\int_\tau^t(-\Dx+1)w(s)\,ds-\gamma w(t)+\mu([\tau,t))+w_\tau'+\gamma w_\tau,\ t\in[\tau,T].
\end{equation}
The desired formula \eqref{lwmf.j} is an immediate corollary of \eqref{eq-sol}.
\par
The proved formula shows, in particular, that the function $\Dt w$ will be weak-continuous as a function with values in $H$ if the measure $\mu$ is non-atomic. Moreover, multiplying equation \eqref{eq-sol} by $\Dt\phi$ integrating over $t\in\R$, performing the integration by parts back and using the initial conditions, we  return to the distributional formulation \eqref{lwmf.w.sol}. Thus, identities \eqref{lwmf.w.sol} and \eqref{eq-sol} are equivalent and we may check \eqref{eq-sol} instead of \eqref{lwmf.w.sol}. We will essentially use this observation later.
\end{rem}
At the next step we write out the explicit formula for the solution of equation \eqref{lwmf.pr}. We start with the homogeneous case $\mu=0$. Then, the solution $w(t)$ is given by
\begin{equation}
w(t)=C_{A}(t-\tau)w_\tau+S_{A}(t-\tau)w'_\tau,
\end{equation}
where $A:=-\Dx+1$ endowed with periodic boundary conditions,
$$
S_A(t):=e^{-\frac{\gamma}{2}t}\frac{\sin(\Lambda(A) t)}{\Lambda(A)},\ \ \ C_A(t):=e^{-\frac{\gamma}{2}t}\cos(\Lambda(A) t)
$$
and $\Lambda(z):=\(z-\frac{\gamma^2}4\)^\frac12$. The corresponding solution semigroup in the energy phase space $\mathcal E$ is then defined via
\begin{equation}\label{2.sem}
\xi_w(t)=\mathcal S_A(t)\xi_w(0),\ \ \ \mathcal S_A(t):=\(\begin{matrix} C_A(t), & S_A(t)\\ \Dt C_A(t),&\Dt S_A(t)\end{matrix}\).
\end{equation}
The following result is well-known and can be verified by straightforward calculations.

\begin{lemma}\label{Lem2.con} The operators $\mathcal S_A(t)$ are bounded in $\mathcal E$ and satisfy the following estimate
\begin{equation}\label{2.contr}
\|\mathcal S_A(t)\|_{\mathcal L(\mathcal E,\mathcal E)}\le Ce^{-\gamma_0t},
\end{equation}
where the constant $C$ may depend on $\gamma$ and $\gamma_0:=\min\{\gamma/2,1\}$.
\end{lemma}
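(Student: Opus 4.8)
The plan is to diagonalise the problem via the spectral decomposition of $A:=-\Dx+1$ and to reduce \eqref{2.contr} to a single uniform estimate for a scalar damped oscillator. Since $A$ is self-adjoint and positive on $H=L^2(\T^3)$ with discrete spectrum $\sigma(A)=\{|k|^2+1:k\in\mathbb Z^3\}\subset[1,\infty)$ and an orthonormal eigenbasis (the Fourier modes $e^{ik\cdot x}$), the operator $\mathcal S_A(t)$ is built from $C_A(t)$ and $S_A(t)$ by the functional calculus, hence commutes with $A$ and leaves every eigenspace invariant. On the eigenspace associated with $\lambda\in\sigma(A)$ it acts as the $2\times2$ matrix $\mathcal S_\lambda(t)$ obtained by substituting the scalar $\lambda$ for $A$ in \eqref{2.sem}. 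First I would record that, by Parseval, $\|\xi_w\|_{\mathcal E}^2=\sum_k(\lambda_k|\widehat w_k|^2+|\widehat w'_k|^2)$, so that $\|\mathcal S_A(t)\|_{\mathcal L(\mathcal E)}=\sup_{\lambda\in\sigma(A)}\|\mathcal S_\lambda(t)\|_\lambda$, where $\|\cdot\|_\lambda$ denotes the operator norm on $\mathbb R^2$ equipped with the weighted norm $\|(a,b)\|_\lambda^2:=\lambda a^2+b^2$. Thus the whole statement reduces to an exponential bound for $\mathcal S_\lambda(t)$ that is \emph{uniform} in $\lambda\ge1$.

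Second, rather than estimating the explicit entries of $\mathcal S_\lambda(t)$ directly (their off-diagonal factor $\Lambda(\lambda)^{-1}=(\lambda-\gamma^2/4)^{-1/2}$ degenerates at the transition $\lambda=\gamma^2/4$), I would use a Lyapunov functional. For the scalar solution $y$ of $\ddot y+\gamma\dot y+\lambda y=0$ introduce $\tilde E_\lambda:=\tfrac12\dot y^2+\tfrac12\lambda y^2+\beta y\dot y$ with a small $\beta=\beta(\gamma)>0$. Since $\lambda\ge1$ absorbs the cross term, $\tilde E_\lambda$ is equivalent to $\|(y,\dot y)\|_\lambda^2$ uniformly in $\lambda\ge1$ provided $\beta<1$, and differentiation along the equation gives $\tfrac{d}{dt}\tilde E_\lambda=-(\gamma-\beta)\dot y^2-\beta\gamma y\dot y-\beta\lambda y^2$. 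It then remains to choose $\beta$ so that $\tfrac{d}{dt}\tilde E_\lambda+2\gamma_0\tilde E_\lambda\le0$ for all $\lambda\ge1$; regarding the left-hand side as a quadratic form in $(\dot y,\sqrt\lambda\,y)$ and using $\lambda\ge1$ turns this into one algebraic inequality relating $\beta$, $\gamma$ and $\gamma_0$. For $\gamma\le2$ the choice $\beta=\gamma/2$ makes this form vanish and yields the rate $\gamma_0=\gamma/2$; the differential inequality then integrates to $\tilde E_\lambda(t)\le e^{-2\gamma_0 t}\tilde E_\lambda(0)$, whence $\|\mathcal S_\lambda(t)\|_\lambda\le Ce^{-\gamma_0 t}$.

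Third, for $\gamma>2$ I would split the spectrum at the threshold $\gamma^2/4$. On the high-frequency part $\lambda\ge\gamma^2/4$ the same multiplier computation (now in the oscillatory regime $\Lambda(\lambda)\in\mathbb R$) still gives the rate $\gamma/2\ge1$. The genuinely delicate piece --- and the step I expect to be the main obstacle --- is the \emph{overdamped, low-frequency} block $\lambda\in[1,\gamma^2/4)$, present only for $\gamma>2$ and consisting of finitely many Fourier modes: here $\mathcal S_\lambda(t)$ is governed by the two real roots $-\tfrac{\gamma}{2}\pm(\tfrac{\gamma^2}{4}-\lambda)^{1/2}$, the slowest decay occurring at the smallest eigenvalue $\lambda=1$, and the point one must verify carefully is that this decay is still no worse than the claimed rate $\gamma_0$. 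On this finite-dimensional block I would estimate $C_\lambda$, $S_\lambda$ and their derivatives directly from their hyperbolic representation; since only finitely many values of $\lambda$ occur, the bound holds with a $\gamma$-dependent constant $C$. Finally I would reassemble the two regimes by Parseval, $\|\mathcal S_A(t)\xi\|_{\mathcal E}^2=\sum_k\|\mathcal S_{\lambda_k}(t)\xi_k\|_{\lambda_k}^2\le C^2e^{-2\gamma_0 t}\|\xi\|_{\mathcal E}^2$, which is exactly \eqref{2.contr}.
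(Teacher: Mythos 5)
Your reduction to a uniform estimate for the $2\times2$ spectral blocks and the Lyapunov-functional computation in the underdamped regime are exactly the ``straightforward calculations'' the paper alludes to (it offers no proof of this lemma), and for $\gamma<2$ your argument is complete: with $\beta=\gamma/2<1$ the functional $\tilde E_\lambda$ is equivalent to $\|\cdot\|_\lambda^2$ uniformly in $\lambda\ge1$ and satisfies $\tfrac{d}{dt}\tilde E_\lambda+\gamma\tilde E_\lambda=0$, which gives \eqref{2.contr} with $\gamma_0=\gamma/2$.

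The gap is precisely the step you defer in the overdamped block, and it cannot be closed as stated. For $\gamma>2$ the lowest eigenvalue $\lambda=1$ (the constant Fourier mode) has characteristic roots $-\gamma/2\pm\sqrt{\gamma^2/4-1}$, so $\|\mathcal S_1(t)\|_1$ decays exactly like $e^{-\kappa t}$ with $\kappa=\gamma/2-\sqrt{\gamma^2/4-1}=2/(\gamma+\sqrt{\gamma^2-4})$, which is \emph{strictly smaller} than $1=\min\{\gamma/2,1\}$ whenever $\gamma>2$ (for instance $\kappa=2-\sqrt{3}\approx0.27$ at $\gamma=4$). A deficit in the exponential rate cannot be absorbed into a $\gamma$-dependent constant $C$, so the verification you postpone (``this decay is still no worse than $\gamma_0$'') fails; similarly, at $\gamma=2$ the critically damped mode $\lambda=1$ produces a factor $te^{-t}$, which is not $O(e^{-t})$. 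The estimate is therefore provable only with the corrected rate $\gamma_0=\gamma/2-\sqrt{(\gamma^2/4-1)_+}$ (diminished by an arbitrarily small $\eb$ at $\gamma=2$), which coincides with $\min\{\gamma/2,1\}$ only for $\gamma<2$; the formula for $\gamma_0$ in the lemma as printed appears to be a slip, and is harmless downstream since the paper only ever uses the existence of some positive rate $\delta(\gamma)$. Your write-up should either restrict to $\gamma<2$ or state and prove the corrected rate, rather than asserting that the finitely many overdamped modes comply with $\gamma_0=1$.
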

Furthermore, in the regular case where the measure $\mu$ is absolutely continuous ($\mu(dt)=g(t)dt$ for some $g\in L^1(\tau,T;H)$), the solution of the non-homogeneous equation is given by the Duhamel formula:
$$
\xi_w(t)=\mathcal S_A(t)\xi_w(\tau)+\int_\tau^t\mathcal S_A(t-s)\(\begin{matrix}0\\g(s)\end{matrix}\)\,ds.
$$
The next theorem shows that the analogue of this formula holds in a general case as well.

\begin{theorem}\label{lwmf.thE!}
Let  $\gamma\geq 0$,  $\xi_\tau\in\E$ and the external force $\mu\in M(\tau,T;H)$. Then problem \eqref{lwmf.pr} possesses a unique energy solution $w$ on $[\tau,T]$. This solution satisfies
\begin{equation}\label{lwmf.DuhM}
\xi_w(t)=\mathcal S_A(t)\xi_\tau+
\int_{s\in[\tau,t)}\mathcal S_{A}(t-s)\(\begin{matrix}0\\\rho_{\mu}(s)\end{matrix}\)|\mu|(ds),\ t\in [\tau,T],
\end{equation}
where $\rho_\mu \in L^1_{|\mu|}(\tau,T;H)$ is the density of $\mu$ with respect to $|\mu|$ (see \eqref{mu=rho|mu|}).
\par
Furthermore, the following energy estimate holds:
\begin{equation}\label{lwmf.E-est}
\|\xi_w(t)\|_\E\leq C\left(\|\xi_\tau\|_\E e^{-\gamma_0(t-\tau)}+\int_\tau^te^{-\gamma_0(t-s)}|\mu|(ds)\right),\quad t\in[\tau,T],
\end{equation}
for some constant $C$ depending only on $\gamma$.
\end{theorem}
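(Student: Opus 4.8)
\emph{Strategy.} The plan is to \emph{define} $\xi_w$ by the right-hand side of \eqref{lwmf.DuhM}, to read off the energy estimate \eqref{lwmf.E-est} almost for free, and then to spend the real effort on checking that this candidate is an energy solution in the sense of Definition \ref{def.sol.lwm}; uniqueness will be a short linear argument at the end. Throughout I would work with the first-order form of \eqref{lwmf.pr}: with $A=-\Dx+1$ and $\mathbb A=\left(\begin{smallmatrix}0&1\\-A&-\gamma\end{smallmatrix}\right)$, the homogeneous problem is solved by the semigroup $\mathcal S_A(t)$ of \eqref{2.sem}, so \eqref{lwmf.DuhM} is the natural variation-of-constants ansatz in which the source $\mu$ enters through the second component only. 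I write $S_A'$, $C_A'$ for the derivatives of the scalar kernels, so that the second column of $\mathcal S_A(t-s)$ is $\{S_A(t-s),S_A'(t-s)\}$.

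First I would check that \eqref{lwmf.DuhM} gives a well-defined $\xi_w(t)\in\E$ for each $t$. Since $\|\rho_\mu(s)\|_H=1$ for $|\mu|$-a.e.\ $s$ and $\|\mathcal S_A(t-s)\{0,\rho_\mu(s)\}\|_\E\le Ce^{-\gamma_0(t-s)}$ by \eqref{2.contr}, the integrand is Bochner integrable with respect to $|\mu|$ into $\E$ and the integral is bounded by $C\int_\tau^t e^{-\gamma_0(t-s)}|\mu|(ds)$. Applying \eqref{2.contr} to the first term as well gives simultaneously $\xi_w\in L^\infty(\tau,T;\E)$ and the energy estimate \eqref{lwmf.E-est}.

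The heart of the matter, and the step I expect to be the main obstacle, is to verify that this $w$ actually solves the equation. Here I would use Remark \ref{Rem3.2} and check the equivalent integrated identity \eqref{eq-sol} rather than the distributional form \eqref{lwmf.w.sol}. Expanding the two components of \eqref{lwmf.DuhM} and differentiating the first one in $t$ reproduces the second (the endpoint contribution at $s=t$ drops out because $S_A(0)=0$, so that $w$ is in fact weakly continuous into $H^1$ and only $\Dt w$ may jump). To recover \eqref{eq-sol} I would substitute the elementary identity $S_A'(r)=1-\gamma S_A(r)-\int_0^r AS_A(r')\,dr'$ — which follows from $S_A(0)=0$, $S_A'(0)=1$ and the fact that $S_A$ solves the homogeneous equation — into $\int_{[\tau,t)}S_A'(t-s)\rho_\mu(s)\,|\mu|(ds)$. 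The constant term reproduces $\mu([\tau,t))=\int_{[\tau,t)}\rho_\mu(s)\,|\mu|(ds)$, the $\gamma S_A$ term reproduces $-\gamma$ times the inhomogeneous part of $w(t)$, and a Fubini interchange of the $dr'$- and $|\mu|$-integrations turns the last term into $-\int_\tau^t Aw(s)\,ds$; together with the homogeneous data this is precisely \eqref{eq-sol}. The delicate points are the half-open interval $[\tau,t)$ and the atoms of $\mu$: the value $S_A'(0)=1$ is exactly what generates the jump relation \eqref{lwmf.j}, and choosing the left-weakly-semicontinuous representative as in Remark \ref{Rem3.1} fixes the values at the jump points and secures the one-sided limits and the initial conditions required by Definition \ref{def.sol.lwm}. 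A cleaner-looking but essentially equivalent alternative is to establish everything first for absolutely continuous $\mu=g\,dt$, where \eqref{lwmf.DuhM} and \eqref{eq-sol} are classical, and then pass to a weak-$^*$ limit along $g_n\,dt\to\mu$; the only nontrivial issue there is the behaviour of the atoms at the moving endpoint $t$.

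Finally, uniqueness is immediate by linearity: the difference $v$ of two energy solutions satisfies \eqref{eq-sol} with $\mu=0$ and zero initial data, so it is a homogeneous energy solution and hence coincides with $\mathcal S_A(t-\tau)\xi_v(\tau)=0$ by the standard uniqueness theorem for the damped linear wave equation (equivalently, Gronwall's inequality applied to the Volterra identity \eqref{eq-sol} forces $v\equiv0$).
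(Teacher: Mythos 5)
Your proposal is correct, and the first step (reading off well-posedness of the formula and the energy estimate from Lemma \ref{Lem2.con} and $\|\rho_\mu\|_H=1$) as well as the uniqueness argument coincide with the paper's. Where you genuinely diverge is the central verification that the Duhamel candidate solves the equation. The paper reduces this to finite dimensions: it expands $\xi_w$ in the eigenbasis of $A$, uses Lemma \ref{bv.lem.|QNmu|to0} to show $\|\xi_{Q_Nw}\|_{L^\infty(\tau,T;H)}\to0$, and then only has to check that each $P_Nw$ solves the ODE $\Dt^2w_N+\gamma\Dt w_N+Aw_N=P_N\mu$, which is elementary (integration by parts \eqref{bv.ByParts} in $\R^N$). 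You instead work directly in infinite dimensions, verifying the integrated identity \eqref{eq-sol} (legitimate by Remark \ref{Rem3.2}) via the kernel identity $S_A'(r)=1-\gamma S_A(r)-\int_0^rAS_A(r')\,dr'$ and a Fubini interchange between $d\sigma$ and $|\mu|(ds)$; the atoms on the diagonal are harmless because $S_A(0)=0$, and $S_A'(0)=1$ produces exactly the jump relation \eqref{lwmf.j}. Both routes are sound: the paper's truncation buys an elementary finite-dimensional check at the cost of the auxiliary convergence lemma for $Q_N\mu$, while your direct computation avoids truncation but must be carried out in $H^{-1}$ and requires justifying the Fubini step for the vector-valued Lebesgue--Stieltjes integral (routine, since the integrand is absolutely integrable). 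One small imprecision worth fixing: the second component of \eqref{lwmf.DuhM} is only the a.e.\ (equivalently, distributional) derivative of the first, since at an atom of $\mu$ the right derivative of $w$ picks up the extra term $S_A'(0)\mu(\{t\})$; this is consistent with, indeed it is, the jump of $\Dt w$, but "differentiating the first component reproduces the second" should be read in the a.e.\ sense.
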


\begin{proof} We first note that due to Lemma \ref{Lem2.con}, the function $\xi_w(t)$ is well-defined and belongs to $L^\infty(\tau,t;\E)$ and satisfies  energy inequality \eqref{lwmf.E-est} (here we have implicitly used that $\|\rho_\mu(t)\|_H=1$). The weak left-continuity of $\xi_w(t)$ as well as the fact that it satisfies the initial data also an immediate corollary of formula \eqref{lwmf.DuhM}.
\par
In order to check that it satisfies the equation in the sense of distributions, we expand $\xi_w(t)$ into the Fourier series associated with the eigenfunctions of the operator $A$. Namely,
let $e_i$, $\lambda_i$ be the eigenvectors and the eigenvalues (enumerated in the non-decreasing order) of the operator $A$ and let $P_N$ be the orthoprojector to the linear subspace generated by the first $N$ eigenvectors. We also denote $Q_N:=1-P_N$. Then,
$$
\xi_w(t)=\xi_{P_Nw}(t)+\xi_{Q_N w}(t)
$$
and, due to Lemma \ref{bv.lem.|QNmu|to0} and estimate \eqref{lwmf.E-est},
\begin{equation}
\lim_{N\to\infty}\|\xi_{Q_Nw}\|_{L^\infty(\tau,T;H)}=0.
\end{equation}
Thus, it is enough to verify that, for every $N\in\mathbb N$, the function $w_N(t):=P_Nw(t)$ is a distributional solution of an ODE
$$
\Dt^2w_N+\gamma\Dt w_N+Aw_N=P_N\mu.
$$
But this can be done in a straightforward way using the integration by parts
formula   \eqref{bv.ByParts} (with $H=\R^N$) and the properties of the Duhamel integral (we leave the rigorous proof of this to the reader). Thus, the function $\xi_w(t)$ is indeed the desired energy solution.
\par
Finally, let $w_1(t)$ and $w_2(t)$ be two energy solutions. Then, since both of these functions are weakly continuous in $H^1$, their derivatives $\Dt w_i(t)$ are weakly left-continuous and have the same jumps according to formula \eqref{lwmf.j}, we conclude that $\xi_w(t)$ is weak-continuous in $\mathcal E$ where $w(t)=w_1(t)-w_2(t)$. In addition, $w(t)$ solves the homogeneous problem \eqref{lwmf.pr} with $\mu=0$ and zero initial data. It is well-known that such solution is unique, so $w\equiv0$ and the uniqueness is also verified and the theorem is proved.
\end{proof}

\begin{cor}\label{lwmf.cor.cont}
Let assumptions of Theorem \ref{lwmf.thE!} holds then the energy solution $w\in C(\tau,T;H^1)$ and $\Dt w\in C([\tau,T]\setminus \supp\mu_d;H)$, where $\supp\mu_d$ is the support of the discrete part of the measure $\mu$ or equivalently the set of points of discontinuities for $\Phi_\mu(t)$. Moreover, the limits $\xi_w(t+0)$ and $\xi_w(t-0)$ both exist for every $t\in[\tau,T]$ in a strong topology of $\mathcal E$.
\end{cor}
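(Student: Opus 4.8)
The plan is to read all the regularity off the Duhamel representation \eqref{lwmf.DuhM}, which already writes the solution as $\xi_w(t)=\mathcal S_A(t)\xi_\tau+I(t)$ with
$$
I(t):=\int_{s\in[\tau,t)}\mathcal S_A(t-s)\(\begin{matrix}0\\\rho_\mu(s)\end{matrix}\)|\mu|(ds).
$$
The homogeneous term $\mathcal S_A(t)\xi_\tau$ is harmless: beyond the bound of Lemma \ref{Lem2.con}, the explicit sine/cosine operators show that $t\mapsto\mathcal S_A(t)v$ is strongly continuous in $\mathcal E$ for each fixed $v$, so this term is strongly continuous into $\mathcal E=H^1\times H$ on all of $[\tau,T]$. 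Everything nontrivial is therefore concentrated in $I(t)$, and I would split the measure into its non-atomic and discrete parts, $\mu=\mu_c+\mu_d$ with $\mu_d=\sum_j a_j\delta_{t_j}$ and $\sum_j\|a_j\|_H=|\mu_d|([\tau,T])<\infty$, writing $I=I_c+I_d$ accordingly.

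For the continuous part I claim $I_c$ is strongly continuous in $\mathcal E$ everywhere. Right-continuity at $t_0$ follows by writing, for $t>t_0$,
$$
I_c(t)-I_c(t_0)=[\mathcal S_A(t-t_0)-\mathrm{Id}]\,I_c(t_0)+\int_{[t_0,t)}\mathcal S_A(t-s)\(\begin{matrix}0\\\rho_{\mu_c}(s)\end{matrix}\)|\mu_c|(ds),
$$
where the group identity $\mathcal S_A(t-s)=\mathcal S_A(t-t_0)\mathcal S_A(t_0-s)$ was used; the first term tends to $0$ by strong continuity applied to the \emph{fixed} vector $I_c(t_0)$, while the second is bounded by $C|\mu_c|([t_0,t))\to C|\mu_c|(\{t_0\})=0$ precisely because $\mu_c$ is non-atomic. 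Left-continuity is handled by the analogous splitting $[\tau,t_0)=[\tau,t)\sqcup[t,t_0)$: the endpoint piece again vanishes since $|\mu_c|([t,t_0))\to0$, and the remaining integrand $[\mathcal S_A(t_0-s)-\mathcal S_A(t-s)]\binom{0}{\rho_{\mu_c}(s)}$ tends to $0$ pointwise in $s$ while being dominated by the constant $2\sup_{0\le\sigma\le T-\tau}\|\mathcal S_A(\sigma)\|_{\mathcal L(\mathcal E)}$, so dominated convergence against the finite measure $|\mu_c|$ finishes it.

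For the discrete part, $I_d(t)=\sum_{j:\,t_j<t}\mathcal S_A(t-t_j)\binom{0}{a_j}$ is a series converging uniformly in $t$ by the Weierstrass test and Lemma \ref{Lem2.con}; each summand is strongly continuous off $t_j$ and has both one-sided limits everywhere, with a single jump $\binom{0}{a_j}$ at $t_j$ (since $t_j\notin[\tau,t_j)$ while $\mathcal S_A(0)\binom{0}{a_j}=\binom{0}{a_j}$). Hence $I_d$ is a regulated $\mathcal E$-valued function: it has strong one-sided limits at every point, is strongly continuous on $[\tau,T]\setminus\supp\mu_d$, and jumps by exactly $\binom{0}{\mu_d(\{t_0\})}$ at each $t_0$. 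Assembling the three pieces gives the corollary: the one-sided limits $\xi_w(t\pm0)$ exist in the strong topology of $\mathcal E$ everywhere; the jumps of $\xi_w$ live entirely in the second component, so the first component $w$ has matching one-sided limits and is thus strongly continuous into $H^1$; and at any $t_0\notin\supp\mu_d$ there are no nearby atoms, whence $I_d$ (and a fortiori $\Dt w$) is strongly continuous there into $H$.

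The main obstacle is the left-limit analysis of $I_c$, where the upper endpoint $t$ appears simultaneously in the propagator $\mathcal S_A(t-s)$ and in the domain of integration: a naive estimate runs into $[\mathcal S_A(h)-\mathrm{Id}]I_c(t)$ with both the operator and the varying vector changing with $t$, which strong continuity alone does not control. I resolve this by the group identity together with dominated convergence against a genuinely uniform (constant) bound on $\|\mathcal S_A\|$ over the compact interval $[0,T-\tau]$, the non-atomicity of $\mu_c$ being exactly what kills the boundary contribution. The only other point needing care is that the jump of $I$ has vanishing first component, i.e. $S_A(0)=0$, which is precisely what upgrades the weak left-continuity of Remark \ref{Rem3.1} to genuine strong continuity of $w$ in $H^1$.
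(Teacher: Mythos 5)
Your argument is correct, but it is a genuinely different route from the one the paper takes. The paper's proof of this corollary is a one-liner riding on the machinery already set up in Theorem \ref{lwmf.thE!}: one writes $\xi_w=\xi_{P_Nw}+\xi_{Q_Nw}$, observes that the finite-dimensional component $\xi_{P_Nw}$ solves an ODE driven by $P_N\mu$ and hence is regulated and continuous off the atoms, and then uses Lemma \ref{bv.lem.|QNmu|to0} to conclude that $\|\xi_{Q_Nw}\|_{L^\infty(\tau,T;\mathcal E)}\to0$, so that $\xi_w$ is a \emph{uniform} limit of regulated functions and inherits all the stated regularity. You instead stay in the infinite-dimensional setting and extract everything from the Duhamel formula \eqref{lwmf.DuhM}: the decomposition $\mu=\mu_c+\mu_d$, the semigroup identity combined with strong continuity of $t\mapsto\mathcal S_A(t)v$ on fixed vectors, dominated convergence against the finite measure $|\mu_c|$ for the left limits, and the Weierstrass test for the atomic series. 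Your approach is more self-contained (it does not need the spectral truncation or Lemma \ref{bv.lem.|QNmu|to0}) and it makes the structure of the jumps explicit, in particular that $S_A(0)=0$ forces the jump into the second component, which is what upgrades $w$ to genuine strong continuity in $H^1$; the paper's approach is shorter because the hard uniform estimate was already paid for in the proof of the existence theorem, and it generalises immediately to any statement that is stable under uniform limits. One small remark: your closing phrase ``at any $t_0\notin\supp\mu_d$ there are no nearby atoms'' is not literally true, since the atoms may accumulate at a non-atomic point; but this does not matter, because the continuity of $I_d$ at such $t_0$ already follows from the uniform convergence of the series together with the continuity of each summand at $t_0$, exactly as you argued in the preceding sentence, so the slip is purely verbal.
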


Indeed, this follows immediately from the analogous statement for the finite-dimensional part $\xi_{P_Nw}(t)$ and from the uniform smallness of the function $\xi_{Q_Nw}(t)$ proved in the theorem.

\begin{cor}\label{Cor2.energy} Assume that, in addition, the measure $\mu$ is non-atomic ($\mu(\{t\})=0$ for all $t$). Then, the solution $\xi_w\in C(\tau,T;\mathcal E)$. Moreover, the energy equality holds:
\begin{equation}\label{2.ener}
\frac12\(\|\xi_w(t)\|_{\mathcal E}^2-\|\xi_w(s)\|_{\mathcal E}^2\)=-\gamma\int_s^t\|\Dt w(\kappa)\|^2_H\,d\kappa+\int_s^t(\Dt w(\kappa), \mu(d\kappa)),
\end{equation}
for all $[s,t]\subset[\tau,T]$ (Since for non-atomic measures the integrals over $[\tau,t)$ and $[\tau,t]$ coincide we write here and below $\int_\tau^t$ instead of $\int_{\kappa\in[\tau,t)}$).
\end{cor}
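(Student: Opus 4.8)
First, the continuity assertion is essentially already contained in Corollary \ref{lwmf.cor.cont}. Since $\mu$ is non-atomic its discrete part vanishes, so $\supp\mu_d=\emptyset$ and that corollary yields $w\in C(\tau,T;H^1)$ together with $\Dt w\in C([\tau,T];H)$; hence $\xi_w\in C(\tau,T;\E)$ in the strong topology. In particular all three terms in \eqref{2.ener} make sense: $\|\xi_w(\cdot)\|_\E$ and $\kappa\mapsto\|\Dt w(\kappa)\|_H^2$ are continuous on $[s,t]$, and the vector-measure integral $\int_s^t(\Dt w(\kappa),\mu(d\kappa)):=\int_{[s,t]}(\Dt w(\kappa),\rho_\mu(\kappa))\,|\mu|(d\kappa)$ is well-defined because $\Dt w$ is continuous, hence bounded, on $[s,t]$.

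For the energy equality itself the plan is to prove it first for the spectral truncations $w_N:=P_Nw$ used in the proof of Theorem \ref{lwmf.thE!} and then to let $N\to\infty$. Writing the $\E$-norm as $\|\xi_w\|_\E^2=\|A^{1/2}w\|_H^2+\|\Dt w\|_H^2$ (recall $A=1-\Dx$, so $\|A^{1/2}w\|_H^2=\|w\|_{H^1}^2$), I would work in the finite-dimensional space $P_NH$, where $w_N$ solves $\Dt^2w_N+\gamma\Dt w_N+Aw_N=P_N\mu$. Here $w_N$ is Lipschitz, so $A^{1/2}w_N$ is absolutely continuous with $\tfrac{d}{d\kappa}\tfrac12\|A^{1/2}w_N\|_H^2=(Aw_N,\Dt w_N)$, while $\Dt w_N=P_N\Dt w$ is continuous (by the first step) and of bounded variation, its distributional derivative $\Dt^2 w_N=(-\gamma\Dt w_N-Aw_N)\,d\kappa+P_N\mu$ being a finite measure. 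The decisive point is that, $\mu$ being non-atomic, the continuous $BV$ function $\Dt w_N$ has no jumps, so the Stieltjes chain rule \eqref{bv.ByParts} applies with no jump-correction terms and gives $d\(\tfrac12\|\Dt w_N\|_H^2\)=(\Dt w_N,\Dt^2 w_N)$. Substituting the equation, the two occurrences of $(Aw_N,\Dt w_N)$ cancel and integration over $[s,t]$ produces the truncated identity
$$\tfrac12\(\|\xi_{w_N}(t)\|_\E^2-\|\xi_{w_N}(s)\|_\E^2\)=-\gamma\int_s^t\|\Dt w_N(\kappa)\|_H^2\,d\kappa+\int_s^t(\Dt w_N(\kappa),P_N\mu(d\kappa)).$$

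It then remains to pass to the limit $N\to\infty$ termwise. Since $\xi_w(t)\in\E$ for every $t$ and $P_N$ converges strongly to the identity on $\E$, one has $\xi_{w_N}(t)=P_N\xi_w(t)\to\xi_w(t)$ strongly in $\E$, so the left-hand side converges; the damping term converges by dominated (indeed monotone) convergence, as $\|\Dt w_N(\kappa)\|_H\le\|\Dt w(\kappa)\|_H$ is bounded on $[s,t]$. For the measure term I would rewrite $\int_s^t(\Dt w_N(\kappa),P_N\mu(d\kappa))=\int_{[s,t]}(\Dt w(\kappa),P_N\rho_\mu(\kappa))\,|\mu|(d\kappa)$ and use that $P_N\rho_\mu(\kappa)\to\rho_\mu(\kappa)$ in $H$ for $|\mu|$-a.e. $\kappa$ with $\|P_N\rho_\mu\|_H\le1$; dominated convergence with respect to the finite measure $|\mu|$, dominated by the $|\mu|$-integrable function $\|\Dt w(\kappa)\|_H$, then yields convergence to $\int_s^t(\Dt w(\kappa),\mu(d\kappa))$. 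I expect this last interchange of the spectral projection with the vector-valued measure integral to be the main technical obstacle, since the remaining steps are merely the classical "multiply by $\Dt w$" computation made rigorous in finite dimensions, where the non-atomicity of $\mu$ is exactly what removes the jump terms in the chain rule.
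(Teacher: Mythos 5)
Your proposal is correct and follows essentially the same route as the paper: the continuity claim is read off from Corollary \ref{lwmf.cor.cont} with $\supp\mu_d=\emptyset$, and the energy identity is proved first for the spectral truncations $w_N=P_Nw$ (where it is a finite-dimensional computation, legitimate precisely because non-atomicity makes $\Dt w_N$ a continuous BV function with no jump corrections) and then passed to the limit $N\to\infty$. The paper justifies the limit via the uniform smallness of $\xi_{Q_Nw}$ from Lemma \ref{bv.lem.|QNmu|to0} while you use dominated convergence termwise (with $P_N\rho_\mu\to\rho_\mu$ $|\mu|$-a.e.), but these are interchangeable details of the same argument.
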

Indeed, as usual, identity \eqref{2.ener} is proved first for the finite-dimensional function $\xi_{w_N}(t)$, where it is standard since the function $\xi_{w_N}(t)$ is continuous in time and therefore can be approximated by smooth functions. Then, passing to the limit $N\to\infty$, one gets the desired energy equality for the infinite-dimensional case as well (using the fact that $\xi_{Q_Nw}$ is uniformly small).
\par
\begin{rem} Identity \eqref{2.ener} can be rewritten in the following way:
\begin{equation}\label{2.eener}
\frac d{dt}\(\frac12\|\xi_w(t)\|^2_{\mathcal E}-\int_\tau^t(\Dt w(s),\mu(ds))\)=-\gamma\|\Dt w(t)\|^2_H.
\end{equation}
In particular, the function $\frac12\|\xi_w(t)\|^2_{\mathcal E}-\int_\tau^t(\Dt w(s),\mu(ds))$ is absolutely continuous in time. However, the energy $\|\xi_w(t)\|^2_{\mathcal E}$ is not absolutely continuous since the singular part of the measure $\mu$ is not assumed to vanish.
\par
The analogue of this formula can be written in the general case, where the discrete part of the measure $\mu$ does not vanish. However, in this case, one should be careful with the integral $\int_\tau^t(\Dt w(\kappa),\mu_d(d\kappa))$ since the function $\Dt w$ has jumps {\it exactly} at the points where $\Phi_\mu(t)$ is discontinuous.
 Moreover, since according to \eqref{eq-sol}, the function $\Dt w-\mu([\tau,t))$ is continuous, the only problematic term is
$\int_{[\tau,t)}(\mu_d([\tau,s)),\mu_d(ds))$. This integral makes sense as a Lebesgue-Stiltjes integral. But the value of the integral thus defined is {\it inconsistent} with the energy identity. Indeed, in our case $\Dt w$ is left-semicontinuous at jump points $t=t_j$ and therefore
$$
\int_{[\tau,t)}(\Dt w(s),\mu_d(ds)))=\sum_{j}\(\Dt w(t_j-0),\mu(\{t_j\}\)_H.
$$
However, arguing in a bit more accurate way (e.g., approximating $\mu_d$ by smooth functions or comparing the values of the energy functional before and after a jump), we see that the correct formula must be
$$
\int_{[\tau,t)}(\Dt w(t),\mu_d(ds)):=\sum_{j}\(\frac12(\Dt w(t_j+0)+\Dt w(t_j-0)),\mu(\{t_j\})\)_H
$$
which corresponds to the choice $\Dt w(t_j):=\frac{\Dt w(t_j+0)+\Dt w(t_j-0)}2$ (see also \cite{Hew}). This gives the following natural interpretation of the problematic integral:
$$
\int_{[\tau,t)}(\Dt w(s),\mu(ds)):=\int_{[\tau,t)}(\Dt w(s)-\mu([\tau,s)),\mu(ds))+\frac12\|\mu([\tau,t))\|_H^2
$$
which is consistent with the energy equality.
 We will return to this in the forthcoming paper.
\end{rem}

We conclude this Section by establishing the Strichartz type estimates for the measure driven wave equation using the approximations of the measure by absolutely continuous ones.

\begin{theorem}\label{lwmf.th.str}
Let  $\gamma\ge0$, the initial data $\xi_\tau\in\E$ and the external force $\mu\in M(\tau,T;H)$. Then the energy solution $w$ to problem \eqref{lwmf.pr} obeys the estimate
\begin{equation}\label{2.str}
\|w\|_{L^4(\tau,T;L^{12})}\leq C\left(\|\xi_\tau\|_\E+\|\mu\|_{M(\tau,T;H)}\right),
\end{equation}
where the constant $C$ depends on $\gamma$ and $T-\tau$ but  is independent of $\xi_\tau$ and $\mu$.
\end{theorem}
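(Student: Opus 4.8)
The plan is to prove \eqref{2.str} first for absolutely continuous measures, where it reduces to the linear Strichartz theory of Section~\ref{s.p}, and then to pass to a general $\mu\in M(\tau,T;H)$ by approximation, exploiting the fact that the energy and Strichartz bounds depend on $\mu$ only through its total variation.

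\emph{Step 1 (absolutely continuous case).} Suppose $\mu(dt)=g(t)\,dt$ with $g\in L^1(\tau,T;H)$. By Corollary~\ref{lwmf.cor.cont} the energy solution then satisfies $\xi_w\in C(\tau,T;\E)$, and we may rewrite \eqref{lwmf.pr} as the undamped equation \eqref{eq.lw} with right-hand side $\tilde g:=g-\gamma\Dt w-w\in L^1(\tau,T;H)$. Applying the linear Strichartz estimate \eqref{est.strlw.R} and then bounding $\|\tilde g\|_{L^1(\tau,T;H)}$ by $\|g\|_{L^1(\tau,T;H)}+(T-\tau)(\gamma+1)\|\xi_w\|_{L^\infty(\tau,T;\E)}$, I control the last factor through the energy estimate \eqref{lwmf.E-est}, namely $\|\xi_w\|_{L^\infty(\tau,T;\E)}\le C(\|\xi_\tau\|_\E+\|g\|_{L^1(\tau,T;H)})$. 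Collecting the constants (which absorb the dependence on $T-\tau$ and $\gamma$) yields \eqref{2.str} with $\|\mu\|_{M(\tau,T;H)}=\|g\|_{L^1(\tau,T;H)}$.

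\emph{Step 2 (approximation).} For a general $\mu$, extend it by zero to a slightly larger interval and mollify in time, $\mu_n(dt)=g_n(t)\,dt$ with $g_n=\mu*\rho_n$, $\rho_n$ a standard mollifier. Then $g_n\in L^1(\tau,T;H)$ with $\|g_n\|_{L^1(\tau,T;H)}\le\|\mu\|_{M(\tau,T;H)}$, and $\mu_n\to\mu$ in the weak-$*$ sense, i.e. $\int\varphi\,d\mu_n\to\int\varphi\,d\mu$ for every continuous $H$-valued test function $\varphi$. Let $w_n$ be the energy solution driven by $\mu_n$. By Step~1 the sequence $\|w_n\|_{L^4(\tau,T;L^{12})}$ is bounded uniformly in $n$ by the right-hand side of \eqref{2.str}.

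\emph{Step 3 (passage to the limit).} Since $L^4(\tau,T;L^{12})$ is reflexive, a subsequence of $w_n$ converges weakly to some $\bar w$ in that space, and the norm is weakly lower semicontinuous, so $\|\bar w\|_{L^4(\tau,T;L^{12})}\le\liminf_n\|w_n\|_{L^4(\tau,T;L^{12})}$. It remains to identify $\bar w$ with the energy solution $w$ of \eqref{lwmf.pr}. For this I read off from the Duhamel formula \eqref{lwmf.DuhM} that, for any fixed $t$ and any $\ell\in\E^*$, writing the second column of $\mathcal S_A(t-s)$ as a bounded operator $H\to\E$ whose adjoint applied to $\ell$ produces a continuous $H$-valued function $\phi_\ell(s)$ on $[\tau,t]$, the pairing $\langle\ell,\xi_{w_n}(t)\rangle$ equals $\langle\ell,\mathcal S_A(t)\xi_\tau\rangle+\int_\tau^t(\phi_\ell(s),g_n(s))\,ds$; by weak-$*$ convergence this tends to the analogous expression written with $\mu$. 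Discarding the at most countably many $t$ at which $\mu$ has an atom, I obtain $\xi_{w_n}(t)\rightharpoonup\xi_w(t)$ weakly in $\E$ for a.e.\ $t$, hence $w_n\to w$ in the sense of distributions. Comparing the two weak limits forces $\bar w=w$, and therefore $w\in L^4(\tau,T;L^{12})$ with the desired bound. The main obstacle is exactly this identification: two points require care, namely the evaluation of the Duhamel integral at the endpoint $s=t$, which contributes the atom $\mu(\{t\})$ and is harmless only for a.e.\ $t$, and the fact that the $L^{12}$ spatial integrability of the Strichartz norm is \emph{not} controlled by the $H^1$ bound supplied by the energy estimate, so the weak $L^4(\tau,T;L^{12})$ compactness genuinely comes from Step~1 rather than from \eqref{lwmf.E-est}, and the two distinct weak limits (in $L^4(\tau,T;L^{12})$ and pointwise in $\E$) must be reconciled as distributions.
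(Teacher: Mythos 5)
Your proposal is correct and follows essentially the same route as the paper: approximate $\mu$ by absolutely continuous measures of no larger total variation, obtain a uniform $L^4(\tau,T;L^{12})$ bound from the linear Strichartz estimate combined with the energy estimate, and conclude by weak lower semicontinuity of the norm. The only real divergence is in identifying the weak limit with the energy solution: the paper uses its specially constructed one-sided mollification of the distribution function $\Phi_\mu$, which yields $\mu_n([\tau,t))\to\mu([\tau,t))$ for \emph{every} $t$ and allows a direct passage to the limit in the integrated equation \eqref{eq-sol}, whereas you pair the Duhamel formula \eqref{lwmf.DuhM} against $\E^*$ and, correctly, discard the countably many atoms of $\mu$ --- a step that closes precisely because your approximants are mollifications of the fixed measure $\mu$, so their mass near a non-atomic point $t$ is controlled by that of $\mu$ and the endpoint contribution vanishes.
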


\begin{proof}
Let $\Phi_\mu(t)$ be the distribution function of $\mu$ given by \eqref{bv.Fmu}. Let $\Phi_n(t)$ denote the smooth approximations of $\Phi_\mu$ constructed  as in Proposition \ref{prop.[L1]*=M} and let us consider the following approximation sequence $w_n$
\begin{equation}\label{2.app}\begin{cases}
\Dt^2 w_n+\gamma\Dt w-\Dx w_n+\gamma\Dt w_n=\Phi'_n(t),\ t\in[\tau,T],\\
\xi_{w_n}|_{t=\tau}=\xi_\tau.
\end{cases}
\end{equation}
We note that by construction (see Proposition \ref{prop.[L1]*=M}) we have
\begin{align}\label{lwmf.MuntoMu}
&\mu_{\Phi_n}\to \mu\  \text{ as }n \to\infty\ \mbox{wealky-star in }M(\tau,T;H),\\
\label{lwmf.Phi'n<|mu|}
&\|\Phi'_n\|_{L^1(\tau,T;H)}\leq \|\mu\|_{M(\tau,T;H)}.
\end{align}
and, in addition, $\mu_{\Phi_n}([\tau,t))\to\mu([\tau,t))$ for all $t\in[\tau,T]$, see Remark \ref{Cor9.conv}.
Using the standard energy estimate and \eqref{lwmf.Phi'n<|mu|} we see that
\begin{equation}
\|\xi_{w_n}\|_{L^\infty(\tau,T;\E)}\leq C\left(\|\xi_\tau\|_\E+\|\Phi'_n\|_{L^1(\tau,T;H)})\leq C(\|\xi_\tau\|_\E+\|\mu\|_{M(\tau,T;H)}\right),\ \forall n\in\N.
\end{equation}
The last estimate together with \eqref{lwmf.MuntoMu} implies that $\xi_{w_n}$ converges to some $\xi_{\bar w}\in L^\infty(\tau,T;\E)$ as $n$ goes to infinity weakly-star in $L^\infty(\tau,T;\E)$. We need to show that $\bar w$ is an energy solution for
problem \eqref{lwmf.pr}. Indeed, arguing in a standard way, we see that $w_n\to \bar w$ strongly in $C(\tau,T;H)$ and, therefore, $\bar w$ is weakly continuous in $H^1$ and $\bar w(\tau)=w_\tau$.
\par
To verify that $\bar w$ is an energy solution, it is enough to pass to point-wise limit at
\begin{equation}\label{eq-soln}
\Dt w_n(t)=-\int_\tau^t(-\Dx+1)w_n(s)\,ds-\gamma w_n(t)+\mu_{\Phi_n}([\tau,t))+w_\tau'+\gamma w_\tau,\ t\in[\tau,T]
\end{equation}
and get \eqref{eq-sol}. Thus, $\bar w$ is an energy solution of \eqref{lwmf.pr} and, by the uniqueness, $\bar w=w$.
\par
To obtain the desired Strichartz estimate, we apply Theorem \ref{th.linstr} to equation \eqref{2.app} and get
\begin{equation}
\|w_n\|_{L^4(\tau,T;L^{12})}\le\left(\|\xi_\tau\|_\E+\|\Phi'_n\|_{L^1(\tau,T;H)}\right)\le C\left(\|\xi_\tau\|_\E+\|\mu\|_{M(\tau,T;H)}\right),\ \forall n\in\N.
\end{equation}
The last estimate allows us to assume without loss of generality that $w_n$ converges to $w$ as $n\to\infty$  weakly in $L^4(\tau,T;L^{12})$. Weak lower semicontinuity of the norm implies the desired estimate \eqref{2.str} and finishes the proof of the theorem.
\end{proof}
\begin{rem} Since the energy estimate gives us the control of the norm of $w$ in $L^\infty(\tau,T;L^6)$, we can replace the $L^4(L^{12})$-norm in the left-hand side of \eqref{2.str} by any intermediate Strichartz norm, for instance, by the $L^5(L^{10})$ norm.
\end{rem}

\section{The quintic wave equation: well-posedness and dissipativity in the energy norm}\label{s.new1}
In this Section, we  discuss the properties of solutions for our main object of study - the damped quintic wave equation:
\begin{equation}\label{4.qwmu}
\Dt^2u+\gamma\Dt u+(1-\Dx) u+f(u)=\mu,\ \ \xi_u\big|_{t=\tau}=\xi_\tau
\end{equation}
on the 3D torus $\Omega=\mathbb T^3$. Since the results presented below are either well-known or straightforward adaptations of well-known results to the case of measure external forces, we restrict ourselves by giving only the brief exposition (more details can be found in \cite{CV,KSZ,plan1,plan2}).
\par
 We assume that $\xi_\tau\in\mathcal E$, $\mu\in M_b(\R,H)$ and the non-linearity $f\in C^2(\R)$ has the following structure:
\begin{equation}\label{4.f}
f(u)=u^5+h(u),\ \ |h''(u)|\le C(1+|u|^{q}),\ \ q<3,\ \ h(0)=0.
\end{equation}
We start our exposition by giving the analogue of Definition \ref{def.sol.lwm} of  a weak solution for the non-linear case.
\begin{Def}\label{Def4.solen} A function $u(t)$ such that  $\xi_u(t)\in L^\infty(\tau,T;\E)$ (where $\xi_u(t):=\{u(t),\Dt u(t)\}$) is an  \emph{energy} solution of problem \eqref{4.qwmu} on $[\tau,T]$ if
\par
1) It satisfies the equation in the sense of distributions, i.e.,
 for any test function $\phi\in C^\infty_0((\tau,T)\times \Omega)$, the following equality holds
\begin{multline}\label{lwmf.w.sol1}
-\int_\tau^T(\Dt u(t),\Dt \phi(t))\,dt+\int_\tau^T(\nabla u(t),\nabla \phi(t))\,dt+\int_\tau^T(u(t),\phi(t))\,dt+\\+
\gamma\int_\tau^T(\Dt w(t),\phi(t))\,dt+\int_\tau^T(f(u(t)),\phi(t))\,dt=\int_\tau^T(\phi(t),\mu(dt)).
\end{multline}
\par
2) It is left-weakly semicontinuous at every point $t\in[\tau,T]$ as $\mathcal E$-valued function.
\par
3) The initial conditions are satisfied in the following sense:
$$
\xi_u(\tau)=\{u_\tau,u'_\tau\},\ \ \xi_u(\tau+0):=\operatorname{w-lim}_{t\to\tau+}\xi_u(t)=\Big\{u_\tau,u'_\tau+\mu(\{\tau\})\Big\}.
$$
\end{Def}
Analogously to the linear case (see Remarks \ref{Rem3.1} and \ref{Rem3.2}), we may conclude that
$$
\Dt u(t+0)-\Dt u(t)=\mu(\{t\}),\ \ t\in[\tau,T]
$$
and, in particular, the difference $\xi_{u_1-u_2}(t)$ between two energy solutions of \eqref{4.qwmu} (corresponding to the same $\mu$) belongs to $C_w(\tau,T,\E)$. In addition, exactly as in the linear case, equality \eqref{lwmf.w.sol1} is equivalent to
\begin{equation}\label{4.intdef}
\Dt u(t)=-\int_\tau^t(-\Dx+1)u(s)+f(u(s))\,ds-\gamma u(t)+\mu([\tau,t))+u_\tau'+\gamma u_\tau,\ t\in[\tau,T].
\end{equation}
 The presence of the non-linear term $f(u)$ does not make any difference here since due to the embedding theorem $H^1\subset L^6$, $f(u)\in L^\infty(\tau,T;H^{-1})$.
\par
The next standard theorem gives the solvability of equation \eqref{4.qwmu} in the class of energy solutions.
\begin{theorem}\label{th.ex1} Let $\xi_\tau\in\mathcal E$, $\mu\in M_b(\R,H)$ and the non-linearity $f$ satisfies \eqref{4.f}. Then, for every $T>\tau$, there exists at least one energy solution $u(t)$ in the sense of the above definition which satisfies the following estimate:
\begin{equation}\label{est.disen}
\|\xi_u(t)\|_\E\leq e^{-\beta(t-\tau)}Q(\|\xi_\tau\|_\E)+Q(\|\mu\|_{M_b(\tau,T;H)}),\ t\geq\tau,
\end{equation}
where monotone increasing function $Q$, constant $\beta>0$ are independent of $\tau\in\R$, $T$, $\mu$ and $\xi_\tau\in\E$.
\end{theorem}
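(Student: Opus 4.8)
The plan is to construct the energy solution of \eqref{4.qwmu} as a limit of solutions of regularized problems in which the measure $\mu$ is replaced by an absolutely continuous approximation. This isolates the only genuinely new difficulty (the presence of the measure) into the already established linear theory, while the passage to the limit in the quintic nonlinearity is handled by the classical compactness argument.

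First I would approximate the measure exactly as in the proof of Theorem \ref{lwmf.th.str}: let $\Phi'_n\in L^1(\tau,T;H)$ be the smooth approximations produced by Proposition \ref{prop.[L1]*=M}, so that $\mu_{\Phi_n}\to\mu$ weakly-star in $M(\tau,T;H)$, with $\|\Phi'_n\|_{L^1(\tau,T;H)}\le\|\mu\|_{M(\tau,T;H)}$ and $\mu_{\Phi_n}([\tau,t))\to\mu([\tau,t))$ for every $t$. For each fixed $n$ the right-hand side is a regular ($L^1$-in-time) external force, so the existence of an energy solution $u_n$ of \eqref{4.qwmu} with force $\Phi'_n$ is the classical result for the quintic wave equation (Galerkin approximation together with the compactness argument described below; see \cite{CV,KSZ,plan1,plan2}). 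Since $\Phi'_n$ is absolutely continuous in time, $\Dt u_n$ is continuous and $u_n$ satisfies the usual energy identity; multiplying the equation by $\Dt u_n+\beta u_n$ for small $\beta>0$ and arguing in the standard dissipative way (using that, by \eqref{4.f}, the potential $F$ is bounded below and coercive modulo strictly lower-order terms) yields the dissipative estimate \eqref{est.disen} with $\|\mu\|_{M_b(\tau,T;H)}$ replaced by $\|\Phi'_n\|_{L^1(\tau,T;H)}\le\|\mu\|_{M_b(\tau,T;H)}$. These bounds are uniform in $n$.

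Next I would pass to the limit $n\to\infty$. The uniform bound gives, along a subsequence, $\xi_{u_n}\rightharpoonup\xi_u$ weakly-star in $L^\infty(\tau,T;\E)$; in particular $u_n$ is bounded in $L^\infty(\tau,T;H^1)$ and $\Dt u_n$ in $L^\infty(\tau,T;H)$, so the $u_n$ are uniformly Lipschitz in time with values in $H$. Interpolating this equicontinuity with the uniform $H^1$ bound and using the compact embedding $H^1\subset\subset H^{1-\eb}$, I obtain strong convergence $u_n\to u$ in $C(\tau,T;H^{1-\eb})$ for small $\eb>0$, hence strong convergence in $L^5((\tau,T)\times\Omega)$ and, along a further subsequence, almost everywhere. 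Combined with the uniform bound on $u_n^5$ in $L^\infty(\tau,T;L^{6/5})$, this yields $u_n^5\rightharpoonup u^5$ weakly in $L^{6/5}$ over space-time; the sub-quintic term $h(u_n)$ is handled identically, being of strictly lower order by \eqref{4.f}. This is enough to pass to the limit in each term of the integral identity \eqref{4.intdef} written for $u_n$; using in addition the pointwise convergence $\mu_{\Phi_n}([\tau,t))\to\mu([\tau,t))$, I recover \eqref{4.intdef} for $u$, which (by the equivalence of \eqref{4.intdef} and \eqref{lwmf.w.sol1} noted in the text) shows that $u$ is an energy solution with the correct jump relation and initial data. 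Finally, estimate \eqref{est.disen} for $u$ follows from that for $u_n$ by weak lower semicontinuity of the $\E$-norm together with the left-weak semicontinuity of $\xi_u$.

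The main obstacle is the passage to the limit in the critical quintic term $u_n^5$: one must upgrade the weak-star energy bound to strong convergence of $u_n$ in $L^5$ of space-time, which requires combining the spatial compactness coming from the uniform $H^1$ bound with the time-equicontinuity coming from the uniform $L^\infty(\tau,T;H)$ bound on $\Dt u_n$. Once this strong convergence is in hand, the identification of the weak limit of $u_n^5$ with $u^5$ is routine, and all remaining steps are direct consequences of the established linear measure theory (Theorems \ref{lwmf.thE!} and \ref{lwmf.th.str}) and the standard dissipative energy estimate.
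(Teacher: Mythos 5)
Your proposal is correct and follows essentially the same route as the paper: establish the regular case by Galerkin plus the multiplier $\Dt u+\beta u$, approximate the singular measure by the regular ones from Proposition \ref{prop.[L1]*=M}, obtain strong convergence of $u_n$ from the uniform $L^\infty(\tau,T;H^1)\cap W^{1,\infty}(\tau,T;H)$ bound, identify the weak $L^{6/5}$ limit of the nonlinearity via a.e.\ convergence, and recover the left-semicontinuous representative and initial data by passing to the pointwise limit in \eqref{4.intdef}. The only cosmetic differences are that you phrase the compactness in $C(\tau,T;H^{1-\eb})$ rather than $C(\tau,T;H)$ and treat $u_n^5$ and $h(u_n)$ separately rather than $f(u_n)$ as a whole.
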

\begin{proof} Indeed, let us start with the case where the measure $\mu$ is regular, i.e., $\mu\in L^1(\tau,T;H)$. In this case, the assertion of the theorem is standard: the existence of a solution is obtained, e.g., using the Galerkin approximations, the uniform estimate energy estimate for Galerkin approximations can be deduced just by multiplying the equation by $\Dt u+\beta u$ for some positive $\beta$, and the validity of the energy estimate for the solution of \eqref{4.qwmu} is then established by passing to the limit in the Galerkin approximations, see \cite{CV} and references therein for the details.
\par
Let us now consider the general case where the measure $\mu$ may be singular. In this case, we approximate $\mu$ by regular measures $\mu_n$ using the special approximations constructed in Proposition \ref{prop.[L1]*=M} (see also Remark \ref{Cor9.conv}). Namely, the sequence $\mu_n$ is uniformly bounded in $L^1(\tau,T;L^2)$, weakly star convergent to the measure $\mu$ in $M(\tau,T;H)$ and  $\mu_n([\tau,t))$ converge to $\mu([\tau,t))$ for every $t\in[\tau,T]$. Let $u_n(t)$ be an energy solution of \eqref{4.qwmu} where $\mu$ is replaced by $\mu_n$. Then, due to the uniform energy estimate, we may assume without loss of generality that
$$
\xi_{u_n}\to \xi_{u}\ \text{weakly star in } L^\infty(\tau,T;\E).
$$
Due to the compactness of the embedding
$$
L^\infty(\tau,T;H^1)\cap W^{1,\infty}(\tau,T;H)\subset C(\tau,T;H),
$$
we conclude that $u_n\to u$ strongly in $C(\tau,T;H)$ and, therefore, almost everywhere in $[\tau,T]\times\Omega$. Moreover, since $f(u_n)$ is uniformly bounded in $L^{6/5}((\tau,T)\times\Omega)$, the convergence almost everywhere implies that
$$
f(u_n)\to f(u) \ \text{weakly in}\ L^{6/5}((\tau,T)\times\Omega).
$$
The established convergence allows us to pass to the limit $n\to\infty$ in the equations \ref{lwmf.w.sol1} and establish that the limit function $u$ solves equation \eqref{4.qwmu} in the sense of distributions.
\par
Finally, in order to verify the left-semicontinuity (find the proper representative in the class of equivalence), it is sufficient to pass to the point-wise limit in equation \eqref{4.intdef} for solutions $u_n$ and the theorem is proved.
\end{proof}
The existence of weak energy solutions can be proved analogously not only for quintic non-linearities. The only difference is that the energy space should be properly corrected. Namely, if the non-linearity grows as $u|u|^q$ where $q>5$, one should take $\E:=[H^1_0\cap L^{q+2}]\cap L^2$ as the energy space, see \cite{CV} for details. However, to the best of our knowledge, the uniqueness of such solution is known only if $q\le2$. Moreover, for the quintic case $q=4$, we do not know also whether or not any energy solution satisfies the energy estimate \eqref{est.disen}. In order to overcome this problem, we introduce (following \cite{BG1999,SS1,SS}) the so-called Shatah-Struwe (SS) solutions and utilize the Strichartz estimates.

\begin{Def}\label{Def4.SS} An energy solution $u(t)$ is a Shatah-Struwe solution of problem \eqref{4.qwmu} if, in addition,
\begin{equation}\label{4.strass}
u\in L^4(\tau,T;L^{12}(\Omega)).
\end{equation}
We note that, due to Theorem \ref{lwmf.th.str} and the fact that
$$
\|f(u)\|_{L^1(\tau,T;H)}\le C(1+\|u\|^4_{L^4(\tau,T;L^{12})})\|u\|_{L^\infty(\tau,T;H^1)},
$$
we see that, for any Shatah-Struwe solution $u$, $f(u)\in L^1(\tau,T;H)$.
\end{Def}
The next theorem establishes the uniqueness of such solutions.
\begin{theorem}\label{Th4.uni} Let $u_1$ and $u_2$ be two Shatah-Struwe solutions of problem \eqref{4.qwmu} which correspond to different initial data and the same $\mu\in M(\tau,T,H)$. Then, the following estimate holds:
\begin{equation}\label{4.uni}
\|\xi_{u_1}(t)-\xi_{u_2}(t)\|_{\E}\le e^{C\int_\tau^t(1+\|u_1(s)\|^4_{L^{12}}+\|u_2(s)\|^4_{L^{12}})\,ds}\|\xi_{u_1}(0)-\xi_{u_2}(0)\|_{\E},
\end{equation}
where $\tau\le t\le T$ and the constant $C$ is independent on $u_1$ and $u_2$. In particular, the Shatah-Struwe solution is unique.
\end{theorem}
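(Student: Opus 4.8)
The plan is to work with the difference $w:=u_1-u_2$ and to exploit the crucial fact that the (possibly singular) external force $\mu$ is common to both solutions and therefore \emph{cancels} upon subtraction. Subtracting the two copies of \eqref{4.intdef} shows that $w$ is an energy solution of the damped \emph{linear} wave equation
\begin{equation*}
\Dt^2 w+\gamma\Dt w+(1-\Dx)w=g(t),\qquad g(t):=-(f(u_1(t))-f(u_2(t))),
\end{equation*}
with no remaining contribution from $\mu$. Since both $u_1,u_2$ are Shatah--Struwe solutions, Definition \ref{Def4.SS} guarantees $f(u_i)\in L^1(\tau,T;H)$, so $g\in L^1(\tau,T;H)$ and the forcing of the equation for $w$ is a genuinely \emph{non-atomic} (indeed absolutely continuous) measure. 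Consequently the jumps of $\Dt u_1$ and $\Dt u_2$ coincide and cancel, $\xi_w\in C(\tau,T;\E)$, and Corollary \ref{Cor2.energy} applies to $w$, yielding the energy equality
\begin{equation*}
\tfrac12\|\xi_w(t)\|_\E^2-\tfrac12\|\xi_w(\tau)\|_\E^2=-\gamma\int_\tau^t\|\Dt w(s)\|_H^2\,ds+\int_\tau^t(\Dt w(s),g(s))\,ds .
\end{equation*}

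Next I would estimate the nonlinear coupling term $(\Dt w,g)=-(f(u_1)-f(u_2),\Dt w)$. From the structure \eqref{4.f} of $f$ one has $|f'(v)|\le C(1+|v|^4)$, whence by the mean value theorem $|f(u_1)-f(u_2)|\le C(1+|u_1|^4+|u_2|^4)|w|$ pointwise. Applying H\"older's inequality with the exponents $3,6,2$ (so that $|u_i|^4\in L^3$ exactly because $u_i\in L^{12}$, that $w\in L^6$ via the embedding $H^1\subset L^6$, and that $\Dt w\in L^2$) together with $\|w\|_{L^6}\le C\|w\|_{H^1}$ gives
\begin{equation*}
|(f(u_1)-f(u_2),\Dt w)|\le C\bigl(1+\|u_1\|_{L^{12}}^4+\|u_2\|_{L^{12}}^4\bigr)\|\xi_w\|_\E^2 .
\end{equation*}

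Combining this with the energy equality and discarding the favourable damping term $-\gamma\int_\tau^t\|\Dt w\|_H^2\le0$, I obtain the integral (differential) inequality
\begin{equation*}
\|\xi_w(t)\|_\E^2\le\|\xi_w(\tau)\|_\E^2+C\int_\tau^t\bigl(1+\|u_1(s)\|_{L^{12}}^4+\|u_2(s)\|_{L^{12}}^4\bigr)\|\xi_w(s)\|_\E^2\,ds ,
\end{equation*}
legitimate since the right-hand side of the energy equality is absolutely continuous in $t$ (the integrand is in $L^1$ because $\Dt w\in L^\infty(\tau,T;H)$ and $g\in L^1(\tau,T;H)$). Gronwall's lemma then produces exactly \eqref{4.uni}, and setting $\xi_{u_1}(\tau)=\xi_{u_2}(\tau)$ forces $w\equiv0$, i.e. uniqueness. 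The only genuinely delicate point is the first step: one must verify carefully that the cancellation of $\mu$ turns $w$ into a \emph{continuous} energy trajectory to which the energy equality of Corollary \ref{Cor2.energy} legitimately applies; once this regularisation is in place, the H\"older bound and Gronwall argument are routine, and the time integrability of $\|u_i\|_{L^{12}}^4$ is guaranteed precisely by the Shatah--Struwe condition \eqref{4.strass}.
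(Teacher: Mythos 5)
Your proposal is correct and follows essentially the same route as the paper: form the difference $w=u_1-u_2$, observe that the measure $\mu$ cancels so that the jumps of $\Dt u_1$ and $\Dt u_2$ coincide and $\xi_w$ is continuous, justify the energy identity for the resulting linear equation with $L^1(\tau,T;H)$ forcing $f(u_1)-f(u_2)$, bound the nonlinear term via $|f'(u)|\le C(1+|u|^4)$, H\"older with $L^3\cdot L^6\cdot L^2$ and $H^1\subset L^6$, and conclude by Gronwall. The only cosmetic difference is that you invoke Corollary \ref{Cor2.energy} explicitly to justify the energy equality where the paper simply asserts that multiplication by $\Dt w$ "can be justified in a standard way"; this is a slightly more careful packaging of the same argument.
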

\begin{proof}Indeed, let $v(t)=u_1(t)-u_2(t)$. Then, the function $\xi_v(t)$ is weakly-continuous in $\E$ since $\Dt u_1$ and $\Dt u_2$ have the same jumps determined by the discrete part $\mu_d$. This function solves the equation
$$
\Dt^2v+\gamma\Dt v+(1-\Dx) v+v+[f(u_1)-f(u_2)]=0.
$$
Since $f(u_1)-f(u_2)\in L^1([\tau,T],L^2)$ and $\Dt v\in L^\infty([\tau,T],L^2)$, multiplication on $\Dt v$ can be justified in a standard way and gives
$$
\frac12\frac d{dt}\|\xi_v\|^2_{\E}=-(f(u_1)-f(u_2),\Dt v).
$$
Moreover, using again the fact that $|f'(u)|\le C(1+|u|^4)$, the H\"older inequality and the embedding $H^1\subset L^6$, we get
$$
|(f(u_1)-f(u_2),\Dt v)|\le C(1+|u_1|^4+|u_2|^4)|v|,|\Dt v|)\le C(1+\|u_1\|_{L^{12}}^4+\|u_1\|_{L^{12}}^4)\|v\|_{H^1}\|\Dt v\|_{L^2}
$$
and the Gronwall inequality finishes the proof of the theorem.
\end{proof}
The next corollary is crucial for our proof of asymptotic compactness.
\begin{cor}\label{Cor4.energy} Let the assumptions of Theorem \ref{th.ex1} and let, in addition, the measure $\mu$ be non-atomic (i.e., $\mu(\{t\})=0$ for all $t$). Then, for every Shatah-Struwe solution $u$, the energy functional  $t\to \frac12\|\xi_u(t)\|^2_{\E}+(F(u(t)),1)$ is a continuous BV function of time  and the following energy equality holds for all $\tau\le s\le t\le T$:
\begin{equation}\label{4.energy}
\frac12\|\xi_u(t)\|^2_{\E}+(F(u(t)),1)-\frac12\|\xi_u(s)\|^2_{\E}-(F(u(s)),1)+\gamma\int_s^t\|\Dt u(\kappa)\|^2_{L^2}\,d\kappa=\int_s^t(\Dt u(\kappa),\mu(d\kappa)).
\end{equation}
In particular, $\xi_u\in C(\tau,T;\E)$.
\end{cor}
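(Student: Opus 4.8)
The plan is to treat the quintic term as part of the external force and thereby reduce the statement to the linear energy identity of Corollary \ref{Cor2.energy}. Since $u$ is a Shatah--Struwe solution, the estimate noted in Definition \ref{Def4.SS} gives $f(u)\in L^1(\tau,T;H)$, so the $H$-valued measure
$$
\nu(dt):=\mu(dt)-f(u(t))\,dt
$$
belongs to $M(\tau,T;H)$, and it is non-atomic because $\mu$ is non-atomic by hypothesis and $f(u)\,dt$ is absolutely continuous. Rewriting the integrated equation \eqref{4.intdef} as
$$
\Dt u(t)=-\int_\tau^t(-\Dx+1)u(s)\,ds-\gamma u(t)+\nu([\tau,t))+u'_\tau+\gamma u_\tau,
$$
we see, exactly as in the equivalence \eqref{lwmf.w.sol}$\Leftrightarrow$\eqref{eq-sol} of the linear case, that $u$ is an energy solution of the \emph{linear} measure-driven problem \eqref{lwmf.pr} with right-hand side $\nu$; by the uniqueness part of Theorem \ref{lwmf.thE!} it coincides with the solution furnished there.

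First I would apply Corollary \ref{Cor2.energy} to this linear problem. Since $\nu$ is non-atomic it yields at once $\xi_u\in C(\tau,T;\E)$ together with the linear energy equality
$$
\tfrac12\|\xi_u(t)\|^2_{\E}-\tfrac12\|\xi_u(s)\|^2_{\E}=-\gamma\int_s^t\|\Dt u(\kappa)\|^2_H\,d\kappa+\int_s^t(\Dt u(\kappa),\nu(d\kappa))
$$
for all $\tau\le s\le t\le T$. Splitting the last integral according to the definition of $\nu$ produces the term $\int_s^t(\Dt u,\mu(d\kappa))$, which is precisely the right-hand side of the desired identity \eqref{4.energy}, together with the extra term $-\int_s^t(f(u(\kappa)),\Dt u(\kappa))\,d\kappa$. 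Thus it only remains to identify this extra term with the increment of the potential energy, i.e.\ to establish the chain rule
$$
\int_s^t(f(u(\kappa)),\Dt u(\kappa))\,d\kappa=(F(u(t)),1)-(F(u(s)),1).
$$

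I expect this chain rule to be the only genuinely delicate point, and I would stress that it is here that the Shatah--Struwe regularity is indispensable: the energy bound alone does \emph{not} control the pairing $(f(u),\Dt u)$, since $u\in H^1\subset L^6$ gives only $f(u)\sim u^5\in L^{6/5}$, which is not dual to $\Dt u\in L^2$. What saves the argument is exactly the bound $f(u)\in L^1(\tau,T;H)$, which together with the already-established $\Dt u\in C(\tau,T;H)$ makes $\kappa\mapsto(f(u(\kappa)),\Dt u(\kappa))$ an $L^1(s,t)$ function. To justify the chain rule I would mollify in time: for $u_\eps:=u*\phi_\eps$ the identity $\frac{d}{d\kappa}(F(u_\eps),1)=(f(u_\eps),\Dt u_\eps)$ is elementary, and as $\eps\to0$ the left-hand side converges because $u_\eps\to u$ in $C([s,t];H^1)\hookrightarrow C([s,t];L^6)$, whence $F(u_\eps)\to F(u)$ in $C([s,t];L^1)$, while the right-hand side converges because $f(u_\eps)\to f(u)$ in $L^1(s,t;H)$ (by the growth of $f$, using $u_\eps\to u$ in $C([s,t];H^1)$ with $\|u_\eps\|_{L^4(s,t;L^{12})}$ uniformly bounded) and $\Dt u_\eps\to\Dt u$ in $C([s,t];H)$. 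Combining the three displays gives \eqref{4.energy}. Finally, continuity of the energy functional follows from $\xi_u\in C(\tau,T;\E)$ and $u\in C(\tau,T;L^6)$, and its bounded variation is read off from \eqref{4.energy}: the term $\gamma\int_s^\cdot\|\Dt u\|^2_{L^2}$ is monotone, and $\kappa\mapsto\int_s^\kappa(\Dt u,\mu(d\kappa'))$ is of bounded variation, being bounded in variation by $\|\Dt u\|_{L^\infty(\tau,T;H)}\,|\mu|([\tau,T])$; this, with the already-obtained $\xi_u\in C(\tau,T;\E)$, completes the proof.
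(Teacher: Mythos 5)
Your proposal is correct and follows essentially the same route as the paper: treat $f(u)\in L^1(\tau,T;H)$ as a regular part of the measure, invoke Corollary \ref{Cor2.energy} for the resulting linear measure-driven problem, and then identify $\int_s^t(f(u),\Dt u)\,d\kappa$ with the increment of $(F(u),1)$ via a chain rule (which the paper dismisses as ``arguing in a standard way'' and you justify by time-mollification). The only cosmetic difference is that you obtain $\xi_u\in C(\tau,T;\E)$ up front from the non-atomicity of $\nu=\mu-f(u)\,dt$ in Corollary \ref{Cor2.energy}, whereas the paper deduces it at the end from the energy equality by the energy method; both are valid.
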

\begin{proof} Indeed, since $u\in L^5(\tau,T;L^{10})$, the term $f(u)\in L^1(\tau,T;H)$ can be treated as a regular measure. Thus, according to Corollary \ref{Cor2.energy}, we may write
$$
\frac12\(\|\xi_u(t)\|^2_\E-\|\xi_u(s)\|^2_\E\)+\int_s^t(f(u(\kappa)),\Dt u(\kappa))\,d\kappa+\gamma\int_s^t\|\Dt u(\kappa)\|^2_{L^2}\,d\kappa=\int_s^t(\Dt u(\kappa),\mu(d\kappa)).
$$
Since $f(u)\Dt u\in L^1((\tau,T)\times\Omega)$, the term involving the non-linearity is well-defined. Moreover, arguing in a standard way, we get that the function $t\to (F(u(t)),1)$ is absolutely continuous and
$$
(F(u(t)),1)-(F(u(s)),1)=\int_s^t(f(u(\kappa)),\Dt u(\kappa))\,d\kappa.
$$
Thus, the energy equality is proved. The fact that the energy functional is continuous and BV in time follows immediately from this equality. Finally, the fact that $\xi_u\in C(\tau,T;\E)$ follows from the energy equality in a straightforward way using the  energy method. Thus, the corollary is proved.
\end{proof}
We now discuss the existence of Shatah-Struwe solutions.
\begin{prop}\label{Prop.nc} Let the assumptions of Theorem \ref{th.ex1} be satisfied. Then, for every $\xi_u(\tau)\in\mathcal E$, there exists a unique global Shatah-Struwe solution $u(t)$ and this solution satisfies the energy dissipative estimate \eqref{est.disen}.
\end{prop}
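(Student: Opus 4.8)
Here is how I would prove it.

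The plan is to split the proof into a local-existence step and a globalization step, with the non-concentration argument doing the heavy lifting in the latter. First I would establish \emph{local} existence of a Shatah–Struwe solution. Given $\xi_\tau\in\E$ and $\mu\in M(\tau,T;H)$, I would write the quintic equation as a damped linear measure-driven equation with the nonlinearity $f(u)$ moved to the right-hand side: $\Dt^2 u+\gamma\Dt u+(1-\Dx)u=\mu-f(u)$. Applying the linear Strichartz estimate of Theorem \ref{lwmf.th.str} together with the bound $\|f(u)\|_{L^1(\tau,T;H)}\le C(1+\|u\|^4_{L^4(\tau,T;L^{12})})\|u\|_{L^\infty(\tau,T;H^1)}$ from Definition \ref{Def4.SS}, I would set up a contraction mapping (or a standard fixed-point iteration) in the space $C(\tau,T';\E)\cap L^4(\tau,T';L^{12})$ on a short interval $[\tau,T']$. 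The quintic scaling makes the Strichartz norm of the nonlinearity quartic in the Strichartz norm of $u$, so the contraction closes provided $T'-\tau$ is small enough (depending only on $\|\xi_\tau\|_\E$ and $\|\mu\|_{M(\tau,T;H)}$); uniqueness on the short interval is already guaranteed by Theorem \ref{Th4.uni}.

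Next I would globalize. The obstruction to extending the local solution is \emph{blow-up of the Strichartz norm} $\|u\|_{L^4(t,t+1;L^{12})}$ in finite time; if this norm stays finite on every bounded interval, the local solution can be continued indefinitely and remains Shatah–Struwe. The energy estimate \eqref{est.disen}, which already holds for energy solutions by Theorem \ref{th.ex1}, controls $\|\xi_u(t)\|_\E$ uniformly, so the task reduces to controlling the Strichartz norm by the energy norm on a fixed-length interval — precisely the energy-to-Strichartz estimate \eqref{0.es}. For this I would invoke the non-concentration / Pohozhaev–Morawetz machinery: the key input is the energy-to-Strichartz bound for the quintic equation in $\R^3$ proved in \cite{BG1999,tao1}, extended to the measure-driven periodic setting by the Duhamel-type approximation described in the introduction (approximating $\mu$ by sums of Dirac masses and representing the solution through solutions of the autonomous equation). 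Granting that estimate, a standard continuation/bootstrap argument rules out finite-time blow-up of the Strichartz norm and yields a global Shatah–Struwe solution.

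Finally, uniqueness of the global solution is immediate from Theorem \ref{Th4.uni}, and the dissipative estimate \eqref{est.disen} is inherited from Theorem \ref{th.ex1} since every Shatah–Struwe solution is in particular an energy solution. The main obstacle is clearly the energy-to-Strichartz estimate powering the globalization step: the local theory and uniqueness are routine consequences of the linear Strichartz bound and the Gronwall argument already in hand, but in the critical quintic case the a priori control of $\|u\|_{L^4(L^{12})}$ by the energy does \emph{not} follow from the linear estimate and perturbation, and it is exactly here that one must appeal to the non-concentration argument and its extension to measure-driven forces. In the present section, however, this heavy estimate is taken as established (it is the content of Theorem \ref{Th0.estr}, proved via the whole-space result of \cite{BG1999}), so the proof of the Proposition amounts to assembling local existence, the a priori Strichartz control, continuation, and uniqueness into a global statement.
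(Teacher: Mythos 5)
Your overall architecture (local existence by treating $f(u)$ as a perturbation in the linear Strichartz estimate \eqref{2.str}, then globalization by controlling the Strichartz norm, with uniqueness from Theorem \ref{Th4.uni} and the dissipative bound inherited from Theorem \ref{th.ex1}) matches the paper's sketch, but there is a genuine error in the local step. You claim the contraction closes on an interval $[\tau,T']$ with $T'-\tau$ depending \emph{only} on $\|\xi_\tau\|_\E$ and $\|\mu\|_{M(\tau,T;H)}$. For the critical quintic nonlinearity this is false, and the falsity matters: if the local existence time depended only on the energy norm, then the uniform energy bound \eqref{est.disen} would let you re-solve on intervals of a fixed length indefinitely, global existence would follow at once, and the entire non-concentration/energy-to-Strichartz machinery you invoke afterwards would be superfluous. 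The correct statement is that the contraction closes only after subtracting the free evolution, and the time of existence is governed by the smallness of the \emph{free solution's} $L^4L^{12}$ norm on short intervals, which depends on the profile of $\xi_\tau$ and not merely on its norm. This is precisely why the critical case is hard and why a separate globalization mechanism is needed.

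On the globalization step you conflate two logically distinct mechanisms. The Pohozhaev--Morawetz/non-concentration argument (the route the paper actually takes for this Proposition, citing Shatah--Struwe and Burq--Lebeau--Planchon) is a soft argument: it rules out concentration of energy at a point of the light cone and hence shows the Strichartz norm is finite on every bounded interval, but it yields no quantitative bound of that norm by the energy. The Bahouri--G\'erard estimate of \cite{BG1999} and its extension to the measure-driven periodic setting are an entirely different, quantitative input; in the paper this is the content of Theorem \ref{th.str-f1} in the next section, whose proof is deliberately arranged so as \emph{not} to use Proposition \ref{Prop.nc} (the approximating solutions there are built by gluing solutions of the homogeneous problem across finitely many Dirac masses). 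Your proposal, read literally, globalizes by appealing to an estimate that is only established later; this can be made non-circular (since the later proof is independent), but it is not the non-concentration argument you name, and the paper's point in this Proposition is precisely that global existence is available by the classical soft route before any quantitative energy-to-Strichartz control is known.
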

The proof of the existence is standard, see \cite{sogge, plan1,plan2} for the details. First, based on the Strichartz estimate \eqref{2.str} for the linear equations and treating the non-linearity as a perturbation, one establishes the {\it local} existence. Then, using the so-called Pohozhaev-Morawetz identity and non-concentration arguments, one establishes that the Strichartz norm cannot blow up and this gives global existence. The presence of the measure $\mu$ in the right-hand side does not produce any essential difficulties as not difficult to check. We will not give a detailed proof here since in the next Section, we give an alternative proof and estimate the Strichartz norm without using the non-concentration arguments.

\section{Quintic wave equation: energy to Strichartz estimates}
\label{s.strna}
As we have already mentioned, the global existence result for Shatah-Struwe solutions based on the non-concentration arguments (and stated in Proposition \ref{Prop.nc}) does not give any control of the Strichartz norm $\|u\|_{L^4(T,T+1;L^{12})}$ in terms of $T$ and the corresponding norms of the initial data and the external forces. In particular, we do not have any control of the behaviour of this norm as $T\to\infty$ which in turn leads to essential problems in the attractor theory, see \cite{KSZ} for the details. The aim of this Section is to estimate this Strichartz norm in terms of the energy norm and the proper norm of the external forces. Since we have already known the dissipative estimate for the energy norm, this result will give us the desired dissipative estimate for the Strichartz norm. Our approach is crucially based on the following result for homogeneous quintic wave equation in the whole space $\R^3$.
\begin{prop}\label{Prop.tao} There exists a monotone increasing function $Q:\R_+\to\R_+$ such that any Shatah-Struwe solution $v(t)$ of the quintic wave equation
\begin{equation}\label{eq.q}
\Dt v-\Dx v+v^5=0
\end{equation}
in the whole space $\Omega=\R^3$
satisfies the estimate
\begin{equation}\label{est-str}
\|v\|_{L^4(\R,L^{12}(\R^3))}\le Q\(\|\Dt v(0)\|_{L^2(\R^3)}^2+\|\Nx v(0)\|^2_{L^2(\R^3)}\).
\end{equation}
\end{prop}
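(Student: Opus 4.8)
The plan is to reduce the claim to a uniform-in-energy a priori bound on the Shatah--Struwe norm and to obtain that bound by a concentration--compactness (profile decomposition) argument, following \cite{BG1999} (with the explicit form of $Q$ coming from \cite{tao1}). Throughout I exploit that \eqref{eq.q} is \emph{defocusing}: multiplying \eqref{eq.q} by $\Dt v$ and integrating gives conservation of
$$
E(v)=\frac12\|\Dt v\|_{L^2}^2+\frac12\|\Nx v\|_{L^2}^2+\frac16\|v\|_{L^6}^6 .
$$
By the Sobolev embedding $\dot H^1(\R^3)\subset L^6(\R^3)$ the last term is dominated by $\|\Nx v(0)\|_{L^2}^2$, so $E$ is bounded by the right-hand side of \eqref{est-str}; since $E$ is conserved and scaling invariant, it suffices to bound $\|v\|_{L^4(\R,L^{12})}$ by a monotone function of $E(v(0))$.

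First I would set up the local and small-energy theory. Writing \eqref{eq.q} as a linear wave equation with source $-v^5$ and applying the whole-space Strichartz estimates (the $\R^3$ analogue of Theorem \ref{th.linstr} with $\a=0$, classical by \cite{Chem,sogge,tao}), together with the H\"older bound
$$
\|v^5\|_{L^1(I;L^2)}\le \|v\|_{L^\infty(I;L^6)}\,\|v\|_{L^4(I;L^{12})}^4
\lesssim E^{1/2}\,\|v\|_{L^4(I;L^{12})}^4 ,
$$
a contraction/continuity argument yields: (i) local well-posedness in the energy space on a time interval whose length depends on the \emph{profile} of the data, not merely on $E$; (ii) a threshold $\eb_0>0$ such that $E\le\eb_0$ forces global existence with $\|v\|_{L^4(\R,L^{12})}\le CE^{1/2}$; and (iii) a long-time perturbation (stability) lemma asserting that a solution whose data and forcing error are close to those of a solution with finite Strichartz norm again has finite, quantitatively controlled Strichartz norm.

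The core is an induction on the energy. Let $\mathcal M(E)$ be the supremum of $\|v\|_{L^4(\R,L^{12})}$ over Shatah--Struwe solutions with $E(v)\le E$, and suppose for contradiction that $E_c:=\sup\{E:\mathcal M(E)<\infty\}$ is finite. Taking solutions with energies $\nearrow E_c$ and Strichartz norms $\to\infty$, I would apply the Bahouri--Gérard linear profile decomposition to the corresponding sequence of initial data: up to a subsequence it splits into asymptotically orthogonal linear profiles, concentrated at distinct scales and space--time cores, plus a remainder negligible in the Strichartz norm. Asymptotic orthogonality of the energy forces all but finitely many profiles to carry energy below $\eb_0$ (handled by step two), while each remaining nonlinear profile has energy $\le E_c$, hence finite Strichartz norm by the induction hypothesis and the perturbation lemma, \emph{unless} a single profile carries the full energy $E_c$. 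In that case the superposition/perturbation lemma shows the original solutions are well approximated by that one nonlinear profile, producing a nonzero minimal-energy solution with $E=E_c$, infinite global Strichartz norm, and precompact trajectory modulo the scaling and translation symmetries.

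The main obstacle, and the point where the defocusing sign is indispensable, is the \emph{rigidity} excluding this critical element. Here I would invoke the Lin--Strauss/Morawetz (virial) inequality for \eqref{eq.q}, the same non-concentration mechanism underlying the Grillakis--Shatah--Struwe global existence (\cite{Grill,SS,SS1}): for a defocusing energy-critical wave it controls a weighted spacetime integral of $v^6$ globally in time, which is incompatible with the nontrivial almost-periodic (non-dispersing) behaviour of the critical element, forcing $v\equiv0$ and contradicting the infinite Strichartz norm. Hence $E_c=\infty$, so $\mathcal M(E)<\infty$ for every $E$, and $Q:=\mathcal M$ gives \eqref{est-str}. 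I expect the delicate parts to be the nonlinear profile decomposition together with the long-time perturbation lemma (controlling the interaction of profiles at different scales and the accumulated error) and the Morawetz-based non-concentration; the explicit tower-type form of $Q$ then follows the frequency- and space-localized refinement of \cite{tao1} rather than the purely compactness-based extraction sketched here.
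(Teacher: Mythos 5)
The paper does not actually prove Proposition \ref{Prop.tao}: it is imported as a black box, with the proof attributed in one line to \cite{BG1999} (and to \cite{tao1} for the explicit form of $Q$). So there is no in-paper argument to measure yours against; the question is whether your sketch is a faithful account of the cited proof. In outline it is correct, but you have recast it in the later Kenig--Merle induction-on-energy form (minimal counterexample, critical element, rigidity via Morawetz), whereas the argument of \cite{BG1999} is logically lighter at that stage: global existence \emph{and} finiteness of the global Strichartz norm for each individual finite-energy solution of \eqref{eq.q} were already known from the non-concentration/Morawetz arguments of Grillakis and Shatah--Struwe \cite{Grill,SS1,SS} together with the scattering theory, and the profile decomposition is used only to upgrade ``finite for every solution'' to ``bounded by a function of the energy'': if the bound failed along a sequence of data with bounded energy, one decomposes the data into orthogonal profiles, each nonlinear profile has finite global Strichartz norm by the qualitative theory, and asymptotic orthogonality plus the long-time perturbation lemma give a uniform bound on the original solutions --- no minimal-energy critical element and no separate rigidity theorem are required. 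Your route also works for the defocusing equation (the Morawetz estimate does exclude the almost-periodic critical element, provided one also rules out the self-similar and finite-time concentration scenarios), and it is the version that generalizes to focusing problems; the explicit tower-type $Q$ of \cite{tao1} comes from yet a third, fully quantitative frequency-localized induction, as you correctly note. Be aware that the genuinely hard analytic content --- the profile decomposition, the long-time perturbation lemma controlling interactions of profiles at different scales, and the rigidity/non-concentration step --- is named but not carried out in your sketch; that is acceptable here only because the paper itself outsources the entire proposition to the literature.
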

The proof of this estimate can be found in \cite{BG1999} (see also \cite{tao1} for the explicit expression of the function~$Q$).
\par
Clearly, estimate \eqref{est-str} on the whole line $t\in\R$ cannot hold in the case where $\Omega$ is a bounded domain. However, its finite time analogue remains true in the case where $\Omega=\mathbb T^3$.
\begin{cor}\label{Cor.tor} There exists a monotone increasing function $Q:\R_+\to\R_+$ such that any Shatah-Struwe solution $v$ of  quintic wave equation \eqref{eq.q} with periodic boundary conditions satisfies the estimate
\begin{equation}\label{est-str-tor}
\|v\|_{L^4(0,1;L^{12})}\le Q\(\|\xi_v(0)\|_{\mathcal E}\).
\end{equation}
\end{cor}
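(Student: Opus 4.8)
The plan is to transfer the whole-space estimate of Proposition~\ref{Prop.tao} to the torus by exploiting the \emph{finite speed of propagation} of the semilinear wave equation together with a localization in space. The key point is that on the time interval $[0,1]$, and with unit propagation speed, the restriction of a solution to a ball of radius $1$ is completely determined by the initial data on the concentric ball of radius $2$; since $2$ is strictly smaller than the injectivity radius $\pi$ of $\T^3$, such a ball is isometric to a Euclidean ball, and we may import the corresponding solution from $\R^3$, where the global-in-time estimate \eqref{est-str} is available.

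Concretely, I would fix a finite cover of $\T^3$ by balls $B(x_j,1)$, $j=1,\dots,M$, where $M$ depends only on the geometry of $\T^3$. For each $j$, choose a smooth cutoff $\theta_j$ equal to $1$ on $B(x_j,2)$ and supported in $B(x_j,2+\delta)$ with $2+\delta<\pi$; using the isometry of $B(x_j,2+\delta)$ with a Euclidean ball, the localized data $\theta_j\xi_v(0)$ defines compactly supported initial data on $\R^3$ whose homogeneous energy is controlled by the full energy norm,
\[
\|\Nx(\theta_j v(0))\|_{L^2(\R^3)}^2+\|\theta_j\Dt v(0)\|_{L^2(\R^3)}^2\le C\|\xi_v(0)\|_{\E}^2,
\]
since the commutator terms involve only $\|v(0)\|_{L^2}$ and $\|\Nx\theta_j\|_{L^\infty}$. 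Let $\tilde v_j$ be the Shatah-Struwe solution of \eqref{eq.q} on $\R^3$ launched from this data. Proposition~\ref{Prop.tao} together with the monotonicity of $Q$ then yields $\|\tilde v_j\|_{L^4(\R;L^{12}(\R^3))}\le Q(\|\xi_v(0)\|_{\E})$.

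It remains to identify $v$ with $\tilde v_j$ on the relevant region and to sum up. Since $\theta_j=1$ on $B(x_j,2)$, the data of $v$ (read through the chart) and of $\tilde v_j$ coincide on $B(x_j,2)$. The difference $w=v-\tilde v_j$ solves a wave equation whose nonlinear term is $v^5-\tilde v_j^5=w\,p(v,\tilde v_j)$ with $p$ a polynomial; running the localized energy estimate in the backward light cones over $B(x_j,1)$ forces $w\equiv0$ on $\{(t,x):|x-x_j|\le 2-t,\ 0\le t\le1\}\supset B(x_j,1)\times[0,1]$. Hence $\|v\|_{L^4(0,1;L^{12}(B(x_j,1)))}\le\|\tilde v_j\|_{L^4(\R;L^{12}(\R^3))}\le Q(\|\xi_v(0)\|_{\E})$. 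Summing over the finite cover through a subordinate partition of unity gives
\[
\|v\|_{L^4(0,1;L^{12}(\T^3))}\le\sum_{j=1}^M\|v\|_{L^4(0,1;L^{12}(B(x_j,1)))}\le M\,Q(\|\xi_v(0)\|_{\E}),
\]
and replacing $Q$ by $M\,Q$ yields \eqref{est-str-tor}.

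The main obstacle is the finite speed of propagation step: one must justify rigorously that the (merely energy-plus-Strichartz) solutions $v$ and $\tilde v_j$ coincide in the backward cone. This requires a careful local energy inequality in truncated cones for the semilinear equation, using the Strichartz integrability of \emph{both} solutions to absorb the quintic difference $w\,p(v,\tilde v_j)$ via H\"older and Gronwall --- essentially a cone-localized version of the uniqueness argument in Theorem~\ref{Th4.uni}. The remaining ingredients (the choice of cutoffs, the energy bookkeeping, and the final summation) are routine.
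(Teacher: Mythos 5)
Your proposal is correct and follows exactly the route the paper takes: the paper's proof consists of the single remark that \eqref{est-str-tor} follows from the whole-space estimate \eqref{est-str} together with finite speed of propagation (citing \cite{sogge}), and your localization by cutoffs, comparison with the $\R^3$ solution in backward light cones, and summation over a finite cover is precisely the detailed implementation of that sketch. The one step you flag as delicate --- the cone-localized uniqueness for Shatah--Struwe solutions --- is indeed the only nontrivial ingredient, and your plan to handle it via a Gronwall argument using the $L^4L^{12}$ integrability of both solutions is the standard and correct way.
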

Indeed, this estimate follows immediately from \eqref{est-str} and finite speed propagation result for wave equations, see \cite{sogge}. To the best of our knowledge, the question of validity of \eqref{est-str-tor} for the case of general bounded domains remains open.
\par
We are now ready to state the key result of this Section.
\begin{theorem}\label{th.str-f1}
Let $\Omega=\Bbb T^3$, non-linearity $f$ satisfy \eqref{4.f} and the external force $\mu\in M(0,1;H)$. Then the Shatah-Struwe solution $u$ of problem
\begin{equation}\label{nhqwv}
\Dt^2u-\Dx u+f(u)=\mu,\ \ \xi_u\big|_{t=0}=\xi_0
\end{equation}
satisfies the following estimate:
\begin{equation}\label{L5L10.nh}
\|\xi_u\|_{L^\infty(0,1;\E)}+\|u\|_{L^4(0,1;L^{12})}\leq Q(\|\xi_0\|_{\E})+Q(\|\mu\|_{M(0,1;H)}),
\end{equation}
where monotone nondecreasing function $Q$ is independent of the choice of initial data $\xi_0\in\E$ and $\mu\in M(0,1;H)$.
\end{theorem}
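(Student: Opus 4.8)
The plan is to reduce to purely atomic external forces and exploit the fact that between consecutive atoms the solution evolves by the \emph{autonomous} quintic flow, for which the global energy-to-Strichartz bound of Corollary~\ref{Cor.tor} is available. Following the scheme of Theorem~\ref{lwmf.th.str}, I would approximate $\mu$ weakly-star in $M(0,1;H)$ by finite sums of Dirac masses $\mu_n=\sum_{j=1}^{N_n}a_j\delta_{t_j}$ with $\|\mu_n\|_{M(0,1;H)}\le\|\mu\|_{M(0,1;H)}$ and $\mu_n([0,t))\to\mu([0,t))$ pointwise, prove \eqref{L5L10.nh} for each $\mu_n$ with a constant independent of $n$, and pass to the limit using uniqueness (Theorem~\ref{Th4.uni}) together with the weak lower semicontinuity of the $L^4(0,1;L^{12})$-norm, exactly as in the proof of Theorem~\ref{lwmf.th.str}. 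For a fixed atomic force the Shatah--Struwe solution $u$ coincides on each interval $(t_j,t_{j+1})$ with the solution of $\Dt^2 V-\Dx V+f(V)=0$ carrying the data $\xi_u(t_j+0)$, while at each $t_j$ only the velocity jumps, $\Dt u(t_j+0)=\Dt u(t_j-0)+a_j$.

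The first ingredient is a \emph{uniform energy bound}. Since the full energy is conserved along the autonomous flow and, at each atom, changes only through its kinetic part, the elementary inequality $\|\Dt u(t_j+0)\|_{L^2}\le\|\Dt u(t_j-0)\|_{L^2}+\|a_j\|_{L^2}$ combined with the coercivity of the energy (here $h$ is subquintic, so $(F(u),1)\ge\tfrac16\|u\|_{L^6}^6-C$) gives, after taking square roots and summing over the jumps, $\|\xi_u(t)\|_{\mathcal E}\le Q(\|\xi_0\|_{\mathcal E})+\sum_{t_j\le t}\|a_j\|_{L^2}\le Q(\|\xi_0\|_{\mathcal E})+Q(\|\mu\|_{M(0,1;H)})$. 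This already yields the $L^\infty(0,1;\mathcal E)$-part of \eqref{L5L10.nh}, uniformly in $n$, and in particular controls $\|h(u)\|_{L^1(0,1;H)}$.

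The delicate point is the Strichartz norm, and here the naive strategy fails: applying Corollary~\ref{Cor.tor} on each $(t_j,t_{j+1})$ gives $\|u\|_{L^4(t_j,t_{j+1};L^{12})}\le Q(\|\xi_0\|_{\mathcal E}+\|\mu\|_{M(0,1;H)})$, but summing the fourth powers over all $N_n$ intervals produces a factor $N_n\to\infty$. This is the main obstacle and reflects the \emph{critical}, non-perturbative character of the quintic term. To circumvent it I would not split at every atom; instead I would partition $[0,1]$ into a number of subintervals $I_1,\dots,I_K$ depending only on $\|\xi_0\|_{\mathcal E}$ and $\|\mu\|_{M(0,1;H)}$. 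Using the total-variation function $t\mapsto|\mu|([0,t))$ and the already controlled quantity $\|h(u)\|_{L^1(0,1;H)}$, one arranges that on each $I_k$ one has $|\mu|(I_k)+\|h(u)\|_{L^1(I_k;H)}\le\varepsilon_0$, the only unavoidable exceptions being the finitely many atoms of size larger than $\varepsilon_0$, whose number is at most $\|\mu\|_{M(0,1;H)}/\varepsilon_0$. Thus $K$ is controlled by $\|\xi_0\|_{\mathcal E}$ and $\|\mu\|_{M(0,1;H)}$ alone, with no dependence on $N_n$.

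On each such small-variation piece I would compare $u$ with the pure-quintic autonomous solution $\tilde V_k$ issued from the same data at the left endpoint of $I_k$: the difference solves a linear wave equation with zero data forced by $-(u^5-\tilde V_k^5)-h(u)+\mu|_{I_k}$, and a nonlinear stability (perturbation) estimate — with the reference bounded by $\|\tilde V_k\|_{L^4(I_k;L^{12})}\le Q(\|\xi_0\|_{\mathcal E}+\|\mu\|_{M(0,1;H)})=:M$ via Corollary~\ref{Cor.tor} — gives $\|u-\tilde V_k\|_{L^4(I_k;L^{12})}\le C(M)\bigl(|\mu|(I_k)+\|h(u)\|_{L^1(I_k;H)}\bigr)\le C(M)\varepsilon_0$, provided $\varepsilon_0=\varepsilon_0(M)$ is chosen small enough to absorb the critical difference $u^5-\tilde V_k^5$. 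Hence $\|u\|_{L^4(I_k;L^{12})}\le M+C(M)\varepsilon_0$ on each of the $K$ pieces, and summing the fourth powers yields $\|u\|_{L^4(0,1;L^{12})}\le Q(\|\xi_0\|_{\mathcal E})+Q(\|\mu\|_{M(0,1;H)})$ uniformly in $n$; the limiting procedure described above then finishes the proof. I expect the genuinely hard part to be precisely this per-piece nonlinear stability estimate (the ``nonlinear Duhamel'' alluded to in the introduction), since it is here that the criticality of the quintic nonlinearity must be defeated by the \emph{global} autonomous bound of Corollary~\ref{Cor.tor} rather than by a soft Gronwall argument.
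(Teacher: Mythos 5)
Your reduction to purely atomic forces, the uniform energy bound across the jumps, and the final limiting procedure are all sound and match the paper's setup, but there are two genuine problems in the middle of the argument. The first is a concrete error: you claim the uniform energy bound ``in particular controls $\|h(u)\|_{L^1(0,1;H)}$''. It does not. Assumption \eqref{4.f} only gives $|h(u)|\le C(1+|u|^{q+2})$ with $q<3$, so $\|h(u)\|_{L^2_x}$ behaves like $\|u\|_{L^{2(q+2)}_x}^{q+2}$ with $2(q+2)$ arbitrarily close to $10$, which is not controlled by $\|u\|_{H^1}\sim\|u\|_{L^6}$. Since your partition of $[0,1]$ into $K$ pieces requires $\|h(u)\|_{L^1(I_k;H)}\le\varepsilon_0$ with $K$ depending only on the data, and that quantity is only controllable through a Strichartz norm of $u$ --- the very thing being proved --- the construction as written is circular. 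The fix is the one the paper uses: prove the statement for $f(u)=u^5$ first, then add $h$ by a short-time perturbation argument iterated over boundedly many intervals with the help of Lemma \ref{lem.Q}.

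The second, more serious issue is that the entire Strichartz bound rests on the per-piece estimate $\|u-\tilde V_k\|_{L^4(I_k;L^{12})}\le C(M)\bigl(|\mu|(I_k)+\|h(u)\|_{L^1(I_k;H)}\bigr)$, which you assert but do not prove. This is precisely the long-time perturbation (stability) lemma for the energy-critical wave equation: since only the reference $\tilde V_k$, and not $u$ itself, carries an a priori Strichartz bound on $I_k$, closing the estimate requires a further subdivision of $I_k$ into roughly $(M/\delta)^4$ subintervals on which $\|\tilde V_k\|_{L^4L^{12}}\le\delta$, together with a bootstrap on $\|u\|_{L^4(I_k;L^{12})}$ --- a true but substantial piece of machinery, here further complicated by the measure forcing, which cannot be dispatched by ``choosing $\varepsilon_0$ small enough''. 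The paper's proof avoids it entirely by a different decomposition: it telescopes over the atoms, writing $u_N=v_0+\sum_l v_l$ with $v_{l+1}=u_N^{l+1}-u_N^l$ the difference of the solutions driven by the first $l+1$ and the first $l$ atoms. Both $u_N^{l}$ and $u_N^{l+1}$ are \emph{autonomous} for $t\ge t_{l+1,N}$, so each carries an a priori Strichartz bound from Corollary \ref{Cor.tor}; the increment $v_{l+1}$ then solves a \emph{linear} wave equation with coefficients controlled by those bounds, and a plain Gronwall estimate plus the linear Strichartz inequality give $\|v_{l+1}\|_{L^4(0,1;L^{12})}\le Q(\|\xi_0\|_{\E}+\|\mu\|_{M(0,1;H)})\,\|h_{l+1,N}\|_H$, which is summable over $l$. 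So the criticality is in fact defeated by a soft Gronwall argument --- applied to the telescoping increments rather than to $u$ on small-variation pieces; the ``nonlinear Duhamel'' of the introduction is this telescoping, not a stability lemma. If you wish to keep your partition strategy you must supply the long-time perturbation lemma in full; otherwise the telescoping route is the shorter path.
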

\begin{proof} Let us first suppose that $f(u)=u^5$ (i.e., $h(u)=0$). The general case $h\ne0$ will be considered later.
 To verify the desired estimate, we consider an approximating sequence $\{u_N\}_{N=1}^\infty$ to the solution $u$, where $u_N$ solves problem \eqref{nhqwv} with external force $\mu_N$ instead of $\mu$, and the sequence $\{\mu_N\}_{N=1}^\infty$ of discrete measures is provided by Theorem \ref{bv.th.delta-conv}:
\begin{equation}
\mu_N=\sum_{k=0}^{N}h_{k,N}\delta_{t_{k,N}},\ h_{k,N}\in H,\ \sum_{k=0}^{N}\|h_{k,N}\|\leq\|\mu\|_{M(0,1;H)}
\end{equation}
and $0= t_{0,N}<t_{1,N}<\ldots<t_{N,N}=1$. Note that the solution $u_N(t)$ should solve the homogeneous problem for $t\in(t_{k,N},t_{k+1,N})$ and has jumps of time derivative at finitely many points $t=t_{k,N}$:
$$
\Dt u_N(t_{k,N}+0)-\Dt u_N(t_{k,N}-0)=h_{k,N},
$$
so the existence and uniqueness of $u_N$ follows immediately from the analogous result for the {\it homogeneous} problem \eqref{eq.q} and we need not to use Proposition \ref{Prop.nc} here. Moreover, due to Theorem \ref{th.ex1}, we have the uniform energy estimate
\begin{equation}
\|\xi_{u_N}\|_{L^\infty(0,1;\mathcal E)}\le Q(\|\xi_0\|_{\mathcal E})+Q(\|\mu_N\|_{M(0,1;H)})\le
Q(\|\xi_0\|_{\mathcal E})+Q(\|\mu\|_{M(0,1;H)})
\end{equation}
for some monotone increasing function $Q$. Thus, passing to a subsequence if necessary and using that $\Phi_{\mu_N}(t)\to\Phi_\mu(t)$ uniformly for all $t$ (due to the special choice of $\mu_N$ explained in Theorem \ref{bv.th.delta-conv}), we may assume that $u_N$ is convergent weakly-star to the weak energy solution $u$ of problem \eqref{nhqwv}, see the proof of Theorem \ref{th.ex1}. Thus, we only need to verify the uniform estimate for the Strichartz norms of solutions~$u_N$. Then passage to the limit $N\to\infty$ will give us the desired estimate for $u$ as well.
\par
Note that we can get the Strichartz estimate for the solution $u_N$ just applying estimate \eqref{est-str-tor} at every time interval $t\in(t_{k,N},t_{k+1,N})$ and using that the energy norm is under the control. However, this is not enough since the obtained estimate will clearly depend on $N$. So we need to proceed in a bit more accurate way.
\par
Let us consider the approximations $u_N^l$, $l=0,\cdots,N-1$ of the solution $u_N$ which solve \eqref{nhqwv} with the same initial data and the external forces
$$
\mu_N^l:=\sum_{k=0}^{l}h_{k,N}\delta_{t_{k,N}}.
$$
Then, on the one hand, due to Theorem \ref{th.ex1},
\begin{equation}\label{good-en}
\|\xi_{u_N^l}\|_{L^\infty(0,1;\mathcal E)}\le Q(\|\xi_0\|_{\mathcal E})+Q(\|\mu\|_{M(0,1;L^2)})
\end{equation}
uniformly with respect to $l$ and $N$. On the other hand,
 clearly $u_N^{N}(t)\equiv u_N(t)$ for all $t\in[0,1]$. Moreover,
$$
u_N^{l}(t)=u_N^{l+1}(t),\ \ t<t_{l+1,N}
$$
and the functions $u_N^{l}(t)$ and $u_{N}^{l+1}(t)$, $t\ge t_{l+1,N}$ solve linear homogeneous problem \eqref{eq.q} with the initial data
$$
\xi_{u_N^{l+1}}=\xi_{u_{N}^l}\big|_{t=t_{l+1,N}+0}+\{0,h_{l+1,N}\}.
$$
In particular, due to Corollary \ref{Cor.tor} and estimate \eqref{good-en},
\begin{equation}\label{good-str}
\|u_N^{l}\|_{L^4(t_{l,N},1;L^{12})}+\|u_N^{l+1}\|_{L^4(t_{l+1,N},1;L^{12})}\le Q(\|\xi_0\|_{\mathcal E})+Q(\|\mu\|_{M(0,1;H)}).
\end{equation}
Finally, we introduce functions $v_{0}(t):=u_N^0(t)$ and $v_{l+1}(t):=u_{N}^{l+1}(t)-u_N^l(t)$, $l=0,\cdots, N-1$. Then, obviously
\begin{equation}\label{5.split}
u_N(t)=v_0(t)+\sum_{l=1}^{N}v_l(t),\ t\in[0,1]
\end{equation}
and the functions $v_{l+1}(t)$, $t>t_{l+1,N}$ solve
\begin{equation}\label{5.est-dif}
\Dt^2 v_{l+1}-\Dx v_{l+1}=-[f(u_N^{l+1})-f(u_N^l)],\ \ \xi_{v_{l+1}}\big|_{t=t_{l+1,N}+0}=\{0,h_{l+1,N}\}.
\end{equation}
Note also that $v_{l+1}(t)\equiv0$ for $t<t_{l+1,N}$. To estimate the Strichartz norms of $v_{l+1}$, we use that
$$
|f(u^{l+1}_N)-f(u^l_N)|\le C(|u^{l+1}_N|^4+|u^l_N|^4)|v_{l+1}|
$$
and, therefore, due to H\"older inequality and Sobolev embedding $H^1\subset L^6$, we have
\begin{equation}\label{5.fest}
\|f(u^{l+1}_N)-f(u^l_N)\|_{H}\le C(\|u^{l+1}_N\|^4_{L^{12}}+\|u^l_N\|^4_{L^{12}})\|\xi_{v_{l+1}}\|_{\mathcal E}.
\end{equation}
Multiplying now equation \eqref{5.est-dif} by $\Dt v_{l+1}$ and using \eqref{5.fest}, we get
$$
\frac12\frac d{dt}\|\xi_{v_{l+1}}\|^2_{\mathcal E}\le C(\|u^{l+1}_N\|^4_{L^{12}}+\|u^l_N\|^4_{L^{12}})\|\xi_{v_{l+1}}\|_{\mathcal E}^2
$$
and the Gronwall inequality together with the control \eqref{good-str} give
\begin{equation}\label{energy-v}
\|\xi_{v_{l+1}}\|_{L^\infty(0,1;\E)}\le \(Q(\|\xi_0\|_{\mathcal E})+\|\mu\|_{M(0,1;H)})\)\|h_{l+1,N}\|_{H}.
\end{equation}
We are now ready to apply the standard Strichartz estimate to the linear equation \eqref{5.est-dif} and get
\begin{multline}
\|v_{l+1}\|_{L^4(0,1;L^{12})}=\|v_{l+1}\|_{L^4(t_{l+1,N},1;L^{12})}\le\\\le C(\|\xi_{v_{l+1}}(t_{l+1,N}+0)\|_{\mathcal E}+\|f(u_N^{l+1})-f(u_N^l)\|_{L^1(t_{l+1,N},1;H)})\le\\\le
\(Q(\|\xi_0\|_{\mathcal E})+\|\mu\|_{M(0,1;H)})\)\|h_{l+1,N}\|_{H}.
\end{multline}
Finally, according to \eqref{5.split}, we arrive at
\begin{multline}
\|u_N\|_{L^4(0,1;L^{12})}\le \(Q(\|\xi_0\|_{\mathcal E})+\|\mu\|_{M(0,1;H)})\)\(1+\sum_{l=0}^{N}\|h_{l,N}\|_{H}\)\le\\\le \(Q(\|\xi_0\|_{\mathcal E})+\|\mu\|_{M(0,1;H)}\)\(1+\|\mu\|_{M(0,1;H)}\).
\end{multline}
Thus, in the particular case $f(u)=u^5$, the theorem is proved.
\par
We consider now the general case $h(u)\ne0$ which can be derived from the obtained estimate by more or less standard perturbation arguments. We first remind the following simple lemma which can be verified using the convexity arguments (see \cite{vz96} and see also \cite{SADE}).

\begin{lemma}\label{lem.Q}
Let $Q:\R_+\to \R_+$ be a monotone increasing function, $L_1,L_2\in\R_+$ and $\eb\in[0,1]$. Then there exists a smooth monotone increasing function $Q_1:\R_+\to\R_+$ such that
\begin{equation}
Q(L_1+\eb L_2)\leq Q_1(L_1)+\eb Q_1(L_2),
\end{equation}
where $Q_1$ is determined by $Q$ only.
\end{lemma}
We rewrite equation \eqref{nhqwv} in the form
$$
\Dt^2 u-\Dx u+u^5=\mu -h(u)
$$
and apply already proved estimate \eqref{L5L10.nh} on the interval $t\in[0,T]$ where $T\le1$ will be determined later. Then, we have
$$
\|u\|_{L^5(0,T;L^{10})}\le \|u\|_{L^4(0,T;L^{12})}+\|\xi_u\|_{L^\infty(0,T;\mathcal E)}\le Q(\|\xi_0\|_{\mathcal E})+Q(\|\mu\|_{M(0,1;H)}+\|h(u)\|_{L^1(0,T;H)}).
$$
Since the function $h(u)$ has a sub-quintic growth rate, the H\"older inequality gives
$$
\|h(u)\|_{L^1(0,T;L^2)}\le CT^\kappa(1+\|u\|_{L^5(0,T;L^{10})}^5)
$$
for some positive exponent $\kappa$. Inserting this estimate into the previous one and using Lemma \ref{lem.Q}, we arrive at
\begin{equation}\label{5.pert}
\|u\|_{L^5(0,T;L^{10})}\le Q(\|\xi_0\|_{\mathcal E})+Q(\|\mu\|_{M(0,1;L^2)})+T^\kappa Q(\|u\|_{L^5(0,T;L^{10})}).
\end{equation}
Important here that the function $Q$ is independent of $T$. Fixing $T=T(\|\xi_0\|_{\mathcal E}+\|\mu\|_{M(0,1;H)})$ to be small enough, we derive from \eqref{5.pert} that
$$
\|u\|_{L^5(0,T;L^{10})}\le Q(\|\xi_0\|_{\mathcal E})+Q(\|\mu\|_{M(0,1;H)})
$$
for some new monotone function $Q$. Since the energy norm of the solution is under the control, we may apply this estimate on the intervals $[T,2T]$, $[2T,3T]$ and so on. This gives us the desired control
$$
\|u\|_{L^5(0,1;L^{10})}\le Q(\|\xi_0\|_{\mathcal E})+Q(\|\mu\|_{M(0,1;H)})
$$
for some monotone increasing function $Q$. Since the $L^1(H)$-norm of $f(u)$ is controlled by the $L^5(L^{10})$-norm of $u$, we may get the control of the $L^4(L^{12})$-norm of $u$ using the Strichartz estimate for the linear equation. Thus, the theorem is proved.
\end{proof}

As corollary of Theorem \ref{th.str-f1} we obtain the desired dissipative Strichartz estimate for the solutions of the nonlinear damped wave equation \eqref{eq.qdw} which is crucial for what follows.
\begin{cor}\label{th.str-f2}
Let the non-linearity $f$ satisfy \eqref{4.f} and the external force $\mu\in M_b(\R;H)$. Then for any $\tau\in\R$ and initial data $\xi_\tau\in \E$ the problem \eqref{eq.qdw} possesses a unique Shatah-Struwe solution $u$ and the following estimate holds
\begin{equation}
\label{est.dis-str}
\|\xi_u(t)\|_\E+\|u\|_{L^4([t,t+1];L^{12})}\leq e^{-\beta(t-\tau)}Q(\|\xi_\tau\|_\E)+Q(\|\mu\|_{M_b(\R;H)}),\ t\geq\tau,
\end{equation}
for some constant $\beta>0$ and some monotone nondecreasing function $Q$ which are independent of $\xi_\tau\in\E$, $\mu\in M_b(\R;H)$ and $\tau\in\R$.
\end{cor}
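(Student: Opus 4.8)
The plan is to split \eqref{est.dis-str} into its energy part and its Strichartz part, to bound each separately on the running unit interval $[t,t+1]$, and then to recombine them into the stated dissipative form with the help of Lemma~\ref{lem.Q}.

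Existence of a unique global Shatah-Struwe solution is already in hand: Proposition~\ref{Prop.nc} supplies existence and Theorem~\ref{Th4.uni} supplies uniqueness. (The global part also follows directly from the local Strichartz theory together with the a priori bound of Theorem~\ref{th.str-f1}, which forbids the $L^4(L^{12})$-norm to blow up on a unit interval, so that the non-concentration argument is in fact not needed here.) The energy part of \eqref{est.dis-str}, namely $\|\xi_u(t)\|_\E\le e^{-\beta(t-\tau)}Q(\|\xi_\tau\|_\E)+Q(\|\mu\|_{M_b(\R;H)})$, is precisely estimate \eqref{est.disen} of Theorem~\ref{th.ex1}, once one observes that $\|\mu\|_{M_b(\tau,T;H)}\le\|\mu\|_{M_b(\R;H)}$, so that the right-hand side is independent of $T$.

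The substantive step is the Strichartz part. I would fix $t\ge\tau$, work on $[t,t+1]$, and move the damping and zeroth-order terms to the right-hand side, writing \eqref{eq.qdw} as $\Dt^2u-\Dx u+f(u)=\tilde\mu$ with $\tilde\mu:=\mu-(\gamma\Dt u+u)\,ds$. Since $u,\Dt u\in L^\infty(t,t+1;H)$ by the energy bound, the correction is an absolutely continuous $H$-valued measure and
\begin{equation*}
\|\tilde\mu\|_{M(t,t+1;H)}\le\|\mu\|_{M_b(\R;H)}+C\sup_{s\in[t,t+1]}\|\xi_u(s)\|_\E .
\end{equation*}
Theorem~\ref{th.str-f1}, which is invariant under time translation and hence applies verbatim on $[t,t+1]$, then gives
\begin{equation*}
\|u\|_{L^4(t,t+1;L^{12})}\le Q\Bigl(\sup_{s\in[t,t+1]}\|\xi_u(s)\|_\E\Bigr)+Q\Bigl(\|\mu\|_{M_b(\R;H)}+C\sup_{s\in[t,t+1]}\|\xi_u(s)\|_\E\Bigr).
\end{equation*}
Feeding in the energy bound (and using $s\ge t$ so that $e^{-\beta(s-\tau)}\le e^{-\beta(t-\tau)}$) bounds the right-hand side by $Q\bigl(e^{-\beta(t-\tau)}Q(\|\xi_\tau\|_\E)+Q(\|\mu\|_{M_b(\R;H)})\bigr)$ for an updated monotone $Q$.

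The only genuine obstacle is that this last expression is of the form $Q\bigl(\eb L_2+L_1\bigr)$ with $\eb=e^{-\beta(t-\tau)}\in[0,1]$, $L_2=Q(\|\xi_\tau\|_\E)$ and $L_1=Q(\|\mu\|_{M_b(\R;H)})$: plugging an exponentially small quantity into a nonlinear monotone $Q$ does not by itself decay. This is exactly the situation covered by Lemma~\ref{lem.Q}, which splits the bound as $Q_1(L_1)+e^{-\beta(t-\tau)}Q_1(L_2)$; after renaming $Q_1\circ Q$ as a new monotone function one obtains the Strichartz half of \eqref{est.dis-str}. Adding the two halves and taking the larger of the monotone functions involved finishes the proof. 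I expect this final rearrangement via Lemma~\ref{lem.Q}, rather than any analytic estimate, to be the delicate point.
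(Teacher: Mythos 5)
Your proposal is correct and follows essentially the same route as the paper: the authors likewise apply estimate \eqref{L5L10.nh} on $[t,t+1]$ to \eqref{eq.qdw} with $\gamma\Dt u+u$ moved to the right-hand side, combine the result with the dissipative energy estimate \eqref{est.disen}, and invoke Lemma \ref{lem.Q} to restore the decaying form of the bound. The extra details you supply (the $M(t,t+1;H)$ bound on the corrected measure and the explicit identification of the $Q(\eb L_2+L_1)$ structure) are exactly the steps the paper leaves implicit.
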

\begin{proof}
The result easily follows if one applies estimate \eqref{L5L10.nh} on $[t,t+1]$ to equation \eqref{eq.qdw}, treating $\gamma\Dt u +u$ as the right hand side, and combines the obtained estimate with dissipative energy estimate \eqref{est.disen} and the estimate of Lemma \ref{lem.Q}.
\end{proof}

\begin{rem}\label{Rem5.str-q} Since the $L^4(L^{12})$-norm of the solution $u$ together with the energy norm allow us to control the $L^1(L^2)$-norm of the non-linearity $f(u)$, applying the Strichartz estimates for the linear equation and treating $f(u)$ as an external force, we get the dissipative estimate for other Strichartz norms of $u$, namely
\begin{equation}\label{5.str-q}
\|u\|_{L^{2/q}(t,t+1;L^{6/(1-q)})}\le Q(\|\xi_u(0)\|_{\mathcal E})e^{-\alpha t}+Q(\|\mu\|_{M_b(\R,H)}),
\end{equation}
where $q\in[0,1)$ and the function $Q$ depends on $q$, but is independent on $u$ and $\mu$.
\end{rem}

\begin{rem}\label{rem.Crist-Kiselev} We recall that the Strichartz estimates for non-homogeneous {\it linear} dispersive equations are usually derived from the homogeneous ones using the duality arguments and the so-called Christ-Kiselev lemma (see \cite{tao} and references therein). In contrast to this, the approach suggested in the proof of Theorem \ref{th.str-f1} works directly for \emph{nonlinear} (and even \emph{critical nonlinear}) problems and can be treated as a generalization of Christ-Kiselev lemma to the non-linear case. We believe that this approach will be useful for other dispersive equations as well.   
\end{rem}

\section{Damped wave equation: weak uniform attractors}\label{s.ua}
We start with basic definitions of nonautonomous dynamical systems (adapted to the measure-driven case), for more detailed treatment and recent advances see \cite{CV}, \cite{Zntrc}.

Let us first recall the key definitions and concept related with the attractors theory. We start with the autonomous case

\begin{Def}\label{Def.auto}
Let $\Phi$ be a Hausdorff  topological space and $\mathbb S(t):\Phi\to\Phi$, $t\ge0$ be a semigroup on it. Let also $\mathbb B$ be a family of sets $B\subset \Phi$ satisfying the property: if $B\in\mathbb B$ and $B_1\subset B$ then $B_1\in\mathbb B$. The sets $B\in \mathbb B$ are called bounded.
\par
A set $\mathcal B\in \mathbb B$ is an absorbing set for the semigroup $\mathbb S(t)$ if for any $B\in\mathbb B$ there exists time $T=T(B)$ such that
$$
\mathbb S(t)B\subset\mathcal B,\ \ \ t\ge T.
$$
A set $\mathcal B$ is an attracting set for the semigroup $S(t)$ if for every neighbourhood $\mathcal O(\mathcal B)$ and every $B\in\mathbb B$, there exists $T=T(\mathcal O,B)$ such that
$$
\mathbb S(t)B\subset\mathcal O(\mathcal B),\ \ \ t\ge T.
$$
Finally, a set $\mathcal A$ is a global attractor for the semigroup $S(t)$ if
\par
1) $\mathcal A$ is compact and bounded ($\mathcal A\in\mathbb B$) in $\Phi$;
\par
2) $\mathcal A$ is an attracting set for $S(t)$;
\par
3) $\mathcal A$ is a minimal set which satisfies properties 1) and 2).
\end{Def}
The 3rd property of the global attractor is usually formulated as the strict invariance with respect to $\mathbb S(t)$, but keeping in mind the non-autonomous case, we prefer to state it as minimality, see \cite{CV} for more details. To state the existence result for the autonomous case, we need one more definition.
\begin{Def} The semigroup $\mathbb S(t):\Phi\to\Phi$ is (sequentially) asymptotically compact on a set $B\subset\Phi$ if, for any sequences $t_n\to\infty$ and $x_n\in B$, the sequence $\mathbb S(t_n)x_n$ is precompact in $\Phi$.
\end{Def}

\begin{prop}\label{Prop.auto} Let the semigroup $\mathbb S(t):\Phi\to\Phi$ possess an absorbing set $\mathcal B\in\mathbb B$. Assume also that
\par
1. The topology induced on $\mathcal B$ by the inclusion $\mathcal B\subset\Phi$ is metrizable and complete (i.e., $\mathcal B$ is a complete metric space);
\par
2. The semigroup $\mathbb S(t)$ is asymptotically compact on $\mathcal B$.
\par
Then the semigroup $\mathbb S(t)$ possesses a global attractor $\mathcal A\subset\mathcal B$.
\par
In addition, if the operators $\mathbb S(t)$ are continuous on $\mathcal B$ for every fixed $t$, then the attractor $\mathcal A$ is strictly invariant: $\mathbb S(t)\mathcal A=\mathcal A$ and is generated by all bounded trajectories defined for all $t\in\R$:
\begin{equation}\label{5.str}
\mathcal A=\mathcal K\big|_{t=0},
\end{equation}
where $\mathcal K:=\{u:\R\to\Phi\,: \mathbb S(t)u(\tau)=u(t+\tau),\ t\ge0,\ \tau\in\R,\ \cup_{t\in\R}u(t)\in\mathbb B\}$.
\end{prop}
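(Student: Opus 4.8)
The plan is to realize the attractor as the $\omega$-limit set of the absorbing set and then to use asymptotic compactness to upgrade this to a genuine global attractor. Concretely, I would define
$$
\mathcal A:=\omega(\mathcal B)=\bigcap_{s\ge0}\overline{\bigcup_{t\ge s}\mathbb S(t)\mathcal B},
$$
where the closures are taken inside $\mathcal B$; this is legitimate because $\mathcal B$ absorbs itself, so the tails $\bigcup_{t\ge s}\mathbb S(t)\mathcal B$ eventually lie in $\mathcal B$, which is a complete metric space by assumption~1. The first step is the standard sequential characterization: $a\in\mathcal A$ if and only if $a=\lim_n\mathbb S(t_n)b_n$ in $\Phi$ for some $t_n\to\infty$ and $b_n\in\mathcal B$. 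Using assumption~2 together with metrizability of $\mathcal B$, I would then show that $\mathcal A$ is nonempty, compact and contained in $\mathcal B$: any sequence $\mathbb S(t_n)b_n$ has a convergent subsequence, whence $\mathcal A\ne\varnothing$, and a routine diagonal argument shows that $\mathcal A$ is closed and precompact, hence compact.

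The core analytic step is the attraction property, which I would prove by contradiction. Suppose $\mathcal A$ fails to attract some $B\in\mathbb B$; then there are a neighbourhood $\mathcal O(\mathcal A)$ in $\Phi$ and sequences $t_n\to\infty$, $x_n\in B$ with $\mathbb S(t_n)x_n\notin\mathcal O(\mathcal A)$. Since $\mathcal B$ absorbs $B$, for large $n$ I can write $\mathbb S(t_n)x_n=\mathbb S(t_n-T_B)\bigl(\mathbb S(T_B)x_n\bigr)$ with $\mathbb S(T_B)x_n\in\mathcal B$; asymptotic compactness then yields a subsequence of $\mathbb S(t_n)x_n$ converging to a point which, by the sequential characterization, lies in $\mathcal A$, contradicting $\mathbb S(t_n)x_n\notin\mathcal O(\mathcal A)$. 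The subtlety here is purely topological: attraction is phrased with $\Phi$-neighbourhoods, but all the relevant points eventually sit inside $\mathcal B$, where the subspace topology coincides with the metric topology, so it suffices to argue with the metric of $\mathcal B$. Minimality (property~3) is then immediate: if $\mathcal A'$ is any compact attracting set, every point $\lim_n\mathbb S(t_n)b_n\in\mathcal A$ is attracted to the closed set $\mathcal A'$ and hence belongs to it, giving $\mathcal A\subset\mathcal A'$.

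For the additional statement I would bring in the continuity of $\mathbb S(t)$ on $\mathcal B$. Strict invariance $\mathbb S(t)\mathcal A=\mathcal A$ splits into two inclusions: $\mathbb S(t)\mathcal A\subset\mathcal A$ follows by passing $\mathbb S(t)$ through the limit $a=\lim_n\mathbb S(t_n)b_n$, since continuity turns $\mathbb S(t)a$ into $\lim_n\mathbb S(t+t_n)b_n\in\mathcal A$, while the reverse inclusion uses asymptotic compactness to extract from $\mathbb S(t_n-t)b_n$ a limit $a'\in\mathcal A$ with $\mathbb S(t)a'=a$. Finally, for the representation \eqref{5.str}, the inclusion $\mathcal K|_{t=0}\subset\mathcal A$ is clear, since for $u\in\mathcal K$ one has $u(0)=\mathbb S(n)u(-n)$ with $u(-n)$ in a fixed bounded set, so $u(0)\in\mathcal A$; for the converse I would construct, for each $a\in\mathcal A$, a complete bounded trajectory through $a$ by choosing inductively preimages $a_{-n}\in\mathcal A$ with $\mathbb S(1)a_{-n}=a_{-n+1}$ (possible because $\mathbb S(1)\mathcal A=\mathcal A$ is onto), filling in the forward pieces by $\mathbb S(t)$, and verifying the cocycle identity. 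This trajectory stays in $\mathcal A\subset\mathcal B$ and is therefore bounded, so $a\in\mathcal K|_{t=0}$. The steps I expect to demand the most care are the topological bookkeeping forced by the fact that completeness and metrizability are assumed only on $\mathcal B$ (not on all of $\Phi$), and the backward construction of complete trajectories, where surjectivity of $\mathbb S(1)$ on $\mathcal A$ must be combined correctly with the semigroup law; the attraction argument itself, though central, is routine once the sequential characterization is in hand.
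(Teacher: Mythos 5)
Your argument is correct and is precisely the standard $\omega$-limit-set construction that the paper itself does not reproduce but delegates to the reference \cite{CV}: nonemptiness, compactness and attraction of $\omega(\mathcal B)$ via asymptotic compactness and a contradiction argument, minimality from the sequential characterization, and strict invariance plus the representation $\mathcal A=\mathcal K\big|_{t=0}$ via continuity and the backward selection of preimages under $\mathbb S(1)$. No substantive deviation from the intended proof, and your closing remarks correctly identify the only delicate points (the topology is metrizable and complete only on $\mathcal B$, and the backward-orbit construction).
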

The proof of this proposition is standard and the details can be found in \cite{CV}.
\par
Since we are mainly interested in the non-autonomous equations, we recall below how the above concepts can be extended to the non-autonomous case. The first difference is that the solution operators are no more generate a semigroup, but the so-called dynamical process which is two-parametric family $U(t,\tau)$, $t\ge\tau$ acting in the phase space and satisfying
\begin{equation}\label{5.process}
U(t,t)=Id,\ \ U(t,\tau)\circ U(\tau,s)=U(t,s),\ t\ge\tau\ge s.
\end{equation}
The operator $U(t,\tau)$ is understood as a solution operator which maps the in initial data at time moment $\tau$ to the solution at time moment $t$.
\begin{Def}\label{Def.n-auto}
Let $\mathcal E$ be a Hausdorff  topological space and $U(t,\tau):\mathcal E\to\mathcal E$, $t\ge\tau$ be a dynamical process on it. Let also $\mathbb B$ be a family of sets $B\subset \mathcal E$ satisfying the property: if $B\in\mathbb B$ and $B_1\subset B$ then $B_1\in\mathbb B$. The sets $B\in \mathbb B$ are called bounded.
\par
A set $\mathcal B\in \mathbb B$ is a {\it uniformly} absorbing set for the semigroup $S(t)$ if for any $B\in\mathbb B$ there exists time $T=T(B)$ such that
$$
U(t,\tau)B\subset\mathcal B,\ \ \ t-\tau\ge T,\ \ \tau\in\R.
$$
A set $\mathcal B$ is a {\it uniformly} attracting set for the process $U(t,\tau)$ if for every neighbourhood $\mathcal O(\mathcal B)$ and every $B\in\mathbb B$, there exists $T=T(\mathcal O,B)$ such that
$$
U(t,\tau)B\subset\mathcal O(\mathcal B),\ \ \ t-\tau\ge T,\ \ \tau\in\R.
$$
Finally, a set $\mathcal A_{un}$ is a {\it uniform} attractor for the process $U(t,\tau)$ if
\par
1) $\mathcal A$ is compact and bounded in $\mathcal E$;
\par
2) $\mathcal A$ is a uniformly attracting set for $U(t,\tau)$;
\par
3) $\mathcal A$ is a minimal set which satisfies properties 1) and 2).
\par\noindent
In the sequel, $\mathcal E$ will be a Banach space (or even Hilbert space) endowed either by the strong or weak topology. The associated uniform attractor will be referred  as {\it strong} or {\it weak} unform attractor respectively. In both cases, $\mathbb B$ consists of all bounded sets of the Banach space considered.
\end{Def}
The generalization of the concept of asymptotic compactness is also straightforward.

\begin{Def} The process $U(t,\tau):\mathcal E\to\mathcal E$ is {\it uniformly} asymptotically compact on a set $B\subset\mathcal E$ if, for any sequences $t_n,\tau_n\in\R$ such that $t_n-\tau_n\to\infty$ and any sequence $x_n\in B$, the sequence $U(t_n,\tau_n)x_n$ is precompact in $\mathcal E$.
\end{Def}
As well as the following existence result, see \cite{CV} for details.
\begin{prop}\label{Prop.non} Let the process $U(t,\tau):\mathcal E\to\mathcal E$ possess a uniformly absorbing set $\mathcal B\in\mathbb B$. Assume also that
\par
1. The topology induced on $\mathcal B$ by the inclusion $\mathcal B\subset\Phi$ is metrizable and complete (i.e., $\mathcal B$ is a complete metric space);
\par
2. The process $U(t,\tau)$ is uniformly asymptotically compact on $\mathcal B$.
\par
Then the process $U(t,\tau)$ possesses a uniform attractor $\mathcal A_{un}\subset\mathcal B$.
\end{prop}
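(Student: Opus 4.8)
The plan is to realize the uniform attractor as the \emph{uniform} $\omega$-limit set of the absorbing set and then verify the three defining properties directly. Concretely, I would set
\[
\mathcal A_{un}:=\omega(\mathcal B):=\bigcap_{s\ge0}\Big[\bigcup_{\tau\in\R}\bigcup_{t\ge\tau+s}U(t,\tau)\mathcal B\Big]_{\mathcal B},
\]
where $[\,\cdot\,]_{\mathcal B}$ denotes closure in the metric of $\mathcal B$; since $\mathcal B\in\mathbb B$ is itself absorbed after some time $T(\mathcal B)$, the inner unions lie in $\mathcal B$ for $s\ge T(\mathcal B)$, so the intersection is genuinely taken inside $\mathcal B$. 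The crucial preliminary remark is that, because of assumption~1, all the topological work can and must be carried out inside the complete metric space $(\mathcal B,d)$: the ambient topology on $\mathcal E$ (typically the weak one) need not be metrizable, so the sequential notion of uniform asymptotic compactness acquires its full strength only after restriction to $\mathcal B$. I would first record the sequential description
\[
\mathcal A_{un}=\{\xi\in\mathcal B:\ \exists\,x_n\in\mathcal B,\ \exists\,t_n\ge\tau_n\ \text{with}\ t_n-\tau_n\to\infty,\ U(t_n,\tau_n)x_n\to\xi\},
\]
which follows from the definition by a standard diagonal extraction.

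Next I would establish nonemptiness and compactness. Taking any sequence with $t_n-\tau_n\to\infty$ and $x_n\in\mathcal B$, assumption~2 yields a convergent subsequence, so $\mathcal A_{un}\ne\varnothing$ and every such sequence clusters inside $\mathcal A_{un}$. A further diagonal argument over the defining sequences shows that $\mathcal A_{un}$ is closed in $\mathcal B$, and precompactness (again from assumption~2) together with completeness of $\mathcal B$ upgrades this to compactness in $\mathcal E$.

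Attraction is then proved by contradiction. If $\mathcal A_{un}$ did not uniformly attract $\mathcal B$, there would exist a neighbourhood $\mathcal O$ of $\mathcal A_{un}$ and sequences $t_n-\tau_n\to\infty$, $x_n\in\mathcal B$ with $U(t_n,\tau_n)x_n\notin\mathcal O$; by assumption~2 a subsequence converges, necessarily to a point of $\mathcal A_{un}$ by the sequential description, contradicting that it stays outside $\mathcal O$. This gives uniform attraction of $\mathcal B$. To pass to an arbitrary bounded $B$ I would use the absorbing property to pick $T(B)$ with $U(\tau+T(B),\tau)B\subset\mathcal B$ for all $\tau$, and then the cocycle identity \eqref{5.process} gives $U(t,\tau)B\subset U(t,\tau+T(B))\mathcal B$; since $t-(\tau+T(B))\to\infty$ uniformly in $\tau$, attraction of $B$ reduces to the already-proved attraction of $\mathcal B$.

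Finally, minimality follows from the sequential description: if $\mathcal A'$ is any closed uniformly attracting set, then for $\xi=\lim U(t_n,\tau_n)x_n\in\mathcal A_{un}$ with $x_n\in\mathcal B$ one has $\dist(U(t_n,\tau_n)x_n,\mathcal A')\to0$, whence $\xi\in\mathcal A'$, so $\mathcal A_{un}\subset\mathcal A'$. The step I expect to be the most delicate is not any single estimate but the consistent bookkeeping forced by the two-parameter, possibly non-metrizable setting: one must keep every limit inside $\mathcal B$ so that metrizability (assumption~1) legitimizes the sequential compactness arguments, and one must carry the supremum over the initial time $\tau$ through each step so that all absorbing, attracting, and limiting statements are genuinely \emph{uniform} in $\tau$ rather than holding only for fixed initial times.
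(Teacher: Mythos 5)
Your proposal is correct: the paper itself gives no proof of this proposition but defers to \cite{CV}, and your argument is precisely the standard one found there — realizing $\mathcal A_{un}$ as the uniform $\omega$-limit set of the absorbing set, using metrizability of $\mathcal B$ to pass to the sequential description, and deducing compactness, uniform attraction and minimality from uniform asymptotic compactness together with the cocycle identity. The points you flag as delicate (keeping all limits inside the metrizable set $\mathcal B$, and carrying the uniformity in $\tau$ through every step) are exactly the ones that matter, and you handle them correctly.
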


We now return to our damped wave equation \eqref{eq.qdw}. Since, according to Corollary \ref{th.str-f2}, for every $\tau\in\R$, $\xi_\tau\in\mathcal E:=H^1(\mathbb T^3)\times L^2(\mathbb T^3)$ and any $\mu\in M_b(\R,H)$, this problem possesses a unique Strichartz solution $\xi_u(t)$, so we may introduce a family of dynamical processes $U_\mu(t,\tau)$, $\mu\in M_b(\R,H)$ in the energy phase space. However, since in contrast to the usual case, the trajectories $\xi_u(t)$ may have jumps, we should be a bit accurate in order to preserve the property \eqref{5.process}. In particular, we use here our agreement that the trajectories $\xi_u(t)$ are left-semicontinuous and we may set  
\begin{equation}
U_\mu(t,\tau)\xi_\tau:=\xi_u(t-0)=\lim_{s\to t-0}\xi_u(s)=\xi_u(t)
\end{equation}
and we set $\xi_u(\tau-0):=\xi_\tau$. Then, as not difficult to see that the operators $U_\mu(t,\tau)$ thus defined are indeed the dynamical processes in the energy space $\mathcal E$, so we may study their uniform attractors. We fix $\mathbb B$ as a family of bounded (in a usual sense) subsets of our energy space $\mathcal E$ (it is a Banach space, so bounded sets are well-defined). Then, estimate
\eqref{est.dis-str} guarantees the existence of a uniformly attracting set. Moreover, it can be taken in the form
\begin{equation}\label{5.abs-set}
\mathcal B:=\big\{\xi\in\mathcal E\,: \|\xi\|_{\mathcal E}\le 2Q(\|\mu\|_{M_b(\R,H)})\big\}.
\end{equation}
Recall that in this Section we are mainly interested in {\it weak} uniform attractors, so we endow the space $\mathcal E$ with the weak topology and denote the obtained locally convex space by $\mathcal E_w$. Since the space $\mathcal E$ is a reflexive Banach space, the absorbing set $\mathcal B$ is compact and metrizable in a weak topology of $\mathcal E_w$, so all of the assumptions of Proposition \ref{Prop.non} are automatically verified and we have proved the following result.

\begin{theorem}\label{Th.wnon-attr} Let the assumptions of Corollary \ref{th.str-f2} hold. Then, for every $\mu\in M_b(\R,H)$ the dynamical process $U_\mu(t,\tau)$ possesses a uniform attractor $\mathcal A_{un}^w$ in the space $\mathcal E_w$ which is called a weak uniform attractor for equation \eqref{eq.qdw}.
\end{theorem}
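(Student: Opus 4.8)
The plan is to verify the two hypotheses of Proposition \ref{Prop.non}, working in $\mathcal E_w$ and taking the absorbing set to be $\mathcal B$ from \eqref{5.abs-set}; the existence of the weak uniform attractor is then immediate. I emphasize that Proposition \ref{Prop.non} requires \emph{no} continuity of the process, which is essential here since (as shown later in this Section) the map $\xi_\tau\mapsto U_\mu(t,\tau)\xi_\tau$ may fail to be continuous in the measure-driven setting. The only genuine analytic input has already been secured: the dissipative Strichartz estimate \eqref{est.dis-str}. First I would record that $\mathcal B$ is uniformly absorbing. This follows directly from \eqref{est.dis-str}: for any bounded $B\in\mathbb B$ and any $\xi_\tau\in B$ one has $\|U_\mu(t,\tau)\xi_\tau\|_{\E}\le e^{-\beta(t-\tau)}Q(\sup_{\xi\in B}\|\xi\|_\E)+Q(\|\mu\|_{M_b(\R,H)})$, and the first summand drops below $Q(\|\mu\|_{M_b(\R,H)})$ once $t-\tau\ge T(B)$, so that $U_\mu(t,\tau)B\subset\mathcal B$ uniformly in $\tau\in\R$.

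The second step is assumption~1 of Proposition \ref{Prop.non}: the topology induced on $\mathcal B$ by $\mathcal E_w$ must be metrizable and complete. Here I would use that $\mathcal E=H^1(\T^3)\times L^2(\T^3)$ is a separable Hilbert space, hence reflexive with separable dual. By reflexivity (Kakutani's theorem, i.e. Banach--Alaoglu together with $\mathcal E^{**}=\mathcal E$) the closed ball $\mathcal B$ is compact in the weak topology $\mathcal E_w$; and since $\mathcal E^*$ is separable, the weak topology restricted to the bounded set $\mathcal B$ is metrizable. A compact metrizable space is automatically complete, so assumption~1 holds.

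Finally I would verify assumption~2, uniform asymptotic compactness on $\mathcal B$, which now comes for free from the weak compactness just established. Given sequences $t_n,\tau_n\in\R$ with $t_n-\tau_n\to\infty$ and $x_n\in\mathcal B$, the absorbing property yields $U_\mu(t_n,\tau_n)x_n\in\mathcal B$ for all large $n$, and weak compactness of $\mathcal B$ extracts a weakly convergent subsequence; thus $\{U_\mu(t_n,\tau_n)x_n\}$ is precompact in $\mathcal E_w$. With both hypotheses in hand, Proposition \ref{Prop.non} delivers the weak uniform attractor $\mathcal A_{un}^w\subset\mathcal B$. I do not expect any serious obstacle in this proof: passing to the weak topology converts what would be the hard points in a strong-topology argument---completeness of the phase space under the relevant topology and uniform asymptotic compactness---into elementary consequences of reflexivity and separability of $\mathcal E$. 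The difficulty is entirely displaced into establishing \eqref{est.dis-str} (already done) and, separately, into upgrading to the strong topology, which is precisely the content of Section \ref{s.str} and requires the energy-equality argument rather than mere reflexivity.
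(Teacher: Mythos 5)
Your proof is correct and follows essentially the same route as the paper: the dissipative estimate \eqref{est.dis-str} gives the uniformly absorbing ball $\mathcal B$, and reflexivity plus separability of $\mathcal E$ make $\mathcal B$ weakly compact and metrizable, so Proposition \ref{Prop.non} applies immediately (the paper states this in one sentence; you merely spell out the details). The only nitpick is your aside that the map $\xi_\tau\mapsto U_\mu(t,\tau)\xi_\tau$ may fail to be continuous: the discontinuity discussed later in the section concerns the dependence on the measure $z\in\mathcal H(\mu)$, not on the initial data, but this side remark plays no role in your argument.
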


At the next step, we describe the extension of the key representation formula \eqref{5.str} to the case of uniform attractors. To this end, we will use (following \cite{CV}), the reduction of the dynamical process $U_\mu(t,\tau)$ to a semigroup acting on the extended phase space. To this end, we introduce a  group of shifts acting on the space of measures $M_b(\R,H)$:
$$
(T(s)\mu)(t):=\mu(t+s),\ \ t,s\in\R.
$$
Then, as not difficult to verify, the introduced dynamical processes $U_\mu(t,s)$ satisfies the following translation identity (=cocycle property):
\begin{equation}\label{5.co}
U_{T(s)\mu}(t,\tau)=U_\mu(t+s,\tau+s), \ \ t\ge\tau\in\R,\ \ s\in\R.
\end{equation}
In order to fix the proper topology on the space $M_b(\R,H)$, we recall that $M_{loc}(\R, H)$ is a dual space for $C_{00}(\R,H)$, where $C_{00}$ means continuous functions with compact support endowed with the inductive topology. Denote by $M_{loc}^{w^*}(\R,H)$ the space $M_{loc}(\R,H)$ endowed with the associated weak star topology. Then, by Banach-Alaoglu theorem, the unit ball of $M_b(\R,H)$ is compact and metrizable in the topology of $M_{loc}^{w^*}(\R,H)$. We recall that $\mu_n\to\mu$ in this topology if and only if
$$
\lim_{n\to\infty}\int_\R(\phi(s),\mu_n(ds))=\int_\R(\phi(s),\mu(ds))
$$
for every $\phi\in C_{00}(\R,H)$. We are now ready to define the hull of the  measure $\mu\in M_b(\R,H)$ as a closure of all shifts of $\mu$ in the weak-star topology:
\begin{equation}\label{H(m)}
\Cal H(\mu)=\left[\{T(s)\mu,s\in\R\}\right]_{M_{loc}^{w^*}(\R,H)},
\end{equation}
where $[\ \cdot\ ]_{M_{loc}^{w^*}(\R,H)}$ means the closure in $M_{loc}^{w^*}(\R,H)$. Obviously, the set $\mathcal H(\mu)$ endowed with the weak-star topology is a compact metric space and the group of shifts $$
T(s):\mathcal H(\mu)\to \mathcal H(\mu),\ \ T(s)\mathcal H(\mu)=\mathcal H(\mu),
$$
acts continuously on $\mathcal H(\mu)$.
\par
Let now $U_\mu(t,\tau):\mathcal E\to\mathcal E$ be a family of dynamical processes associated with damped wave equation \eqref{eq.qdw}. Then, the extended phase space for problem \eqref{eq.qdw} is defined via
$$
\Phi:=\mathcal E\times\mathcal H(\mu)
$$
and the associated autonomous dynamical system on $\Phi$ acts as follows
\begin{equation}\label{ext.sem}
\Bbb S(t)\{\xi_0,z\}:=\{U_z(t,0),T(t)z\},\ \ \xi_0\in \mathcal E,\ \ z\in\mathcal H(\mu).
\end{equation}
Indeed, the semigroup property for $\Bbb S(t)$ is an immediate corollary of the translation identity \eqref{5.co}.
\par
The key general idea is to relate the uniform attractor $\mathcal A_{un}$ for the dynamical process $U_\mu(t,\tau)$ constructed above with the global attractor $\mathbb A$ of the extended semigroup $\Bbb S(t)$ and, in particular, to describe the structure of $\mathcal A_{un}$ using the representation \eqref{5.str} for the autonomous case. Namely, we endow the extended phase space $\Phi=\mathcal E\times\mathcal H(\mu)$ with the topology  generated by the embedding $\Phi\subset \mathcal E_w\times M_{loc}^{w^*}(\R,H)$ and fix bounded sets in $\Phi$ as follows: $B\subset \Phi$ is bounded iff $\Pi_1B$ is bounded in $\mathcal E$ (here and below $\Pi_1$ means the projection to the first component of the Cartesian product $\mathcal E\times\mathcal H(\mu)$). Then, due to estimates \eqref{est.dis-str} and the elementary fact that
\begin{equation}
\|z\|_{M_b(\R,H)}\le \|\mu\|_{M_b(\R,H)},\ \ z\in \mathcal H(\mu),
\end{equation}
 the set
$$
\mathcal B_{ext}:=\mathcal B\times\mathcal H(\mu),
$$
where $\mathcal B$ is defined by \eqref{5.abs-set},
is a compact metrizable absorbing set for the extended semigroup $\mathbb S(t)$ and, therefore, due to Proposition \ref{Prop.auto}, the semigroup $\Bbb S(t)$ possesses a global attractor $\mathbb A_{ext}$. The next Theorem gives the desired structure of the constructed uniform attractor for the damped wave equation \eqref{eq.qdw}.

\begin{theorem}\label{Th5.cont-attr} Let the assumptions of Theorem \ref{Th.wnon-attr} hold and let, in addition, the maps $(\xi_0,z)\to U_z(t,\tau)\xi_0$ be continuous (in the weak topology) as maps $\mathcal B_{ext}$ to $\E$ for every fixed $t,\tau\in\R$, $t\ge\tau$. Then,
\begin{equation}\label{6.str1}
\mathcal A_{un}=\Pi_1\mathbb A_{ext}
\end{equation}
and, moreover,
\begin{equation}\label{6.str2}
\mathcal A_{un}=\cup_{z\in\mathcal H(\mu)}\mathcal K_z\big|_{t=0},
\end{equation}
where $\mathcal K_z:=\big\{u\in L^\infty(\R,\mathcal E),\ \ U_z(t,\tau)u(\tau)=u(t), \ t\ge\tau\in\R\big\}$ is the so-called kernel of the process $U_z(t,\tau)$ in the terminology of \cite{CV}.
\end{theorem}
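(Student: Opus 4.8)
The plan is to deduce everything from the autonomous theory of Proposition~\ref{Prop.auto} applied to the extended semigroup $\Bbb S(t)$ on $\Phi=\E\times\mathcal H(\mu)$. It has already been arranged that $\mathcal B_{ext}=\mathcal B\times\mathcal H(\mu)$ is a compact, metrizable absorbing set, so the only missing ingredient for the second part of Proposition~\ref{Prop.auto} is the continuity of the operators $\Bbb S(t)$ on $\mathcal B_{ext}$. But $\Bbb S(t)\{\xi_0,z\}=\{U_z(t,0)\xi_0,T(t)z\}$, whose first component is continuous precisely by the extra hypothesis of the theorem and whose second component is continuous since the shifts act continuously on $\mathcal H(\mu)$. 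Hence Proposition~\ref{Prop.auto} gives that $\Bbb A_{ext}$ is strictly invariant and, by the representation \eqref{5.str}, equals $\mathbb K|_{t=0}$, where $\mathbb K$ is the set of all bounded complete trajectories of $\Bbb S(t)$.

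Next I would identify $\mathbb K$ with the kernels $\mathcal K_z$. If $t\mapsto\{\xi(t),z(t)\}$ is a bounded complete trajectory of $\Bbb S$, the second component forces $z(t)=T(t)z_0$ with $z_0:=z(0)\in\mathcal H(\mu)$, while the first component together with the translation identity \eqref{5.co} gives $U_{z_0}(t+h,t)\xi(t)=\xi(t+h)$ for all $t\in\R$, $h\ge0$; that is, $\xi(\cdot)\in\mathcal K_{z_0}$. Conversely, every $z_0\in\mathcal H(\mu)$ and every $u\in\mathcal K_{z_0}$ produce such a trajectory. Projecting with $\Pi_1$ (and noting $\Pi_1\{\xi(0),z_0\}=\xi(0)=u(0)$) yields
\begin{equation}
\Pi_1\Bbb A_{ext}=\cup_{z\in\mathcal H(\mu)}\mathcal K_z\big|_{t=0},
\end{equation}
so that \eqref{6.str2} follows from \eqref{6.str1} and it remains to establish \eqref{6.str1}. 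For the inclusion $\mathcal A_{un}\subseteq\Pi_1\Bbb A_{ext}$ I would check that $\Pi_1\Bbb A_{ext}$ is a compact (continuous image of a compact set) uniformly attracting set: for bounded $B$, the translation identity gives $U_\mu(t,\tau)B=\Pi_1\Bbb S(t-\tau)(B\times\{T(\tau)\mu\})\subseteq\Pi_1\Bbb S(t-\tau)(B\times\mathcal H(\mu))$, and attraction of the bounded set $B\times\mathcal H(\mu)$ by $\Bbb A_{ext}$ forces $U_\mu(t,\tau)B$ into any prescribed neighbourhood of $\Pi_1\Bbb A_{ext}$ once $t-\tau$ is large, uniformly in $\tau$. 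Minimality of $\mathcal A_{un}$ then yields the inclusion.

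The hard part will be the reverse inclusion $\Pi_1\Bbb A_{ext}\subseteq\mathcal A_{un}$, equivalently $\mathcal K_z|_{t=0}\subseteq\mathcal A_{un}$ for each $z\in\mathcal H(\mu)$, and this is exactly where the continuity hypothesis is indispensable. Fix $\xi_0=u(0)$ with $u\in\mathcal K_z$; by the dissipative estimate \eqref{est.dis-str} (letting $\tau\to-\infty$) the whole trajectory lies in $\mathcal B$, so $u(-n)\in\mathcal B$ for all $n$. Choosing shifts $T(s_k)\mu\to z$ in $M_{loc}^{w^*}$ and using $T(-n)z=\lim_k T(s_k-n)\mu$, the translation identity rewrites $\xi_0=U_z(0,-n)u(-n)=U_{T(-n)z}(n,0)u(-n)$, and weak continuity of $(\eta,w)\mapsto U_w(n,0)\eta$ in the symbol (with $\eta=u(-n)$ fixed) gives
\begin{equation}
U_\mu(s_k,s_k-n)u(-n)=U_{T(s_k-n)\mu}(n,0)u(-n)\xrightarrow[k\to\infty]{w}\xi_0 .
\end{equation}
Each such approximation has a \emph{fixed} time span $n$, so a diagonal choice $k=k(n)$ producing points $U_\mu(t_n,\tau_n)u(-n)$ with $t_n-\tau_n=n\to\infty$ and $U_\mu(t_n,\tau_n)u(-n)\to\xi_0$ weakly is needed; since $\mathcal A_{un}$ is the uniform $\omega$-limit set of $\mathcal B$, this shows $\xi_0\in\mathcal A_{un}$ and completes the proof. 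The main obstacle is thus organizing this approximation-plus-diagonal passage: one must simultaneously push the symbol $T(s_k-n)\mu$ towards $T(-n)z$ and the elapsed time $n$ to infinity, which is possible only because the weak continuity of the solution map lets a single-step ($n$-span) approximation be iterated through the diagonal.
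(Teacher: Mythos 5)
Your argument is correct and is precisely the standard Chepyzhov--Vishik reduction to the extended autonomous semigroup that the paper itself invokes (it gives no proof of this theorem, referring instead to \cite{CV}): continuity of $\mathbb S(t)$ on $\mathcal B_{ext}$ via the hypothesis plus continuity of the shifts, identification of bounded complete trajectories of $\mathbb S$ with the kernels $\mathcal K_z$ through the translation identity, attraction of $\Pi_1\mathbb A_{ext}$ plus minimality for one inclusion, and the shift-plus-diagonal approximation $U_\mu(s_k,s_k-n)u(-n)=U_{T(s_k-n)\mu}(n,0)u(-n)\rightharpoonup\xi_0$ (which is exactly where the continuity in the symbol is needed) for the reverse inclusion. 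I see no gaps.
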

The proof of this result in general setting can be found in \cite{CV}.
\par
Note that, in contrast to the usual case, the continuity assumption is {\it not satisfied} for general $\mu\in M_b(\R,H)$. Namely, the following result holds.

\begin{prop}\label{Prop.cont} Let the assumptions of Theorem \ref{Th.wnon-attr} hold. Then the continuity assumption of Theorem \ref{Th5.cont-attr} hold if and only if
\begin{equation}\label{6.mes-cont}
1.\ \  z_n(\{\tau\})\equiv 0;\ \ 2. \ \ z_n([\tau,t])\to z([\tau,t]) \ \ \text{weakly in $H$}
\end{equation}
for every sequence $z_n\in\mathcal H(\mu)$ such that $z_n\to z$ weakly star in $M_{loc}(\R,H)$ and every fixed $t\ge\tau\in\R$.
\end{prop}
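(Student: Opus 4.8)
The plan is to isolate the dependence of the solution operator on the measure by working with the integral form \eqref{4.intdef} of the equation and to reduce the whole question to the weak-$H$ convergence of the quantities $z([\tau,t))$. Write $U_z(t,\tau)\xi_0=\{u(t),\Dt u(t)\}$, where $u$ is the (unique, by Theorem \ref{Th4.uni}) Shatah--Struwe solution with right-hand side $z$ and data $\xi_0$; by \eqref{4.intdef} its second component reads
\[
\Dt u(t)=-\int_\tau^t\big[(1-\Dx)u(s)+f(u(s))\big]\,ds-\gamma u(t)+z([\tau,t))+u_\tau'+\gamma u_\tau .
\]
Every term on the right except $z([\tau,t))$ is \emph{regular}, in the sense that it passes to the limit under weak convergence of the data and $z$, whereas $z([\tau,t))$ is sensitive to the fine (atomic) structure of $z$ precisely because $\mathbf 1_{[\tau,t)}$ is not continuous and hence is not controlled by weak-star convergence of measures. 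This dichotomy is the backbone of both implications.

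For sufficiency I would take $\xi_0^n\to\xi_0$ weakly in $\E$ and $z_n\to z$ weak-star, all inside $\mathcal B_{ext}$, and set $u_n:=U_{z_n}(\cdot,\tau)\xi_0^n$. The uniform energy-to-Strichartz bound of Corollary \ref{th.str-f2} makes $\{u_n\}$ bounded in $L^\infty(\tau,T;\E)\cap L^4(\tau,T;L^{12})$, so, exactly as in the proof of Theorem \ref{th.ex1}, a subsequence converges to some $\bar u$: weak-star in $L^\infty(\tau,T;\E)$, strongly in $C(\tau,T;H)$, and with $f(u_n)\rightharpoonup f(\bar u)$. Hence all the regular terms above converge. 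Since conditions (1)--(2) are required for \emph{every} initial time, (1) forces each $z_n$ (and $z$) to be non-atomic, so that $z_n([\tau,s))=z_n([\tau,s])$, and (2) then yields $z_n([\tau,s))\to z([\tau,s))$ weakly in $H$ at the relevant $s$ (this is the point where the distribution-function machinery of Section \ref{s.bv} upgrades weak-star convergence of measures to pointwise convergence of the half-open evaluations). Passing to the limit in \eqref{4.intdef} shows that $\bar u$ solves the equation with right-hand side $z$ and data $\xi_0$, whence $\bar u=U_z(\cdot,\tau)\xi_0$ by the uniqueness Theorem \ref{Th4.uni}. As the limit is independent of the subsequence, the whole sequence $U_{z_n}(t,\tau)\xi_0^n$ converges weakly in $\E$ to $U_z(t,\tau)\xi_0$, which is the asserted continuity.

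For necessity I would run the same compactness scheme for an arbitrary sequence $z_n\to z$ and, say, fixed $\xi_0$. Continuity gives $u_n(t)\to u_z(t)$ for every $t$, which identifies $\bar u=u_z$ and forces convergence of every regular term in \eqref{4.intdef}; subtracting, I obtain $z_n([\tau,t))\to z([\tau,t))$ weakly in $H$ for all $t\ge\tau$. From this single family of convergences the stated conditions follow by elementary endpoint manipulations: the jump $z_n(\{t\})=z_n([\tau,t+0))-z_n([\tau,t))$ and the behaviour as $t\downarrow\tau$ control the atomic parts, and combining with shift invariance of $\mathcal H(\mu)$ (so that an atom of any hull element can be translated to the initial time $\tau$ and then approached from the left by the shifts $T(h)z$, as in $\delta_{\tau-1/n}=T(1/n)\delta_\tau\to\delta_\tau$) shows that no $z_n$ may carry an atom at $\tau$, i.e.\ (1), while the convergence of the closed-interval evaluations $z_n([\tau,t])$ is exactly (2).

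I expect the main obstacle to be this endpoint bookkeeping in the necessity part, namely converting weak-star convergence of the $z_n$ into pointwise weak-$H$ convergence of the half-open evaluations $z_n([\tau,t))$ and disentangling the contributions of atoms at the two endpoints. The delicate point is that weak-star convergence of vector measures controls only integrals against continuous test functions, so the discontinuity of $\mathbf 1_{[\tau,t)}$ at both $\tau$ and $t$ must be absorbed, using non-atomicity at $\tau$ and the closed-interval convergence at $t$; producing the explicit atomic counterexamples (such as $\delta_{\tau-1/n}\to\delta_\tau$ and the constant sequence $z_n\equiv\delta_\tau$) that show neither condition can be dropped is the quantitative heart of the argument.
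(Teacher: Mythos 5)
Your proposal is correct and follows essentially the same route as the paper: both directions rest on passing to the limit in the integral identity \eqref{4.intdef}, with compactness supplied by the uniform dissipative/Strichartz estimates and uniqueness of Shatah--Struwe solutions, and the necessity of the non-atomicity condition is obtained exactly as in the paper via the shifted sequence $z_n=T(1/n)z$ applied to a hull element carrying an atom (chosen so that there is no atom at the initial time). The only cosmetic difference is that you first extract the weak-$H$ convergence of the half-open evaluations $z_n([\tau,t))$ from the assumed continuity and then specialize to the shifted sequence, whereas the paper argues directly by contradiction with a two-case analysis of the weak limit; the content is the same.
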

\begin{proof} Indeed, let \eqref{6.mes-cont} be satisfied. We need to prove that $U_{z_n}(t,\tau)\xi_n$ is weakly convergent to $U_z(t,\tau)\xi_0$ if $z_n\to z$ in $\mathcal H(\mu)$ and $\xi_n\to\xi_0$ in $\mathcal E_w$. Let $\xi_{u_n}(t):=U_{z_n}(t,\tau)\xi_n$ be the corresponding Shatah-Struwe solutions. Then, due to the uniform dissipative estimate \eqref{est.dis-str}, we may assume without loss of generality that $\xi_{u_n}\to\xi_u$ weakly star in $L^\infty(\tau,t;\mathcal E)$. Thus, we only need to pass to the limit in  \eqref{4.intdef}. Namely, taking into account that $z_n(\{t\})=0$,
this equality reads
\begin{equation}\label{6.intdef}
\Dt u_n(t)=-\int_\tau^t(-\Dx+1)u_n(s)+f(u_n(s))\,ds-\gamma u_n(t)+z_n([\tau,t])+u_{\tau,n}'+\gamma u_{\tau,n},
\end{equation}
where $\xi_n:=\{u_{\tau,n},u_{\tau,n}'\}$. Obviously, the limit function $\xi_u(t)$ satisfies equation \eqref{eq.qdw} in the sense of distributions and the passage to the limit in \eqref{6.intdef} is also straightforward due to condition \eqref{6.mes-cont}.
\par
Let us now check the necessity. We first check that $z(\{\tau\})=0$ for all $z\in\mathcal H(\mu)$ is necessary. Indeed, let $z(\{0\})\ne0$ for some $z\in\mathcal H(\mu)$. Since the number of jumps is at most countable, we may assume that $z(\{-1\})=0$. Let us consider a sequence $z_n:=T_{1/n}z$ and $\xi_{u_n}:=U_{z_n}(-1,0)\xi_0$, where $\xi_0\in\mathcal E$. Clearly, $z_n\to z$ as $n\to\infty$ and we may assume without loss of generality that $\xi_{u_n}\to\xi_{\bar u}$ weakly star in $L^\infty(-1,0;\mathcal E)$. Moreover, by the Helly selection theorem, we may also assume that $\xi_{u_n}(t)\to\xi_{\bar u}(t)$ weakly in $\mathcal E$ for almost all $t\in[-1,0]$. Let $\xi_u(t):=U_{z}(-1,t)\xi_0$. Then, two cases a priori possible:
\par
1. $\xi_{\bar u}(t)\ne\xi_{u}(t)$ on a subset of $[-1,0]$ of positive measure. Then, the continuity obviously fails.
\par
2. $\xi_{\bar u}=\xi_{u}$ almost everywhere. Then, passing to the limit in \eqref{6.intdef}, say, in $H^{-2}$, we get
\begin{multline}
\lim_{n\to\infty}\Dt u_n(0-)-\Dt u(0-)=\lim_{n\to\infty} z_n([-1,0))-z([-1,0))=\\=-\lim_{n\to\infty}z([-1,-1+1/n))+\lim_{n\to\infty}z([-1,1/n))-z([-1,0))=z(\{0\})\ne0
\end{multline}
and the continuity of $U_z(-1,0)$ fails. Thus, the necessity of the first condition is proved.
\par
The necessity of the second condition can be proved analogously, but even simpler since we need not to shift the measures and may pass to the limit directly in \eqref{6.intdef}. So, the proposition is proved.
\end{proof}
The proved proposition reduces finding necessary and sufficient conditions for the weak continuity of the dynamical process associated with equation \eqref{eq.qdw} to verifying conditions \eqref{6.mes-cont} which are purely measure theoretic and can be completely understood. To state the criterion, we need the following definition.

\begin{Def}\label{Def6.wmes} A measure $\mu\in M_b(\R,H)$ is weak uniformly non-atomic if for every $\psi\in H$ there exists a monotone increasing  function $\omega_\psi:\R_+\to\R_+$ such that
\begin{equation}\label{6.wun}
\lim_{h\to0}\omega_\psi(h)=0,\ \ \ |(\mu([s,t]),\psi)|\le \omega_\psi(|t-s|)
\end{equation}
for all $t\ge s\in\R$. The space of such measures is denoted by $M^{wna}_b(\R,H)$.
\end{Def}
Then, the following result holds.

\begin{prop}\label{Prop6.cont} Assumptions \eqref{6.mes-cont} are satisfied if and only if the initial measure $\mu\in M_b(\R,H)$ is weak uniformly non-atomic.
\end{prop}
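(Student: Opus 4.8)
The plan is to recast both conditions in \eqref{6.mes-cont} and the notion of weak uniform non-atomicity in terms of the scalar \emph{cumulative functions}
\[
\Phi_{\zeta,\psi}(t):=(\zeta([0,t]),\psi)\quad(t\ge0),\qquad \Phi_{\zeta,\psi}(t):=-(\zeta([t,0]),\psi)\quad(t<0),
\]
defined for $\zeta\in M_b(\R,H)$ and $\psi\in H$, so that $\Phi_{\zeta,\psi}(t)-\Phi_{\zeta,\psi}(s)=(\zeta([s,t]),\psi)$ whenever $\zeta(\{s\})=\zeta(\{t\})=0$. In this language $\mu$ is weak uniformly non-atomic (Definition \ref{Def6.wmes}) exactly when, for each $\psi$, the shifted family $\{\Phi_{\mu,\psi}(\cdot+s)\}_{s\in\R}$ is \emph{equicontinuous} with common modulus $\omega_\psi$ (equivalently $\Phi_{\mu,\psi}$ is uniformly continuous on $\R$), while condition~2 of \eqref{6.mes-cont} is precisely pointwise convergence $\Phi_{z_n,\psi}(t)\to\Phi_{z,\psi}(t)$ along weak-star convergent hull sequences and condition~1 is the continuity (non-atomicity) of the limiting cumulatives. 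The one tool I would use repeatedly is the identification, obtained by integration by parts, that if $\zeta_j\to\zeta$ weakly star in $M_{loc}^{w^*}(\R,H)$ then $\Phi_{\zeta_j,\psi}$ and $\Phi_{\zeta,\psi}$ have the same distributional derivative in the limit; hence any locally uniform limit of the $\Phi_{\zeta_j,\psi}$ must coincide with $\Phi_{\zeta,\psi}$.

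For the implication ``weak uniformly non-atomic $\Rightarrow$ \eqref{6.mes-cont}'' I would first transfer the modulus to the whole hull: any $\zeta\in\mathcal H(\mu)$ is a weak-star limit of shifts $T(\sigma_j)\mu$, whose cumulatives satisfy $|\Phi_{T(\sigma_j)\mu,\psi}(t)-\Phi_{T(\sigma_j)\mu,\psi}(s)|\le\omega_\psi(|t-s|)$ uniformly in $j$; by Arzel\`a--Ascoli a subsequence converges locally uniformly, and the limit is $\Phi_{\zeta,\psi}$ by the identification above, so $|(\zeta([s,t]),\psi)|\le\omega_\psi(|t-s|)$ and in particular $\zeta$ is non-atomic (condition~1). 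Condition~2 then follows from the same compactness: for $z_n\to z$ in $\mathcal H(\mu)$ the cumulatives $\Phi_{z_n,\psi}$ are equicontinuous and locally bounded, hence precompact in $C_{loc}(\R)$ with unique accumulation point $\Phi_{z,\psi}$, so $\Phi_{z_n,\psi}\to\Phi_{z,\psi}$ locally uniformly and $(z_n([\tau,t]),\psi)\to(z([\tau,t]),\psi)$ for every $\psi$.

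For the converse I argue by contraposition. If $\mu$ is \emph{not} weak uniformly non-atomic, then for some $\psi$ and $c>0$ there are $s_k\in\R$ and $\eta_k\to0^+$ with $|(\mu([s_k,s_k+\eta_k]),\psi)|\ge c$. Passing to a subsequence I may assume $w_k:=T(s_k)\mu$ converges weakly star to some $w\in\mathcal H(\mu)$ and $\Phi_{w_k,\psi}(\eta_k)\to m$ with $|m|\ge c$; moreover the co-shifted sequence $u_k:=T(s_k+\eta_k)\mu=T(\eta_k)w_k$ also lies in $\mathcal H(\mu)$ and, since $\eta_k\to0$, converges to the \emph{same} limit $w$. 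If $w$ or some $w_k$ had an atom, condition~1 would already fail, so assume all are non-atomic. The decisive step is an averaging argument that locates a good fixed test point: since
\[
\int_1^2\big|(w_k([t,t+\eta_k]),\psi)\big|\,dt\le \|\psi\|\,\eta_k\,|\mu|([s_k,s_k+3])\le C\eta_k\to0,
\]
I can choose a fixed $t^*\in(1,2)$ (a continuity point of $\Phi_{w,\psi}$) and a further subsequence along which $(w_k([t^*,t^*+\eta_k]),\psi)\to0$, that is $\Phi_{w_k,\psi}(t^*+\eta_k)-\Phi_{w_k,\psi}(t^*)\to0$. Applying condition~2 of \eqref{6.mes-cont} to the two sequences $w_k\to w$ and $u_k\to w$ on the \emph{fixed} interval $[0,t^*]$ gives $\Phi_{w_k,\psi}(t^*)\to\Phi_{w,\psi}(t^*)$ and $\Phi_{w_k,\psi}(t^*+\eta_k)-\Phi_{w_k,\psi}(\eta_k)\to\Phi_{w,\psi}(t^*)$; subtracting and using the vanishing increment forces $\Phi_{w_k,\psi}(\eta_k)\to0$, contradicting $|m|\ge c$. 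Hence \eqref{6.mes-cont} forces the uniform modulus, i.e. weak uniform non-atomicity.

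I expect the averaging step to be the main obstacle. The naive attempt to detect the concentrating mass $(\mu([s_k,s_k+\eta_k]),\psi)$ by a single shift fails, because adjacent cancellation (a ``dipole'') can hide it simultaneously from every fixed interval and from the weak-star limit; it is exactly the freedom to choose the fixed evaluation point $t^*$ \emph{away} from the concentration, where the $\eta_k$-increment of the cumulative is negligible on average, that breaks the symmetry and turns condition~2 into the required contradiction. The remaining ingredients — the integration-by-parts identification of cumulative limits and the fact that $u_k$ and $w_k$ share the limit $w$ — are routine.
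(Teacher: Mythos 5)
Your proof is correct, and while the ``weakly uniformly non-atomic $\Rightarrow$ \eqref{6.mes-cont}'' half follows essentially the paper's route (transfer the modulus $\omega_\psi$ to the whole hull via Helly/Arzel\`a--Ascoli and identification of the limit cumulative, then get condition 2 from equicontinuity), the converse half is genuinely different. The paper considers $G(z,\tau):=(z([0,\tau]),\psi)$ on the compact metrizable space $\mathcal H(\mu)\times[0,1]$, notes that the two conditions of \eqref{6.mes-cont} say exactly that $G$ is separately continuous, invokes a Namioka-type separate-to-joint continuity theorem to obtain a single $\tau_0$ at which $G$ is jointly continuous uniformly over the compact hull, and then uses the shift-invariance $T(h)\mathcal H(\mu)=\mathcal H(\mu)$ to propagate the resulting modulus from $\tau_0$ to all of $\R$. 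You instead argue by contraposition: a concentrating sequence $|(\mu([s_k,s_k+\eta_k]),\psi)|\ge c$ with $\eta_k\to 0$ yields two hull sequences $T(s_k)\mu$ and $T(s_k+\eta_k)\mu$ with the same weak-star limit, and the Fubini bound $\int_1^2|(w_k([t,t+\eta_k]),\psi)|\,dt\le \|\psi\|\,\eta_k\,|w_k|([1,3])\le C\eta_k$ lets you select a fixed evaluation point $t^*$ where the $\eta_k$-increment of the cumulative vanishes along a subsequence; two applications of condition 2 on the fixed interval $[0,t^*]$ then squeeze out $\Phi_{w_k,\psi}(\eta_k)\to 0$, contradicting the lower bound. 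Both arguments rest on compactness and translation invariance of the hull; the paper's is shorter but leans on a nontrivial topological theorem, whereas yours is elementary and self-contained at the cost of the averaging bookkeeping. The ingredients you call routine (sequential weak-star compactness of bounded sets of measures, the integration-by-parts identification of locally uniform limits of cumulatives, non-atomicity of all hull elements under condition 1, and the fact that $T(\eta_k)w_k$ and $w_k$ share the same weak-star limit) do all check out.
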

\begin{proof} Assume that assumptions \eqref{6.mes-cont} hold and let $\psi\in H$ be arbitrary. Consider the function $G:\mathcal H(\mu)\times[0,1]\to\R$ defined by
$$
G(z,\tau):=(z([0,\tau]),\psi).
$$
Then, due to the first condition of \eqref{6.mes-cont}, this function is continuous in $\tau$ for every fixed $z$. On the other hand, due to the second condition of \eqref{6.mes-cont}, it is continuous in $z$ for every fixed $\tau$. Thus, there is a point $\tau_0\in(0,1)$ such that $G$ is jointly continuous at $\{z,\tau_0\}$ for every $z\in\mathcal H(\mu)$ (in a fact, there is a dense set of such points $\tau_0\in[0,1]$, see e.g., \cite{namioka} and references therein). Since $\mathcal H(\mu)$ is compact, we conclude that there exists a monotone increasing function $\omega_\psi:\R_+\to\R_+$ such that
$$
|(z([\tau_0,\tau_0+s]),\psi)|= |G(z,\tau_0+s)-G(z,\tau_0)|\le\omega_\psi(s) \ \text{ for all } z\in\mathcal H(\mu)
$$
and $\lim_{h\to0}\omega_\psi(h)=0$. Using finally that
$$
(T(h)z)([\tau,t])=z([\tau+h,t+h])
$$
and that $T(h)\mathcal H(\mu)=\mathcal H(\mu)$, we deduce \eqref{6.wun}. Thus, conditions \eqref{6.mes-cont} imply that $\mu$ is weakly uniformly non-atomic.
\par
Let now $\mu$ be weakly uniformly non-atomic. Then, as not difficult to see using the Helly selection theorem, see Theorem \ref{bv.th.w*sc} and Corollary \ref{bv.cor2},
\begin{equation}\label{6.un}
|(z([\tau,t]),\psi)|\le\omega_\psi(t-\tau),\ \ \forall z\in\mathcal H(\mu),
\end{equation}
where the functions $\omega_\psi$ are the same as in \eqref{6.wun}. Then, the first assumption of \eqref{6.mes-cont} is immediate and the second one is the standard corollary of the Arzela theorem and the proposition is proved.
\end{proof}
Thus, we have proved the following theorem which can be considered as the main result of this Section.
\begin{theorem}\label{Th6.main} Let the assumptions of Theorem \ref{Th.wnon-attr} hold and let, in addition, $\mu\in M^{wna}_b(\R,H)$. Then, the weak uniform attractor $\mathcal A_{un}$ of equation \eqref{eq.qdw} satisfies \eqref{6.str1} and \eqref{6.str2}.
\end{theorem}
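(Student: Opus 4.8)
The plan is to deduce this result by simply verifying the continuity hypothesis of Theorem \ref{Th5.cont-attr} and then applying that theorem directly; in fact all the substantial work has already been carried out in the two preceding propositions, so the proof is a short synthesis rather than a fresh argument.

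First I would recall that, under the assumptions of Theorem \ref{Th.wnon-attr}, the weak uniform attractor $\mathcal{A}_{un}$ exists and the extended semigroup $\mathbb{S}(t)$ possesses a global attractor $\mathbb{A}_{ext}$ on the compact metrizable absorbing set $\mathcal{B}_{ext}=\mathcal{B}\times\mathcal{H}(\mu)$. According to Theorem \ref{Th5.cont-attr}, the desired representation formulas \eqref{6.str1} and \eqref{6.str2} follow provided the solution maps $(\xi_0,z)\mapsto U_z(t,\tau)\xi_0$ are weakly continuous on $\mathcal{B}_{ext}$ for every fixed $t\ge\tau$. Thus the only thing left to establish is precisely this continuity.

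Next I would chain the two characterizations already at hand. By Proposition \ref{Prop.cont}, the required continuity holds if and only if the two measure-theoretic conditions \eqref{6.mes-cont} are satisfied for every sequence $z_n\to z$ (weakly star in $M_{loc}(\mathbb{R},H)$) in the hull $\mathcal{H}(\mu)$. In turn, Proposition \ref{Prop6.cont} asserts that conditions \eqref{6.mes-cont} hold if and only if the initial measure $\mu$ is weakly uniformly non-atomic, i.e. $\mu\in M^{wna}_b(\mathbb{R},H)$. Since this last property is exactly the extra hypothesis imposed in the statement, the chain closes: $\mu\in M^{wna}_b(\mathbb{R},H)$ yields \eqref{6.mes-cont}, which yields the weak continuity of the solution maps.

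Having verified the continuity hypothesis, I would invoke Theorem \ref{Th5.cont-attr} to conclude at once that $\mathcal{A}_{un}=\Pi_1\mathbb{A}_{ext}$ and $\mathcal{A}_{un}=\cup_{z\in\mathcal{H}(\mu)}\mathcal{K}_z\big|_{t=0}$, which are exactly \eqref{6.str1} and \eqref{6.str2}. There is no genuine obstacle at this stage: the delicate analytic point---that weak uniform non-atomicity is the sharp condition ruling out the discontinuities of $z\mapsto U_z(t,\tau)$ produced by atoms of the measures in the hull---has already been resolved in Propositions \ref{Prop.cont} and \ref{Prop6.cont}, and the present theorem merely records their combination together with the abstract attractor representation of Theorem \ref{Th5.cont-attr}.
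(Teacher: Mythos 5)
Your proposal is correct and matches the paper exactly: the paper also deduces Theorem \ref{Th6.main} as an immediate corollary of Theorem \ref{Th5.cont-attr} combined with Propositions \ref{Prop.cont} and \ref{Prop6.cont}. Nothing is missing.
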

Indeed, this is an immediate corollary of Theorem \ref{Th5.cont-attr} and Propositions \ref{Prop.cont} and \ref{Prop6.cont}.
\par
We now give some examples clarifying the posed conditions to the external forces.

\begin{example} We start with the case of regular measures $\mu(t)\in L^p_b(\R,H)$ where $p>1$. Then,
$$
\|\mu([\tau,\tau+h])\|_H=\|\int_\tau^{\tau+h}\mu(t)\,dt\|_H\le \int_\tau^{\tau+h}\|\mu(t)\|_H\,dt \le\(\int_\tau^{\tau+h}1dt\)^{1-1/p}\|\mu\|_{L^p_b}=C|h|^{1-1/p}.
$$
Thus, $\mu\in M_b^{wna}(\R,H)$ (and even strongly uniformly non-atomic) and the theory works. Moreover, in this case
$$
\mathcal H(\mu)\subset L^p_b(\R,H)\subset L^1_b(\R,H),
$$
so, all measures from the hull are regular.
\par
This will be not the case, if we consider the so-called {\it normal} external forces from $L^1_b(\R,H)$ which has been introduced in \cite{Lu} to study the uniform attractors for parabolic equations (see also \cite{Zntrc} for more details), we recall that $\mu\in L^1_b(\R,H)$ is normal if there is a monotone increasing function $\omega:\R_+\to\R_+$ such that $\lim_{h\to0}\omega(h)=0$ and
\begin{equation}\label{6.snormal}
\int_t^{t+h}\|\mu(t)\|_H\,dt\le \omega(h),\ \ t\in\R.
\end{equation}
In this case, we still have $\mu\in M_b^{wna}(\R,H)$ (also $\mu\in M_b^{sna}(\R,H)$) and the theory works. However, in this case the hull $\mathcal H(\mu)$ may contains measures with non-zero singular part. According to the Dunford-Pettis theorem, see Section \ref{s.bv}, the condition which guarantees that $\mathcal H(\mu)\subset L^1_b(\R,H)$ is a bit stronger:
$$
\int_A\|\mu(t)\|_H\,dt\le \omega(|A|),
$$
where $A$ is any (Lebesgue) measurable set on $\R$ and $|A|$ stands for the Lebesgue measure.
\par
Condition \eqref{6.normal} can be weakened as follows:
\begin{equation}\label{6.normal}
\|\int_t^{t+h}\mu(t)\,dt\|_H\le \omega(h),\ \ t\in\R
\end{equation}
which still guarantees that $\mu\in M_b^{wna}(\R,H)$.
\end{example}
\begin{example}\label{Ex6.ex} We now give two more exotic examples clarifying the nature of weakly non-atomic measures. We start with the scalar measure $\mu\in M_b(\R,\R)$. To this end, we fix a non-negative smooth function $\phi\in C_0^\infty(\R)$ supported on $[0,1]$ such that $\int_\R\phi(t)\,dt=1$ and consider the delta-like sequence $\phi_n(t):=n\phi(nt)$. Finally, we introduce the following function
\begin{equation}
\mu(t):=\frac12\sum_{n=2}^\infty\(\phi_{n^2}(t-n)-\phi_{n^2}(t-n-\frac 1{n^2})\).
\end{equation}
Clearly, this function belongs to $L^1_b(\R)$. It is also not difficult to show, that the $n$th term of this function averages to zero. So, particularly, $\mu\in M_b^{wna}(\R)$ and
$$
T(s)\mu\rightharpoondown 0
$$
as $s\to\infty$. On the other hand, the total variation of this measure reads
$$
|\mu|(t):=\frac12\sum_{n=2}^\infty\(\phi_{n^2}(t-n)+\phi_{n^2}(t-n-\frac 1{n^2})\)
$$
and we see that $nth$ term now tends to the $\delta$-function at $t=n$. Particularly,
$$
T(n)|\mu|\rightharpoondown\sum_{n\in\Bbb Z}\delta(t-n)\ne0.
$$
Thus, $|\mu|\notin M_b^{wna}(\R)$, so the assumption \eqref{6.snormal} does not imply \eqref{6.normal} and the class of measure $M_b^{wna}(\R)$ is indeed larger than $M_b^{sna}(\R)$.
\par
The next example is somehow complementary to the previous one and an alternative construction in the infinite-dimensional spaces. Namely, let $H$-be a Hilbert spaces and $\{e_n\}_{n=1}^\infty$ be an orthonormal base in it. Let
$$
\mu(t):=\sum_{n=1}^\infty\phi_n(t-n)e_n.
$$
Then, clearly $\mu\in L^1_b(\R,H)$ and its total variation reads
$$
|\mu|(t):=\sum_{n=1}^\infty\phi_n(t-n).
$$
Thus, taking any $\psi\in H$ and using that $(\psi,e_n)\to0$, we see that $\mu\in M_b^{wna}(\R,H)$. However, its total variation clearly does not belong to this space.
\end{example}
Our last example shows the pathology which may appear in the case where the condition $\mu\in M_b^{wna}(\R,H)$ is violated.
\begin{example}\label{Ex6.bad} Let us consider the first order ODE in the form
\begin{equation}\label{6.pat}
y'=y-y^3-3+3\arctan t.
\end{equation}
The example for the hyperbolic equation can be obtained analogously by adding the term $\eb y''(t)$ but the construction become less transparent, so we prefer to deal with the first order equation. In this case, the uniform attractor can be found explicitly. Namely, the external force now is $\mu_0(t)=3\arctan t$ and its hull gives
$$
\mathcal H(\mu_0)=\{-3\}\cup\{+3\}\cup\{\mu_0(t+h),\, h\in\R\}.
$$
Moreover, as not difficult to see, using e.g., the comparison principle, that every complete trajectory $y(t)$ which corresponds to the external force $\mu\in \mathcal H(\mu_0)$, $\mu\ne\pm3$ satisfies
$$
\lim_{t\to-\infty}y(t)=-2,\ \  \lim_{t\to+\infty}y(t)=-1,\ \ y'(t)>0
$$
and, consequently,
$$
\mathcal K_z=[-2,-1],\ \ \ z\in\mathcal H(\mu_0),\ \ z\ne\pm3.
$$
Finally, in the case when $z=-3$ the equation is monotone, so $\mathcal K_z=\{-2\}$ and in the case $z=+3$, we have the autonomous regular attractor $\mathcal K_z=[-1,1]$. Therefore,
$$
\mathcal A_{un}=\cup_{z\in\mathcal H(\mu_0)}K_z=[-2,1].
$$
We now consider the perturbed version of equation \eqref{6.pat}:
\begin{equation}\label{6.pat1}
y'=y-y^3-3+3\arctan t+\bar\mu(t),\ \ \bar\mu(t):=\frac12\sum_{n=1}^\infty \phi_{Kn}(t-Kn)-\phi_{Kn}(t-Kn-\frac1{Kn}),
\end{equation}
where $K>0$ is sufficiently big number and $\phi_n(t)$ is the same as in the previous example. Then, since $T(s)\bar\mu\to0$ as $s\to\pm\infty$, the hull of this external force $\mu+\bar\mu$ is similar to the non-perturbed one
$$
\mathcal H(\mu+\bar\mu)=\{+3\}\cup\{-3\}\cup\{\mu(t+h)+\bar\mu(t+h),\ \ h\in\R\}.
$$
Then, using the fact that the impact of the right-hand side $\frac12(\phi_{Kn}(t-Kn)-\phi_{Kn}(t-Kn-\frac1{Kn}))$ to the solution of \eqref{6.pat} is just a spike of size close to one half centered near $t=Kn$ if $K$ is large enough, we see that
$$
\mathcal K_z=[-2,-1/2],\ \ z\in\mathcal H(\mu+\bar\mu),\ \ z\ne\pm3.
$$
Thus, we have
$$
\cup_{z\in\mathcal H(\mu+\bar\mu)}\mathcal K_z=[-2,1].
$$
On the other hand, if we take $y\big|_{t=\tau}=1$ with $\tau>0$ big enough, we get a trajectory which is close to $y(t)=1$ with spikes of size close to one half. This shows that
$$
\mathcal A_{un}=[-2,\frac32]\ne \cup_{z\in\mathcal H(\mu+\bar\mu)}\mathcal K_z.
$$
\end{example}
\begin{rem}\label{Rem6.tr} We recall that the representation formula \eqref{6.str1} plays the fundamental role in the theory of non-autonomous attractors (see e.g. \cite{CV}), so the last example shows that the constructed theory of uniform attractors for general measures $\mu\in M_b(\R,H)$ is not satisfactory and we really need the restriction $\mu\in M_b^{wna}(\R,H)$ to have a reasonable theory.
\par
Up to the moment, the problem of building up a satisfactory attractors theory for general measures $\mu\in M_b(\R.H)$ remains open. The most natural and straightforward idea here is to endow the space $M_b(\R,H)$ with a different topology in which the $U_\mu(t,\tau)$ become continuous in $\mu$. But unfortunately this does not work even in the scalar case. Indeed, we actually need the topology $\Tau$ on the space of measures $M(0,1)$ satisfying two properties:
\par
1) The unit ball in $M(0,1)$ is sequentially compact in $\Tau$.
\par
2) The convergence $\mu_n\to\mu$ in $\Tau$ implies the point-wise convergence of distribution functions $\Phi_{\mu_n}(t)\to\Phi_\mu(t)$ for every fixed $t\in[0,1]$.
\par
But this topology does not exist. Indeed, consider a sequence $\mu_n=\delta(t-1/2)-\delta(t-1/2-1/n)$. This sequence clearly convergent to zero in the weak-star topology and does not converge to zero in $\Tau$ (since $\Phi_{\mu_n}(1/2)=1$ does not converge to zero). Note that the convergence in $\Tau$ plus uniform boundedness of a sequence implies its weak star convergence (due to the Helly theorem). Thus, we should have a subsequence $\mu_{k_n}$ which converges in $\Tau$ to zero which is impossible since $\Phi_{\mu_{n_k}}(1/2)=1$ does not tend to zero. So, we see that the problem is deeper than one might expect.
\par
Alternatively, it seems to us that the problem can be solved by passing from the dynamical process on the initial phase space to the so-called trajectory dynamical system which acts on pieces of trajectories and endowed with the proper space-time topology (e.g., the topology of $L^p_{loc}(\R_+,\mathcal E)$ with $1\le p<\infty$), see \cite{CV} and references therein. We return to this problem in the forthcoming paper.
\end{rem}

\section{Asymptotic compactness  and strong uniform attractors}\label{s.str}

In this Section we would like to address the question of existence of a \emph{strong} uniform attractor $\mathcal A_{un}^s$ for equation \eqref{eq.qdw}. By definition, this is the uniform attractor for the dynamical process $U_\mu(t,\tau)$ associated with this equation and acting in the energy phase space $\mathcal E$ endowed with the strong topology, see Definition \ref{Def.n-auto}. In this Section we always assume that
\begin{equation}
\mu\in M_b^{wna}(\R,H)
\end{equation}
and, therefore, the {\it weak} uniform attractor $\mathcal A_{un}^w$ always exists and, due to Theorem \ref{Th6.main}, possesses the description \eqref{6.str2}. It is also not difficult to see, the strong uniform attractor if exists coincides with the weak one:
\begin{equation}
\mathcal A_{un}^s=\mathcal A_{un}^w=\cup_{z\in\mathcal H(\mu)}\mathcal K_z.
\end{equation}
Moreover, due to Proposition \ref{Prop.non}, to verify the existence of a strong uniform attractor, we only need to check the asymptotic compactness of the process $U_\mu(t,\tau)$. In a fact, it is more convenient for us to check instead the asymptotic compactness of the extended semigroup $\Bbb S(t):\Phi\to\Phi$ acting on the spaces $\Phi:=\mathcal E\times\mathcal H(\mu)$, where the space $\mathcal E$ is endowed with the strong topology (and $\mathcal H(\mu)$ remains endowed with the weak-star topology). Namely, we will verify that for any sequence of $\tau_n\in\R$ such that $\tau_n\to-\infty$ and any sequences $z_n\in\mathcal H(\mu)$ and $\xi_{\tau_n}\in \mathcal B$, the sequence
\begin{equation}\label{7.ac}
\{U_{z_n}(0,\tau_n)\xi_{\tau_n}\}_{n=1}^\infty
\end{equation}
is precompact in $\mathcal E$. Due to the translation identity, this implies the asymptotic compactness of the process $U_\mu(t,\tau)$. Actually, since under our conditions the extended semigroup $\Bbb S(t)$ is weakly continuous on $\Phi$ for every fixed $t\ge0$, one can prove that the asymptotic compactness of the semigroup $\Bbb S(t)$ and the process $U_\mu(t,\tau)$ are equivalent, but we will not use this fact below.
\par
Clearly, the only assumption $\mu\in M^{wna}_b(\R,H)$ is not enough to get the strong asymptotic compactness (see examples in \cite{Zntrc}, in particular, as shown there, $\mu\in L^\infty(\R,H)$ is also not enough for compactness even in the case of linear damped wave equation). In order to state our extra assumptions on $\mu$, following \cite{Zntrc}, we introduce the following classes of external forces.

\begin{Def}\label{Def7.ext} Let $\mu\in M_b(\R,H)$. The measure $\mu$ is called {\it space-regular} if there exists a sequence $\mu_n\in M_b(\R,C^\infty(\Omega))$ such that
\begin{equation}\label{7.app}
\lim_{n\to\infty}\|\mu_n-\mu\|_{M_b(\R,H)}=0.
\end{equation}
Analogously, the measure $\mu$ is called {\it time-regular} if there exists a sequence $\mu_n\in C^\infty_b(\R,H)$ such that \eqref{7.app} holds (here and below we identify the measure which is absolutely continuous with respect to the Lebesgue measure with its density).
\end{Def}
The following proposition gives the key property of the introduced classes of functions.

 \begin{prop}\label{Prop7.reg} Let $\mu\in M_b(\R,H)$ be space-regular. Then, for every $k\in \N$ and every $\eb>0$, there exists $\bar\mu=\bar\mu_{\eb,k}\in M_b(\R,H^k)$ such that
 \begin{equation}\label{7.app1}
 \|\mu-\bar\mu\|_{M_b(\R,H)}\le\eb.
 \end{equation}
 Moreover, every measure from $\mathcal H(\mu)$ is space-regular and, for every $z\in\mathcal H(\mu)$ there exists $\bar z\in\mathcal H(\bar\mu)$  such that
 \begin{equation}\label{7.app2}
 \|z-\bar z\|_{M_b(\R,H)}\le\eb.
 \end{equation}
 Analogously, let $\mu\in M_b(\R,H)$ be time-regular. Then,  for every $k\in \mathbb N$ and every $\eb>0$, there exists $\bar\mu=\bar\mu_{\eb,k}\in H^k_b(\R,H)$ such that \eqref{7.app1} holds. Moreover, every measure from $\mathcal H(\mu)$ is time-regular and, for every $z\in\mathcal H(\mu)$ there exists $\bar z\in\mathcal H(\bar\mu)$  such that \eqref{7.app2} holds.
 \end{prop}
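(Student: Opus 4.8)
The plan is to handle the space-regular and time-regular cases in parallel by introducing a single \emph{regularization defect} functional and exploiting two of its elementary properties: invariance under time shifts and lower semicontinuity in the weak-star topology of $M_{loc}^{w^*}(\R,H)$. Fix a regularizing family $\{R_\eb\}_{\eb>0}$ acting on $M_b(\R,H)$: in the space-regular case $R_\eb$ is spatial mollification (convolution on $\T^3$ with a smooth approximate identity, applied to the values of the measure), and in the time-regular case $R_\eb$ is mollification in $t$. In both cases $R_\eb\nu$ is smooth in the corresponding variable, so it lies in $M_b(\R,H^k)$ (resp.\ in $H^k_b(\R,H)$) for every $k$, and the $R_\eb$ are contractions on $M_b(\R,H)$. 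Setting $d_\eb(\nu):=\|R_\eb\nu-\nu\|_{M_b(\R,H)}$, the first thing I would record is the characterization that $\mu$ is space-regular (resp.\ time-regular) if and only if $d_\eb(\mu)\to0$ as $\eb\to0$. The "if" direction is trivial (take the approximants $R_\eb\mu$), while the "only if" direction follows from $d_\eb(\mu)\le 2\|\mu-\mu_n\|_{M_b(\R,H)}+\|R_\eb\mu_n-\mu_n\|_{M_b(\R,H)}$ for any approximating sequence $\mu_n$, combined with the standard mollifier estimate $\|R_\eb\mu_n-\mu_n\|_{M_b(\R,H)}\le C\eb\,\|\mu_n\|_{M_b(\R,H^1)}$ (resp.\ $\le C\eb\,\|\Dt\mu_n\|_{L^1_b(\R,H)}$ in time), which tends to $0$ for each fixed $n$. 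Assertion \eqref{7.app1} is then immediate: for $\eb$ small enough $\bar\mu:=R_\eb\mu$ satisfies $\|\bar\mu-\mu\|_{M_b(\R,H)}\le\eb$ and lies in $M_b(\R,H^k)$ (resp.\ $H^k_b(\R,H)$).

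For the claim that every $z\in\mathcal H(\mu)$ is again regular, the two key facts I would isolate are: (a) $R_\eb$ commutes with the time shift $T(s)$ and the total variation norm is shift-invariant, so $d_\eb(T(s)\mu)=d_\eb(\mu)$ for all $s$; and (b) $\|\cdot\|_{M_b(\R,H)}$ is lower semicontinuous with respect to weak-star convergence in $M_{loc}^{w^*}(\R,H)$, which I would cite from Section \ref{s.bv} (Theorem \ref{bv.th.w*sc}). Given $z\in\mathcal H(\mu)$, write it as a weak-star limit $z=\lim_k T(s_k)\mu$. Since $R_\eb$ is weak-star continuous, $R_\eb T(s_k)\mu-T(s_k)\mu$ converges weak-star to $R_\eb z-z$, and (b) together with (a) yields $d_\eb(z)\le\liminf_k d_\eb(T(s_k)\mu)=d_\eb(\mu)\to0$. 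By the characterization above, $z$ is regular, which proves the second assertion.

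Finally, for \eqref{7.app2} I would use the same two ingredients, now without any mollification. Fix $z=\lim_k T(s_k)\mu$ weak-star and consider the uniformly bounded shifts $T(s_k)\bar\mu$ (bounded in $M_b(\R,H)$ by $\|\bar\mu\|_{M_b(\R,H)}\le\|\mu\|_{M_b(\R,H)}+\eb$). By Banach--Alaoglu and metrizability of the weak-star topology on bounded sets, pass to a subsequence along which $T(s_{k_j})\bar\mu\to\bar z$ weak-star for some $\bar z\in\mathcal H(\bar\mu)$, while still $T(s_{k_j})\mu\to z$. Then $T(s_{k_j})(\mu-\bar\mu)\to z-\bar z$ weak-star, so lower semicontinuity and shift-invariance give $\|z-\bar z\|_{M_b(\R,H)}\le\liminf_j\|T(s_{k_j})(\mu-\bar\mu)\|_{M_b(\R,H)}=\|\mu-\bar\mu\|_{M_b(\R,H)}\le\eb$, as required.

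The main obstacle is the weak-star lower semicontinuity of the total variation norm together with the compatibility of the regularizing operators with both time translation and weak-star limits; once these are secured, each assertion reduces to combining shift-invariance with lower semicontinuity, and the time-regular case is formally identical, with time mollification in place of space mollification.
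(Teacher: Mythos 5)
The paper does not actually prove this proposition -- it defers to \cite{Zntrc} with the remark that the argument is straightforward -- so there is no in-paper proof to match against. Your reconstruction (mollification $R_\eb$ in space or time, the characterization of regularity via $d_\eb(\mu)\to0$, shift-invariance of $d_\eb$, and passage to the hull by extracting weak-star limits along the same shift sequence) is a sound strategy and is in the spirit of the argument in \cite{Zntrc}; the approximants $R_\eb\mu$ do land in $M_b(\R,H^k)$, resp.\ $H^k_b(\R,H)$, the operators $R_\eb$ are weak-star continuous and commute with $T(s)$, and the diagonal extraction producing $\bar z\in\mathcal H(\bar\mu)$ is correct.

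There is, however, one step that fails as you state it: the weak-star lower semicontinuity of $\|\cdot\|_{M_b(\R,H)}$ with constant $1$, which you attribute to Theorem \ref{bv.th.w*sc}. That theorem is the scalar Helly theorem on a fixed interval and does not give this; worse, the claim itself is false. Take $\lambda_k=\delta_0+\delta_{1+1/k}$ in $M_b(\R,\R)$: then $\|\lambda_k\|_{M_b}=1$ for every $k$ (no closed unit interval contains both atoms), while the weak-star limit $\delta_0+\delta_1$ has $M_b$-norm $2$; moreover such a sequence can be realized as shifts $T(c_k)\lambda_0$ of a single measure $\lambda_0=\sum_n\frac12(\delta_{c_n}+\delta_{c_n+1+1/n})$ with widely spaced $c_n$, so the phenomenon does occur in exactly your setting. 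This is precisely the endpoint/subinterval pathology the paper warns about in Section \ref{s.bv} (the double $\liminf$ with the $\delta$-enlargement in \eqref{bv.thw*H2} and the remark following Theorem \ref{bv.th.w*M(H)}). The correct semicontinuity statement, obtained by covering $[t-\delta,t+1+\delta]$ with two unit intervals, is $\|z\|_{M_b}\le 2\liminf_k\|T(s_k)\lambda_0\|_{M_b}=2\|\lambda_0\|_{M_b}$. The damage is cosmetic -- you get $\|z-\bar z\|_{M_b}\le 2\eb$ and $d_\eb(z)\le 2\,d_\eb(\mu)$, which still prove the proposition after relabelling $\eb$ -- but as written the inequality in your final display is not justified and the cited theorem is the wrong one. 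You should either invoke Theorem \ref{bv.th.w*M(H)}(2) and carry the factor $2$, or work with half-open intervals of length strictly less than one and take a supremum at the end.
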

The proof of this proposition is straightforward and is given in \cite{Zntrc}.

\begin{rem}\label{Rem7.reg} More details on the properties of space or time regular functions can be found in \cite{Zntrc}. For instance, any time-regular measure $\mu$ belongs to $L^1_b(\R,H)$ (this follows, e.g., from the Dunford-Pettis theorem, see Theorem \ref{bv.cor3}). In contrast to this, the space-regular measures may have singular component. It is also known that $\mu$ is simultaneously space and time regular if and only if it is translation compact in $L^1_b(\R,H)$.
\par
The typical examples of space or time regular measures are $\mu\in M_b(\R,H^1)$ or $\mu\in C^\alpha_b(\R,H)$, $\alpha>0$ respectively. Typical example of space non-regular measure is
$$
\mu(t)=\sum_{n=1}^\infty\chi_{[n,n+1)}(t)e_n,
$$
where $\{e_n\}_{n=1}^\infty$ is an orthonormal base in $H$, say, generated by the Laplacian and $\chi_A(t)$ is a characteristic function of the set $A$. The example of time non-regular function is even simpler $\mu(t)=\sin (t^2)$. Combining these two examples, we get a measure
$$
\tilde \mu(t)=\sum_{n=1}^\infty\sin(n^2t)\chi_{[n,n+1)}(t)e_n
$$
which is neither space nor time regular. Nevertheless, $\tilde\mu\in M_b^{wna}(\R,H)$ and as elementary calculations show, gives the strong  asymptotic compactness due to the averaging effects. Thus, the introduced conditions are not necessary for the asymptotic compactness. Unfortunately, the necessary and sufficient conditions are not known so far.
\end{rem}
We are now ready to state and prove the main result of this Section.

\begin{theorem}
\label{Th7.main}
Let the assumptions of Theorem \ref{Th6.main} hold and let, in addition, the external force $\mu$ be time-regular or space regular.  Then the  dynamical processes $U_\mu(t,\tau)$ associated to problem \eqref{eq.qdw} possesses a strong uniform attractor $\Cal A_{un}^s$ which coincides with the weak attractor $\mathcal A_{un}^w$ constructed before and admits representations \eqref{6.str1} and \eqref{6.str2}.
\end{theorem}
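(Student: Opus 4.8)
The plan is to apply the abstract existence result, Proposition \ref{Prop.non}. The dissipative estimate \eqref{est.dis-str} already supplies the uniformly absorbing ball $\mathcal B$ of \eqref{5.abs-set}, which, carrying the strong topology of $\E$, is a complete metric space; so the only thing to verify is the uniform \emph{strong} asymptotic compactness. By the translation identity it suffices to show that every sequence of the form \eqref{7.ac}, i.e. $\xi_{u_n}(0)=U_{z_n}(0,\tau_n)\xi_{\tau_n}$ with $\tau_n\to-\infty$, $z_n\in\mathcal H(\mu)$ and $\xi_{\tau_n}\in\mathcal B$, is precompact in the strong topology. First I would extract, along a subsequence, $z_n\to z$ in $\mathcal H(\mu)$ and, using \eqref{est.dis-str} together with the compactness of $L^\infty(H^1)\cap W^{1,\infty}(H)\subset C(H)$ already exploited in the proof of Theorem \ref{th.ex1}, the convergences $\xi_{u_n}(t)\rightharpoonup\xi_u(t)$ in $\E$ for every $t$ and $u_n\to u$ strongly in $C_{loc}(\R,H)$ (hence in $C_{loc}(\R,L^p)$ for $p<6$, and a.e.), where $u\in\mathcal K_z$ is a complete bounded trajectory, so that $\xi_u(0)$ already lies on $\mathcal A_{un}^w=\cup_z\mathcal K_z$. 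It then remains to upgrade the weak convergence at $t=0$ to strong convergence.

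Since $\mu$, and hence every $z_n$, is non-atomic (this is guaranteed by $\mu\in M_b^{wna}(\R,H)$), the energy equality \eqref{4.energy} of Corollary \ref{Cor4.energy} is available. Write $E_n(t):=\tfrac12\|\xi_{u_n}(t)\|_\E^2+(F(u_n(t)),1)$ and let $E(t)$ be the analogous quantity for $u$. Weak lower semicontinuity of the complete energy — convexity of $\tfrac12\|\cdot\|_\E^2$, weak lower semicontinuity of the $L^6$-norm in the critical term, and strong $L^p$-convergence ($p<6$) of the sub-sextic remainder — yields $E(0)\le\liminf_n E_n(0)$. Conversely, because $u_n(0)\to u(0)$ strongly in every $L^p$, $p<6$, the reverse inequality $\limsup_n E_n(0)\le E(0)$ would force each of the three weakly lower semicontinuous pieces $\|\Dt u_n(0)\|^2$, $\|\Nx u_n(0)\|^2$ and $\|u_n(0)\|_{L^6}^6$ to converge to its limit, whence $\xi_{u_n}(0)\to\xi_u(0)$ strongly in $\E$. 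Thus the whole matter reduces to the upper bound $\limsup_n E_n(0)\le E(0)$, which is exactly what the energy method (Helly selection applied to the non-increasing functionals $t\mapsto E_n(t)-\int_{\tau_n}^{t}(\Dt u_n,z_n(ds))$ coming from \eqref{4.energy}, combined with weak lower semicontinuity of $E$ and the continuous dependence of Theorem \ref{Th4.uni}) delivers, as in \cite{ball,rosa,Zntrc}, \emph{provided one can pass to the limit in the work integral}.

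The passage to the limit in the work term $\int_s^t(\Dt u_n,z_n(d\kappa))$ is the crux, and the only place where the regularity hypothesis is used: it is a pairing of the two merely weakly convergent objects $\Dt u_n\rightharpoonup\Dt u$ and $z_n\to z$, which does not converge for general measures — this is precisely the pathology of Example \ref{Ex6.bad}. To control it, fix $\eb>0$ and invoke Proposition \ref{Prop7.reg}: choose a smooth $\bar\mu$ ($\bar\mu\in M_b(\R,H^k)$ in the space-regular case, $\bar\mu\in H^k_b(\R,H)$ in the time-regular case) with $\|\mu-\bar\mu\|_{M_b(\R,H)}\le\eb$, and for each $z_n$ a $\bar z_n\in\mathcal H(\bar\mu)$ with $\|z_n-\bar z_n\|_{M_b(\R,H)}\le\eb$. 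Split $u_n=v_n+w_n$, where $w_n$ solves the \emph{linear} damped wave equation \eqref{lwmf.pr} with force $r_n:=z_n-\bar z_n$ and zero data at $\tau_n$. The exponentially decaying linear estimate \eqref{lwmf.E-est} (equivalently \eqref{1.main}) gives, uniformly in $n$ and in $t$,
\[
\|\xi_{w_n}(t)\|_\E\le C\int_{\tau_n}^{t}e^{-\gamma_0(t-s)}|r_n|(ds)\le C\Big(\sum_{k\ge0}e^{-\gamma_0 k}\Big)\|r_n\|_{M_b(\R,H)}\le C'\eb,
\]
so that the rough part of the force is absorbed into a uniformly $O(\eb)$ correction in $\E$. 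For the remaining smooth force $\bar z_n$ the work integral does converge: in the space-regular case the distribution function $\Phi_{\bar z_n}$ is bounded in $H^k$ and of bounded variation in $H$, hence precompact in $C_{loc}(\R,H)$, and an integration by parts in time turns $\int(\Dt v_n,\bar z_n(d\kappa))$ into pairings of weakly convergent factors against the strongly convergent $\Phi_{\bar z_n}$; in the time-regular case $\bar z_n$ has a density converging strongly in $L^2_{loc}(\R,H)$, so the pairing against the weakly convergent $\Dt v_n$ passes to the limit directly.

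Putting these together, the energy method applies to the smooth-force part and shows that the corresponding contributions to $\{\xi_{u_n}(0)\}$ form a precompact set, while the rough-force part is uniformly of order $\eb$; hence $\{\xi_{u_n}(0)\}$ lies within distance $C\eb$ of a precompact set. Since $\eb>0$ is arbitrary, $\{\xi_{u_n}(0)\}$ is totally bounded and therefore precompact in the strong topology of $\E$. This is the required uniform strong asymptotic compactness, so Proposition \ref{Prop.non} furnishes a strong uniform attractor $\mathcal A_{un}^s\subset\mathcal B$; by the remark opening this section it coincides with the weak attractor $\mathcal A_{un}^w$ and, through Theorem \ref{Th6.main}, admits the representations \eqref{6.str1} and \eqref{6.str2}. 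The genuinely delicate step is the one of the third paragraph: simultaneously keeping the rough remainder $r_n$ uniformly of order $\eb$ (which forces the use of the exponential dissipation of the linear semigroup, Lemma \ref{Lem2.con}) and justifying the limit in the work term for the smooth part, where the integration by parts and the compactness of $\Phi_{\bar z_n}$ must be made compatible with the weak-uniform non-atomicity carried over to the hull.
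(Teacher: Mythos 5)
Your overall architecture --- energy method plus approximation of the external force by regular measures, with the work integral $\int e^{\delta s}(\Dt u_n,z_n(ds))$ identified as the crux --- is the same as the paper's, but two of your key steps do not hold up. First, the solution splitting $u_n=v_n+w_n$ is both unnecessary and problematic: the equation satisfied by $v_n$ carries the nonlinearity $f(v_n+w_n)$, not $f(v_n)$, so ``the energy method applies to the smooth-force part'' is not a routine application. The energy identity for $v_n$ does not close --- multiplying by $\Dt v_n$ produces $(f(u_n),\Dt u_n)-(f(u_n),\Dt w_n)$, and the leftover cross term, the identification of the weak limit of $v_n$, and the energy equality for that limit (which solves an equation with nonlinearity $f(v+w)$) would all have to be re-derived; none of this is done. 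The paper avoids the splitting entirely: it keeps the identity \eqref{7.inten} for $u_n$ with the full force $z_n$ and replaces $z_n$ by $\bar z_n$ only \emph{inside the work integral}, where the error is at most $C\eb$ simply because $\Dt u_n$ is bounded in $L^\infty(\R,H)$ and $\|z_n-\bar z_n\|_{M_b(\R,H)}\le\eb$ --- the same $O(\eb)$ bookkeeping you perform via $w_n$, but without disturbing the nonlinear structure or the limit energy equality.

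Second, and more seriously, your treatment of the space-regular case fails: the distribution functions $\Phi_{\bar z_n}$ are \emph{not} precompact in $C_{loc}(\R,H)$. A space-regular measure, hence $\bar\mu\in M_b(\R,H^k)$ and the elements of $\mathcal H(\bar\mu)$, may have a nontrivial discrete part (see Remark \ref{Rem7.reg}), so the $\Phi_{\bar z_n}$ need not be equicontinuous in $t$; boundedness in $H^k$ gives compactness of the \emph{values} in $H$, not of the functions in the uniform topology. Moreover the integration by parts you propose produces a pairing of $\Phi_{\bar z_n}$ against $\Dt^2 v_n$, which is itself a measure containing $\bar z_n$ --- exactly the ill-defined ``measure against its own distribution function'' integral discussed in the remark following Corollary \ref{Cor2.energy}. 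The paper instead establishes (in its Step 1, using the weak uniform non-atomicity of the original hull to get equicontinuity of $\Dt u_n$ in $H^{-1}$) the \emph{strong} convergence $\Dt u_n\to\Dt u$ in $C_{loc}(\R,H^{-1})$ and pairs it against $\bar z_n\rightharpoonup\bar z$ in $M_{loc}(\R,H^1)$ via the $H^{-1}$--$H^1$ duality, inserting a cutoff $\theta_\beta$ near the endpoint $t=0$ whose contribution is controlled by $\omega_{\Dt u(0)}(\beta)$. That strong $C_{loc}(\R,H^{-1})$ convergence, which you never establish, is the ingredient that actually closes the argument; your time-regular case is essentially correct modulo the splitting issue above.
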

\begin{proof} As explained before, we only need to verify the asymptotic compactness of the associated process $U_\mu(t,\tau)$ in a strong topology of $\mathcal E$. To this end, it is sufficient to verify the pre-compactness of the sequence \eqref{7.ac}, where $\tau_n\to-\infty$, $\xi_{\tau_n}$ are taken from the uniformly absorbing set $\mathcal B$ and $z_n\in\mathcal H(\mu)$. We will utilize  the so-called
energy method, see \cite{ball,rosa}, which is based on the following elementary fact: let the sequence $\xi_n\rightharpoondown\xi_\infty$ in a Hilbert space $\mathcal E$ and $\|\xi_n\|_\mathcal E\to\|\xi_\infty\|_{\mathcal E}$ than $\xi_n\to\xi_\infty$ strongly. The proof is divided into two natural steps.
\par
{\it Step 1.} At this step we utilize the weak continuity of the processes $U_z(t,\tau)$ and the existence of weak uniform attractor in order to obtain good description of weak limit points of the sequence \eqref{7.ac}. The arguments given below actually reprove the general representation formula \eqref{6.str2} for the case of equation \eqref{eq.qdw}. Nevertheless, we decide to give these arguments here since they are crucial for our proof of asymptotic compactness.
\par
Without loss of generality we may assume that $z_n\to z\in\mathcal H(\mu)$ (in the associated weak star topology). Let us also introduce the solutions which correspond to this sequence
\begin{equation}
\xi_{u_n}(t)=U_{z_n}(t,\tau_n)\xi_{\tau_n},\ t\geq\tau_n.
\end{equation}
Then, due to the dissipative estimate \eqref{est.dis-str} and the fact that $\xi_{\tau_n}$ are uniformly bounded, the sequence $\xi_{u_n}(t)$ satisfies
\begin{equation}\label{7.bound}
\|\xi_{u_n}(t)\|_{\mathcal E}+\|u_n\|_{L^4(t,t+1;L^{12})}\le C,\ \ t\ge\tau_n.
\end{equation}
In particular, the sequence \eqref{7.ac} is bounded, so passing to the subsequence if necessary, we may assume that
\begin{equation}\label{7.weak}
\xi_n:=U_{z_n}(0,\tau_n)\xi_{\tau_n}\rightharpoondown \xi_\infty.
\end{equation}
for some $\xi_\infty\in\mathcal E$.
Moreover, without loss of generality, we may assume also that
\begin{equation}
\xi_{u_n}\to\xi_u\ \text{ weakly star in $L^\infty_{loc}(\R,\mathcal E)$ and } u_n\to u \text{ weakly in $L^4_{loc}(\R,L^{12})$}
\end{equation}
to some function $u$ such that $\xi_u\in L^\infty(\R,\mathcal E)$ and $u\in L^4_b(\R,L^{12})$. Passing to the limit $n\to\infty$ in the sense of distributions in equations \eqref{eq.qdw} for $u_n$, we get in a standard way  (see e.g. \cite{KSZ} for the details) that $u$ is a complete bounded solution of \eqref{eq.qdw} with the right-hand side $z\in\mathcal H(\mu)$ and since $z\in M_b^{wna}(\R,H)$, the function $\xi_u(t)$ has no jumps, so $u(t)$ is a Shatah-Struwe solution for \eqref{eq.qdw} and therefore
$$
\xi_u\in \mathcal K_z.
$$
We need to check now that $\xi_u(0)=\xi_\infty$. To this end, we establish some strong convergences for solutions $u_n(t)$ which will be essentially used in Step 2 below. First we note that $u_n$ is bounded in $L^\infty(\R,H^1)$ and $\Dt u_n$ is bounded in $L^\infty(\R,H)$, so by the compactness arguments,
\begin{equation}\label{7.fun}
u_n\to u \text{ strongly in }\ C_{loc}(\R,H).
\end{equation}
The analogous result for $\Dt u_n(t)$ is a bit more delicate since in contrast to the standard case, $\Dt^2 u_n$ are not functions, but measures. To overcome this problem, we derive from \eqref{6.intdef} that
\begin{multline}
\|\Dt u_n(t)-\Dt u_n(\tau)\|_{H^{-1}}\le \int_\tau^t(\|u_n(s)\|_{H^1}+\|f(u_n(s))\|_{H^{-1}})\,ds+\\+\gamma\|u_n(t)-u_n(\tau)\|_{H^{-1}}+
\|z_n([\tau,t])\|_{H^{-1}}\le C|t-\tau|+\|z_n([\tau,t])\|_{H^{-1}},
\end{multline}
where we have implicitly used that $\xi_{u_n}(t)$ is bounded in $\mathcal E$ and that
$$
\|f(u_n)\|_{H^{-1}}\le C\|f(u_n)\|_{L^{6/5}}\le C(1+\|u_n\|^6_{L^6})\le C(1+\|u_n\|_{H^1}^6).
$$
Moreover, since $\mu\in M_b^{wna}(\R,H)$, there exists a monotone function $\omega:\R_+\to\R_+$ such that $\lim_{x\to0}\omega(x)=0$ and
\begin{equation}
\|z([\tau,t])\|_{H^{-1}}\le \omega(|t-\tau|),\ \ z\in\mathcal H(\mu).
\end{equation}
Thus,
$$
\|\Dt u_n(t)-\Dt u_n(\tau)\|_{H^{-1}}\le C|t-\tau|+\omega(|t-\tau|),
$$
and the functions $\Dt u_n(t)$ are equi-continuous as functions with values in $H^{-1}$. Since they are also bounded as functions in $H$, the Arzela theorem gives us that
\begin{equation}
\Dt u_n\to\Dt u \ \ \text{strongly in }\ C_{loc}(\R,H^{-1}).
\end{equation}
Thus, $\xi_{u_n}\to \xi_u$ strongly in $C_{loc}(\R,\mathcal E^{-1})$ and, particularly,
\begin{equation}\label{7.str}
\xi_n=\xi_{u_n}(0)=U_{z_n}(0,\tau_n)\xi_{\tau_n}\rightharpoondown \xi_u(0)=\xi_\infty.
\end{equation}

\emph{Step 2.} At this step we verify that $\|\xi_{u_n}(0)\|_{\mathcal E}\to \|\xi_u(0)\|_{\mathcal E}$  by passing to the limit in the appropriate  energy equality. Crucial for this method is the fact that, under the assumption that $\mu\in M_b^{wna}(\R,H)$, any Shatah-Struwe solution of equation \eqref{eq.qdw} satisfies the energy equality, see Corollary \ref{Cor4.energy}. Thus, the validity of taking the scalar product of the equation \eqref{eq.qdw} with $\Dt u$ is justified  and testing this equation with $u$ does not require any extra justification. By this reason, we may multiply (following to \cite{KSZ}) equation \eqref{eq.qdw} for the solution $u_n(t)$ by $\Dt u_n+\delta u_n$ where $\delta>0$ is small enough to get
\begin{equation}\label{7.en}
\frac{d}{dt}\E(\xi_{u_n})+\delta\E(\xi_{u_n})+B(\xi_{u_n})+\\
\delta\Big((f(u_n),u_n)-(F(u_n),1)\Big)=(z_n,\Dt u_n+\delta u_n),
\end{equation}
where
\begin{equation}
 \E(\xi_u)=\frac{1}{2}\|\xi_{u}\|^2_\E+\frac{\delta\gamma}{2}\|u\|^2_H+\delta(\Dt u, u)+(F(u),1),\ F(u)=\int_0^uf(v)\,dv
\end{equation}
and
\begin{equation}
B(\xi_u)=\left(\gamma-\frac{3\delta}{2}\right)\|\Dt u\|^2_H-\delta^2(\Dt u,u)+\left(\frac{\delta}{2}-\frac{\gamma\delta^2}{2}\right)\|u\|^2_H+\frac{\delta}{2}\|\nabla u(s)\|^2_H.
\end{equation}
Multiplying \eqref{7.en} by $e^{\delta t}$ and integrating the obtained identity in time from $\tau_n$ to $0$ we get the energy identity in the following integral form
\begin{multline}\label{7.inten}
\E(\xi_{u_n})(0)+
\int_{-\infty}^{0}e^{\delta s}\left(B(\xi_{u_n})(s)+\delta\Big((f(u_n(s)),u_n(s))-(F(u_n(s)),1)\Big)\right)\,ds=\\
e^{\delta\tau_n}\E(\xi_{\tau_n})+\int_{-\infty}^0e^{\delta s}(\Dt u_n(s),z_n(ds))+\delta \int_{-\infty}^0e^{\delta s}(u_n(s),z_n(ds)),
\end{multline}
where, to avoid dependence on $n$ in the lower limit of integration, we  set $\xi_{u_n}(s)\equiv 0$ for $s<\tau_n$.
\par
We want to pass to the  limit $n\to\infty$ in \eqref{7.inten}. To this end, we first note that the weak convergence $\xi_{u_n}(0)\to\xi_{u}(0)$ in $\mathcal E$ and the compactness of the embedding $H^1\subset H$ imply that
\begin{equation}
\frac{\delta\gamma}{2}\|u(0)\|^2+\delta(\Dt u(0), u(0))=\lim_{n\to\infty}\left(\frac{\delta\gamma}{2}\|u_n(0)\|^2+\delta(\Dt u_n(0), u_n(0))\right).
\end{equation}
In order to pass to the limit in the terms containing the non-linearity, we recall
that $f(u)$ has a positive coefficient in front of the leading quintic term, see \eqref{4.f}.  Therefore,
\begin{equation}
1.\ F(s)\geq -C,\ s\in\R,\qquad 2.\ f(s)s-F(s)\geq -C,\ s\in\R,
\end{equation}
for some $C=C_f$. Moreover, the strong convergence $u_n(0)\to u(0)$ implies the convergence almost everywhere (passing to a subsequence if necessary). This allows to apply the Fatou lemma and get
\begin{equation}
(F(u(0)),1)\leq\liminf_{n\to\infty}(F(u_n(0)),1).
\end{equation}
Analogously, using the strong convergence $u_n\to u$ in $C_{loc}(\R,H)$ and the boundedness of $u_n$ in $L^\infty(\R,H^1)$, we arrive at
\begin{multline}
\int_{-\infty}^0e^{\delta s}\Big((f(u(s)),u(s))-(F(u(s)),1)\Big)\,ds \leq\\
\liminf_{n\to\infty}\int_{-\infty}^0e^{\delta s}\Big((f(u_n(s)),u_n(s))-(F(u_n(s)),1)\Big)\,ds.
\end{multline}
Next, for small enough $\delta=\delta(\gamma)>0$ the quadratic form $B$ is positive definite and hence is convex and weakly lower semicontinuous, therefore
\begin{equation}
\int_{-\infty}^0e^{\delta s}B(\xi_u(s))\,ds\leq \liminf_{n\to\infty}\int_{-\infty}^0e^{\delta s}B(\xi_{u_n}(s))\,ds.
\end{equation}
Let us now look at the right-hand side of \eqref{7.inten}.
Since $\xi_{\tau_n}$ are bounded in $\E$ by the assumption and $\tau_n$ tends to $-\infty$ the first term on the right hand side of  vanishes.
\par
Moreover, since $z_n$ and $u_n$ are bounded in $M_b(\R,H)$ and $L^\infty(\R,H)$ respectively and $u_n\to u$ strongly in $C_{loc}(\R,H)$, we have
\begin{equation}
\int_{-\infty}^0e^{\delta s}(u(s),z(ds))=\lim_{n\to\infty}\int_{-\infty}^0e^{\delta s}(u_n(s),z_n(ds))\,ds.
\end{equation}
Here we also used that $z_n\to z$ weakly star in $M_{loc}(\R,H)$ as well as $\mu\in M_b^{wna}(\R,H)$ (in order to guarantee that $z_n\big|_{t\le0}\to z\big|_{t\le0}$ weakly star in $M_{loc}(-\infty,0;H)$).
\par
Up to the moment, we have nowhere used that $\mu$ is time or space regular. This will be essentially used in order to pass to the limit in the second term in the right-hand side of \eqref{7.inten}, namely, to show that
\begin{equation}\label{7.bad}
\int_{-\infty}^0e^{\delta s}(\Dt u(s),z(ds))=\lim_{n\to\infty}\int_{-\infty}^0e^{\delta s}(\Dt u_n(s),z_n(ds)).
\end{equation}
Assume for the moment that \eqref{7.bad} is verified and complete the proof of the theorem. Indeed, passing to a subsequence if necessary, we may assume that
$$
\limsup_{n\to\infty}\|\xi_{u_n}(0)\|_{\mathcal E}=\lim_{n\to\infty}\|\xi_{u_n}(0)\|_{\mathcal E}.
$$
Then, taking $\liminf_{n\to\infty}$ from both sides of \eqref{7.inten} and using the inequalities obtained above together with the fact that
$$
\liminf_{n\to\infty}(A_n+B_n)\ge\liminf_{n\to\infty}A_n+\liminf_{n\to\infty}B_n,
$$
we arrive at
\begin{multline}\label{unEid3}
\limsup_{n\to\infty}\frac{1}{2}\|\xi_{u_n}(0)\|^2_\E+\frac{\delta\gamma}{2}\|u(0)\|^2+\delta(\Dt u(0),u(0))+(F(u(0)),1)+\\
\int_{-\infty}^{0}e^{\delta s}\left(B(\xi_{u})(s)+\delta\Big((f(u(s)),u(s))-(F(u(s)),1)\Big)\right)\,ds\leq\\
\int_{-\infty}^0e^{\delta s}(\Dt u(s)+\delta u(s),z(ds)).
\end{multline}
On the other hand, since $u$ is \emph{Shatah-Struwe} solution of the limit problem,  it also obeys energy equality
\begin{multline}\label{unEid4}
\frac{1}{2}\|\xi_{u}(0)\|^2_\E+\frac{\delta\gamma}{2}\|u(0)\|^2+\delta(\Dt u(0),u(0))+(F(u(0)),1)+\\
\int_{-\infty}^{0}e^{\delta s}\left(B(\xi_{u})(s)+\delta\Big((f(u(s)),u(s))-(F(u(s)),1)\Big)\right)\,ds=\\
\int_{-\infty}^0e^{\delta s}(\Dt u(s)+\delta u(s),z(ds)).
\end{multline}
Combining \eqref{unEid3}, \eqref{unEid4} with weak lower semi continuity of $\|\cdot\|_\E$ we get the chain of inequalities
\begin{equation}
\limsup_{n\to\infty}\|\xi_{u_n}(0)\|^2_\E\leq\|\xi_{u}(0)\|^2_\E\leq \liminf_{n\to\infty}\|\xi_{u_n}(0)\|^2_\E,
\end{equation}
that implies the equality
\begin{equation}
\|\xi_{u}(0)\|^2_\E=\lim_{n\to\infty}\|\xi_{u_n}(0)\|^2_\E,
\end{equation}
which together with the already proved weak convergence $\xi_{u_n}(0)\rightharpoondown\xi_u(0)$ proves the strong convergence.  Thus, in order to finish the proof of theorem, we only need to verify identity \eqref{7.bad}. This is done in the following lemma.
\begin{lemma} Let $\mu\in M_b^{wna}(\R,H)$ be a measure which is either time or space regular. Assume also that the sequence of functions $\xi_{u_n}\in C_b(\R,\mathcal E)$ be uniformly bounded and that $\xi_{u_n}\to\xi_u$ strongly in $C_{loc}(\R,\mathcal E^{-1})$. Then, equality \eqref{7.bad} holds for every sequence $z_n\in\mathcal H(\mu)$ such that $z_n\to z$ weakly star in $M_{loc}(\R,H)$.
\end{lemma}
\begin{proof}[Proof of the Lemma] Let $\mu$ be time regular. Then, according to Proposition \ref{Prop7.reg}, for every $\eb>0$, there exists $\bar\mu\in H^2_b(\R, H)$ and measures $\bar z_n\in \mathcal H(\bar\mu)$ such that
\begin{equation}\label{7.close}
\|\mu-\bar\mu\|_{M_b(\R,H)}\le\eb,\ \ \|z_n-\bar z_n\|_{M_b(\R,H)}\le\eb.
\end{equation}
Moreover, since the hull $\mathcal H(\bar\mu)$ is compact in a weak topology of $H^2_{loc}(\R,H)$, we may also assume that $\bar z_n\rightharpoondown \bar z\in\mathcal H(\bar\mu)$ weakly in $H^2_{loc}(\R,H)$. In particular,
$$
\|z-\bar z\|_{M_b(\R,H)}\le\eb.
$$
Since the functions $\xi_{u_n}(t)$ are bounded in $L^\infty(\R,\mathcal E)$, we have
\begin{multline}\label{7.close2}
|\int_{-\infty}^0e^{\delta s}(\Dt u_n(s),z_n(ds))-\int_{-\infty}^0e^{\delta s}(\Dt u_n(s),\bar z_n(ds))|+\\+|\int_{-\infty}^0e^{\delta s}(\Dt u(s), z(ds))-\int_{-\infty}^0e^{\delta s}(\Dt u(s),\bar z(ds))|\le C\eb.
\end{multline}
Thus, we only need to prove that
\begin{equation}\label{7.conv}
\int_{-\infty}^0e^{\delta s}(\Dt u(s),\bar z(ds))=\lim_{n\to\infty}\int_{-\infty}^0e^{\delta s}(\Dt u_n(s),\bar z_n(ds)).
\end{equation}
To verify this we utilize the fact that $\bar z_n$ is smooth in time and that $u_n\to u$ strongly in $C_{loc}(\R,H)$, so we may integrate by parts and get
\begin{multline}
\int_{-\infty}^0e^{\delta s}(\Dt u_n(s),\bar z_n(ds))=(\bar z_n(0),u_n(0))-\int_{-\infty}^0e^{\delta s}(\bar z_n'(s)+\delta \bar z_n(s),u_n(s))\,ds\to\\\to (\bar z(0),u(0))-\int_{-\infty}^0 e^{\delta s}(\bar z'(s)+\delta \bar z(s),u(s))\,ds=\int_{-\infty}^0e^{\delta s}(\Dt u(s),\bar z(ds))
\end{multline}
and the lemma is proved in the case where $\mu$ is time regular.
\par
Assume now that $\mu$ is space regular. Then, analogously to the time regular case, we may approximate the measure $\mu$ by $\bar \mu\in M_b(\R,H^1)$ and fix $\bar z_n\in H(\bar\mu)$ in such a way that \eqref{7.close} and \eqref{7.close2} hold. And again, the desired convergence would be proved if we check \eqref{7.conv}. However, since we do not assume that $\bar\mu\in M_b^{wna}(\R,H^1)$, this convergence may be broken and we need to proceed in a more accurate way. Namely, let $\beta>0$ be a small number and
\begin{equation}
\theta_\beta(t):=\begin{cases}1,\ t\le0,\\ 0,\ t\ge\beta,\\ 1-\beta^{-1}t,\ t\in[0,\beta].\end{cases}
\end{equation}
Then, since $\bar z_n\to\bar z$ weakly star in $M_{loc}(\R,H^1)$ and $\Dt u_n\to\Dt u$ strongly in $C_{loc}(\R,H^{-1})$, for every $\beta>0$, we have
\begin{equation}\label{7.conv1}
\int_{\R}e^{\delta s}(\theta_\beta(s)\Dt u(s),\bar z(ds))=\lim_{n\to\infty}\int_{\R}e^{\delta s}(\theta_\beta(s)\Dt u_n(s),\bar z_n(ds)).
\end{equation}
Thus, to prove the convergence, we need to estimate
\begin{multline}
|\int_{[0,\beta]}e^{\delta s}[(\theta_\beta(s)\Dt u_n(s),\bar z_n(ds))-(\theta_\beta(s)\Dt u(ds),\bar z(s))]|\le\\\le \|\bar z_n\|_{M_b(\R,H^1)}e^{\delta\beta}\|\Dt u_n-\Dt u\|_{C(0,\beta;H^{-1})}+|\int_{[0,\beta]}e^{\delta s}(\theta_\beta(s)\Dt u(s),\bar z_n(ds)-\bar z(ds))|.
\end{multline}
The first term in the right-hand side tends to zero as $n\to\infty$ and due to \eqref{7.close} the second term satisfies
$$
|\int_{[0,\beta]}e^{\delta s}(\theta_\beta(s)\Dt u(s),\bar z_n(ds)-\bar z(ds))|\le
|\int_{[0,\beta]}e^{\delta s}(\theta_\beta(s)\Dt u(s),z_n(ds)-z(ds))|+C\eb,
$$
where the constant $C$ is independent of $n$. Thus, we only need to prove that
\begin{equation}\label{7.fconv}
\lim_{\beta\to0}\int_0^\beta e^{\delta s} (\theta_\beta(s)\Dt u(s),z(ds))=0
\end{equation}
uniformly with respect to all $z\in\mathcal H(\mu)$. Moreover, since $z$ in non-atomic, the function $\Dt u(s)$ is continuous as a function with values in $H$, we only need to prove that
$$
\lim_{\beta\to0}\(\Dt u(0),\int_0^\beta(1-\beta^{-1}s)z(ds)\)=0.
$$
Finally, integration by parts together with the fact that $\mu\in M_b^{wna}(\R,H)$ give
$$
|\(\Dt u(0),\int_0^\beta(1-\beta^{-1}s)z(ds)\)|=|\beta^{-1}\int_0^\beta(\Phi_z(s),\Dt u(0))\,ds|\le
\sup_{s\in[0,\beta]}|(z([0,s]),\Dt u(0))|\le \omega_{\Dt u(0)}(\beta).
$$
Thus, the convergence \eqref{7.fconv} is verified and the lemma is proved. The theorem is also proved.
\end{proof}
\end{proof}

\section{Smoothness of uniform attractors}\label{s.sm}

The aim of this Section is to verify that the uniform attractor $\mathcal A_{un}$ of the damped wave equation \eqref{eq.qdw} is more regular if the external force $\mu\in M_b(\R,H)$ is more regular. We consider two model cases of extra regularity for $\mu$, namely,
\begin{equation}\label{8.mtime}
\partial_t\mu\in M_b(\R,H)
\end{equation}
or
\begin{equation}\label{8.mspace}
\mu\in M_b(\R,H^\alpha)
\end{equation}
for some (small) positive $\alpha$. The main result of this Section is the following theorem.

\begin{theorem}\label{Th8.main} Let the assumptions of theorem \ref{Th.wnon-attr} hold and let in addition the measure $\mu$ satisfies \eqref{8.mtime} or \eqref{8.mspace}. Then, the dynamical process $U_\mu(t,\tau)$ associated with equation \eqref{eq.qdw} possesses the strong uniform attractor $\mathcal A_{un}^s$ in the phase space $\mathcal E$ (which coincides with the weak uniform attractor $\mathcal A_{un}^w$ constructed in Theorem \ref{Th.wnon-attr}) and this attractor is bounded in the space $\mathcal E^\alpha:=H^{\alpha+1}\times H^{\alpha}$ for some small $\alpha>0$:
\begin{equation}\label{8.atsm}
\|\mathcal A_{un}\|_{\mathcal E^\alpha}\le C.
\end{equation}
\end{theorem}
 \begin{rem}\label{Rem8.rem} Note that \eqref{8.mtime} together with the assumption $\mu\in M_b(\R,H)$ implies that $\mu$ is a function of bounded variation with values in $H$
 $$
 \mu\in BV_b(\R,H).
 $$
 In particular, $\mu\in M_b^{wna}(\R,H)$ and therefore the uniform attractor possesses representations \eqref{6.str1} and \eqref{6.str2}. In contrast to this, in the case where \eqref{8.mspace} is satisfied, the measure $\mu$ may contain discrete part and \eqref{6.str2} is not necessarily satisfied.
 \end{rem}
To prove the theorem, we split the solution $u$ into three parts
\begin{equation}\label{8.3}
u(t)=\theta(t)+v(t)+w(t),
\end{equation}
where $\theta(t)$ solves the linear wave equation
\begin{equation}\label{8.lin}
\Dt^2\theta+\gamma\Dt\theta+(1-\Dx)\theta=\mu,\ \ \xi_\theta\big|_{t=\tau}=0,
\end{equation}
the function $v$ solves the following auxiliary nonlinear problem
\begin{equation}\label{8.dec}
\Dt^2 v+\gamma\Dt v+(1-\Dx)v+f(v)+Lv=0,\ \ \xi_v\big|_{t=\tau}=\xi_u\big|_{t=\tau},
\end{equation}
where $L>0$ is a sufficiently big number, and the reminder $w$ solves the following problem with zero initial conditions:
\begin{equation}\label{8.reg}
\Dt^2w+\gamma\Dt w+(1-\Dx)w+[f(\theta+v+w)-f(v)]=Lv,\ \ \xi_w\big|_{t=\tau}=0.
\end{equation}
We need to obtain good estimates for every of three functions $\theta$, $v$ and $w$. We start with the simplest case of $\theta$ which satisfies the linear equation.
\begin{lemma}\label{Lem8.theta} Let the above assumptions hold and let $\mu$ satisfies either \eqref{8.mspace} or \eqref{8.mtime}. Then the solution $\theta$ of equation \eqref{8.lin} satisfies
\begin{equation}\label{8.thetagood}
\|\xi_{\theta}(t)\|_{\mathcal E^\alpha}+\|\theta\|_{L^4([t,t+1],H^{\alpha,12})}\le C\|\mu\|_W,
\end{equation}
where $\alpha>0$ is small enough and the symbol $W$ means the space $BV_b(\R,H)$ (if \eqref{8.mtime} is satisfied) or $M_b(\R,H^\alpha)$ (if \eqref{8.mspace} is satisfied).
\end{lemma}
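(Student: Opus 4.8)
The plan is to treat the space-regular case \eqref{8.mspace} and the time-regular case \eqref{8.mtime} separately, reducing each to the linear estimates of Sections \ref{s.p} and \ref{s.lwmf}.

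For the space-regular case $\mu\in M_b(\R,H^\alpha)$, I would first apply the fractional operator $(1-\Dx)^{\alpha/2}$ to equation \eqref{8.lin}. Since this operator commutes with $\Dt$, $\Dx$ and $(1-\Dx)$, the function $\tilde\theta:=(1-\Dx)^{\alpha/2}\theta$ solves the same damped linear equation \eqref{8.lin} with zero initial data and right-hand side $\tilde\mu:=(1-\Dx)^{\alpha/2}\mu$, which by \eqref{1.i} lies in $M_b(\R,H)$ with $\|\tilde\mu\|_{M_b(\R,H)}=\|\mu\|_{M_b(\R,H^\alpha)}$. The energy estimate \eqref{lwmf.E-est} of Theorem \ref{lwmf.thE!}, applied with vanishing initial data and summed over unit intervals (using $\int_\tau^t e^{-\gamma_0(t-s)}|\tilde\mu|(ds)\le C\|\tilde\mu\|_{M_b(\R,H)}$), then gives $\|\xi_{\tilde\theta}(t)\|_{\E}\le C\|\mu\|_{M_b(\R,H^\alpha)}$, that is $\|\xi_\theta(t)\|_{\E^\alpha}\le C\|\mu\|_{M_b(\R,H^\alpha)}$. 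For the Strichartz part I would apply the measure-driven estimate \eqref{2.str} of Theorem \ref{lwmf.th.str} on each interval $[t,t+1]$, with the already controlled initial data $\xi_{\tilde\theta}(t)$ together with $\tilde\mu|_{[t,t+1]}$; since the constant in \eqref{2.str} depends only on the length $T-\tau=1$, this yields $\|\tilde\theta\|_{L^4([t,t+1];L^{12})}\le C\|\mu\|_{M_b(\R,H^\alpha)}$, which is exactly $\|\theta\|_{L^4([t,t+1];H^{\alpha,12})}\le C\|\mu\|_{M_b(\R,H^\alpha)}$, again by \eqref{1.i}.

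For the time-regular case, recall (Remark \ref{Rem8.rem}) that \eqref{8.mtime} forces $\mu$ to be absolutely continuous, $\mu=g\,dt$ with $g\in BV_b(\R,H)\subset L^\infty_b(\R,H)$ and $\Dt\mu=dg\in M_b(\R,H)$. The basic bound $\|\xi_\theta(t)\|_{\E}\le C\|g\|_{L^\infty_b(\R,H)}\le C\|\mu\|_{BV_b(\R,H)}$ follows from Corollary \ref{Cor1.damp} with $\alpha=0$ and zero initial data. To gain regularity I would differentiate in time: the function $\vartheta:=\Dt\theta$ is an energy solution of the measure-driven equation $\Dt^2\vartheta+\gamma\Dt\vartheta+(1-\Dx)\vartheta=dg$ with initial data $\xi_\vartheta(\tau)=\{0,g(\tau)\}$ read off from \eqref{8.lin} at $t=\tau$. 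Applying the measure energy estimate \eqref{lwmf.E-est} gives $\|\xi_\vartheta(t)\|_{\E}\le C(\|g(\tau)\|_H e^{-\gamma_0(t-\tau)}+\|dg\|_{M_b(\R,H)})\le C\|\mu\|_{BV_b(\R,H)}$ uniformly in $t$; in particular the left-continuous representatives satisfy $\Dt\theta(t)\in H^1$ and $\Dt^2\theta(t)=\Dt\vartheta(t)\in H$ with norms bounded by $C\|\mu\|_{BV_b(\R,H)}$. Finally I would recover the spatial regularity of $\theta$ by elliptic regularity: rewriting \eqref{8.lin} pointwise as $(1-\Dx)\theta(t)=g(t)-\Dt^2\theta(t)-\gamma\Dt\theta(t)$, the right-hand side is bounded in $H$ for a.e.\ $t$, so \eqref{1.i} yields $\theta\in L^\infty_b(\R,H^2)\subset L^\infty_b(\R,\E^\alpha)$ for small $\alpha$. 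The Strichartz norm then comes for free, since the Sobolev embedding $H^2\subset H^{\alpha,12}$ (valid for small $\alpha$) gives $\|\theta\|_{L^4([t,t+1];H^{\alpha,12})}\le\|\theta\|_{L^\infty(H^{\alpha,12})}\le C\|\mu\|_{BV_b(\R,H)}$.

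The main obstacle is the measure structure in the time-regular case. Because $g$ may genuinely jump, $\Dt\mu=dg$ has atoms and $\vartheta=\Dt\theta$ is only an energy solution of a measure-driven equation with discontinuous $\Dt\vartheta$; one must therefore justify the differentiation step rigorously --- most cleanly by differentiating the Duhamel representation \eqref{lwmf.DuhM}, or by passing to the limit in the difference quotients $h^{-1}(\theta(\cdot+h)-\theta(\cdot))$ using the uniform measure-energy bounds --- and then observe that, although $\Dt^2\theta$ is distributionally a measure, its weakly-left-continuous representative is uniformly bounded in $H$, which is precisely what the elliptic regularity step needs. The remaining computations are routine.
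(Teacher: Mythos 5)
Your proposal is correct and follows essentially the same route as the paper: in the space-regular case you commute $(1-\Dx)^{\alpha/2}$ with the equation and invoke Theorem \ref{lwmf.thE!} together with \eqref{2.str}, and in the time-regular case you differentiate in time, apply the measure-driven energy/Strichartz estimates to $\Dt\theta$ with data $\{0,\mu(\tau)\}$, and recover $\xi_\theta\in\E^1$ by elliptic regularity plus the embedding $H^2\subset H^{\alpha,12}$. Your extra remark on justifying the time differentiation (via the Duhamel formula or difference quotients, and working with the left-continuous representative of $\Dt^2\theta$) is a sensible elaboration of a step the paper leaves implicit.
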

\begin{proof} Indeed, in the case of conditions \eqref{8.mspace}, estimate \eqref{8.thetagood} is an immediate corollary of Theorem \ref{lwmf.thE!} and estimate \eqref{2.str} applied to the function $\bar\theta:=(-\Dx+1)^{\alpha/2}\theta$ and the elliptic regularity.
\par
Let now assumption \eqref{8.mtime} be satisfied. Then, differentiating equation \eqref{8.lin} in time and denoting $\bar\theta:=\Dt\theta$, we get
$$
\Dt^2\bar\theta+\gamma\Dt\bar\theta-(\Dx-1)\bar\theta=\Dt\mu,\ \ \xi_{\bar\theta}\big|_{t=\tau}=\{0,\mu(\tau)\}.
$$
Since, $\mu(\tau)$ is well-defined and $\|\mu(\tau)\|_{H}\le C\|\mu\|_W$, we may apply Theorem \ref{lwmf.thE!} and estimate \eqref{2.str} to this equation and get
$$
\|\xi_{\bar\theta}(t)\|_{\mathcal E}+\|\bar\theta\|_{L^4([t,t+1],L^{12})}\le C\|\mu\|_W,\ \ t\ge\tau.
$$
Using now that $(-\Dx+1)\theta=-\Dt\bar\theta(t)-\gamma\bar\theta+\mu(t)$, we derive that the function $\xi_{\theta}(t)$ is bounded in $\mathcal E^1$. Finally, using that $H^2\subset H^{3/4,12}$, we see that estimate \eqref{8.thetagood} is satisfied at least for $\alpha\le3/4$. Of course, the bound $\alpha\le3/4$ is artificial and can be easily removed, but the validity of \eqref{8.thetagood} for some small positive $\alpha$ is enough for our purposes. Thus, the lemma is proved.
\end{proof}
At the next step we show that the function $v(t)$ decays exponentially as $t\to\infty$.
\begin{lemma}\label{Lem8.vgood} Let the above assumptions hold. Then, the solution $v(t)$ satisfies the following estimate:
\begin{equation}\label{8.vgood}
\|\xi_v(t)\|_{\mathcal E}+\|v\|_{L^4(t,t+1;L^{12})}\le Q(\|\xi_u(\tau)\|_{\mathcal E})e^{-\delta(t-\tau)},\ \ t\ge\tau,
\end{equation}
where the positive constant $\delta$ and the monotone function $Q$ are independent of $t$, $\tau$ and $v$.
\end{lemma}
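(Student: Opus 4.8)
The plan is to read \eqref{8.dec} as an \emph{autonomous, unforced} damped quintic wave equation whose nonlinearity $g(v):=f(v)+Lv$ is, for $L$ large, globally monotone, so that the origin is exponentially stable in $\E$. First observe that $g(v)=v^5+\bigl(h(v)+Lv\bigr)$, and $h(v)+Lv$ again satisfies \eqref{4.f} (same second derivative, vanishing at $0$); hence $g$ satisfies \eqref{4.f} and the whole theory of the previous sections applies to \eqref{8.dec} with $\mu=0$. In particular $v$ is the unique global Shatah--Struwe solution (Proposition~\ref{Prop.nc}) and, the forcing being trivially non-atomic, the energy equality of Corollary~\ref{Cor4.energy} holds, which legitimises the multiplications below. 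The role of the largeness of $L$ is that $g$ becomes monotone increasing: $g'(v)=5v^4+h'(v)+L$, and \eqref{4.f} yields $|h'(v)|\le C(1+|v|^{q+1})$ with $q+1<4$, so Young's inequality gives $|h'(v)|\le\tfrac52 v^4+C'$ and $g'(v)\ge0$ for $L\ge C'$. With $g(0)=0$ this forces the three inequalities $g(v)v\ge0$, $0\le G(v):=\int_0^v g$, and $G(v)\le g(v)v$ for all $v\in\R$, which are exactly what turns the energy identity into exponential decay.

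Next I would establish the energy part of \eqref{8.vgood} by a Lyapunov functional. Multiplying \eqref{8.dec} by $\Dt v+\beta v$ for small $\beta>0$ and setting
$$
\mathcal L(t):=\tfrac12\|\xi_v(t)\|_\E^2+(G(v(t)),1)+\beta(\Dt v(t),v(t))+\tfrac{\beta\gamma}2\|v(t)\|_H^2,
$$
a standard computation gives $\frac d{dt}\mathcal L=-(\gamma-\beta)\|\Dt v\|_H^2-\beta(\|v\|_H^2+\|\Nx v\|_H^2)-\beta(g(v),v)$. Using $(g(v),v)\ge(G(v),1)\ge0$ and choosing $\beta\le\gamma/2$ gives $\frac d{dt}\mathcal L\le-\delta\mathcal L$ for some $\delta>0$, since for $\beta$ small $\mathcal L$ is equivalent to $\|\xi_v\|_\E^2+(G(v),1)$. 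Gronwall then yields $\mathcal L(t)\le\mathcal L(\tau)e^{-\delta(t-\tau)}$, and because $\xi_v(\tau)=\xi_u(\tau)$ and $(G(v(\tau)),1)\le C(1+\|v(\tau)\|_{L^6}^6)\le Q(\|\xi_u(\tau)\|_\E)$ via $H^1\subset L^6$, this is precisely the energy part of \eqref{8.vgood} after renaming $\delta$ and $Q$.

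It remains to propagate the decay to the Strichartz norm. On each unit window $[t,t+1]$ the energy-to-Strichartz estimate of Theorem~\ref{th.str-f1} (with vanishing right-hand side) gives $\|v\|_{L^4(t,t+1;L^{12})}\le Q(\|\xi_v(t)\|_\E)$. Since a general monotone $Q$ need not be small for a small argument, I would invoke the small-data theory: there is a threshold $\rho_0>0$ such that if $\|\xi_v(t_0)\|_\E\le\rho_0$ then the nonlinearity is a genuine perturbation and Corollary~\ref{Cor1.damp} gives the \emph{linear} bound $\|v\|_{L^4(t_0,t_0+1;L^{12})}\le C\|\xi_v(t_0)\|_\E$. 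By the energy decay just proved there is $T_*=T_*(\|\xi_u(\tau)\|_\E)$ with $\|\xi_v(t)\|_\E\le\rho_0$ for $t\ge\tau+T_*$; on this range the linear bound combined with the energy decay gives $\|v\|_{L^4(t,t+1;L^{12})}\le CQ(\|\xi_u(\tau)\|_\E)e^{-\delta(t-\tau)}$, while on the bounded range $\tau\le t\le\tau+T_*$ one bounds the Strichartz norm by $Q(\|\xi_u(\tau)\|_\E)$ and absorbs the harmless factor $e^{\delta T_*}$ into a new monotone function of $\|\xi_u(\tau)\|_\E$.

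The main obstacle is this last interplay between the \emph{exponential} energy decay and the \emph{nonlinear} dependence on the energy in the Strichartz estimate: it cannot be handled by a direct substitution and genuinely needs the small-data/threshold splitting above. The only other point requiring care is the algebraic verification that large $L$ makes $g$ monotone, so that $(g(v),v)\ge(G(v),1)\ge0$; this is the routine consequence of \eqref{4.f} indicated in the first paragraph.
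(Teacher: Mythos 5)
Your argument is correct, and the energy part coincides with the paper's: the paper also multiplies \eqref{8.dec} by $\Dt v+\delta v$, uses that for large $L$ the shifted nonlinearity $f_L(u)=f(u)+Lu$ satisfies $F_L(u)\ge0$ and $f_L(u)u-F_L(u)\ge0$, and applies Gronwall to the perturbed energy functional. Where you diverge is in upgrading the exponential decay to the Strichartz norm. You invoke the small-data local theory (a threshold $\rho_0$ below which the nonlinearity is perturbative and the Strichartz norm is \emph{linear} in the data) and then split time into an initial interval $[\tau,\tau+T_*]$ and a tail. This works, but the small-data claim is an extra ingredient you assert rather than prove, and your concluding remark that the nonlinear dependence of $Q$ on the energy ``cannot be handled by a direct substitution and genuinely needs the small-data/threshold splitting'' is not accurate: the paper does handle it by direct substitution. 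Namely, it first gets the \emph{uniform} (non-decaying) bound $\|v\|_{L^4(t,t+1;L^{12})}\le Q(\|\xi_u(\tau)\|_{\mathcal E})$ from \eqref{L5L10.nh}, then exploits $f_L(0)=0$ to factor the nonlinearity as
\begin{equation*}
\|f_L(v)\|_{L^1(t,t+1;H)}\le C\bigl(1+\|v\|^4_{L^4(t,t+1;L^{12})}\bigr)\,\|\xi_v\|_{L^\infty(t,t+1;\mathcal E)},
\end{equation*}
in which the first factor is merely bounded (by a constant depending on $\|\xi_u(\tau)\|_{\mathcal E}$) while the second factor carries the exponential decay already established; feeding this back as a forcing term into the \emph{linear} Strichartz estimate then yields the decaying bound on $\|v\|_{L^4(t,t+1;L^{12})}$ in one step, with no threshold, no time-splitting, and no appeal to small-data well-posedness. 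Both routes are valid; the paper's is lighter and self-contained within the estimates already proved, so you may want to adopt it, or else supply the (standard but omitted) continuity/bootstrap argument behind your small-data linear bound.
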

\begin{proof} Multiplying equation \eqref{8.dec} by $\Dt v+\delta v$, where $\delta>0$ is small enough and arguing in a standard way, we obtain the analogue of the identity \eqref{7.en}, where $z_n=0$ and the non-linearity $f$ is replaced by $f_L(u):=f(u)+Lu$. Since the non-linearity $f$ satisfies \eqref{4.f}, one can verify that, for a sufficiently large $L$,
$$
F_L(u)\ge0,\ \ f_L(u).u-F_L(u)\ge0
$$
and \eqref{7.en} reads
$$
\frac d{dt}\mathcal E(\xi_v)+\delta\mathcal E(\xi_v)\le0.
$$
Applying the Gronwall inequality and using that
$$
\frac14\|\xi_v\|^2_{\E}\le\mathcal E(\xi_v)\le Q(\|\xi_v\|_{\E}),
$$
we end up with the desired estimate for the energy norm of $v$. To get the control of the Strichartz norm, we apply energy to Strichartz estimate \eqref{L5L10.nh} to equation \eqref{8.dec} and get
$$
\|v\|_{L^4(t,t+1;L^{12})}\le Q(\|\xi_v(t)\|_{\E})\le Q(\|\xi_v(\tau)\|_{\E}).
$$
Next we utilize again the fact that $f(0)=0$ which together with the fact that $f$ has no more than quintic growth rate gives us the control
$$
\|f_L(u)\|_{L^1(t,t+1;H)}\le C(1+\|u\|_{L^4(t,t+1;L^{12})}^4)\|\xi_v\|_{L^\infty(t,t+1;\E)}\le Q(\|\xi_u(\tau)\|_{\E})e^{-\delta(t-\tau)}.
$$
Finally, treating the term $f_L(u)$ as an external force and  applying the Strichartz estimate to the obtained linear equation, we arrive at the desired decaying Strichartz estimate for $v$ and finish the proof of the lemma.
\end{proof}
We now ready to treat the most complicated $w$-component of the solution $u$. We do this in two steps: at the first step we get an exponentially growing in time estimate which will be refined at the second step.
\begin{lemma}\label{Lem8.bad} Let the above assumptions hold. Then the solution $w(t)$ satisfies the following estimate:
\begin{equation}\label{8.bad}
\|\xi_w(t)\|_{\E^\alpha}+\|w\|_{L^4(\tau,t;H^{\alpha,12})}\le \(Q(\|\xi_u(\tau)\|_{\E})+Q(\|\mu\|_W)\)e^{K(t-\tau)},
\end{equation}
where $\alpha\in(0,\frac25)$ is sufficiently small positive exponent and the monotone functions $K=K(\|\xi_u(\tau)\|_\E+\|\mu\|_W)$ and $Q$ are independent of $t$, $\tau$, and of the concrete choices of  $u$ and $\mu$.
\end{lemma}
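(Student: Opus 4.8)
The plan is to apply the higher-regularity damped Strichartz estimate of Corollary \ref{Cor1.damp} to equation \eqref{8.reg}, treating its nonlinear and $Lv$ terms as an external force, and then to close a Gronwall inequality for the higher-order norm of $w$. Since \eqref{8.reg} carries zero initial data, writing $g:=Lv-[f(\theta+v+w)-f(v)]$ and applying \eqref{1.main} with exponent $\alpha$ gives, on $[\tau,t]$,
\begin{equation}
\|\xi_w(t)\|_{\E^\alpha}+\(\int_\tau^te^{-4\delta(t-s)}\|w(s)\|^4_{H^{\alpha,12}}\,ds\)^{1/4}\le C\int_\tau^te^{-\delta(t-s)}\|g(s)\|_{H^\alpha}\,ds,
\end{equation}
so everything reduces to controlling $\|g(s)\|_{H^\alpha}$. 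The linear term is harmless: since $\alpha\le1$ we have $\|Lv\|_{H^\alpha}\le L\|v\|_{H^1}$, which decays exponentially by Lemma \ref{Lem8.vgood} and contributes only $Q(\|\xi_u(\tau)\|_\E)$ to the right-hand side.

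The core of the argument is the nonlinear term, which I would estimate by the Kato--Ponce inequality of Section \ref{s.ap} applied with the perturbation $\theta+w$ in place of $w$:
\begin{multline}
\|f(\theta+v+w)-f(v)\|_{H^\alpha}\le C\(1+\|v\|_{L^{12}}+\|\theta+w\|_{L^{12}}\)^{4-\alpha}\times\\
\(1+\|v\|_{H^1}+\|\theta+w\|_{H^1}\)^{\alpha}\|\theta+w\|_{H^{1+\alpha}}^{1-\alpha}\|\theta+w\|_{H^{\alpha,12}}^{\alpha}.
\end{multline}
First I would observe that all the $H^1$-factors are uniformly bounded: since $\xi_u$, $\xi_\theta$ and $\xi_v$ are bounded in $\E$ (by \eqref{est.dis-str}, Lemma \ref{Lem8.theta} and Lemma \ref{Lem8.vgood}), so is $\xi_w=\xi_{u-\theta-v}$, whence $(1+\|v\|_{H^1}+\|\theta+w\|_{H^1})^\alpha$ is dominated by a constant depending only on $\|\xi_u(\tau)\|_\E+\|\mu\|_W$. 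Next, by the triangle inequality the two top-order factors split into $\theta$-parts, controlled by $\|\mu\|_W$ via Lemma \ref{Lem8.theta}, and $w$-parts $\|w\|_{H^{1+\alpha}}^{1-\alpha}\|w\|_{H^{\alpha,12}}^{\alpha}$. The decisive point is that these two $w$-exponents sum to $(1-\alpha)+\alpha=1$, so the nonlinearity is of total degree one in the higher-order norm of $w$; this structural feature is what makes a Gronwall argument possible, and it is the reason the Kato--Ponce bound is needed precisely in the range $\alpha\in(0,\tfrac25)$.

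To integrate in time I would work interval by interval and use H\"older with the conjugate exponents $\tfrac4{4-\alpha}$ and $\tfrac4\alpha$: the coefficient $(1+\|v\|_{L^{12}}+\|\theta+w\|_{L^{12}})^{4-\alpha}$ lies in $L^{4/(4-\alpha)}$ in time, the factor $\|w\|_{H^{\alpha,12}}^\alpha$ in $L^{4/\alpha}$, and $\|w\|_{H^{1+\alpha}}^{1-\alpha}$ in $L^\infty$. This yields, on each unit window, a bound of the form
\begin{equation}
\int_t^{t+1}\|f(\theta+v+w)-f(v)\|_{H^\alpha}\,ds\le C\,\mathcal P^{4-\alpha}\(\|\theta\|_{\ast}+Z\),
\end{equation}
where $\mathcal P:=1+\|v\|_{L^4(t,t+1;L^{12})}+\|\theta+w\|_{L^4(t,t+1;L^{12})}$, $Z$ abbreviates $\|\xi_w\|_{L^\infty(\E^\alpha)}+\|w\|_{L^4(H^{\alpha,12})}$ on the window (here I used $a^{1-\alpha}b^\alpha\le a+b$ to merge the two $w$-factors), and $\|\theta\|_\ast$ collects the $\theta$-norms bounded by $\|\mu\|_W$. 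The dissipative estimate \eqref{est.dis-str} bounds $\mathcal P$ uniformly by a monotone function of $\|\xi_u(\tau)\|_\E+\|\mu\|_W$, so $\mathcal P^{4-\alpha}\le K$ with $K=K(\|\xi_u(\tau)\|_\E+\|\mu\|_W)$. Feeding this back into the Strichartz estimate produces a linear integral inequality $Z(T)\le A+K\int_\tau^TZ(s)\,ds$ with $A=Q(\|\xi_u(\tau)\|_\E)+Q(\|\mu\|_W)$, and Gronwall's lemma gives the claimed bound $Z(t)\le A\,e^{K(t-\tau)}$.

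The main obstacle is the bookkeeping that realizes the degree-one structure: one must verify that, after the time-H\"older splitting, the two top-order norms of $w$ really recombine linearly, so that the growth is only exponential rather than finite-time blow-up; this hinges on the sharp exponents in the Kato--Ponce inequality and forces $\alpha<\tfrac25$. A secondary technical nuisance is reconciling the weighted Strichartz quantity $\big(\int e^{-4\delta(t-s)}\|w\|^4_{H^{\alpha,12}}\,ds\big)^{1/4}$ appearing in \eqref{1.main} with the unweighted $L^4$-norms used in the H\"older step; as in the proof of Corollary \ref{Cor1.damp}, this is handled by summing over unit time-windows and exploiting the concavity of $z\mapsto z^{1/4}$.
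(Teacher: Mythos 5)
Your overall skeleton is the same as the paper's: apply the weighted Strichartz estimate \eqref{1.main} to \eqref{8.reg} with $g=Lv-[f(\theta+v+w)-f(v)]$, control $g$ via the Kato--Ponce corollary \eqref{A2.f}, split the top-order factors into $\theta$-parts (bounded by Lemma \ref{Lem8.theta}) and $w$-parts, and close by Gronwall. The gap is in how you recombine the two $w$-factors. You propose to merge $\|w\|_{H^{1+\alpha}}^{1-\alpha}\|w\|_{H^{\alpha,12}}^{\alpha}$ via $a^{1-\alpha}b^\alpha\le a+b$ and then multiply by $\mathcal P^{4-\alpha}\le K$, arriving at a window bound of the form $K(\|\theta\|_*+\|\xi_w\|_{L^\infty(\E^\alpha)}+\|w\|_{L^4(H^{\alpha,12})})$. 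This does not close: the Strichartz norm $\|w\|_{L^4(H^{\alpha,12})}$ now appears on the right-hand side \emph{linearly with the large constant} $K$, so it can neither be absorbed into the left-hand side of \eqref{1.main} (you would need a coefficient smaller than the constant there, and $K$ is a monotone function of $\|\xi_u(\tau)\|_\E+\|\mu\|_W$ with no smallness) nor fed into Gronwall, since it is an integral quantity over a window rather than a pointwise-in-time function; your final inequality $Z(T)\le A+K\int_\tau^TZ(s)\,ds$ is therefore not actually derived. The paper avoids this by \emph{not} splitting the product pointwise: after the time-H\"older step with exponents $\frac1{1-\alpha/4}$ and $\frac4\alpha$, the Strichartz contribution survives as $\bigl(\int_\tau^te^{-4\delta(t-s)}\|w(s)\|^4_{H^{\alpha,12}}\,ds\bigr)^{\alpha/4}$, i.e.\ raised to the small power $\alpha/4$, and the weighted Young inequality with exponents $1/\alpha$ and $1/(1-\alpha)$ turns it into $\tfrac18\bigl(\int_\tau^te^{-4\delta(t-s)}\|w\|^4_{H^{\alpha,12}}\,ds\bigr)^{1/4}$ plus a term involving only $\|\xi_w(s)\|_{\E^\alpha}^{\frac{1-\alpha}{1-\alpha/4}}$ under the time integral; the first piece is absorbed into the left-hand side of \eqref{8.main} because its coefficient is small, and the second is a genuine pointwise-in-time quantity for which Gronwall applies to $Y(t)=\|\xi_w(t)\|_{\E^\alpha}^{\frac{1-\alpha}{1-\alpha/4}}$.

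Your argument can be repaired along the same lines you sketch, but only if you keep the $\eb$-weighted form of Young's inequality, $\mathcal P^{4-\alpha}a^{1-\alpha}b^{\alpha}\le C_\eb\mathcal P^{\frac{4-\alpha}{1-\alpha}}a+\eb b$, so that the $L^4(H^{\alpha,12})$-norm of $w$ enters with an arbitrarily small coefficient and can be hidden in the left-hand side after summing the unit windows against the exponential weights (as in the proof of Corollary \ref{Cor1.damp}), while the $L^\infty(\E^\alpha)$-part carries the large constant and is handled by a (discrete) Gronwall. As written, with the unweighted inequality $a^{1-\alpha}b^\alpha\le a+b$, the step fails.
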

\begin{proof} We treat the non-linearity in equation \eqref{8.reg} as an external force and apply the $\E^\alpha$ energy and Strichartz estimate to this linear equation to get
\begin{multline}\label{8.main}
\|\xi_w(t)\|_{\E^\alpha}+\(\int_\tau^te^{-4\delta(t-s)}\|w(s)\|^4_{H^{\alpha,12}}\,ds\)^{1/4}\le\\\le C\int_\tau^te^{-\delta(t-s)}\(\|f(\theta+v+w)-f(v)\|_{H^\alpha}+L\|v(s)\|_{H^1}\)\,ds,
\end{multline}
where $\delta>0$ is a sufficiently small number, see \eqref{1.main}.
To estimate the non-linear term we use the key inequality \eqref{A2.f} which gives
\begin{multline}\label{8.rough}
\|f(\theta+v+w)-f(v)\|_{H^\alpha}\le \\\le C\(1+\|\theta+w\|_{L^{12}}+\|v\|_{L^{12}}\)^{4-\alpha}\(1+\|\theta+w\|_{H^1}+\|v\|_{H^1}\)^{\alpha}
\|\theta+w\|_{H^{1+\alpha}}^{1-\alpha}\|\theta+w\|^\alpha_{H^{\alpha,12}}
\end{multline}
and 
$$
\|\theta+w\|_{H^{1+\alpha}}^{1-\alpha}\|\theta+w\|^\alpha_{H^{\alpha,12}}\le \|\theta\|_{H^{1+\alpha}}^{1-\alpha}\|\theta\|^\alpha_{H^{\alpha,12}}+
\|w\|_{H^{1+\alpha}}^{1-\alpha}\|w\|^\alpha_{H^{\alpha,12}}+
\|\theta\|_{H^{1+\alpha}}^{1-\alpha}\|w\|^\alpha_{H^{\alpha,12}}+\|w\|_{H^{1+\alpha}}^{1-\alpha}\|\theta\|^\alpha_{H^{\alpha,12}}.
$$
Using the H\"older inequality together with the control \eqref{8.thetagood} for the $\theta$-component, we arrive at
\begin{multline}\label{8.huge}
\int_\tau^te^{-\delta(t-s)}\(1+\|\theta+w\|_{L^{12}}+\|v\|_{L^{12}}\)^{4-\alpha}\(1+\|\theta+w\|_{H^1}+\|v\|_{H^1}\)^{\alpha}
\|w\|_{H^{1+\alpha}}^{1-\alpha}\|w\|^\alpha_{H^{\alpha,12}}\,ds\le\\\le C\(\int_\tau^te^{-\delta(t-\tau)\frac{1-\alpha}{1-\alpha/4}}(1+\|\theta+w\|^4_{L^{12}}+\|v\|^4_{L^{12}})\right.\times\\\times\left.
(1+\|\theta+w\|_{H^1}+\|v\|_{H^1})^{\frac\alpha{1-\alpha/4}}
\|\xi_w(s)\|_{\E^\alpha}^{\frac{1-\alpha}{1-\alpha/4}}\,ds\)^{1-\alpha/4}
\(\int_\tau^te^{-4\delta(t-s)}\|w(s)\|^4_{H^{\alpha,12}}\,ds\)^{\alpha/4}\le\\\le
\frac18\(\int_\tau^te^{-4\delta(t-s)}\|w(s)\|^4_{H^{\alpha,12}}\,ds\)^{1/4}+
C\(\int_\tau^te^{-\delta'(t-\tau)}l_{\theta,v,w}(s)
\|\xi_w(s)\|_{\E^\alpha}^{\frac{1-\alpha}{1-\alpha/4}}\,ds\)^{\frac{1-\alpha/4}{1-\alpha}},
\end{multline}
where $\delta'=\delta\frac{1-\alpha}{1-\alpha/4}$ and
$$
l_{\theta,v,w}(s):=(1+\|\theta(s)+w(s)\|^4_{L^{12}}+\|v(s)\|^4_{L^{12}})
(1+\|\theta(s)+w(s)\|_{H^1}+\|v(s)\|_{H^1})^{\frac\alpha{1-\alpha/4}}.
$$
Estimation of three other terms in \eqref{8.rough} which contain $H^{1+\alpha}$ and $H^{\alpha,12}$ norms of $\theta$ is analogous, but even simpler due to the control \eqref{8.thetagood}. 
According to the already obtained estimates, we have
\begin{equation}\label{8.lbad}
\int_{t}^{t+1}l_{\theta,v,w}(s)\,ds\le Q_1=Q_1(\|\xi_u(0)\|_{\E}+\|\mu\|_{W}),\ \ t\ge\tau
\end{equation}
and inserting \eqref{8.huge} into the right-hand side of \eqref{8.main}, we arrive at
\begin{multline}\label{8.gron1}
\|\xi_w(t)\|_{\E^\alpha}+\(\int_\tau^te^{-4\delta(t-s)}\|w(s)\|^4_{H^{\alpha,12}}\,ds\)^{1/4}\le\\\le
C\(\int_\tau^te^{-\delta'(t-\tau)}l_{\theta,v,w}(s)
\|\xi_w(s)\|_{\E^\alpha}^{\frac{1-\alpha}{1-\alpha/4}}\,ds\)^{\frac{1-\alpha/4}{1-\alpha}}+Q_2,
\end{multline}
where the constant $Q_2$ depends only on $\|\xi_u(0)\|_{\E}$ and $\|\mu\|_W$. Introducing $Y(t):=\|\xi_w(t)\|_{\E^\alpha}^{\frac{1-\alpha}{1-\alpha/4}}$ and taking power $\frac{1-\alpha}{1-\alpha/4}$ from both sides of \eqref{8.gron1}, we finally get
\begin{equation}\label{8.gr}
Y(t)\le Q+C\int_\tau^te^{-\delta'(t-s)}l_{\theta,v,w}(s)Y(s)\,ds
\end{equation}
for some new constant $Q$ depending on $\xi_u(0)$ and $\|\mu\|_W$.
The Gronwall inequality applied to this estimate together with \eqref{8.lbad} give the desired estimate \eqref{8.bad} and finish the proof of the lemma.
\end{proof}
We now state (following \cite{ZCPAA2004}) a  corollary of the obtained estimates which is crucial for what follows.
\begin{cor}\label{Cor8.split} Let the above assumptions hold and let $\xi_u(\tau)\in\mathcal B$ where $\mathcal B$ is a uniform absorbing set for equation \eqref{eq.qdw}. Then, for every $\eb>0$ there exists a splitting of the solution $w(t)$ of problem \eqref{8.reg} $w(t)=\bar w(t)+\tilde w(t)$ such that
\begin{equation}\label{8.small}
\int_s^t\|\tilde w(\kappa)\|^4_{L^{12}}\,ds+\int_s^t\|\xi_{\tilde w}(\kappa)\|_{\E}^{\frac\alpha{1-\alpha/4}}\,d\kappa\le C_\eb+\eb(t-s),\ \ t\ge s\ge\tau
\end{equation}
and
\begin{equation}\label{8.smooth}
\|\xi_{\bar w}(t)\|_{\E^\alpha}+\|\bar w\|_{L^4(t,t+1;H^{\alpha,12})}\le C_\eb,\ \ t\ge\tau,
\end{equation}
where the constant $C_\eb$ depends only on $\eb$ (and is independent of $t,s,\tau$ and $\xi_u(\tau)\in\mathcal B$). Moreover,
\begin{equation}\label{8.bound}
\|\xi_{\bar w}(t)\|_{\E}+\|\bar w\|_{L^4(t,t+1;L^{12})}+\|\xi_{\tilde w}(t)\|_{\E}+\|\tilde w\|_{L^4(t,t+1;L^{12})}\le C,\ \ t\ge\tau,
\end{equation}
where $C$ is independent also of $\eb$.
\end{cor}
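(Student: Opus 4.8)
The plan is to realize the splitting $w=\bar w+\tilde w$ on the level of the \emph{driving force} of the linear damped wave equation that $w$ solves. Writing \eqref{8.reg} as $\Dt^2 w+\gamma\Dt w+(1-\Dx)w=g$ with $g:=Lv-[f(\theta+v+w)-f(v)]$, I would fix a threshold $M=M(\eb)$, set $\chi_M(s):=1$ on the set $\{s:\|u(s)\|_{L^{12}}>M\}$ and $\chi_M(s):=0$ elsewhere, and define $\tilde w$ as the solution (with zero Cauchy data at $t=\tau$) of the damped linear equation driven by $-\chi_M[f(\theta+v+w)-f(v)]$, and $\bar w:=w-\tilde w$, which is then driven by $Lv-(1-\chi_M)[f(\theta+v+w)-f(v)]$. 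By the uniform dissipative Strichartz bound \eqref{est.dis-str}, $\int_t^{t+1}\|u\|^4_{L^{12}}\,ds\le C$ whenever $\xi_u(\tau)\in\mathcal B$, so Chebyshev's inequality forces the ``singular'' time set $\{\chi_M=1\}$ to be thin: its density per unit interval is at most $CM^{-4}$, which I can make as small as I wish by enlarging $M$.

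The uniform energy bound \eqref{8.bound} is the easy part. Both driving forces are bounded in $L^1_b(\R,H)$ by $\|g\|_{L^1_b}\le C$, using $\|f(\theta+v+w)-f(v)\|_H\le C(1+\|u\|^4_{L^{12}})\|\theta+w\|_{H^1}$ together with \eqref{8.thetagood}, \eqref{8.vgood} and the energy control. Hence applying the damped energy-to-Strichartz estimate of Corollary \ref{Cor1.damp} with $\alpha=0$ separately to $\bar w$ and to $\tilde w$ yields the uniform control $\|\xi_{\bar w}(t)\|_\E+\|\bar w\|_{L^4(t,t+1;L^{12})}+\|\xi_{\tilde w}(t)\|_\E+\|\tilde w\|_{L^4(t,t+1;L^{12})}\le C$, uniformly in $t,\tau$ and independently of $\eb$.

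For the density estimate \eqref{8.small} I would use the convolution form of Corollary \ref{Cor1.damp}, namely $\|\xi_{\tilde w}(\kappa)\|_\E\le C\int_\tau^\kappa e^{-\delta(\kappa-\sigma)}\chi_M(\sigma)\|f(u)-f(v)\|_H\,d\sigma$ and the analogous weighted bound for the $L^{12}$ quantity. The point is to exploit three facts simultaneously: the exponential damping kernel, the thinness of $\{\chi_M=1\}$, and the concavity of $x\mapsto x^{\alpha/(1-\alpha/4)}$ (the exponent being $<1$). Decomposing the singular forcing into its contributions over the thin family and using subadditivity of this sub-unit power turns the $L^1_b$-bounded (but not small) total mass into a sum over the thin family; a H\"older inequality in which the \emph{number} of contributing pieces carries the small factor (density $\le CM^{-4}$) then produces a bound of the exact shape $C_\eb+\eb(t-s)$, first for $\int_s^t\|\xi_{\tilde w}\|^{\alpha/(1-\alpha/4)}_\E$ and, through the Strichartz part of \eqref{1.main}, for $\int_s^t\|\tilde w\|^4_{L^{12}}$.

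Finally, \eqref{8.smooth} would be obtained by running the $\E^\alpha$ Strichartz/Gronwall scheme \eqref{8.main}--\eqref{8.gr} for $\bar w$, the only---but decisive---difference from Lemma \ref{Lem8.bad} being that the critical nonlinear feedback is now switched off precisely where $\|u\|_{L^{12}}$ is large. After estimating the nonlinearity by \eqref{8.rough} and feeding back $w=\bar w+\tilde w$ (with the $\tilde w$-contribution already controlled by \eqref{8.small} and the $\theta$-contribution by \eqref{8.thetagood}), the resulting integral inequality for $Y:=\|\xi_{\bar w}\|^{(1-\alpha)/(1-\alpha/4)}_{\E^\alpha}$ has an effective coefficient whose uniform time-average can be pushed below the damping rate by enlarging $M$, so the Gronwall inequality closes with a bound $C_\eb$ independent of $t,\tau$. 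The main obstacle is exactly this closure: unlike in Lemma \ref{Lem8.bad}, where the bound \eqref{8.bad} was allowed to grow like $e^{K(t-\tau)}$, here one must produce a genuinely \emph{uniform} $\E^\alpha$-bound, while the dissipative Strichartz estimate only guarantees that the troublesome time set is thin, not that the critical norm is equi-integrable on it. Reconciling the uniform regular bound for $\bar w$ with the small-density remainder $\tilde w$---so that the feedback of $\tilde w$ into the $\bar w$-inequality is absorbed rather than amplified---is the quantitative heart of the argument and the place where the thinness of the singular set, the damping, and the sub-unit power must be balanced against one another, following the scheme of \cite{ZCPAA2004}.
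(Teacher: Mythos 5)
Your decomposition by truncating the driving force on the super-level set $\{s:\|u(s)\|_{L^{12}}>M\}$ does deliver the uniform bound \eqref{8.bound}, but it cannot deliver the key smallness \eqref{8.small}, and this is a genuine gap rather than a technical detail. Two things go wrong. First, Chebyshev only makes the \emph{measure} of the bad set small; the $L^1$-mass of $\|f(\theta+v+w)-f(v)\|_H\le C(1+\|u\|^4_{L^{12}})\|\theta+w\|_{H^1}$ carried by that set need not be small, since the dissipative Strichartz estimate gives no uniform integrability of $\|u\|^4_{L^{12}}$ over the family of all solutions starting in $\mathcal B$ (you acknowledge this yourself). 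Second, and independently, even if the singular force were supported on an arbitrarily thin set with small mass per unit interval, the response $\tilde w$ of the damped equation persists outside that set, decaying only like $e^{-\delta t}$; the convolution bound of Corollary \ref{Cor1.damp} then yields $\|\xi_{\tilde w}(\kappa)\|_{\E}\le C$ uniformly but nothing smaller, so $\int_s^t\|\xi_{\tilde w}(\kappa)\|^{\alpha/(1-\alpha/4)}_{\E}\,d\kappa\le C(t-s)$ with a constant that does not shrink as $M\to\infty$: subadditivity of the sub-unit power and counting over the thin family still leave one $O(1)$ contribution per unit interval, because every unit interval may contain a piece of the bad set carrying the full mass $C$. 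The same obstruction defeats the closure of the $\E^\alpha$ Gronwall for $\bar w$: the time-average of the surviving coefficient $(1-\chi_M)(1+\|u\|^4_{L^{12}})(\cdots)$ is generically of order one no matter how large $M$ is (take $\|u(s)\|_{L^{12}}$ of order one on a whole unit interval), so it cannot be pushed below the damping rate by enlarging $M$; moreover the feedback of $\tilde w$ into the $H^\alpha$-estimate of the nonlinearity requires $\E^\alpha$-level control of $\tilde w$, which your decomposition does not provide.

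The paper obtains the splitting from a different and more elementary source of smallness. One fixes $T\sim 1/\eb$ and \emph{restarts} the three-term decomposition \eqref{8.3}, $u=\theta_n+v_n+w_n$, at the times $\tau+nT$, with $\xi_{v_n}\big|_{t=\tau+nT}=\xi_u\big|_{t=\tau+nT}\in\mathcal B$ and $\xi_{w_n}\big|_{t=\tau+nT}=0$, and sets $\tilde w:=v_n-v$, $\bar w:=\theta_n+w_n-\theta$ on $[\tau+nT,\tau+(n+1)T)$. By \eqref{8.vgood} each $v_n$ decays exponentially from uniformly bounded data, so its contribution to $\int(\|v_n\|^4_{L^{12}}+\|\xi_{v_n}\|_{\E}^{\alpha/(1-\alpha/4)})$ over one window is a constant independent of $T$; summing over the roughly $(t-s)/T$ windows meeting $[s,t]$ gives exactly $C_\eb+\eb(t-s)$. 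Dually, \eqref{8.smooth} holds because the exponentially growing bound \eqref{8.bad} for $w_n$ is only ever used on a window of length $T$, hence is bounded by $Q\,e^{KT}=:C_\eb$, while $\theta_n$ and $\theta$ are controlled by \eqref{8.thetagood}. If you wish to keep a force-truncation scheme you must identify where the factor $\eb$ comes from; in the paper it comes from averaging an exponentially decaying quantity over windows of length $1/\eb$, a resource your decomposition does not have.
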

\begin{proof} Note that, due to estimates \eqref{8.vgood} and \eqref{8.thetagood}, it is sufficient to construct the desired splitting $u(t)=\bar u(t)+\tilde u(t)$ only. To do this we fix $T=T(\eb)$ (actually $T\sim\frac1\eb$) and construct splitting \eqref{8.3} at $\tau_0=\tau$, $\tau_1=\tau+T$, $\tau_2=\tau+2T$, etc. Namely, denote by $\theta_n(t)$, $v_n(t)$, $w_n(t)$ the solutions of problems
\eqref{8.lin}, \eqref{8.dec} and \eqref{8.reg} respectively where the initial time moment $\tau$ is replaced by $\tau+nT$ and define
\begin{equation}
\tilde u(t):=v_n(t),\ \ \bar u(t):=\theta_n(t)+w_n(t),\ \  t\in[\tau+nT,\tau+(n+1)T).
\end{equation}
Then, as elementary calculations based on \eqref{8.vgood} show, the function $\tilde u(t)$ satisfies \eqref{8.small} and \eqref{8.bound}. In turn, estimates \eqref{8.thetagood} and \eqref{8.bad} together with the dissipative estimate for the solution $u(t)$ guarantee that the function $\bar u$ satisfies \eqref{8.smooth} and \eqref{8.bound}. Finally, in order to obtain the desired splitting of $w$, we just need to fix
$$
\tilde w(t)=\tilde u(t)-v(t),\ \ \bar w(t)=\bar u(t)-\theta(t)
$$
and the corollary is proved.
\end{proof}
We are now ready to refine Lemma \ref{Lem8.bad}.
\begin{lemma}\label{Lem8.good1} Let the above assumptions hold and let $\xi_u(\tau)\in\mathcal B$. Then the solution $w$ of problem \eqref{8.reg} satisfies
\begin{equation}\label{8.wgood}
\|\xi_{w}(t)\|_{\E^\alpha}+\|w\|_{L^4(t,t+1;H^{\alpha,12})}\le C,
\end{equation}
where the constant $C$ is independent of $t$, $\tau$ and $\xi_u(\tau)\in\mathcal B$.
\end{lemma}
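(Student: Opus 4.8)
The plan is to upgrade the exponentially growing bound of Lemma \ref{Lem8.bad} to the uniform-in-time bound \eqref{8.wgood} by feeding the splitting of Corollary \ref{Cor8.split} back into the Gronwall-type inequality \eqref{8.gr}. The obstruction in Lemma \ref{Lem8.bad} is that the coefficient $l_{\theta,v,w}$ multiplying the self-referential norm $\|\xi_w\|_{\E^\alpha}$ only has bounded, rather than small, time-average (see \eqref{8.lbad}), so the Gronwall inequality produces the factor $e^{K(t-\tau)}$. The point of the splitting $w=\bar w+\tilde w$ is precisely that the $\E^\alpha$-regularity of the smooth part $\bar w$ is already under control by \eqref{8.smooth}, while the remaining part $\tilde w$ enters the dangerous coefficient only through quantities that are small on average by \eqref{8.small}. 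Fixing $\eb>0$ (to be chosen) and the associated splitting, I would therefore re-estimate the nonlinearity in \eqref{8.main} so as to separate these two effects.

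Concretely, first I would write $\theta+v+w=v+(\theta+\bar w)+\tilde w$ and, adding and subtracting $f(v+\tilde w)$, split the increment as
$$\|f(\theta+v+w)-f(v)\|_{H^\alpha}\le \|f(v+(\theta+\bar w)+\tilde w)-f(v+\tilde w)\|_{H^\alpha}+\|f(v+\tilde w)-f(v)\|_{H^\alpha}.$$
To the first term I would apply the Kato--Ponce inequality \eqref{A2.f} with increment $\theta+\bar w$: since $\|\xi_{\theta}\|_{\E^\alpha}$ and $\|\xi_{\bar w}\|_{\E^\alpha}$ are bounded by $C$ and $C_\eb$ via \eqref{8.thetagood} and \eqref{8.smooth}, the $H^{1+\alpha}$- and $H^{\alpha,12}$-factors here are \emph{not} self-referential, so after inserting this term into \eqref{8.main} and using the dissipative control of all the $L^{12}$- and $H^1$-factors on unit intervals one gets a bounded forcing contributing only to the constant $Q_\eb$. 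To the second term I would apply \eqref{A2.f} with increment $\tilde w$; the $\E^\alpha$-factor $\|\tilde w\|_{H^{1+\alpha}}^{1-\alpha}\|\tilde w\|^\alpha_{H^{\alpha,12}}$, after writing $\tilde w=w-\bar w$, splits into the genuinely self-referential quantity $\|w\|_{H^{1+\alpha}}^{1-\alpha}\|w\|^\alpha_{H^{\alpha,12}}$ plus a $\bar w$-contribution absorbed into $Q_\eb$. The crucial gain is that the coefficient now accompanying the self-referential term carries only the $L^{12}$- and $H^1$-norms of $\tilde w$ and of the exponentially decaying $v$, rather than of the full solution.

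Repeating the H\"older--Young manipulation that led from \eqref{8.rough} through \eqref{8.huge} to \eqref{8.gr}, I expect to reach a refined inequality $Y(t)\le Q_\eb+C\int_\tau^t e^{-\delta'(t-s)}\tilde l(s)Y(s)\,ds$, with $Y=\|\xi_w\|_{\E^\alpha}^{(1-\alpha)/(1-\alpha/4)}$ as in \eqref{8.gr}, where the new coefficient $\tilde l$ is built only from $v$ and $\tilde w$. Using the exponential decay \eqref{8.vgood} of $v$ together with the averaged smallness \eqref{8.small} of $\tilde w$, one should obtain $\int_s^t \tilde l(\kappa)\,d\kappa\le C_\eb+\eb'(t-s)$ with $\eb'\to0$ as $\eb\to0$. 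Choosing $\eb$ small so that $\eb'$ is small relative to the fixed rate $\delta'$, a uniform Gronwall inequality adapted to the exponential kernel (the small-density argument of \cite{ZCPAA2004}) then yields a bound on $Y(t)$ uniform in $t,\tau$, hence the $\E^\alpha$-bound in \eqref{8.wgood}; the Strichartz part follows by reinserting this bound into \eqref{8.main}.

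The main obstacle I anticipate is making the density of $\tilde l$ genuinely small. For the pure quintic part $u^5$ this is automatic, since $f(v+\tilde w)-f(v)$ carries a coefficient of the form $(\|v\|_{L^{12}}+\|\tilde w\|_{L^{12}})^{4}$ with no additive constant, whose time integral is small on average by \eqref{8.small} and \eqref{8.vgood}. The subquintic perturbation $h$ (with $h(0)=0$ and growth $q<3$) is more delicate, since it produces coefficients with an additive constant and hence a term of density one; here I would exploit the strictly lower growth $q<3$ to treat $h$ as a genuinely lower-order contribution, absorbing it by Young's inequality — using that the self-referential power $(1-\alpha)/(1-\alpha/4)$ is strictly less than $1$ — into a small multiple of $Y$ plus a bounded remainder. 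Getting this bookkeeping to close simultaneously for the quintic and subquintic parts, with a single choice of $\eb$ compatible with the fixed rate $\delta'$, is the technical heart of the argument.
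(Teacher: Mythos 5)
Your overall architecture is exactly the paper's: the same decomposition $f(\theta+v+w)-f(v)=[f(v+\tilde w+(\theta+\bar w))-f(v+\tilde w)]+[f(v+\tilde w)-f(v)]$, the same use of Corollary \ref{Cor8.split} to make the coefficient of the self-referential term have small time-average, and the same refined Gronwall inequality with exponential kernel. You have also correctly located the one genuinely delicate point: the additive constant produced by the nonlinearity, which would give the coefficient $l$ a density-one part and break the Gronwall step.

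The gap is in your proposed resolution of that point. Absorbing the density-one contribution ``by Young's inequality, using that the self-referential power $(1-\alpha)/(1-\alpha/4)$ is strictly less than $1$'' does not work: after the H\"older manipulation of \eqref{8.huge} and raising to the power $\frac{1-\alpha}{1-\alpha/4}$, the inequality \eqref{8.gr} is \emph{linear} in $Y$, so a constant $c$ in the coefficient can only be absorbed when $c<\delta'$, and $c$ (determined by $f$) is not at your disposal; nor can Young's inequality dump the largeness elsewhere, since both factors $\|\xi_w\|_{\E^\alpha}^{1-\alpha}$ and $\|w\|_{H^{\alpha,12}}^{\alpha}$ are self-referential and their exponents already sum to one. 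The paper's fix is simpler and uniform in the quintic and subquintic parts: reduce to $f'(0)=0$ by peeling off the linear term $f'(0)(\theta+w)$, whose $H^\alpha$-norm is a bounded forcing by the energy estimate alone, and then apply the refined Kato--Ponce estimate \eqref{est.fp0} of Corollary \ref{CorA2.0}, in which the $H^1$-factor carries \emph{no} additive constant. Combined with the uniform energy bounds \eqref{8.bound}, this makes every summand of $l(s)$ contain at least one factor ($\|v\|_{L^{12}}^4$, $\|\tilde w\|_{L^{12}}^4$, $\|\xi_v\|_{\E}^{\alpha/(1-\alpha/4)}$ or $\|\xi_{\tilde w}\|_{\E}^{\alpha/(1-\alpha/4)}$) with small average by \eqref{8.vgood} and \eqref{8.small}, while the harmless additive constant in the $L^{12}$-factor is simply absorbed into the uniformly bounded $H^1$-factor. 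With this replacement your argument closes; the rest of your proposal coincides with the paper's proof.
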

\begin{proof} We refine estimates \eqref{8.rough} and \eqref{8.huge} using the result of Corollary \ref{Cor8.split}. To this end, we first note that we may assume without loss of generality that $f'(0)=0$. Indeed, the extra term $|f'(0)|\|w(t)+\theta(t)\|_{H^\alpha}$ is easily controllable by the obtained before estimates in the energy norm $\E$. Next, we write the difference $f(\theta+v+w)-f(v)$ as follows
\begin{multline}\label{8.ssplit}
|f(\theta+v+w)-f(v)|\le |f(\theta+v+w)-f(v+\tilde w)|+|f(v+\tilde w)-f(v)|\le\\\le |f(\theta+v+w)-f(v+\tilde w)|+|\int_0^1f'(v+\kappa\tilde w)\,d\kappa\, w|+|\int_0^1f'(v+\kappa\tilde w)\,d\kappa\, \bar w|.
\end{multline}
The first term in the right-hand side of \eqref{8.ssplit} is controlled exactly as in \eqref{8.rough}
\begin{multline}
\|f(\theta+v+w)-f(v+\tilde w)\|_{H^\alpha}\le C(1+\|\theta+\bar w\|_{L^{12}}+\|v+\tilde w\|_{L^{12}})^{4-\alpha}\times\\\times
(1+\|\theta+\bar w\|_{H^1}+\|v+\tilde w\|_{H^1})^{4-\alpha}\|\theta+\bar w\|_{H^{1+\alpha}}^{1-\alpha}\|\theta+\bar w\|_{H^{\alpha,12}}^{\alpha}\le\\\le C_\eb(1+\|\theta\|_{H^{\alpha,12}}^4+\|v\|_{L^{12}}^4+\|\bar w\|_{H^{\alpha,12}}^4+\|\tilde w\|_{L^{12}}^4).
\end{multline}
The third term is estimated analogously using \eqref{est.main}:
\begin{multline}
\|\int_0^1f'(v+\kappa\tilde w)\bar w\,d\kappa\|_{H^\alpha}\le C(1+\|v+\tilde w\|_{L^{12}}+\|v\|_{L^{12}})^{4-\alpha}\times\\\times
(1+\|v+\tilde w\|_{H^1}+\|v\|_{H^1})^{4-\alpha}\|\bar w\|_{H^{1+\alpha}}^{1-\alpha}\|\bar w\|_{H^{\alpha,12}}^{\alpha}\le\\\le C_\eb(1+\|v\|_{L^{12}}^4+\|\tilde w\|_{L^{12}}^4+\|\bar w\|_{H^{\alpha,12}}^4).
\end{multline}
Thus, due to estimates \eqref{8.smooth}, \eqref{8.thetagood},\eqref{8.vgood} and \eqref{8.bound}, we have
\begin{multline}
\int_\tau^t e^{-\delta(t-s)}\|f(\theta(s)+v(s)+w(s))-f(v(s)+\tilde w(s))\|_{H^\alpha}\,ds+\\+\int_\tau^t e^{-\delta(t-s)}\|\int_0^1f'(v(s)+\kappa\tilde w(s))\bar w(s)\,d\kappa\|_{H^\alpha}\, ds\le C_\eb.
\end{multline}
So, we only need to estimate the second term in the right-hand side of \eqref{8.ssplit}. To this end, we utilize that $f'(0)=0$ and apply estimate \eqref{est.fp0} to get
$$
\|\int_0^1f'(v(s)+\kappa\tilde w(s)) w(s)\,d\kappa\|_{H^\alpha}\le C(1+\|v\|_{L^{12}}+\|\tilde w\|_{L^{12}})^{4-\alpha}(\|v\|_{H^1}+\|\tilde w\|_{H^1})^{\alpha}\|w\|_{\E^\alpha}^{1-\alpha}\|w\|_{H^{\alpha,12}}^\alpha
$$
and arguing as in \eqref{8.rough}, we get
\begin{multline}
\int_\tau^t e^{-\delta(t-s)}\|\int_0^1f'(v(s)+\kappa\tilde w(s))\bar w(s)\,d\kappa\|_{H^\alpha}\,ds\le\\\le
\frac 12\(\int_\tau^te^{4\delta(t-s)}\|w(s)\|^4_{H^{\alpha,12}}\,ds\)^{1/4}+\(\int_\tau^t e^{-\delta'(t-s)}l(s)
\|\xi_w(s)\|_{\E^\alpha}^{\frac{1-\alpha}{1-\alpha/4}}\,ds\)^{\frac{1-\alpha/4}{1-\alpha}},
\end{multline}
where
\begin{multline}
l(s):=C(1+\|v(s)\|_{L^{12}}+\|\tilde w(s)\|_{L^{12}})^4(\|v(s)\|_{H^1}+\|\tilde w(s)\|_{H^1})^{\frac{\alpha}{1-\alpha/4}}\le\\\le C(\|v(s)\|_{L^{12}}^4+\|\tilde w(s)\|^4_{L^{12}}+\|\xi_{\tilde w}(s)\|_{\E}^{\frac\alpha{1-\alpha/4}}+\|\xi_{v}(s)\|_{\E}^{\frac\alpha{1-\alpha/4}}).
\end{multline}
Important that the constant $C$ here is independent of $\eb$. Therefore, due to \eqref{8.small} and \eqref{8.vgood}, we have
\begin{equation}\label{8.exp}
\int_s^tl(\kappa)\,d\kappa\le C_\eb+\eb(t-s),\ \ t\ge s\ge\tau
\end{equation}
and inserting the obtained estimates to inequality \eqref{8.main}, we finally get that the function $Y(t)=\|\xi_w(t)\|_{\E^{\alpha}}^{\frac{1-\alpha}{1-\alpha/4}}$ satisfies the refined analogue of \eqref{8.gr}
\begin{equation}\label{8.fine}
Y(t)+\(\int_\tau^te^{-4\delta(t-s)}\|w(s)\|^4_{H^{\alpha,12}}\,ds\)^{1/4}\le C_\eb+ \int_\tau^te^{-\delta'(t-s)}l(s)Y(s)\,ds.
\end{equation}
To derive the desired estimate \eqref{8.wgood} from \eqref{8.fine}, we need the following version of the Gronwall lemma.
\begin{lemma}\label{Lem8.gr} Let the function $Y\in C_{loc}([\tau,\infty))$ satisfies
$$
Y(t)\le C+\int_\tau^te^{-\delta'(t-s)}l(t)Y(s)\,ds,\ \ \ t\ge\tau
$$
for some  constants $C$ and $\delta$ and non-negative function $l(t)\ge0$ such that $l\in L^1_{loc}([\tau,\infty))$. Then, the following estimate holds:
\begin{equation}\label{8.gr1}
Y(t)\le C\(1+\int_\tau^t e^{-\delta'(t-s)+\int_s^tl(\kappa)\,d\kappa}l(s)\,ds\).
\end{equation}
\end{lemma}
The proof of this lemma follows word by word to the proof of the usual Gronwall lemma and by this reason is omitted.
Applying estimate \eqref{8.gr1} to inequality \eqref{8.fine} and using \eqref{8.exp} (with the parameter $\eb$ fixed in such a way that $\eb<\delta'$), we derive the desired estimate \eqref{8.wgood} and finish the proof of the lemma.
\end{proof}
Now we are ready to complete the proof of the main theorem.
\begin{proof}[Proof of Theorem \ref{Th8.main}] Indeed, according to estimates \eqref{8.thetagood}, \eqref{8.vgood} and \eqref{8.wgood}, the set
$$
\mathcal B_\alpha:=\{\xi\in\E^\alpha\,:\, \|\xi\|_{\E^\alpha}\le R\}
$$
is a compact uniformly attracting set for the process $U_\mu(t,\tau)$ in $\E$ if $R$ is large enough. Thus, the process $U_\mu(t,\tau)$ is uniformly asymptotically compact and possesses a uniform attractor $\mathcal A_{un}$ in the strong topology of $\E$. Moreover, $\mathcal A\subset \mathcal B_\alpha$. Thus, Theorem \ref{Th8.main} is proved.
\end{proof}
The next corollary gives the global well-posedness and dissipativity of the process $U_\mu(t,\tau)$ in the higher energy space $\E^\alpha$.

\begin{cor}\label{Cor8.sm} Let the assumptions of Theorem \ref{Th8.main} hold and let, in addition, $\xi_u(\tau)\in\mathcal E^\alpha$. Then, the solution $u$ of equation \eqref{eq.qdw} satisfies $\xi_u(t)\in\E^\alpha$ for all $t\ge\tau$ and the following estimate holds:
\begin{equation}\label{8.usm}
\|\xi_u(t)\|_{\E^\alpha}+\|u\|_{L^4(t,t+1;H^{\alpha,12})}\le Q(\|\xi_u(\tau)\|_{\E^\alpha})e^{-\delta(t-\tau)}+Q(\|\mu\|_{W}),
\end{equation}
where the constant $\delta>0$ and the monotone function $Q$ are independent of $t$, $\tau$, $\mu$ and $u$.
\end{cor}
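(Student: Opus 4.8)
The plan is to run the same three--term decomposition $u=\theta+v+w$ introduced in \eqref{8.3}, \eqref{8.lin}, \eqref{8.dec}, \eqref{8.reg}, now starting from the more regular datum $\xi_u(\tau)\in\mathcal E^\alpha$, and to read off the two pieces of the desired estimate \eqref{8.usm} from the three components. The stationary term $Q(\|\mu\|_W)$ should come entirely from $\theta$ and $w$: by Lemma \ref{Lem8.theta} the source component obeys $\|\xi_\theta(t)\|_{\mathcal E^\alpha}\le C\|\mu\|_W$ with no initial--data dependence, while by Lemma \ref{Lem8.good1} the reminder satisfies $\|\xi_w(t)\|_{\mathcal E^\alpha}\le C=Q(\|\mu\|_W)$ once $\xi_u(\tau)\in\mathcal B$. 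The decaying term $Q(\|\xi_u(\tau)\|_{\mathcal E^\alpha})e^{-\delta(t-\tau)}$ can therefore only be carried by the auxiliary function $v$, which inherits the initial data $\xi_v(\tau)=\xi_u(\tau)$. Consequently the heart of the proof is to upgrade Lemma \ref{Lem8.vgood}, which only provides decay of $v$ in the energy norm $\mathcal E$, to an analogous \emph{exponential} decay in $\mathcal E^\alpha$.

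To obtain this upgrade I would apply the damped higher--order energy--to--Strichartz estimate \eqref{1.main} of Corollary \ref{Cor1.damp} to the autonomous equation \eqref{8.dec}, treating $f_L(v)=f(v)+Lv$ as an external force:
\[
\|\xi_v(t)\|_{\mathcal E^\alpha}+\Big(\int_\tau^te^{-4\delta(t-s)}\|v(s)\|_{H^{\alpha,12}}^4\,ds\Big)^{1/4}\le C\|\xi_v(\tau)\|_{\mathcal E^\alpha}e^{-\delta(t-\tau)}+C\int_\tau^te^{-\delta(t-s)}\|f_L(v(s))\|_{H^\alpha}\,ds.
\]
Bounding $\|f_L(v)\|_{H^\alpha}$ by the Kato--Ponce type inequality \eqref{A2.f} (applied with the lower--order slot equal to zero, using $f(0)=0$) and repeating the H\"older/Young manipulation of Lemma \ref{Lem8.bad}, the Strichartz contribution is absorbed into the left--hand side and one is left with a Gronwall inequality for $Y(t):=\|\xi_v(t)\|_{\mathcal E^\alpha}^{(1-\alpha)/(1-\alpha/4)}$ whose coefficient $l_v(s)\sim(1+\|v(s)\|_{L^{12}}^4)(1+\|v(s)\|_{H^1})^{\alpha/(1-\alpha/4)}$ is, by Lemma \ref{Lem8.vgood}, \emph{exponentially small} in $s-\tau$, so that $\int_\tau^\infty l_v<\infty$ with uniformly bounded tails. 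Feeding this into the refined Gronwall Lemma \ref{Lem8.gr} (estimate \eqref{8.gr1}) and using that the convolution of an exponentially decaying kernel with an exponentially decaying $l_v$ again decays exponentially yields $Y(t)\le Q(\|\xi_u(\tau)\|_{\mathcal E^\alpha})e^{-\delta'(t-\tau)}$, i.e. the sought $\mathcal E^\alpha$--decay of $v$ together with the corresponding decaying Strichartz norm.

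It remains to remove the restriction $\xi_u(\tau)\in\mathcal B$ present in Lemma \ref{Lem8.good1}. For this I would first invoke the energy dissipative estimate of Corollary \ref{th.str-f2}, which controls $\|\xi_u(t)\|_{\mathcal E}$ and guarantees that $\xi_u(t)$ enters the absorbing set $\mathcal B$ after a finite time $T_0=T_0(\|\xi_u(\tau)\|_{\mathcal E})$. On the transient interval $[\tau,\tau+T_0]$ the crude growing bound \eqref{8.bad} for $w$, together with the $\mathcal E^\alpha$--control of $\theta$ and $v$ established above, gives a bound on $\|\xi_u(t)\|_{\mathcal E^\alpha}$ depending only on $\|\xi_u(\tau)\|_{\mathcal E^\alpha}$ and $\|\mu\|_W$; since this interval is compact and $e^{-\delta(t-\tau)}$ is bounded below on it, this contribution is absorbed into the decaying term after enlarging $Q$. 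For $t\ge\tau+T_0$ one restarts the decomposition from the datum $\xi_u(\tau+T_0)\in\mathcal B\cap\mathcal E^\alpha$, so that Lemma \ref{Lem8.good1} applies to the reminder and the upgraded decay applies to the auxiliary component. Summing the three components, applying Lemma \ref{lem.Q} to split $Q$ of the resulting sums, and using $\|\xi_u(\tau)\|_{\mathcal E}\le\|\xi_u(\tau)\|_{\mathcal E^\alpha}$, one arrives at \eqref{8.usm}; in particular $\xi_u(t)\in\mathcal E^\alpha$ for all $t\ge\tau$.

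The main obstacle, as indicated, is the passage from energy--norm to $\mathcal E^\alpha$ decay of the dissipative auxiliary solution $v$: one must control the critical quintic nonlinearity in the fractional norm $H^\alpha$ while keeping the dependence on the initial datum genuinely exponentially decaying, and it is precisely the exponential smallness of the Strichartz coefficient $l_v$ (rather than merely its boundedness, as for the reminder $w$ in Lemma \ref{Lem8.good1}) together with the refined Gronwall inequality \eqref{8.gr1} that makes this possible.
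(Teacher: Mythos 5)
Your argument is sound in outline, but it takes a genuinely different (and somewhat longer) route than the paper. The paper's proof is a one-line reduction: take $v\equiv 0$ in the decomposition \eqref{8.3} and put the initial data $\xi_u(\tau)\in\E^\alpha$ directly into the $w$-component, so that the decaying term $Q(\|\xi_u(\tau)\|_{\E^\alpha})e^{-\delta(t-\tau)}$ is produced for free by the linear part of estimate \eqref{1.main} applied to \eqref{8.reg}; one then repeats the derivation of \eqref{8.wgood} (still using Corollary \ref{Cor8.split}), the only change being that the Gronwall inequality \eqref{8.fine} acquires an exponentially decaying inhomogeneity, which survives Lemma \ref{Lem8.gr} thanks to \eqref{8.exp}. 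You instead keep $v$ as the carrier of the initial data and therefore must prove a statement not contained in the paper, namely exponential decay of $\xi_v$ in $\E^\alpha$ for the autonomous critical equation \eqref{8.dec}. This can be done along the lines you indicate and yields a self-contained smoothing/decay result for \eqref{8.dec}, but it costs an extra Gronwall argument that the choice $v=0$ avoids --- which is exactly why the paper calls its version ``even simpler''.

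One point in your key step needs correcting. With the coefficient as you wrote it, $l_v(s)\sim(1+\|v(s)\|_{L^{12}}^4)(1+\|v(s)\|_{H^1})^{\alpha/(1-\alpha/4)}$, one has $l_v\ge 1$, so $\int_s^t l_v(\kappa)\,d\kappa\ge t-s$ grows linearly and the factor $e^{\int_s^t l_v}$ in \eqref{8.gr1} destroys the exponential decay; this is precisely why Lemma \ref{Lem8.bad} only gives the growing bound \eqref{8.bad}. To make $l_v$ genuinely exponentially small you must first normalize $f'(0)=0$ (the linear remainder $f'(0)v$ is harmless: it is controlled by the energy norm and hence decays by Lemma \ref{Lem8.vgood}) and then use \eqref{est.fp0} instead of \eqref{A2.f}, so that the $H^1$-factor enters as $\|v\|_{H^1}^{\alpha}$ without the additive constant; this is the same device used in the proof of Lemma \ref{Lem8.good1}. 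With that correction $\int_\tau^\infty l_v(\kappa)\,d\kappa\le Q(\|\xi_u(\tau)\|_{\E})$ and your Gronwall step closes. Your treatment of initial data outside the absorbing set (transient interval via \eqref{8.bad}, restart at $\tau+T_0$, and Lemma \ref{lem.Q} to split the resulting monotone functions) is correct and is needed in either version of the proof.
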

Indeed, the proof of this estimate is based on the result of Corollary \ref{Cor8.split} and can be obtained analogously to the derivation of estimate \eqref{8.wgood} (and even simpler since we may take $v(t)=0$ and put the initial conditions directly to the $w$ component). By this reason, we left the detailed proof of this corollary to the reader.

\begin{rem} Note that Theorem \ref{Th8.main} and Corollary \ref{Cor8.sm} are formally proved under the assumption $\alpha<\frac25$ and we do not know how to obtain more regularity of the attractor $\mathcal A_{un}$ {\it in one step} even in the case where $\mu$ and $f$ are smooth. For instance, it would be interesting to get $\E^1$ regularity without the usage of fractional spaces. The problem is related to the restriction on the exponent $\alpha$ in the key Lemma \ref{Lem.v4wHa}. However, the higher regularity can be easily obtained in several steps using the standard bootstrapping arguments. Moreover, the most difficult step is exactly the first one: to obtain the $\E^\alpha$ regularity of solutions and since the non-linearity is no more critical in $\E^\alpha$, one can use the linear decomposition to improve further the regularity. Namely, in \eqref{8.3} we may take $v=0$ and $\xi_{\theta}\big|_{t=\tau}=\xi_u\big|_{t=\tau}$. For instance, if $\alpha>\frac18$, we have
$$
\|f(u)\|_{L^1(t,t+1;H^1)}\le C(1+\|u\|_{L^4(t,t+1;H^{\alpha,12})}^4)\|\xi_u\|_{L^\infty(t,t+1;\E^\alpha)}
$$
and, therefore, we need only one extra step to get the $\E^1$-regularity of the attractor.
\end{rem}

\section{Appendix 1: BV-functions and vector measures}
\label{s.bv}
In this Appendix we recall a number of more or less standard results concerning functions of bounded variation (BV-functions) with values in Banach spaces and the associated measures which are used throughout of the paper. We restrict ourselves to consider only the case where these functions are with values in a separable Hilbert space $H$, see \cite{Bg1,Bg2,Di,Ma} and references therein for more detailed exposition.

\begin{Def}\label{bv.def1} A function $\Phi:[a,b]\to H$ is BV if
\begin{equation}
\Var_a^b(\Phi,H):=\sup\left\{\sum_{i=1}^N\|\Phi(t_j)-\Phi(t_{j-1})\|_H\right\}<\infty,
\end{equation}
where the supremum is taken over all finite $N$ and all partitions $a=t_0<t_1<\cdots<t_N=b$ of the segment $[a,b]$.
\end{Def}
 We also recall the elementary properties of the introduced variation:
\par
1) $\Var_a^b(\Phi,H)=\Var_a^c(\Phi,H)+\Var_c^b(\Phi,H)$ if $a<c<b$;
\par
2) $\Var_a^b(\Phi_1+\Phi_2,H)\le\Var_a^b(\Phi_1,H)+\Var_a^b(\Phi_2,H)$;
\par
3) $\Var_a^b(\alpha\Phi,H)=|\alpha|\Var_a^b(\Phi,H)$;
\par
4) The function $\varphi(t):=\Var_a^t(\Phi,H)$ is monotone nondecreasing and $\Var_x^y(\Phi,H)=\varphi(y)-\varphi(x)$;
\par
5) The function $\Phi(t)$ is continuous (left/right semicontinuous) at $t=t_0$ if and only if the same is true for $\varphi(t)$.
\par
Note that if $\Phi$ is BV then $\varphi$ is a {\it scalar nondecreasing and bounded} function, so it is continuous up to at most countable set of points (indeed, number of jumps of $\varphi$ which are larger than $1/n$, for every $n$, must be finite). Therefore the fifth property guarantees that the function $\Phi(t)$ also has at most countable number of discontinuities. Furthermore, due to monotonicity, right/left limits $\varphi(t+0)$ and $\varphi(t-0)$ exist for all $t\in[a,b]$, that, by simple arguments, implies existence of right/left limits at every point of $[a,b]$ for the original function $\Phi$.
\begin{Def}
We denote by $V_0(a,b;H)$ the Banach space all BV-functions $\Phi$ on $[a,b]$ with values in $H$ such that $\Phi(a)=0$ and $\Phi(t)$ is left-semicontinuous at every \emph{internal} point of $[a,b]$ (that is $\Phi$ may have a jump at $b$). The norm in this space is given by
$$
\|\Phi\|_{V_0}:=\Var_a^b(\Phi,H).
$$
\end{Def}

As usual,  every $\Phi\in V_0$, defines a vector-valued  measure $\mu_\Phi$ on the semiring generated by intervals of $[a,b]$ via
\begin{equation}\label{mu-phi}
\begin{aligned}
\mu_\Phi((s,t]):=\Phi(t+0)-\Phi(s+0),\ \ \mu_\Phi([s,t]):=\Phi(t+0)-\Phi(s-0),\\
 \mu_\Phi((s,t)):=\Phi(t-0)-\Phi(s+0),\ \ \mu_\Phi([s,t)):=\Phi(t-0)-\Phi(s-0),
\end{aligned}
\end{equation}
where $a\leq s\leq t\leq b$, and we use notations $(t,t]=[t,t)=(t,t)=\emptyset$, $[t,t]={t}$ and $\Phi(b+0):=\Phi(b)$, $\Phi(a-0):=0$. As usual, the left-continuity assumption implies $\sigma$-additivity of $\mu_\Phi$ on the algebra generated by intervals. Moreover, this measure can be extended in a unique way to the $\sigma$-algebra of Borel sets of $[a,b]$ and gives a $\sigma$-additive vector  measure $\mu_\Phi$ of finite total variation $|\mu_\Phi|([a,b])<\infty$. We recall that for any Borel set $A\subset [a,b]$,
\begin{equation}\label{def.|mu|}
|\mu_\Phi|(A):=\sup\left\{\sum_{n=1}^\infty\|\mu_\Phi(A_n)\|_{H}\right\},
\end{equation}
where the supremum is taken over all countable disjoint Borel partitions of $A$. It is also known that $|\mu_\Phi|$ is a scalar positive $\sigma$-additive measure generated by the function $\varphi(t):=\Var_a^t(\Phi,H)$, that is formulas \eqref{mu-phi} are valid if one substitutes $\mu_\Phi$ by $|\mu_\Phi|$ on the left and $\Phi$ by $\phi$ on the right. In particular, we have
\begin{equation}
\label{bv.|mu|=var}
|\mu_\Phi|([a,b])=\varphi(b+0)-\varphi(a-0)=\Var_a^b(\Phi,H).
\end{equation}
Vice-versa, with every $\sigma$-additive $H$-valued measure $\mu$ defined on Borel $\sigma$-algebra of $[a,b]$ with finite total variation $|\mu|([a,b])<\infty$ we can associate a BV function $\Phi_\mu:[a,b]\to H$ from $V_0([a,b];H)$ given by
\begin{equation}
\label{bv.Fmu}
\Phi_\mu(t)=\begin{cases}\mu([a,t)),\ a\le t< b,\\
\mu([a,b]),\ t=b.
\end{cases}
\end{equation}
Thus, there is an isomorphism between the space $M(a,b;H)$ of Borel measures \emph{with bounded total variation} endowed with the norm
$$
\|\mu\|_{M(a,b,H)}:=|\mu|([a,b])=\Var_a^b(\Phi_\mu,H)
$$
 and the space $V_0(a,b;H)$.

Furthermore, for every $\mu\in M(a,b;H)$ and every $\mu$-measurable function $f:[a,b]\to H$ such that $\int_a^b\|f(t)\|_H|\mu|(dt)<\infty$, the Lebesgue integral $\int_{[a,b]}(f(t),\mu(dt))_H$ is well-defined. As usual, it is first defined on simple functions
$$
f(t)=\sum_{j=1}^Nc_j\chi_{A_j}(t),
$$
where $c_j\in H$ and $\mu$-measurable sets $A_j$ form a disjoint partition of $[a,b]$, via
\begin{equation}
\int_{[a,b]}(f(t),\mu(dt))_H=\sum_{j=1}^N(c_j,\mu(A_j))_H.
\end{equation}
Then it can be extended in a standard way to any integrable function $f:[a,b]\to H$, see \cite{Ma} for the details.
\par
On the other hand, for every $\mu\in M(a,b;H)$, we can also consider Riemann-Stieltjes integral $\int_a^b(f(t),d\,\Phi(t))$  as a limit of Riemann integral sums
\begin{equation}
\int_a^b(f(t),d\Phi_\mu(t))_H:=\lim_{\Delta t\to0}\Big((f(c_n),\Phi_\mu(b-0)-\Phi(t_{n-1}))_H+\sum_{j=1}^{n-1}(f(c_j),\Phi_\mu(t_{j})-\Phi_\mu(t_{j-1}))_H\Big),
\end{equation}
where the limit is taken over all partitions $a=t_0<t_1<\cdots<t_n=b$, points $c_j\in[t_{j-1},t_j)$ and $\Delta t:=\max_{j}{|t_j-t_{j-1}|}$ (the first term in the Riemann sum is properly modified in order to preserve the additivity of the integral). It is well-known that Riemann-Stieltjes integral exists at least for every continuous function $f\in C(a,b;H)$ and when exists it coincides with the Lebesgue (Lebesgue-Stieltjes) integral, namely
\begin{equation}
\int_{[a,b)}(f(t),\mu(dt))_H=\int_a^b(f(t),d\Phi_\mu(t))_H.
\end{equation}
In addition, by additivity of the  Lebesgue integral, for every segment $[x,y]\subset[a,b]$, we have
\begin{multline}
\int_{[x,y]}(f(t),\mu_\Phi(dt))_H=\int_{\{y\}}(f(t),\mu_\Phi(dt))_H+\int_{[x,y)}(f(t),\mu_\Phi(dt))_H=\\=
\left(f(y),\mu_\Phi(\{y\})\right)_H+\int_x^y(f(t),d\Phi(t))_H=(f(y),\Phi(y+0)-\Phi(y-0))+\int_x^y(f(t),d\Phi(t))_H.
\end{multline}
at least for continuous functions $f:[a,b]\to\R$.

We now recall that, by the standard properties of the Lebesgue integral, we have the following inequality:
\begin{equation}\label{bv.|Im|<I|m|}
\left|\int_{[a,b]}(f(t),\mu_\Phi(dt))_H\right|\leq \int_{[a,b]}\|f(t)\|\,|\mu_\Phi|(dt).
\end{equation}
In particular, for $f\in C(a,b;H)$ it reads
\begin{equation}
\left|\int_{[a,b]}(f(t),\mu_\Phi(dt))_H\right|\leq \|f\|_{C(a,b;H)}|\mu_\Phi|([a,b])=\|f\|_{C(a,b;H)}\Var_a^b(\Phi,H),
\end{equation}
 Thus, for every $\Phi\in M(a,b;H)$, the linear functional
 $$
 L_\Phi(f):=\int_{[a,b]}(f(t),\mu_\Phi(dt))_H
 $$
 is a bounded linear functional on $C(a,b;H)$ and $\|L_\Phi\|\leq \Var_a^b(\Phi,H)$. Actually, analogously to the scalar case (see \cite{KF}), the following version of Riesz-Representation Theorem holds, see \cite{Di} for details.

\begin{theorem} \label{bv.C*=M}
Let $H$ be a separable Hilbert space and $[a,b]\subset\R$. Then for any linear continuous functional $L\in (C(a,b;H))^*$ there exists a function $\Phi\in V_0(a,b;H)$, such that
\begin{equation}
L(f)=\int_{[a,b]}(f(t),\mu_\Phi(dt))_H,\ \text{for all }f\in C(a,b;H),
\end{equation}
and $\|L\|=\Var_a^b(\Phi,H)$. In other words
\begin{equation}
(C(a,b;H))^*=M(a,b;H)=V_0(a,b;H).
\end{equation}
\end{theorem}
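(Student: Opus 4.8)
The plan is to reduce the vector problem to the classical scalar Riesz theorem, reconstruct a vector measure componentwise, and then establish finite total variation by hand — the latter being the one genuinely vector-valued step, which I expect to be the main obstacle.

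First I would fix $h\in H$ and consider the scalar functional $L_h(g):=L(gh)$ on $C(a,b)$ (scalar continuous functions), where $gh$ denotes the $H$-valued function $t\mapsto g(t)h$. Clearly $|L_h(g)|\le\|L\|\,\|g\|_{C(a,b)}\|h\|_H$, so by the classical scalar Riesz representation theorem there is a unique regular signed Borel measure $\nu_h$ on $[a,b]$ with $L_h(g)=\int_{[a,b]}g\,d\nu_h$ and $|\nu_h|([a,b])=\|L_h\|\le\|L\|\,\|h\|_H$. Uniqueness forces $h\mapsto\nu_h$ to be linear. For a fixed Borel set $A$ the map $h\mapsto\nu_h(A)$ is linear and bounded by $\|L\|\,\|h\|_H$, so by Riesz representation in the Hilbert space $H$ there is a unique $\mu(A)\in H$ with $(\mu(A),h)_H=\nu_h(A)$ and $\|\mu(A)\|_H\le\|L\|$. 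Finite additivity of $\mu$ follows at once from that of each $\nu_h$.

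The crux is to show that $\mu$ has finite total variation, with the sharp bound $|\mu|([a,b])\le\|L\|$. Given a finite disjoint Borel partition $A_1,\dots,A_n$ of $[a,b]$, set $h_j:=\mu(A_j)/\|\mu(A_j)\|_H$ (and $h_j=0$ if $\mu(A_j)=0$), so that $\sum_j\|\mu(A_j)\|_H=\sum_j\nu_{h_j}(A_j)$. Using regularity of the finitely many measures $|\nu_{h_i}|$ I choose disjoint compact sets $K_j\subset A_j$ exhausting the $A_j$ up to arbitrarily small $|\nu_{h_i}|$-measure, separate them by disjoint open sets, and pick Urysohn functions $g_j\in C(a,b)$, $0\le g_j\le1$, $g_j\equiv1$ on $K_j$, with pairwise disjoint supports. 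Then $f:=\sum_j g_jh_j\in C(a,b;H)$ has $\|f\|_{C(a,b;H)}\le1$ (at each $t$ at most one summand is nonzero and $\|h_j\|_H\le1$), while $L(f)=\sum_j\int g_j\,d\nu_{h_j}$ approximates $\sum_j\nu_{h_j}(A_j)$ to within $\eb$. Hence $\sum_j\|\mu(A_j)\|_H\le|L(f)|+\eb\le\|L\|+\eb$, and taking the supremum over partitions gives $|\mu|([a,b])\le\|L\|<\infty$. Finiteness of $|\mu|$ together with countable additivity of the scalar measures upgrades finite additivity of $\mu$ to norm $\sigma$-additivity, via $\|\mu(\cup_{k>N}A_k)\|_H\le|\mu|(\cup_{k>N}A_k)\to0$; thus $\mu\in M(a,b;H)$.

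It then remains to identify $L$ with integration against $\mu$ and to match norms. For a simple tensor $f=gh$ the definition of $\mu$ gives $\int_{[a,b]}(gh,\mu(dt))_H=\int_{[a,b]}g\,d\nu_h=L(gh)$, so by linearity $L$ and $f\mapsto\int_{[a,b]}(f,\mu(dt))_H$ coincide on all finite combinations of simple tensors. Such combinations are dense in $C(a,b;H)$ (partition $[a,b]$ finely and use a subordinate partition of unity to write $f\approx\sum_k\phi_k\,f(t_k)$), and both functionals are continuous — the integral one by \eqref{bv.|Im|<I|m|} and the just-proved finiteness of $|\mu|$. Hence they agree on all of $C(a,b;H)$. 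Finally \eqref{bv.|Im|<I|m|} gives $\|L\|\le|\mu|([a,b])$, which with the reverse bound above yields $\|L\|=|\mu|([a,b])$. Setting $\Phi:=\Phi_\mu$ via \eqref{bv.Fmu} we obtain $\Phi\in V_0(a,b;H)$ with $\mu=\mu_\Phi$ and, by \eqref{bv.|mu|=var}, $\Var_a^b(\Phi,H)=|\mu|([a,b])=\|L\|$, which is the assertion; uniqueness of $\Phi$ follows since distinct elements of $V_0$ induce distinct measures and hence distinct functionals. The essential difficulty is precisely the finite-total-variation step, where — in contrast to the scalar case recorded in the commented remark — boundedness of $L$ does not automatically control $\sum_j\|\mu(A_j)\|_H$ and must be extracted through the disjoint-support test function.
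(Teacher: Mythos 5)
The paper does not actually prove this theorem: it cites \cite{Di} and the remark that the argument is ``analogous to the scalar case,'' so your proposal is compared against a reference, not against a proof in the text. Your argument is correct and self-contained, and it is the natural route: scalarize via $L_h(g):=L(gh)$, recover $\mu(A)$ from the bounded linear functional $h\mapsto\nu_h(A)$ by Riesz representation in $H$, and then extract the total-variation bound with a disjoint-support test function. You correctly identify the last step as the genuinely vector-valued one, and your estimate goes through because $U_j\setminus K_j\subset[a,b]\setminus\bigcup_iK_i=\bigcup_i(A_i\setminus K_i)$, so inner regularity of the finitely many measures $|\nu_{h_i}|$ with respect to \emph{all} the $A_i$ simultaneously controls both error terms without invoking outer regularity; it would be worth making that containment explicit. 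Two points deserve one more line each. First, in the $\sigma$-additivity step the claim $|\mu|\bigl(\cup_{k>N}A_k\bigr)\to0$ presupposes countable subadditivity of $|\mu|$, which is not automatic for a finitely additive vector measure; it follows here because each $\nu_h$ is countably additive, whence for any finite partition $\{B_i\}$ of $A=\cup_kA_k$ one has $\|\mu(B_i)\|_H=\sup_{\|h\|\le1}\sum_k(\mu(B_i\cap A_k),h)_H\le\sum_k\|\mu(B_i\cap A_k)\|_H$ and therefore $|\mu|(A)\le\sum_k|\mu|(A_k)$ (alternatively, quote Orlicz--Pettis). Second, the identity $\int_{[a,b]}(g(t)h,\mu(dt))_H=\int_{[a,b]}g\,d\nu_h$ used on simple tensors is a short computation from the paper's definition of the integral via simple functions, and is worth recording since the whole identification of $L$ with $L_\mu$ rests on it. Neither point is a gap in substance; your proof also never uses separability of $H$, so it is marginally more general than the statement requires.
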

We now recall the concept of absolute continuity and related Radon-Nikodym theorem for vector measures. For simplicity, we will consider the case of a Hilbert space only where the Radon-Nikodym property is always satisfied.
\begin{Def}
Let $H$ be a  Hilbert space and $[a,b]\subset\R$. A measure $\mu \in M(a,b;H)$ is absolutely continuous with respect to a scalar Borel measure $\nu \in M(a,b;\R)$, $\nu\geq 0$ if the \emph{scalar} measure $|\mu|$ given by \eqref{def.|mu|} is absolutely continuous with respect to $\nu$. The latter means that $|\mu|(A)=0$ for every Borel set $A$ such that $\nu(A)=0$. We say that $\mu$ is absolutely continuous ($\mu\in M^{ac}(a,b;H)$) if it is absolutely continuous with respect to the Lebesgue measure on $[a,b]$.
\end{Def}
The vector valued analogue of the Radon-Nikodym theorem then reads, see, e.g., \cite{Ma}.

\begin{theorem}[Radon--Nikodym]\label{th.v-RN}
Let $H$ be a separable Hilbert space, a segment $[a,b]\subset\R$ and a measure $\mu\in M(a,b;H)$. Then the measure $\mu$ is absolutely continuous with respect to a scalar positive Borel measure $\nu\in M(a,b;\R)$, if and only if there exists a function $\rho\in L^1_\nu(a,b;H)$ such that
\begin{equation}\label{th.v-RN1}
\mu(A)=\int_A\rho(s)\nu(ds),
\end{equation}
for every Borel set $A\subset [a,b]$ and the integral in the RHS is understood as a Bohner integral. Furthermore, we have
\begin{equation}\label{th.v-RN2}
|\mu|(A)=\int_A\|\rho(s)\|_H\nu(ds)
\end{equation}
and, at least for continuous functions $f:[a,b]\to H$,
\begin{equation}\label{9.simpler}
\int_A(f(t),\mu(dt))_H=\int_A(f(t),\rho(t))_H\nu(dt)
\end{equation}
for every Borel set $A\subset [a,b]$.
\end{theorem}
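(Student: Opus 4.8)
The plan is to reduce the vector statement to the classical scalar Radon--Nikodym theorem by expanding everything in a fixed orthonormal basis $\{e_n\}_{n=1}^\infty$ of the separable Hilbert space $H$; the real content is that such an $H$ enjoys the Radon--Nikodym property. I would dispatch the easy implication first. If a density $\rho\in L^1_\nu(a,b;H)$ with $\mu(A)=\int_A\rho(s)\,\nu(ds)$ is given, then for any Borel $A$ with $\nu(A)=0$ and any measurable partition $A=\cup_jA_j$ one has $\sum_j\|\mu(A_j)\|_H=\sum_j\big\|\int_{A_j}\rho\,d\nu\big\|_H\le\int_A\|\rho\|_H\,d\nu=0$, so $|\mu|(A)=0$ and $\mu\ll\nu$. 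The very same chain already yields $|\mu|(A)\le\int_A\|\rho\|_H\,d\nu$, one of the two inequalities needed for \eqref{th.v-RN2}.

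For the substantial direction I would assume $\mu\ll\nu$, i.e. $|\mu|\ll\nu$, and apply the scalar Radon--Nikodym theorem to the positive scalar measure $|\mu|$ to obtain $\theta\in L^1_\nu(a,b;\R)$, $\theta\ge0$, with $|\mu|(A)=\int_A\theta\,d\nu$. For each $n$ the scalar measure $\mu_n(A):=(\mu(A),e_n)_H$ satisfies $|\mu_n|\le|\mu|\ll\nu$ and hence has a density $\rho_n\in L^1_\nu(a,b;\R)$. The candidate is the formal series $\rho(t):=\sum_n\rho_n(t)e_n$, and the hard part is to show it converges in $H$ almost everywhere with an integrable norm. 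To this end I would observe that for any finite linear combination $c=\sum_{n\le N}c_ne_n$ the scalar measure $\mu_c(A):=(\mu(A),c)_H=\sum_{n\le N}c_n\mu_n(A)$ has $\nu$-density $\sum_{n\le N}c_n\rho_n$ and obeys $|\mu_c|\le\|c\|_H\,|\mu|$, so $|\sum_{n\le N}c_n\rho_n(t)|\le\|c\|_H\,\theta(t)$ for $\nu$-a.e. $t$. Taking the supremum over a countable dense subset of the unit balls of the spaces $\mathrm{span}(e_1,\dots,e_N)$---which is exactly where separability of $H$ enters, to keep the exceptional null set uniform---yields $\sum_{n\le N}\rho_n(t)^2\le\theta(t)^2$ for all $N$, hence $\sum_n\rho_n(t)^2\le\theta(t)^2<\infty$ $\nu$-a.e. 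Therefore $\rho(t)\in H$ with $\|\rho(t)\|_H\le\theta(t)$ a.e., and $\rho\in L^1_\nu(a,b;H)$.

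It then remains to identify $\rho$ as the density and to upgrade the inequalities to equalities. Testing against each $e_n$ gives $(\mu(A),e_n)_H=\int_A\rho_n\,d\nu=\big(\int_A\rho\,d\nu,e_n\big)_H$ for all $n$, whence $\mu(A)=\int_A\rho\,d\nu$, which is \eqref{th.v-RN1}. For \eqref{th.v-RN2} the easy-direction estimate gives one inequality, while the bound $\|\rho\|_H\le\theta$ combined with $|\mu|(A)=\int_A\theta\,d\nu$ gives the reverse, forcing $\|\rho\|_H=\theta$ $\nu$-a.e. Finally I would establish \eqref{9.simpler} first for simple functions $f=\sum_jc_j\chi_{A_j}$, where both sides reduce to $\sum_j\int_{A\cap A_j}(c_j,\rho)_H\,d\nu$, and then pass to continuous $f$ by uniform approximation, controlling the error via \eqref{bv.|Im|<I|m|}. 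The single genuine obstacle is the pointwise norm bound $\|\rho(t)\|_H\le\theta(t)$ a.e., i.e. assembling the coordinate densities $\rho_n$ into a bona fide $H$-valued Bochner-integrable function; everything else is bookkeeping built on the scalar theory.
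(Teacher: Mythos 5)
Your proof is correct. Note that the paper itself does not prove Theorem \ref{th.v-RN} at all --- it is stated with a pointer to the literature (``see, e.g., \cite{Ma}''), so there is no internal argument to compare against; what you have supplied is a genuine self-contained derivation. Your route is the standard one for Hilbert-space-valued measures: reduce to the scalar Radon--Nikodym theorem coordinatewise and then reassemble. The one step you correctly identify as the crux --- the pointwise bound $\bigl(\sum_{n\le N}\rho_n(t)^2\bigr)^{1/2}\le\theta(t)$ obtained by testing against a countable dense subset of the unit balls of $\mathrm{span}(e_1,\dots,e_N)$ so that the exceptional $\nu$-null set can be taken uniform --- is exactly where separability and the Hilbert structure enter, and your treatment of it is sound: the estimate $|\mu_c|\le\|c\|_H|\mu|$ transfers to the densities $\nu$-a.e., and finite-dimensional duality converts the sup into the Euclidean norm of $(\rho_1(t),\dots,\rho_N(t))$. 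Two small points you should make explicit in a written version: first, strong ($\nu$-)measurability of $\rho$ follows since it is the a.e.\ norm limit of the strongly measurable finite partial sums, which is needed before calling it Bochner integrable; second, the interchange $\bigl(\int_A\rho\,d\nu,e_n\bigr)_H=\int_A\rho_n\,d\nu$ uses that Bochner integrals commute with bounded linear functionals. Neither is a gap, just bookkeeping. The argument for \eqref{th.v-RN2} by squeezing $|\mu|(A)\le\int_A\|\rho\|_H\,d\nu\le\int_A\theta\,d\nu=|\mu|(A)$, forcing $\|\rho\|_H=\theta$ a.e., and the passage from simple to continuous $f$ in \eqref{9.simpler} via \eqref{bv.|Im|<I|m|}, are both fine.
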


There are two particular cases of this theorem which are of our particular interest. The first one is when $\nu=|\mu|$. Clearly, that every measure $\mu\in M(a,b;H)$ is absolutely continuous with respect to $|\mu|$. Therefore, in the case of a \emph{separable} Hilbert space $H$, we can apply the Radon-Nikodym Theorem
and conclude that there exists such a function $\rho_\mu\in L^1_{|\mu|}(a,b;H)$ such that
\begin{equation}\label{mu=rho|mu|}
\mu(A)=\int_A\rho_\mu(s)|\mu|(ds).
\end{equation}
Moreover, from \eqref{th.v-RN2} we see that
$$
\|\rho_\mu(s)\|_H=1,
$$
$|\mu|$-almost everywhere.
\par
The above formulas allow us to express vector-valued measures in terms of  scalar measures and integrable functions. In particular, based on \eqref{mu=rho|mu|}, we derive the following approximation result which is crucial for our study of  measure driven PDEs.

\begin{lemma}\label{bv.lem.|QNmu|to0}
Let $H$ be a separable Hilbert space with an orthonormal basis $\{e_i\}_{i=1}^\infty$, a segment $[a,b]\subset\R$ and a measure $\mu\in M(a,b;H)$. And let $P_N$ be an orthonormal projector on the subspace generated by the first vectors $\{e_i\}_{i=1}^N$, $Q_N=Id-P_N$. Then
\begin{equation}
\lim_{N\to\infty}|Q_N\mu|([a,b])=\lim_{N\to\infty}|\mu-P_N\mu|([a,b])=0.
\end{equation}
\end{lemma}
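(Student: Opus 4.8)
The plan is to reduce everything to the scalar Radon--Nikodym density of $\mu$ with respect to its total variation and then apply dominated convergence. First I would invoke formula \eqref{mu=rho|mu|}, the special case $\nu=|\mu|$ of the Radon--Nikodym Theorem \ref{th.v-RN}: since $H$ is separable, there is a density $\rho_\mu\in L^1_{|\mu|}(a,b;H)$ with
$$
\mu(A)=\int_A\rho_\mu(s)\,|\mu|(ds),\qquad \|\rho_\mu(s)\|_H=1 \ \ |\mu|\text{-a.e.}
$$

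Next I would observe that, since $Q_N$ is a bounded linear operator on $H$, the measure $Q_N\mu$ inherits the density $Q_N\rho_\mu$, that is $Q_N\mu(A)=\int_A Q_N\rho_\mu(s)\,|\mu|(ds)$ for every Borel $A\subset[a,b]$, and clearly $Q_N\rho_\mu\in L^1_{|\mu|}(a,b;H)$ because $\|Q_N\rho_\mu(s)\|_H\le\|\rho_\mu(s)\|_H=1$. Applying the total variation formula \eqref{th.v-RN2} (with $\nu=|\mu|$ and the density $Q_N\rho_\mu$) then gives the key identity
$$
|Q_N\mu|([a,b])=\int_{[a,b]}\|Q_N\rho_\mu(s)\|_H\,|\mu|(ds).
$$
Since $\mu-P_N\mu=Q_N\mu$, the two limits in the statement coincide, so it remains only to show this integral tends to $0$.

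Finally I would pass to the limit $N\to\infty$ under the integral sign. For $|\mu|$-almost every $s$ we have $\rho_\mu(s)\in H$, and because $\{e_i\}_{i=1}^\infty$ is an orthonormal basis, $\|Q_N\rho_\mu(s)\|_H^2=\sum_{i>N}|(\rho_\mu(s),e_i)|^2\to0$ as $N\to\infty$. Thus the integrand converges to $0$ pointwise $|\mu|$-a.e., while it is dominated uniformly in $N$ by the constant function $1$, which is $|\mu|$-integrable since $|\mu|([a,b])=\Var_a^b(\Phi_\mu,H)<\infty$. The dominated convergence theorem then yields $|Q_N\mu|([a,b])\to0$, completing the proof. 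The only point requiring a moment's care is the justification that $Q_N\mu$ has density $Q_N\rho_\mu$ and that \eqref{th.v-RN2} applies to it; this is immediate from the linearity and boundedness of $Q_N$ together with the fact that $Q_N\rho_\mu$ is $|\mu|$-integrable, so no genuine obstacle arises.
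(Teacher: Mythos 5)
Your proposal is correct and follows essentially the same route as the paper: apply $Q_N$ to the Radon--Nikodym representation $\mu(A)=\int_A\rho_\mu\,|\mu|(ds)$, use the total-variation formula \eqref{th.v-RN2} to write $|Q_N\mu|([a,b])=\int_{[a,b]}\|Q_N\rho_\mu(s)\|_H\,|\mu|(ds)$, and conclude by dominated convergence. You have merely spelled out the pointwise convergence and the domination by the constant $1$, which the paper leaves implicit.
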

\begin{proof}
Indeed, applying the projector $Q_N$ to \eqref{mu=rho|mu|} and using \eqref{th.v-RN2} we find
\begin{equation}
\lim_{N\to\infty}|Q_N\mu|([a,b])=\lim_{N\to\infty}\int_a^b\|Q_N\rho_\mu(s)\|\,|\mu|(ds)=0,
\end{equation}
by Lebesgue dominated convergence theorem.
\end{proof}
The next more standard particular case is when $\nu$ is a Lebesgue measure on $[a,b]$. In this case, absolutely continuous measures can be characterised via the analogous property of the corresponding distribution functions.

\begin{Def}
A function $\Phi\in V_0(a,b;H)$ is absolutely continuous, $\Phi\in AC(a,b;H)$, iff for every $\eb>0$ there exists $\delta>0$ such that for every  finite sequence of pairwise disjoint sub-intervals $[x_l, y_j]$ of $[a,b]$ satisfying the condition
$$
\sum_j|x_j-y_j|<\delta,\ \ \text{we have}\ \ \sum_j\|\Phi(x_j)-\Phi(y_j)\|_H<\eb.
$$
Then, as not difficult to see, a measure $\mu\in M_{ac}(a,b;H)$ if and only if its distribution function $\Phi_\mu\in AC(a,b;H)$. We recall that, by the definition
$$
\Phi_\mu(t):=\mu([a,t)),\ t\in[a,b).
$$
\end{Def}

As a corollary of the Radon-Nikodym theorem, similar to the scalar case, the following standard result holds.

\begin{theorem}\label{th.AC}
A function $\Phi:[a,b]\to H$ is absolutely continuous if and only if  there exists a density $g(t)\in L^1(a,b;H)$ such that the following equality holds
\begin{equation}\label{bv.NLform}
\Phi(t)=\Phi(a)+\int_a^tg(s)\,ds,\ t\in[a,b].
\end{equation}
Furthermore, for every $\Phi\in AC(a,b;H)$ written in the form \eqref{bv.NLform} we have the identity
\begin{equation}
\Var_a^b(\Phi,H)=\int_a^b\|g(t)\|_H\,dt.
\end{equation}
\end{theorem}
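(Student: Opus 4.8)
The plan is to treat the two implications separately and then read off the variation identity directly from the Radon--Nikodym theorem stated above.

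First I would prove the easy implication: if $\Phi(t)=\Phi(a)+\int_a^t g(s)\,ds$ for some $g\in L^1(a,b;H)$, then $\Phi$ is absolutely continuous. The key point is the absolute continuity of the Bochner integral: since $\|g(\cdot)\|_H\in L^1(a,b;\R)$, the scalar set function $A\mapsto\int_A\|g(s)\|_H\,ds$ is absolutely continuous with respect to the Lebesgue measure, so for every $\eb>0$ there is $\delta>0$ with $\int_A\|g\|_H<\eb$ whenever $|A|<\delta$. Given pairwise disjoint subintervals $[x_j,y_j]$ with $\sum_j|y_j-x_j|<\delta$, I would simply estimate $\sum_j\|\Phi(y_j)-\Phi(x_j)\|_H=\sum_j\|\int_{x_j}^{y_j}g(s)\,ds\|_H\le\int_{\cup_j[x_j,y_j]}\|g(s)\|_H\,ds<\eb$, which is exactly the required $\eb$--$\delta$ condition. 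The same estimate applied to a single interval shows $\Phi$ is continuous, and splitting $[a,b]$ into finitely many intervals of length $<\delta$ shows $\Phi$ is of bounded variation, so $\Phi$ indeed lies in $V_0(a,b;H)$.

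For the converse, suppose $\Phi$ is absolutely continuous. After subtracting the constant $\Phi(a)$ --- which affects neither the $\eb$--$\delta$ condition nor $\Var_a^b(\Phi,H)$ --- I may assume $\Phi(a)=0$, so that $\Phi\in V_0(a,b;H)$ and $\Phi=\Phi_{\mu_\Phi}$ for its associated measure $\mu_\Phi\in M(a,b;H)$. By the equivalence recorded just before the theorem, $\Phi\in AC(a,b;H)$ is the same as $\mu_\Phi\in M^{ac}(a,b;H)$, i.e. $\mu_\Phi$ is absolutely continuous with respect to the Lebesgue measure $\nu=dt$ on $[a,b]$. Applying the vector-valued Radon--Nikodym theorem (Theorem \ref{th.v-RN}) with this $\nu$ produces a density $g\in L^1(a,b;H)$ such that $\mu_\Phi(A)=\int_A g(s)\,ds$ for every Borel set $A$. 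Evaluating at $A=[a,t)$ and using $\Phi(t)=\mu_\Phi([a,t))$ gives $\Phi(t)=\int_a^t g(s)\,ds$, which, upon restoring the subtracted constant, is exactly \eqref{bv.NLform}.

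Finally, the variation identity comes for free from the same application of Radon--Nikodym. Formula \eqref{th.v-RN2} with $\nu=dt$ and $A=[a,b]$ yields $|\mu_\Phi|([a,b])=\int_a^b\|g(t)\|_H\,dt$, while \eqref{bv.|mu|=var} identifies $|\mu_\Phi|([a,b])=\Var_a^b(\Phi,H)$; combining the two gives $\Var_a^b(\Phi,H)=\int_a^b\|g(t)\|_H\,dt$. I expect the only genuine difficulty to be concentrated in the converse implication, namely in justifying that an absolutely continuous $\Phi$ produces an absolutely continuous measure $\mu_\Phi$ (equivalently, that the scalar variation function $\varphi(t)=\Var_a^t(\Phi,H)$ is itself absolutely continuous) so that Radon--Nikodym applies; this rests on the Radon--Nikodym property of the separable Hilbert space $H$, which is precisely what Theorem \ref{th.v-RN} supplies. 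A direct proof of the inequality $\Var_a^b(\Phi,H)\ge\int_a^b\|g\|_H$ --- the nontrivial half of the variation identity --- would otherwise require approximating $g$ by step functions and passing to the limit, but quoting \eqref{th.v-RN2} bypasses it entirely.
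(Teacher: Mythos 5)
Your argument is correct and follows exactly the route the paper intends: the paper gives no proof of Theorem \ref{th.AC}, stating only that it is ``a corollary of the Radon--Nikodym theorem, similar to the scalar case,'' and your proposal fills in precisely that derivation --- the easy direction from absolute continuity of the Bochner integral, the converse via the equivalence $\Phi\in AC(a,b;H)\Leftrightarrow\mu_\Phi\in M^{ac}(a,b;H)$ (already recorded in the paper) plus Theorem \ref{th.v-RN} with $\nu=dt$, and the variation identity from \eqref{th.v-RN2} combined with \eqref{bv.|mu|=var}. The one point you flag as delicate, namely that the $\eb$--$\delta$ condition on $\Phi$ forces the scalar variation $\varphi(t)=\Var_a^t(\Phi,H)$ to be absolutely continuous, is handled by the standard refinement-of-partitions argument and is exactly the step the paper dismisses as ``not difficult to see.''
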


\begin{rem}\label{rem.AC}
By the properties of the Bohner integral, every $\Phi\in AC(a,b;H)$ is differentiable a. e. on $[a,b]$ and $\Phi'(t)=g(t)$, where $g$ is from \eqref{bv.NLform}. Moreover, this point-wise derivative coincides with the distributional derivative of $\Phi$.
\par
Recall also  that the point-wise derivative $\Phi'\in L^1(a,b;H)$ exists for any BV function $\Phi$ with values in a separable Hilbert space, but the analogue of Newton-Leibnitz formula \eqref{bv.NLform} holds if and only if $\Phi$ is absolutely continuous. Exactly as in the scalar case any BV function can be uniquely decomposed into three parts
$$
\Phi(t)=\Phi_d(t)+\Phi_{sing}(t)+\Phi_{ac}(t),
$$
where $\Phi_d(t)$ is a discrete part (step function), $\Phi_{sing}(t)$ is a singular part (continuous, but satisfying $\Phi_{sing}'(t)=0$ a.e.) and an absolutely continuous part which satisfies the Newton-Leibnitz formula
$$
\Phi_{ac}(t)-\Phi_{ac}(a)=\int_{a}^t\Phi'(s)\,ds=\int_a^t\Phi'_{ac}(s)\,ds
$$
and the analogous decomposition holds for the associated measures.
\end{rem}

\begin{rem}
Recall also the standard  integration by parts formula
\begin{equation}\label{bv.ByParts}
\int_{[x,y]}(f(t),\mu_{\Phi}(dt))_H+\int_{[x,y]}(f'(t),\Phi(t))_H\,dt=(f(y),\Phi(y+0))_H-(f(x),\Phi(x))_H
\end{equation}
which holds for every $f\in AC(a,b;H)$, $\Phi\in V_0(a,b;H)$ and every $[x,y]\subset[a,b]$. Of course, if $x=b$ the quantity $\Phi(b+0)$ should be substituted by $\Phi(b)$. Actually, this formula remains true (after replacing the second term in the LHS by $\int_{[x,y]}(\Phi(t), \mu_f(dt))_H$) if the function $f$ is BV with zero discrete part ($f_d(t)\equiv0$). However, it is not straightforward when both $f$ and $\Phi$ have non-zero discrete parts, since an extra accuracy is required to define properly the integral of the step function with respect to the Dirac measure, see e.g., \cite{Hew}. It is also worth to mention that, according to \eqref{bv.ByParts}, the distributional derivative of the function $\Phi_\mu(t)$ is exactly the measure $\mu$:
$$
\Phi_\mu'(t)=\mu(t).
$$
\end{rem}

We now discuss the relations between the space $L^1(a,b;H)$ and $M(a,b;H)$. For any $g\in L^1(a,b;H)$, we define the distribution function
$$
\Phi_g(t):=\int_a^tg(s)\,ds.
$$
Then, obviously, the function $\Phi_g$ is absolutely continuous and, therefore, the associated measure $\tilde \mu_g:=\mu_{\Phi_g}$ is also absolutely continuous. Therefore, due to the Radon-Nikodym theorem
$$
\tilde\mu_g(A)=\int_Ag(t)\,dt \ \text{ and }\ \ |\tilde \mu|(A)=\int_A\|g(t)\|_H\,dt
$$
for every measurable $A\subset[a,b]$. In particular,
$$
\|\tilde\mu_g\|_{M([a,b],H)}=|\tilde\mu_g|([a,b])=\int_a^b\|g(t)\|_H\,dt=\|g\|_{L^1(a,b;H)}.
$$
Thus, the map $g\to\tilde\mu_g$ is an isometric embedding of the space $L^1(a,b;H)$ into the space $M(a,b;H)$ and the range of this linear operator is exactly the space of absolutely continuous measures. This allows us to identify the integrable functions $g\in L^1(a,b;H)$ with regular (absolutely continuous measures).

The advantage of this embedding is that $M(a,b;H)$ is dual to separable Banach space $C(a,b;H)$ and, consequently, its unit ball $B_M$ is weakly-star compact, so after this embedding, the unit ball $B_{L^1}$ becomes weakly-star pre-compact and (since this topology is metrizable on a unit ball), we can naturally identify  weak-star limit points of bounded sequences in $L^1(a,b;H)$ with vector measures of \emph{finite total variation}. Namely, the following statement holds.

\begin{prop}\label{prop.[L1]*=M} Let $H$ be a separable Hilbert space, $[a,b]\subset\R$ and $B_M$ be a unit ball in the space $M(a,b;H)$ endowed with total variation norm $\|\mu\|_{M(a,b;H)}=|\mu|([a,b])$. Then we have
\begin{equation}
B_M = [B_M\cap M_{ac}([a,b];H)]^{w^*}=[B_{L^1}]^{w^*},
\end{equation}
where $[\ \cdot\ ]^{w^*}$ means the  closure in the weak-star topology of $M(a,b;H)$.
\end{prop}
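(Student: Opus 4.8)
The plan is to prove the set equality by establishing the two inclusions separately, regarding $L^1(a,b;H)$ as isometrically embedded into $M(a,b;H)$ through $g\mapsto\tilde\mu_g$, so that $B_{L^1}$ is identified with exactly $B_M\cap M_{ac}(a,b;H)$; this identification is the content of Theorem \ref{th.AC} together with the isometry $\|\tilde\mu_g\|_{M}=\|g\|_{L^1}$, and it is what makes the two closures in the statement coincide. The inclusion $[B_{L^1}]^{w^*}\subseteq B_M$ is immediate: by Theorem \ref{bv.C*=M} we have $M(a,b;H)=(C(a,b;H))^*$, so Banach--Alaoglu makes $B_M$ weak-star compact, hence weak-star closed, and it contains $B_{L^1}$; therefore it contains its weak-star closure. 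All of the work is in the reverse inclusion $B_M\subseteq[B_{L^1}]^{w^*}$.

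For the reverse inclusion I would argue by Hahn--Banach separation in the locally convex space $M(a,b;H)$ equipped with the weak-star topology, whose topological dual is precisely $C(a,b;H)$. The set $K:=[B_{L^1}]^{w^*}$ is convex (as the weak-star closure of the convex set $B_{L^1}$) and weak-star closed. Suppose, for contradiction, that some $\mu_0\in B_M$ does not lie in $K$. Then there exist $\phi\in C(a,b;H)$ and $\alpha\in\R$ with
\[
\int_{[a,b]}(\phi(t),\mu_0(dt))_H>\alpha\ge\int_a^b(\phi(t),g(t))_H\,dt\quad\text{for all }g\in B_{L^1}.
\]
Taking the supremum over $g\in B_{L^1}$ on the right, the crucial computation is that this supremum equals $\|\phi\|_{C(a,b;H)}$: the bound $\le\|\phi\|_{C}$ follows from H\"older's inequality, and it is attained in the limit by concentrating the unit $L^1$-mass of $g$ near a point $t_0$ where $\|\phi(t)\|_H$ is maximal, with $g$ taken proportional to $\phi/\|\phi\|_H$ there. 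Hence $\alpha\ge\|\phi\|_{C}$. On the other hand, \eqref{bv.|Im|<I|m|} together with $|\mu_0|([a,b])=\|\mu_0\|_{M}\le1$ gives $\int_{[a,b]}(\phi,\mu_0(dt))_H\le\|\phi\|_{C}\le\alpha$, contradicting the strict inequality above. Thus no such $\mu_0$ exists and $B_M\subseteq K$.

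The step I expect to be the real heart of the matter is the support-function identity $\sup_{g\in B_{L^1}}\int(\phi,g)\,dt=\|\phi\|_{C(a,b;H)}=\sup_{\mu\in B_M}\int(\phi,\mu(dt))$: it says that $B_{L^1}$ and $B_M$ have the same support function in every direction $\phi$, which is exactly what forces their weak-star closures to coincide, and it is here that the concrete duality structure (rather than abstract nonsense) enters. A more hands-on alternative, which is in fact the version actually needed later in the paper, is to produce the approximating sequence explicitly: mollifying the distribution function $\Phi_\mu$ (equivalently, convolving $\mu$ with a smooth approximate identity) yields absolutely continuous measures $\mu_n$ with $\|\mu_n\|_{M}\le\|\mu\|_{M}$ (the total variation does not increase under convolution with a probability density), with $\mu_n\to\mu$ weakly-star, and additionally with $\Phi_{\mu_n}(t)\to\Phi_\mu(t)$ at every $t$. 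The only nuisance along that route is controlling the mass that the mollification transports across the endpoints $a,b$, which is handled by using one-sided mollifiers or a harmless extension of $\mu$; this constructive approach has the advantage of delivering the pointwise convergence of distribution functions that is exploited in Sections \ref{s.lwmf} and \ref{s.new1}.
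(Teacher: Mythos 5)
Your proposal is correct, but its main line of argument is genuinely different from the paper's. You prove the inclusion $B_M\subseteq[B_{L^1}]^{w^*}$ abstractly, by Hahn--Banach separation in $(M(a,b;H),w^*)$: since the dual of this locally convex space is $C(a,b;H)$, a measure $\mu_0\in B_M$ outside the weak-star closed convex set $[B_{L^1}]^{w^*}$ would be strictly separated by some $\phi\in C(a,b;H)$, and your support-function computation $\sup_{g\in B_{L^1}}\int(\phi,g)\,dt=\|\phi\|_{C(a,b;H)}\ge\sup_{\mu\in B_M}\int(\phi,\mu(dt))$ (the last inequality via \eqref{bv.|Im|<I|m|}) rules this out. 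This is sound and arguably slicker for the set equality as stated. The paper instead argues constructively: it mollifies the distribution function, $\Phi_n=\theta_n*\bar\Phi$ with $\bar\Phi$ the constant extension of $\Phi$ beyond $[a,b]$, checks $\Var_a^b(\Phi_n,H)\le\Var_a^b(\Phi,H)$ so that $\mu_{\Phi_n}\in B_M\cap M_{ac}$, and verifies weak-star convergence by integrating by parts against $C^1$ test functions and invoking dominated convergence, using that $\Phi_n\to\Phi$ at continuity points (with a separate one-sided treatment of possible endpoint jumps). What the constructive route buys --- and what your abstract argument does not deliver --- is the extra information recorded in Remark \ref{Cor9.conv}, namely that with left-sided kernels one gets $\mu_{\Phi_n}([a,t))\to\mu([a,t))$ for \emph{every} $t$, which is exactly what is exploited in the proofs of Theorems \ref{lwmf.th.str} and \ref{th.ex1}; you correctly identify this and sketch the mollification as the version ``actually needed later,'' including the endpoint issue, so nothing essential is missing.
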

\begin{proof} Let $\mu=\mu_{\Phi}\in B_M$. We approximate the distribution function $\Phi$ by the standard mollification procedure $\Phi_n=\theta_n*\bar\Phi$, where the positive kernels $\theta_n$ approximate the $\delta$-function and
\begin{equation}
\bar\Phi(t)=\begin{cases}
	0,\ t<a;\\
	\Phi(t),\ t\in[a,b];\\
	\Phi(b),\ t>b.
            \end{cases}
\end{equation}
Then, obviously, $\Var_a^b(\Phi,H)=\Var_{-\infty}^\infty(\bar \Phi,H)$ and
\begin{equation}
\Var_a^b(\Phi_n,H)\le\Var_{-\infty}^\infty(\Phi_n,H)\le \Var_{-\infty}^\infty(\bar \Phi,H)=\Var_a^b(\Phi,H).
\end{equation}
Thus, $\mu_{\Phi_n}\in B_M$ and since they are smooth, $\mu_{\Phi_n}\in B_{L^1}$. Moreover, without loss of generality we may assume that $\Phi(t)$ is left/right continuous at the endpoints $t=a$ and $t=b$ (otherwise, we subtract the corresponding endpoint $\delta$-measures and approximate them separately using the one-sided approximating sequences).

Let $f\in C(a,b;H)$ be arbitrary. We need to prove that
\begin{equation}
\int_a^b(f(t),\mu_{\Phi_n}(d\,t)-\mu_{\Phi}(d\,t))_H\to0\ \text{as }n\to\infty.
\end{equation}
Since the variations of $\Phi_n$ are \emph{uniformly} bounded with respect to $n$, it is enough to verify the convergence for $f\in C^1(a,b;H)$. In this case, we may integrate by parts to get
\begin{equation}
\int_a^b(f(t),\mu_{\Phi_n}(d\,t)-\mu_{\Phi}(d\,t))_H=
(f(b),\Phi_n(b)-\Phi(b))_H-\int_a^b(f'(t),\Phi_n(t)-\Phi(t))_H\,dt.
\end{equation}
Then, from construction of $\Phi_n$, it is easy to see that $\Phi_n(t)$ tends to $\Phi(t)$ at all points of continuity of $\Phi$.
This fact implies convergence of $(f(b),\Phi_n(b))_H$ to $(f(b),\Phi(b))_H$ and together with Lebesgue dominated convergence theorem we also have convergence to $0$ of the integral on the right. This proves the proposition.
\end{proof}

\begin{rem}\label{Cor9.conv} Without loss of generality, we may assume also that $\Phi_n(t)\to\Phi(t)$ for all  $t\in[a,b)$ (including the jump points). Indeed, using the fact that $\Phi_n(t)$ are continuous and choosing the left-sided kernels $\theta_n(t)$ (i.e., such that $\operatorname{supp}\theta_n\subset(-\infty,0]$), we get the convergence $\Phi_n(t)\to\Phi(t)$ for all $t\in[a,b)$. Thus, we may assume that $\mu_{\Phi_n}([a,t))\to \mu_{\Phi}([a,t))$ for all $t\in[a,b]$.
\end{rem}

As the next step, we  recall the  characterization of weak-star convergence in the case of \emph{scalar signed} measures (see \cite{Bg2}, Proposition 8.1.8\footnote{In \cite{Bg2} the author, following the tradition coming from Probability Theory, uses the notion of weak convergence of measures which coincides with the notion of weak-star convergence we use here, following the terminology from Functional Analysis.}) which is usually referred as the Helly selection theorem.

\begin{theorem}\label{bv.th.w*sc}
A sequence of \emph{scalar signed} measures $\mu_n$ on the segment $[a,b]\subset \R$ converges weakly-star in $M(a,b;\R)$ to a measure $\mu$ precisely when
\par
1. $\sup_n\|\mu_n\|_{M(a,b;\R)}<\infty$;
\par
2. Every subsequence $\Phi_{n_k}$ in the sequence of distribution functions $\Phi_{\mu_n}$ of the measures $\mu_n$ contains a further subsequence $\Phi_{n_{k_m}}$ convergent to $\Phi_\mu$ everywhere except of at most countable set depending on the subsequence $\Phi_{n_{k_m}}$.
\par
In the case when measures $\mu_n,\ \mu$ are nonnegative the second condition can be changed to
\par
2'. The whole sequence $\Phi_{\mu_n}$ converges to the function $\Phi_\mu$ at continuity points of $\Phi_{\mu}$.
\end{theorem}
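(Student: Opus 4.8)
The plan is to exploit the duality $M(a,b;\R)=(C(a,b;\R))^*$ furnished by Theorem \ref{bv.C*=M}, under which weak-star convergence $\mu_n\to\mu$ means precisely $\int_{[a,b]}f\,\mu_n(dt)\to\int_{[a,b]}f\,\mu(dt)$ for every $f\in C(a,b;\R)$, and to transfer this to the distribution functions $\Phi_{\mu_n}$ through the integration-by-parts formula \eqref{bv.ByParts}. The bridge in both directions is the identity, valid for $f\in C^1(a,b;\R)$ because $\Phi\in V_0$ forces $\Phi(a)=0$ and $\Phi(b+0)=\Phi(b)$,
\[
\int_{[a,b]}f(t)\,\mu_{\Phi}(dt)=f(b)\Phi(b)-\int_a^b f'(t)\Phi(t)\,dt,
\]
which converts an integral against the measure into an ordinary Lebesgue integral of $f'$ against the distribution function. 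The second tool is the classical Helly selection theorem for real BV functions: a uniform bound on the variation together with a uniform pointwise bound yields a pointwise convergent subsequence.

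For necessity I would first invoke the Banach--Steinhaus theorem: a weak-star convergent sequence of functionals is bounded, and since $\|\mu_n\|_{M(a,b;\R)}$ is exactly the operator norm of the associated functional, condition~1 follows. Granting condition~1, the functions $\Phi_{\mu_n}$ have uniformly bounded variation $\Var_a^b(\Phi_{\mu_n},\R)=|\mu_n|([a,b])$ by \eqref{bv.|mu|=var} and are uniformly bounded, so any subsequence admits, by classical Helly, a further subsequence $\Phi_{n_{k_m}}$ converging pointwise on $[a,b]$ to some BV function $\Psi$. Passing to the limit in the displayed identity, using $\Phi_{n_{k_m}}(b)\to\Psi(b)$ and dominated convergence for the integral term (the integrands are bounded by $\|f'\|_\infty\sup_n\|\mu_n\|$), gives $\int f\,\mu_{n_{k_m}}(dt)\to f(b)\Psi(b)-\int_a^b f'\Psi\,dt=\int f\,\mu_\Psi(dt)$ for all $f\in C^1$, hence for all $f\in C$ by density together with the uniform variation bound. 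Thus $\mu_{n_{k_m}}\to\mu_\Psi$ weakly-star; since the whole sequence converges to $\mu$, uniqueness of the weak-star limit forces $\mu_\Psi=\mu$, so $\Psi$ and $\Phi_\mu$ generate the same measure and therefore agree at every common continuity point, i.e. off an at most countable set. This is condition~2.

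For sufficiency, suppose conditions~1 and~2 hold but $\int f_0\,\mu_n(dt)\not\to\int f_0\,\mu(dt)$ for some $f_0\in C(a,b;\R)$; then a subsequence stays a fixed distance $\delta>0$ away, and condition~2 extracts from it a further subsequence whose distribution functions converge to $\Phi_\mu$ off a countable, hence Lebesgue-null, set. Running the same integration-by-parts limit---first for test functions with $f(b)=0$, where the endpoint term is absent and dominated convergence applies directly, and then reducing a general $f$ to this case plus the constant function, for which $\int 1\,\mu_n(dt)=\Phi_{\mu_n}(b)\to\Phi_\mu(b)$---yields $\int f_0\,\mu_{n_{k_m}}(dt)\to\int f_0\,\mu(dt)$, a contradiction. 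The nonnegative case is easier: the $\Phi_{\mu_n}$ are monotone nondecreasing, classical Helly for monotone functions applies, and the standard subsequence principle (every subsequence has a sub-subsequence converging to the \emph{same} limit $\Phi_\mu$ at its continuity points) upgrades subsequential convergence to convergence of the whole sequence at continuity points, which is~2'.

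The main obstacle is that for \emph{signed} measures one cannot sandwich the indicator $\chi_{[a,t_0)}$ between continuous functions as in the positive case, so the passage between $\mu_n([a,t))=\Phi_{\mu_n}(t)$ and integrals against continuous test functions must be routed entirely through the integration-by-parts formula rather than through monotonicity. The accompanying delicacy is the correct treatment of the endpoint $b$ and of the jump points, where $\Phi_\mu$ is only left-semicontinuous and where a pointwise limit of left-continuous functions may fail to be left-continuous; this is exactly why the exceptional set in condition~2 is unavoidable and why the endpoint term must be isolated in the sufficiency argument.
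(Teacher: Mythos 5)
The paper does not actually prove this theorem --- it is quoted from Bogachev (Proposition 8.1.8) --- so there is no in-house proof to compare against; judged on its own, your architecture is the standard and correct one: Banach--Steinhaus for condition~1, classical Helly selection plus the integration-by-parts identity \eqref{bv.ByParts}, density of $C^1$ in $C$ together with the uniform variation bound, and uniqueness of weak-star limits for condition~2, with the monotone sandwich argument for $2'$. The necessity half goes through, with one harmless remark you should make explicit: the pointwise Helly limit $\Psi$ of left-continuous BV functions need not be left-continuous, so before writing $\int f\,\mu_\Psi(dt)$ you must pass to its left-continuous modification; this changes $\Psi$ only on a countable set and not at $t=b$, so your conclusion that $\Psi$ agrees with $\Phi_\mu$ off a countable set is unaffected.

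There is, however, a genuine gap in the sufficiency direction, at the step $\int 1\,\mu_n(dt)=\Phi_{\mu_n}(b)\to\Phi_\mu(b)$. Condition~2 as literally stated does not deliver this: the exceptional countable set is allowed to contain the endpoint $b$, and $\Phi_\mu(b)=\mu([a,b])$ is the \emph{only} value of the distribution function that records an atom of $\mu$ at $b$ (interior exceptional points are recoverable by left-continuity, but $b$ is not). The failure is not a removable technicality: take $\mu_n=\delta_b$ and $\mu=0$; then conditions~1 and~2 hold literally (the distribution functions are constant in $n$ and agree with $\Phi_\mu\equiv 0$ except at the single point $b$), yet $\int f\,\mu_n(dt)=f(b)\not\to 0$. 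The same endpoint pathology afflicts $2'$ (e.g.\ $\mu_n=2\delta_b$, $\mu=\delta_b$). So the implication you are proving is false under the literal reading, and your reduction of a general $f$ to ``$f(b)=0$ plus a constant'' cannot be completed from condition~2 alone. The repair is to note that your own necessity argument, tested with $f\equiv 1$, shows that the Helly limit satisfies $\Psi(b)=\Phi_\mu(b)$, i.e.\ the exceptional set can always be taken inside $[a,b)$; condition~2 must therefore be read as including convergence of the total masses $\mu_{n_{k_m}}([a,b])\to\mu([a,b])$, and with that reading your sufficiency proof closes. You should state this endpoint normalization explicitly rather than treat it as a consequence of condition~2.
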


\begin{rem}\label{bv.rem.2}
We would like to emphasise that weak-star convergence of \emph{signed} measures, in contrast to the case of nonnegative measures, does not imply point-wise convergence of the corresponding distribution functions on a dense set. In this respect we mention the example from \cite{Bg2}. On segment $[0,1]$ we can consider the sequence of measures $\mu_n=\delta_{x_n}-\delta_{y_n}$, where the sequence of segments $[x_n,y_n]$ is formed from gliding segments $[k2^{-m},(k+1)2^{-m}]$, where $k\in\overline{0,2^{m}-1}$ for each $m\in \N$. It is easy to see that $\mu_n$ converges to $0$ weakly-star in $M(0,1;\R)$, but distribution functions $\Phi_{\mu_n}(t)=\chi_{[x_n,y_n)}(t)$ does not converge at any point of interval $(0,1)$. Thus, the operator $\mu\to\Phi_\mu(t)$ considered as an operator from $M(a,b;H)$ with weak star topology to $\R$ is not (sequentially) {\it continuous} for any fixed~$t$. By this reason, the solution operator $\mu\to u(t)$ even for the simplest equation
$$
\frac d{dt}u=\mu, \ \ u(a)=0
$$
is {\it not continuous} with respect to the weak star convergence on measures. This discontinuity makes the corresponding attractors theory essentially more delicate.
\end{rem}

The next theorem gives the analogue of the Helly selection theorem for vector measures and some further useful properties of the weak star convergence in $M(a,b;H)$.

\begin{theorem}\label{bv.th.w*M(H)}
Let $H$ be a separable Hilbert space, segment $[a,b]\subset\R$ and   $\mu_n\in M(a,b;H)$ be a sequence of vector measures.   Let also functions $\Phi_n(t),\ \Phi(t)\in V_0(a,b;H)$ be the corresponding distribution functions. Then
\par
1. The sequence $\mu_n$ is weakly-star convergent in $M(a,b;H)$ to a measure $\mu \in M(a,b;H)$ if and only if it is bounded:
$$
\|\mu_n\|_{M(a,b;H)}\le C
$$
and every subsequence of  $\Phi_{n_k}(t)$ of the sequence $\Phi_n$ contains a further subsequence $\Phi_{n_{k_m}}(t)$ which is weakly convergent in $H$ to $\Phi(t)$ at all point of $[a,b]$ with the exception of at most countable subset  depending on the choice of the subsequence $\Phi_{n_{k_{m}}}$.
\par
2. Let $\mu_n$ be weakly-star convergent to $\mu$. Then, for every segment $[x,y]\subset (a,b)$, the following inequality holds:
\begin{equation}\label{bv.thw*H2}
|\mu|([x,y])\le\liminf_{\delta\to0}\liminf_{n\to\infty}|\mu_n|([x-\delta,y+\delta]).
\end{equation}
This inequality also holds when $x$ and $x-\delta$ are substituted by $a$ or $y$ and $y+\delta$ are substituted by $b$.
\end{theorem}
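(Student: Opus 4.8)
The plan is to reduce everything to the scalar Helly selection theorem (Theorem \ref{bv.th.w*sc}) by testing against fixed vectors $\psi\in H$ and exploiting the separability of $H$, and then to obtain the total variation bound from the standard variational description of $|\mu|$ together with the Radon--Nikodym representation \eqref{mu=rho|mu|}. The key linking observation for the first part is that, for each fixed $\psi\in H$, the scalar signed measures $(\mu_n,\psi)_H$ (whose distribution functions are $(\Phi_n(t),\psi)_H$) converge weakly-star in $M(a,b;\R)$ to $(\mu,\psi)_H$ as soon as $\mu_n\rightharpoonup\mu$ weakly-star in $M(a,b;H)$: indeed $g\psi\in C(a,b;H)$ for any scalar $g\in C(a,b;\R)$, so $\int g\,d(\mu_n,\psi)_H=\int(g\psi,\mu_n(dt))_H\to\int(g\psi,\mu(dt))_H$.

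For the implication ($\Rightarrow$), boundedness of $\|\mu_n\|_{M(a,b;H)}$ is the uniform boundedness principle applied to the weakly-star convergent sequence in the dual of the separable space $C(a,b;H)$. Fixing a countable dense family $\{\psi_j\}\subset H$ and applying Theorem \ref{bv.th.w*sc} to each scalar sequence $(\mu_{n_k},\psi_j)_H$, I would extract by a diagonal procedure a single subsequence $\Phi_{n_{k_m}}$ along which $(\Phi_{n_{k_m}}(t),\psi_j)_H\to(\Phi(t),\psi_j)_H$ for every $j$ and every $t$ outside a countable exceptional set (a countable union of countable sets). Since $\|\Phi_n(t)\|_H\le|\mu_n|([a,b])\le C$, boundedness together with convergence against the dense family $\{\psi_j\}$ upgrades this to weak convergence $\Phi_{n_{k_m}}(t)\rightharpoonup\Phi(t)$ in $H$ off the exceptional set, which is precisely condition~2. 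For the converse ($\Leftarrow$) I would argue by contradiction: if $\mu_n$ failed to converge weakly-star to $\mu$, some subsequence and some $f\in C(a,b;H)$ would break the defining relation; by Banach--Alaoglu on the separable predual, the bounded sequence has a weakly-star convergent further subsequence $\mu_{n_{k_m}}\rightharpoonup\tilde\mu$. Passing the hypothesis through this subsequence and applying the already proven forward implication to $\tilde\mu$, the distribution functions $\Phi_{n_{k_m}}$ would converge weakly (off a countable set) simultaneously to $\Phi=\Phi_\mu$ and to $\Phi_{\tilde\mu}$; testing against a fixed $\psi$ and using left-semicontinuity of elements of $V_0(a,b;H)$ forces $\Phi_{\tilde\mu}=\Phi_\mu$, hence $\tilde\mu=\mu$ by the isomorphism $M(a,b;H)=V_0(a,b;H)$ of Theorem \ref{bv.C*=M}, contradicting the choice of $f$.

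For the second part, I would first establish the variational formula
$$
|\mu|(U)=\sup\Big\{\textstyle\int_U(f(t),\mu(dt))_H:\ f\in C(a,b;H),\ \supp f\subset U,\ \|f(t)\|_H\le1\Big\}
$$
for every relatively open $U\subset[a,b]$. The inequality ``$\le$'' is immediate from \eqref{bv.|Im|<I|m|}; the reverse follows by writing $\mu=\rho_\mu\,|\mu|$ with $\|\rho_\mu(t)\|_H=1$ a.e.\ (see \eqref{mu=rho|mu|}) and approximating $\rho_\mu\chi_K$, for $K\subset U$ compact, in $L^1_{|\mu|}$ by admissible continuous functions, so that $\int(f,\rho_\mu)_H\,d|\mu|\to|\mu|(K)\uparrow|\mu|(U)$. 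Granting this, for any admissible $f$ supported in $(x-\delta,y+\delta)$,
$$
\int(f,\mu(dt))_H=\lim_{n\to\infty}\int(f,\mu_n(dt))_H\le\liminf_{n\to\infty}|\mu_n|\big((x-\delta,y+\delta)\big),
$$
and taking the supremum over such $f$ gives $|\mu|\big((x-\delta,y+\delta)\big)\le\liminf_{n\to\infty}|\mu_n|\big([x-\delta,y+\delta]\big)$. Since $[x,y]\subset(x-\delta,y+\delta)$ yields $|\mu|([x,y])\le|\mu|\big((x-\delta,y+\delta)\big)$ for every $\delta>0$, applying $\liminf_{\delta\to0}$ produces \eqref{bv.thw*H2}; the endpoint cases are identical after replacing the two-sided neighbourhood by the relatively open one-sided neighbourhood $[a,y+\delta)$ or $(x-\delta,b]$.

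The two delicate points I anticipate are the identification $\tilde\mu=\mu$ in the converse of the first part --- where one must track the subsequence-dependent countable exceptional sets and invoke left-semicontinuity in $V_0(a,b;H)$ correctly --- and, in the second part, the variational characterization of $|\mu|(U)$, which rests on the Radon--Nikodym property of the separable Hilbert space $H$ (Theorem \ref{th.v-RN}) and on the density of continuous functions in $L^1_{|\mu|}(a,b;H)$; the rest is a routine passage to the limit and outer approximation of the closed interval.
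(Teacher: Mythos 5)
Your proof is correct, and while Part 1 follows essentially the same path as the paper (reduction to the scalar Helly theorem via $\mu_{n,h}(A):=(h,\mu_n(A))_H$, diagonalization over a countable dense subset of $H$, and an upgrade to weak convergence using the uniform bound), you actually supply more detail than the paper does for the converse implication, which the paper dismisses as a ``standard corollary''; your Banach--Alaoglu/uniqueness-of-limits argument is the right way to fill that in, with the only unspoken point being the endpoint $t=b$, where the identification $\Phi_{\tilde\mu}(b)=\Phi_\mu(b)$ follows from testing against constant functions rather than from left-semicontinuity. For Part 2 your route genuinely differs from the paper's. The paper first reduces to the case where $x$ and $y$ are continuity points of $\Phi$ (using that the exceptional set is countable), then picks a norm-one continuous $f_\eb$ on $[x,y]$ nearly attaining $\Var_x^y(\Phi,H)$ directly from the definition of variation, and extends it to a function supported in $[x-\delta,y+\delta]$ without losing more than $\eb$ --- the continuity of $\Phi$ at $x,y$ is what makes this extension harmless. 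You instead prove the duality formula $|\mu|(U)=\sup\{\int_U(f,\mu(dt))_H: f\in C,\ \supp f\subset U,\ \|f\|_H\le1\}$ for relatively open $U$ via the Radon--Nikodym density $\rho_\mu$ of \eqref{mu=rho|mu|} and Lusin-type approximation in $L^1_{|\mu|}$, and then apply it with $U=(x-\delta,y+\delta)$. Your version buys a cleaner statement (no case analysis on continuity points, and the lower semicontinuity of $|\cdot|$ on open sets is isolated as a reusable fact), at the cost of invoking the Radon--Nikodym representation and density of continuous functions in $L^1_{|\mu|}(a,b;H)$; the paper's version is more elementary but needs the preliminary reduction and a short argument that the extension of $f_\eb$ only costs $O(\eb)$. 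Both are complete proofs of \eqref{bv.thw*H2}, including the endpoint cases.
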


\begin{proof}
\emph{1.} Indeed, as not difficult to see, the weak star convergent of vector measures $\mu_n\to \mu$ is equivalent to the weak star convergence of {\it scalar signed measures} $\mu_{n,h}\to\mu_h$ for every fixed $h\in H$, where
\begin{equation}
\mu_{n,h}(A):=\int_A(h,\mu_n(dt))_H=(h,\mu_n(A))_H,\ \mu_h(A):=\int_A(h,\mu(dt))_H=(h,\mu(A))_H.
\end{equation}
From the definitions of $\mu_{n,h},\ \mu_h$ and \eqref{bv.Fmu} we also see that
\begin{equation}\label{bv.3}
\Phi_{n,h}(t):=\Phi_{\mu_{n,h}}(t)=(h,\Phi_n(t))_H,\quad \Phi_h(t):=\Phi_{\mu_h}(t)=(h,\Phi(t))_H.
\end{equation}
Then,  the first assertion is a standard corollary of Theorem \ref{bv.th.w*sc} applied to measures $\mu_{n,h}$ and the fact that $H$ is separable.

\emph{2.}
To prove the second assertion, we first note that
$$
|\mu|([x,y])\le |\mu|([x-\eb,y+\eb])
$$
for every $\eb>0$. This inequality together with the fact that the limit distribution $\Phi(t)$ is continuous everywhere except of at most countable set, shows that it is sufficient  to consider the case where $x$ and $y$ are the points of continuity of the limit function $\Phi$.
\par
Let us  fix $\eb>0$ and fix a continuous function $f_{\eb}(t)$ on $[x,y]\subset(a,b)$ with norm one such that
\begin{equation}
\int_{[x,y]} (f_\eb(t),\mu_\Phi(dt))\ge \Var_x^y(\Phi,H)-\eb.
\end{equation}
Then, due to continuity of $\Phi$ at $x$ and $y$, we may extend $f_\eb$ to a continuous function on $[a,b]$ (which we also denote by $f_\eb$) without extending its norm in such a way that
\begin{equation}
\supp f_\eb\subset [x-\delta,y+\delta]\subset(a,b),
\end{equation}
where $\delta \leq\delta_0(\eb)$ is small enough and
\begin{equation}
\int_{[a,b]} (f_\eb(t),\mu_\Phi(dt))\ge \Var_x^y(\Phi,H)-2\eb.
\end{equation}
Thus, we obtain
\begin{multline}
|\mu|([x,y])=\Var_x^y(\Phi,H)\le \int_{[a,b]} (f_\eb(t),\mu_\Phi(dt))+2\eb=\lim_{n\to\infty}\int_{[a,b]} (f_\eb(t),\mu_{\Phi_n}(dt))+2\eb\leq\\\leq \liminf_{n\to\infty}\Var_{x-\delta}^{y+\delta}(\Phi_n,H)+2\eb=\liminf_{n\to\infty}|\mu_n|([x-\delta,y+\delta])+2\eb,
\end{multline}
and passing to the limit $\eb\to0$, we get the desired inequality.

The case when $x$ and $x-\delta$ equal $a$ or $y$ and $y+\delta$ equal $b$ can be considered analogously.
\end{proof}
\begin{rem} Note that in general the weak-star convergence $\mu_n\to\mu$ in $M(a,b;H)$ {\it does not imply} the weak-star convergence of $\mu_n$ in $M(x,y;H)$ if $[x,y]$ is a proper subinterval of $[a,b]$. By this reason, the naive estimate
$$
\|\mu\|_{M(x,y;H)}:=|\mu|([x,y])\le\liminf_{n\to\infty}|\mu_n|([x,y])
$$
may be not true and hence the second $\liminf$ in \eqref{bv.thw*H2} is essential. On the other hand, the sequence $\mu_n$ is bounded (and since precompact) in $M(x,y;H)$, so passing to a subsequence, we may assume that $\mu_n\to\bar\mu$ weakly star in $M(x,y;H)$. However, even in this case we cannot get that $\mu=\bar \mu$. Instead, we may only prove that
$$
\bar \mu=\mu+h_1\delta_x+h_2\delta_y
$$
for some $h_1,h_2\in H$ depending of the choice of a subsequence. For instance, the sequence $\mu_n:=\delta_{\frac12-\frac1{2n}}\in M(0,1;\R)$ is convergent weakly star to $\mu=\delta_{\frac12}$ in this space. Let $[x,y]:=[1/2,1]$. Then the restrictions of $\mu_n$ to $M(1/2,1;\R)$ vanish and therefore $\bar\mu=0$. So, $\mu=\bar\mu+\delta_{\frac12}$.
\end{rem}
We now introduce the so-called uniformly non-atomic sets of measures which allow us to overcome the discontinuity problem mentioned in Remark \ref{bv.rem.2}.

\begin{Def}\label{Def9.una} The sets $C\subset M(a,b;H)$ is strongly uniformly non-atomic if there exists a monotone increasing continuous function $\omega:\R_+\to\R_+$ and that $\lim_{z\to0}\omega(z)=0$ and
\begin{equation}
|\mu|([x,y])\le \omega(|x-y|) \text{ for all $x,y\in[a,b]$ and $\mu\in C$. }
\end{equation}
Analogously, $C$ is weakly uniformly non-atomic if for every $\psi\in H$ there exists a monotone increasing function $\omega_\psi:\R_+\to\R_+$ satisfying $\lim_{z\to0}\omega_\psi(z)=0$ such that
\begin{equation}
|(\mu([x,y]),\psi)|\le \omega_\psi(|x-y|) \text{ for all $x,y\in[a,b]$ and $\mu\in C$. }
\end{equation}
\end{Def}

\begin{cor}\label{bv.cor2}
Let $H$ be a separable Hilbert space, $[a,b]\subset\R$ and  let a sequence of measures $\mu_n\in M(a,b;H)$  be  weakly-star convergent  to a measure $\mu\in M(a,b;H)$.
\par
 Assume that the sequence $\mu_n$ is strongly uniformly non-atomic. Then the limit measure $\mu$ is also non-atomic and the distribution functions $\Phi_{\mu_n}(t)$ (resp. $\Phi_\mu(t)$) of $\mu_n$ (resp. $\mu$) satisfy  the inequalities
\begin{equation}
\label{bv.norm}
\|\Phi_{\mu_n}(x)-\Phi_{\mu_n}(y)\|_H\le \omega(|x-y|),\ \ \|\Phi_\mu(x)-\Phi_\mu(y)\|_H\le\omega(|x-y|)
\end{equation}
for all $n$ and $x,y\in[a,b]$.
\par
Assume that the sequence $\mu_n$ is weakly uniformly non-atomic.
Then the limit measure $\mu$ is also non-atomic and the distribution functions $\Phi_{\mu_n}(t)$ (resp. $\Phi_\mu(t)$) of $\mu_n$ (resp. $\mu$) satisfy  the inequalities
\begin{equation}\label{wbv.norm}
|(\Phi_{\mu_n}(x)-\Phi_{\mu_n}(y),\psi)|\le \omega_\psi(|x-y|),\ \ |(\Phi_\mu(x)-\Phi_\mu(y),\psi)|\le\omega_\psi(|x-y|)
\end{equation}
for all $n$, $\psi\in H$ and $x,y\in[a,b]$.
\par
 In both cases,  for every $\psi\in H$, the scalar distribution functions $(\Phi_{\mu_n}(\cdot),\psi)$ converge to $(\Phi_\mu(\cdot),\psi)$ in $C[a,b]$.
\end{cor}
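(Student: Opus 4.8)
The plan is to reduce both cases to the \emph{scalar} distribution functions $g_n^\psi(t):=(\psi,\Phi_{\mu_n}(t))_H$ and to exploit that uniform non-atomicity forces these to be equicontinuous; this equicontinuity is precisely the ingredient missing in the general discontinuous situation of Remark~\ref{bv.rem.2}. First I would record the estimates for the $\mu_n$ themselves, which are essentially immediate. For $x<y$ one has $\Phi_{\mu_n}(y)-\Phi_{\mu_n}(x)=\mu_n([x,y))$ (with the obvious modification $\mu_n([x,b])$ when $y=b$), so in the strong case $\|\Phi_{\mu_n}(y)-\Phi_{\mu_n}(x)\|_H\le|\mu_n|([x,y))\le|\mu_n|([x,y])\le\omega(|x-y|)$ by \eqref{def.|mu|} and Definition~\ref{Def9.una}. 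In the weak case the hypothesis bounds $(\psi,\mu_n([x,y]))_H$ only on \emph{closed} intervals, so to reach the half-open interval $[x,y)=\bigcup_{y'<y}[x,y']$ I would use continuity from below of the scalar measure $A\mapsto(\psi,\mu_n(A))_H$ together with monotonicity of $\omega_\psi$ to obtain $|(\psi,\mu_n([x,y)))_H|\le\omega_\psi(|x-y|)$. This yields the first inequalities in \eqref{bv.norm} and \eqref{wbv.norm}.

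The core step is the convergence in $C[a,b]$. Fix $\psi\in H$. The functions $g_n^\psi$ are uniformly bounded, since $\|\Phi_{\mu_n}(t)\|_H\le\|\mu_n\|_{M(a,b;H)}$ and weakly-star convergent sequences are norm-bounded, and by the inequalities just proved they are equicontinuous with modulus $\omega_\psi$ (respectively $\|\psi\|_H\,\omega$). By the Arzel\`a--Ascoli theorem every subsequence of $\{g_n^\psi\}$ has a further subsequence converging uniformly to some $h\in C[a,b]$. To identify $h$ I would invoke Theorem~\ref{bv.th.w*M(H)}(1): along a still further subsequence $\Phi_{\mu_{n_k}}(t)$ converges weakly in $H$ to $\Phi_\mu(t)$ for all $t$ outside an at most countable set, whence $g_{n_k}^\psi(t)\to(\psi,\Phi_\mu(t))_H$ on a co-countable (dense) set. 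Thus $h(t)=(\psi,\Phi_\mu(t))_H$ off a countable set; since any two continuous subsequential limits agree on a dense set they coincide, so the full sequence $g_n^\psi$ converges uniformly to a single limit $h^\psi$ with $h^\psi=(\psi,\Phi_\mu(\cdot))_H$ off a countable set.

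It remains to deduce that $\mu$ is non-atomic and that the limiting inequalities hold. Passing to the limit in the equicontinuity bound gives $|h^\psi(y)-h^\psi(x)|\le\omega_\psi(|x-y|)$; approaching any point $t$ from the right and from the left through the dense set where $h^\psi=(\psi,\Phi_\mu)_H$, and using that $h^\psi$ is continuous while the one-sided limits of $\Phi_\mu$ exist, I obtain $(\psi,\Phi_\mu(t+0))_H=(\psi,\Phi_\mu(t-0))_H=h^\psi(t)$, i.e. $(\psi,\mu(\{t\}))_H=0$. As $\psi$ is arbitrary, $\mu(\{t\})=0$, so $\mu$ is non-atomic; consequently $\Phi_\mu$ is genuinely continuous, $h^\psi=(\psi,\Phi_\mu(\cdot))_H$ everywhere, and $g_n^\psi\to(\psi,\Phi_\mu(\cdot))_H$ in $C[a,b]$, which is the last assertion. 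The weak-case inequality for $\mu$ in \eqref{wbv.norm} now follows by passing to the limit, and in the strong case one may instead apply Theorem~\ref{bv.th.w*M(H)}(2) to get the sharper $|\mu|([x,y])\le\liminf_{\delta\to0}\liminf_{n\to\infty}|\mu_n|([x-\delta,y+\delta])\le\omega(|x-y|)$ via continuity of $\omega$; taking the supremum over $\|\psi\|_H\le1$ (equivalently, reading off $\Phi_\mu(y)-\Phi_\mu(x)=\mu([x,y))$) then yields \eqref{bv.norm}.

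The main obstacle is exactly the discontinuity phenomenon of Remark~\ref{bv.rem.2}: weak-star convergence alone controls neither $\Phi_{\mu_n}(t)$ at a fixed $t$ nor the exceptional null-like set, and that set depends on the chosen subsequence. The uniform non-atomicity hypothesis is what converts this into equicontinuity, after which Arzel\`a--Ascoli upgrades the merely co-countable, subsequence-dependent convergence of Theorem~\ref{bv.th.w*M(H)}(1) into genuine uniform convergence of the full sequence; the only bookkeeping care needed is the closed-versus-half-open interval passage and verifying that all subsequential limits coincide.
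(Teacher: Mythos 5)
Your proof is correct and follows essentially the same route as the paper's: the bounds on $\Phi_{\mu_n}$ come directly from the definition of the total variation (resp.\ the uniform non-atomicity hypothesis), the convergence in $C[a,b]$ from the Arzel\`a--Ascoli theorem combined with the Helly-type selection result of Theorem \ref{bv.th.w*M(H)}, and the limiting inequality for $\mu$ from \eqref{bv.thw*H2}. You merely spell out the details the paper leaves implicit (the closed versus half-open interval passage, the identification of all subsequential limits, and the deduction that $\mu$ is non-atomic).
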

\begin{proof} Indeed, the first inequality of \ref{bv.norm} follows from the inequality
$$
\|\Phi_{\mu_n}(x)-\Phi_{\mu_n}(y)\|\le |\mu_n|(|x-y|)\le\omega(|x-y|),
$$
the second one is an immediate corollary of \eqref{bv.thw*H2} and the convergence in $C[a,b]$ follows from the Arzela theorem and the Helly selection theorem stated before. The case of weakly uniformly non-autonomous measures is treated analogously.
\end{proof}

In particular, if the sequence $h_n\in B_{L^1}$ is such that
\begin{equation}
\limsup_{n\to\infty}\int_{x}^y\|h_n(t)\|_H\,dt\le \omega(|x-y|),
\end{equation}
then the weak-star limit measure $\mu_{\Phi}$ is strongly non-atomic.
\par
Thus, under the assumptions of the above corollary, the discrete contribution of the BV-function $\Phi$ vanishes. The next corollary gives the condition which guarantees that its singular part also vanishes. To this end we need one more definition

\begin{Def}{\label{bv.uaci}}
A sequence of functions $\{h_n\}_{n=1}^\infty\subset L^1([a,b];H)$ is equi-integrable if
\begin{equation}
\int_{A}\|h_n(t)\|_H\,dt\le \omega(|A|),
\end{equation}
for any Borel set $A\subset[a,b]$ (here $|A|$ stands for the Lebesgue measure of $A$ and  $\omega:\R_+\to\R_+$ is a monotone increasing continuous function which does not depend on $n$ and such that $\lim_{x\to0}\omega(z)=0$).
\end{Def}
The next statement is a version of the Dunford-Pettis theorem for vector measures, see \cite{Bg1} for more details.
\begin{theorem}\label{bv.cor3}
Let $H$ be a separable Hilbert space, $[a,b]\subset\R$, and a sequence of measures $\mu_n$ be convergent weakly-star in $M(a,b;H)$ to a measure $\mu$. Let also   the corresponding distributions $\Phi_{\mu_n}(t)$  be absolutely continuous, so  $\{\Phi_{\mu_n}'\}_{n=1}^\infty\subset L^1(a,b;H)$. Then, $\Phi_{\mu_n}'$ are convergent weakly in $L^1(a,b;H)$ if and only if they are equi-integrable. In this case, the limit measure $\mu$  is  absolutely continuous and
\begin{equation}\label{bv.4}
\lim_{n\to\infty}\int_a^b(\Phi'_{\mu_n}(t),\phi(t))_H\,dt=\int_a^b(\Phi_\mu'(t),\phi(t))_H\,dt,\quad \forall\phi\in L^\infty(a,b;H).
\end{equation}
\end{theorem}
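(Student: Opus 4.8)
The plan is to deduce the statement from the vector-valued Dunford--Pettis theorem (the relative weak compactness criterion in $L^1(a,b;H)$ cited from \cite{Bg1}), using the weak-star convergence $\mu_n\to\mu$ only to pin down the limit. First I would record the reduction afforded by absolute continuity. Since each $\Phi_{\mu_n}\in AC(a,b;H)$, Theorem \ref{th.AC} gives $\mu_n(dt)=\Phi_{\mu_n}'(t)\,dt$, so for every $f\in C(a,b;H)$ one has
$$
\int_{[a,b]}(f(t),\mu_n(dt))_H=\int_a^b(f(t),\Phi_{\mu_n}'(t))_H\,dt.
$$
Because $C(a,b;H)\subset L^\infty(a,b;H)$, a continuous $f$ is simultaneously a legitimate test function for the weak-star topology on $M(a,b;H)$ and for the weak topology on $L^1(a,b;H)$; this compatibility is the bridge between the two convergences.

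For the equivalence itself I would invoke the vector-valued Dunford--Pettis theorem. Equi-integrability in the sense of Definition \ref{bv.uaci} is exactly uniform integrability of $\{\Phi_{\mu_n}'\}$, and for a bounded family in $L^1(a,b;H)$ it is equivalent to relative weak compactness, provided the auxiliary requirement that $\{\int_A\Phi_{\mu_n}'\,dt\}_n$ be relatively weakly compact in $H$ for each Borel $A$ holds. Since $H$ is a separable Hilbert space, hence reflexive, bounded subsets of $H$ are relatively weakly compact, so this auxiliary condition is automatic from the uniform $L^1$-bound (which in turn follows from equi-integrability by taking $A=[a,b]$). Thus equi-integrability $\iff$ relative weak compactness of $\{\Phi_{\mu_n}'\}$. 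The necessity half of the stated ``iff'' is then immediate: a weakly $L^1$-convergent sequence together with its limit is relatively weakly compact, and the necessity direction of Dunford--Pettis (which needs no reflexivity) forces equi-integrability.

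It remains to prove sufficiency together with the identification of the limit, which is where the weak-star hypothesis enters. Assuming equi-integrability, the family is relatively weakly compact, so by the Eberlein--\v{S}mulian theorem every subsequence of $\{\Phi_{\mu_n}'\}$ has a further subsequence converging weakly in $L^1(a,b;H)$ to some $g$. Testing $g$ against an arbitrary $f\in C(a,b;H)$ and using the compatibility above, I obtain $\int_a^b(f,g)_H\,dt=\lim_n\int_a^b(f,\Phi_{\mu_n}')_H\,dt=\int_{[a,b]}(f,\mu(dt))_H$, the last equality by the weak-star convergence $\mu_n\to\mu$. Hence $\mu(dt)=g(t)\,dt$, so $\mu$ is absolutely continuous with $\Phi_\mu'=g$ by Theorem \ref{th.AC} and Remark \ref{rem.AC}; in particular the subsequential limit is independent of the chosen subsequence. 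A standard topology argument (relative weak compactness plus a single possible subsequential limit) then upgrades this to $\Phi_{\mu_n}'\rightharpoondown\Phi_\mu'$ weakly in $L^1(a,b;H)$, i.e.\ to \eqref{bv.4} for every $\phi\in L^\infty(a,b;H)=(L^1(a,b;H))^*$.

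I expect the main obstacle to be not the citation of Dunford--Pettis but the clean gluing of the two convergence notions: one must check that the weak-star limit $\mu$ and any weak-$L^1$ subsequential limit $g$ are forced to coincide as $\Phi_\mu'=g$, and that the resulting uniqueness genuinely promotes subsequential weak convergence to convergence of the full sequence. Some care is also needed to verify that reflexivity of $H$ really does reduce the general two-condition vector-valued Dunford--Pettis criterion to bare equi-integrability.
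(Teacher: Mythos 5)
Your argument is correct, and it takes a genuinely different route from the paper's. The paper in fact offers no proof of this statement: it presents it as a known vector-valued version of the Dunford--Pettis theorem and defers to \cite{Bg1}; the direct argument the authors have in mind (a draft of which survives in the source) is a Lusin-theorem cut-off, namely approximate a given $\phi\in L^\infty(a,b;H)$ by a continuous $\phi_\eb$ agreeing with $\phi$ outside a set of measure at most $\eb$, control the term tested against $\phi_\eb$ by the weak-star convergence $\mu_n\to\mu$, control the error on the exceptional set uniformly in $n$ by equi-integrability, and let $\eb\to0$. You instead take the abstract compactness criterion as the black box: you correctly observe that reflexivity of $H$ makes the second Dunford--Pettis condition (relative weak compactness in $H$ of the partial integrals $\int_A\Phi_{\mu_n}'\,dt$) automatic, so bare equi-integrability is equivalent to relative weak compactness in $L^1(a,b;H)$; you then identify every subsequential weak-$L^1$ limit with $\mu$ by testing against $C(a,b;H)$ and invoking the Riesz representation (Theorem \ref{bv.C*=M}) together with Theorem \ref{th.AC}, and you upgrade subsequential convergence to convergence of the full sequence via Eberlein--\v{S}mulian and uniqueness of limits. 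Your route delivers both implications of the stated equivalence and the absolute continuity of $\mu$ in one stroke, whereas the cut-off argument only addresses the sufficiency half and tacitly presupposes that a weak $L^1$ limit exists; the price is reliance on the heavier vector-valued Dunford--Pettis theorem rather than on elementary Lusin approximation. The points you flag as delicate (compatibility of the two pairings on $C(a,b;H)$, the necessity direction not needing reflexivity, the promotion of subsequential to full convergence) all check out.
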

We conclude this Section by one more result related to the approximation of measures by delta-measures which plays important role in the proof of one of the main results of the work, \mbox{Theorem \ref{th.str-f1}}.

\begin{theorem}\label{bv.th.delta-conv}
Let $H$ be a separable Hilbert space, $[a,b]\subset\R$ and $M(a,b;H)$ be the space of measures with finite total variation and $\mu\in M(a,b;H)$. Then there exists a sequence $\mu_n$ of discrete measures
\begin{equation}
\mu_n=\sum_{k=0}^{n}\delta_{t_{k,n}}h_{k,n},
\end{equation}
where $h_{k,n}\in H$ for all $k\in\overline{0,n}$ such that
\begin{equation}\label{9.mesbound}
\|\mu_n\|_{M(a,b;H)}=\sum_{k=0}^{n}\|h_{k,n}\|_{H}\le\|\mu\|_{M(a,b;H)}
\end{equation}
and $\Phi_{\mu_n}(t)\to\Phi_\mu(t)$ strongly in $H$ as $n\to\infty$ and uniformly with respect to all $t\in[a,b]$. In particular, $\mu_n\to\mu$ weakly star in $M(a,b;H)$.
\end{theorem}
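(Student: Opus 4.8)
The plan is to reduce the statement to the uniform approximation of the distribution function $\Phi_\mu\in V_0(a,b;H)$ by left-continuous step functions whose total variation does not exceed that of $\Phi_\mu$. Indeed, by the isomorphism $M(a,b;H)=V_0(a,b;H)$, a finitely supported step function in $V_0$ is precisely the distribution function of a discrete measure, and its total variation equals the sum of the norms of its jumps: if $\Psi\in V_0(a,b;H)$ has jumps $h_k$ at points $t_k$, then $\Psi=\Phi_\nu$ with $\nu=\sum_k\delta_{t_k}h_k$ and $\|\nu\|_{M(a,b;H)}=\sum_k\|h_k\|_H=\Var_a^b(\Psi,H)$. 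Hence it suffices to produce step functions $\Psi_n$ with $\Var_a^b(\Psi_n,H)\le\Var_a^b(\Phi_\mu,H)=\|\mu\|_{M(a,b;H)}$ and $\sup_{t\in[a,b]}\|\Psi_n(t)-\Phi_\mu(t)\|_H\to0$; the atoms of the associated $\mu_n$ are then relabelled in increasing order and, if necessary, the list is padded with points carrying zero mass to match the prescribed indexing $\sum_{k=0}^n$ (which affects neither the total variation nor the distribution function).

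To build the $\Psi_n$ I would split $\mu$ into its atomic and non-atomic parts, $\mu=\mu_d+\mu_c$, where $\mu_d=\sum_i\delta_{c_i}g_i$ with $\sum_i\|g_i\|_H<\infty$ and $\mu_c(\{t\})=0$ for all $t$. Since $\mu_d$ and $\mu_c$ are carried by disjoint Borel sets (the countable set of atoms and its complement), the variations add: $\|\mu\|_M=\|\mu_d\|_M+\|\mu_c\|_M$. For the atomic part I simply truncate, $\mu_d^{(N)}:=\sum_{i\le N}\delta_{c_i}g_i$; then $\|\Phi_{\mu_d}(t)-\Phi_{\mu_d^{(N)}}(t)\|_H\le\sum_{i>N}\|g_i\|_H$ \emph{uniformly} in $t$, which tends to $0$, while $\|\mu_d^{(N)}\|_M\le\|\mu_d\|_M$. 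This is exactly the place where the uniformity near the (possibly infinitely many) atoms is secured by a uniform tail bound rather than by a pointwise estimate.

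For the non-atomic part I would use that $\Phi_{\mu_c}$ is \emph{continuous} on $[a,b]$ (it is left-continuous by construction and $\Phi_{\mu_c}(t+0)-\Phi_{\mu_c}(t)=\mu_c(\{t\})=0$), hence uniformly continuous. Given $\epsilon>0$, choose any partition $a=s_0<\dots<s_M=b$ of mesh small enough that the oscillation of $\Phi_{\mu_c}$ on each $[s_{k-1},s_k]$ is below $\epsilon$, and set $(\mu_c)_\epsilon:=\sum_{k=1}^M\delta_{s_{k-1}}\big(\Phi_{\mu_c}(s_k)-\Phi_{\mu_c}(s_{k-1})\big)$. A direct computation of its distribution function gives $\Phi_{(\mu_c)_\epsilon}(t)=\Phi_{\mu_c}(s_k)$ for $t\in(s_{k-1},s_k]$, whence $\|\Phi_{(\mu_c)_\epsilon}(t)-\Phi_{\mu_c}(t)\|_H<\epsilon$ uniformly, while $\|(\mu_c)_\epsilon\|_M=\sum_k\|\Phi_{\mu_c}(s_k)-\Phi_{\mu_c}(s_{k-1})\|_H\le\Var_a^b(\Phi_{\mu_c},H)=\|\mu_c\|_M$. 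The absence of jumps is precisely what makes this crude right-endpoint step function a uniform approximation, which is why the preliminary decomposition is worthwhile.

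Finally, setting $\mu_n:=\mu_d^{(N_n)}+(\mu_c)_{\epsilon_n}$ with $N_n\to\infty$ and $\epsilon_n\to0$, the triangle inequality yields the uniform convergence $\Phi_{\mu_n}\to\Phi_\mu$, and since masses placed at a common node only combine sub-additively in norm, $\|\mu_n\|_M\le\|\mu_d^{(N_n)}\|_M+\|(\mu_c)_{\epsilon_n}\|_M\le\|\mu_d\|_M+\|\mu_c\|_M=\|\mu\|_M$, which is exactly \eqref{9.mesbound}. The weak-star convergence $\mu_n\to\mu$ then follows as in Proposition \ref{prop.[L1]*=M}: integrate by parts for $f\in C^1(a,b;H)$, pass to the limit using the uniform convergence of the distribution functions, and extend to all $f\in C(a,b;H)$ via the uniform bound $\|\mu_n\|_M\le\|\mu\|_M$ and density. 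The only genuinely delicate point throughout is to preserve the total-variation inequality while obtaining uniform (and not merely pointwise) control; both are delivered by the mutual singularity of $\mu_d$ and $\mu_c$ together with the uniform tail and oscillation estimates above.
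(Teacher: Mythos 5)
Your proposal is correct and follows essentially the same route as the paper: decompose $\mu$ into its atomic and non-atomic parts, truncate the atomic series (with the uniform tail bound), and approximate the non-atomic part by placing the mass $\mu_c([s_{k-1},s_k))=\Phi_{\mu_c}(s_k)-\Phi_{\mu_c}(s_{k-1})$ at the left endpoint of each cell of a fine partition, using the uniform continuity of $\Phi_{\mu_c}$ to get uniform convergence of the distribution functions and the triangle inequality for the total variation. The only differences are cosmetic (the paper uses the equidistant partition $t_{k,n}=a+(b-a)k/n$ and writes the masses directly as $\mu([t_{k,n},t_{k+1,n}))$, and is slightly less explicit about recombining the two pieces), so there is nothing to add.
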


\begin{proof} We first note that, without loss of generality, we may assume that the measure $\mu$ is non-atomic (i.e., that $\Phi_\mu\in C(a,b;H)$). Indeed, in a general case,  we may split the measure $\mu$ on a discrete and non-atomic part: $\mu=\mu_d+\mu_{cont}$, where
$$
\mu_d=\sum_{n=1}^\infty h_n\delta_{t_n},\ \ \|\mu_d\|_{M}=\sum_{n=1}^\infty\|h_n\|_H\le\|\mu\|_M
$$
and $\|\mu\|_{M}=\|\mu_d\|_M+\|\mu_{cont}\|_M$. By these reasons, we may consider $\mu_d$ and $\mu_{cont}$ separately. In addition, the desired approximation for $\mu_d$ can obviously be chosen by the following expression:
$$
\mu_{d,n}:=\sum_{k=1}^n h_k\delta_{t_k}.
$$
Thus, we assume from now on that $\mu=\mu_{cont}$ and $\Phi_\mu\in C([a,b],H)$. Let us set
\begin{equation}
t_{k,n}:=a+(b-a)\frac kn,\ \text{where } n\in \N,\ k\in \overline{0,n},
\end{equation}
and define the sequence of measures $\{\mu_n\}_{n=1}^\infty$ as follows
\begin{equation}
\mu_n=\sum_{k=0}^{n-1}\mu\Big([t_{k,n},t_{k+1,n})\Big)\delta_{t_{k,n}}.
\end{equation}
Then, by the construction, $\|\mu_n\|_M\le\|\mu\|_M$ and,  for any fixed $t\in[a,b]$
$$
\|\Phi_\mu(t)-\Phi_{\mu_n}(t)\|=\|\mu([a,t])-\mu([a,t_{k_0,n}]))\|=\|\Phi_\mu(t)-\Phi_\mu(t_{k_0,n})\|
$$
where $k_0$ is the largest $k$ such that $t_{k,n}<t$. Since $|t-t_{k_0,n}|\le\frac1n$ and $\Phi_{\mu}$
is uniformly continuous, we have the uniform convergence $\Phi_{\mu_n}\to\Phi_\mu$. The weak star convergence is an immediate corollary of this uniform convergence and the theorem is proved.
\end{proof}

\begin{rem}
Although approximation of measures by sums of delta-measures  is a standard technical result which can be immediately obtained, say, from Krein-Millman theorem, the convergence of $\mu_n$ to $\mu$ in the weak star topology only is not sufficient for our purposes due to the problems mentioned in Remark \ref{bv.rem.2}. In contrast to the usual weak-star convergence, the result presented above has an extra important property that $\Phi_{\mu_n}\to\Phi_{\mu}$ point-wise and even uniform in the strong topology of $H$. This allows us to overcome the above mentioned problem. In particular, this uniform convergence implies that $\mu_n(\{t\})\to\mu(\{t\})$ strongly in $H$ for every $t\in[a,b]$.
\end{rem}

\section{Appendix 2: Key estimates in  fractional Sobolev spaces }\label{s.ap}
In this Appendix, we prove the key inequality for the $H^\alpha$-norm of the difference $f(w+v)-f(v)$ in terms of the proper norms of the functions $v$ and $w$. To this end, we need the following fractional Leibnitz rule.

\begin{theorem}[Kato-Ponce inequality] \label{th.KP}
Let $\alpha>0$ and constants $r$, $p_1$, $q_1$, $p_2$, $q_2\in(1,\infty)$ are such that
\begin{equation*}
\frac{1}{r}=\frac{1}{p_1}+\frac{1}{q_1}=\frac{1}{p_2}+\frac{1}{q_2}.
\end{equation*}
Assume also that the functions $v(x)$, $w(x)$ on  $d$-dimensional torus $\T^d$ ($d\in\N$) satisfy
\begin{equation*}
v\in H^{\alpha,p_1}(\T^d)\cap L^{p_2}(\T^d),\qquad w\in L^{q_1}(\T^d)\cap H^{\alpha,q_2}(\T^d).
\end{equation*}
Then the product $vw\in H^{\alpha,r}(\T^d)$ and the following inequality holds
\begin{equation}
\|vw\|_{H^{\alpha,r}}\le C\left(\|v\|_{H^{\alpha,p_1}}\|w\|_{L^{q_1}}+\|v\|_{L^{p_2}}\|w\|_{H^{\alpha,q_2}}\right),
\end{equation}
for some positive constant $C=C(\alpha,r,p_1,q_1,p_2,q_2)$.
\end{theorem}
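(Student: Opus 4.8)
The plan is to prove Theorem \ref{th.KP} by the classical Littlewood--Paley / paraproduct method, which on the torus $\T^d$ proceeds exactly as on $\R^d$ once one fixes a periodic dyadic decomposition. First I would recall the square-function characterization of the Bessel potential spaces: for $1<s<\infty$ and $\beta\ge0$ one has $\|u\|_{H^{\beta,s}}\sim\|(\sum_j 2^{2\beta j}|\Delta_j u|^2)^{1/2}\|_{L^s}$, where $\{\Delta_j\}$ is a Littlewood--Paley decomposition, $\Delta_j$ projecting onto frequencies of size $\sim 2^j$ and $S_{k}=\sum_{j\le k}\Delta_j$ denoting the low-frequency cutoff. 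This equivalence follows from the Mikhlin--H\"ormander multiplier theorem together with the vector-valued Littlewood--Paley inequality, both valid on $\T^d$ for $s\in(1,\infty)$. Armed with this, I would apply Bony's decomposition $vw=T_vw+T_wv+R(v,w)$, where $T_vw=\sum_k (S_{k-2}v)\,\Delta_k w$ is the low--high paraproduct, $T_wv$ the symmetric high--low paraproduct, and $R(v,w)=\sum_{|j-k|\le1}\Delta_j v\,\Delta_k w$ the resonant remainder, and estimate the three pieces separately.

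For the paraproduct $T_vw$, each summand $(S_{k-2}v)\Delta_k w$ has Fourier support in an annulus $\sim 2^k$, so applying $(1-\Delta)^{\alpha/2}$ costs a factor $\lesssim 2^{k\alpha}$ while preserving the almost-orthogonality; hence $\|T_vw\|_{H^{\alpha,r}}\lesssim\|(\sum_k 2^{2k\alpha}|S_{k-2}v|^2|\Delta_k w|^2)^{1/2}\|_{L^r}$. Using the pointwise bound $|S_{k-2}v|\lesssim (\mathcal M v)$ by the Hardy--Littlewood maximal function, pulling $\mathcal M v$ out of the square, and applying H\"older with $\tfrac1r=\tfrac1{p_2}+\tfrac1{q_2}$ together with the maximal theorem ($p_2>1$) and the square-function characterization ($q_2>1$), I obtain $\|T_vw\|_{H^{\alpha,r}}\lesssim\|v\|_{L^{p_2}}\|w\|_{H^{\alpha,q_2}}$. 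The symmetric estimate $\|T_wv\|_{H^{\alpha,r}}\lesssim\|v\|_{H^{\alpha,p_1}}\|w\|_{L^{q_1}}$ follows with the roles of $v,w$ and the exponent pairs interchanged. These two terms are routine and reproduce exactly the two terms on the right-hand side of the claimed inequality.

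The main obstacle is the resonant term $R(v,w)$, because each summand $\Delta_j v\,\Delta_k w$ with $|j-k|\le1$ has Fourier support in a ball of radius $\sim 2^k$ rather than an annulus, so the output frequency may be far smaller than $2^k$ and the clean frequency orthogonality is lost. Here I would exploit the positivity of $\alpha$: writing $\Delta_l R(v,w)=\sum_{k\ge l-c,\ |j-k|\le1}\Delta_l(\Delta_j v\,\Delta_k w)$ and using $2^{l\alpha}\le 2^{k\alpha}\cdot 2^{(l-k)\alpha}$ with $l-k\le c$, the factor $2^{(l-k)\alpha}$ together with $\alpha>0$ produces a summable geometric tail once one sums over $l\le k+c$. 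Estimating the inner dyadic pieces by H\"older (placing $(1-\Delta)^{\alpha/2}$ on the $w$-factor, bounding the $v$-factor by $\mathcal M v$ and the $w$-factor by the square function, exactly as for the paraproducts) then yields $\|R(v,w)\|_{H^{\alpha,r}}\lesssim\|v\|_{L^{p_2}}\|w\|_{H^{\alpha,q_2}}$, and combining the three estimates gives the theorem.

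I should stress that this is a classical result (Kato--Ponce, later sharpened by Grafakos--Oh), so in the body of the paper it could simply be cited. An alternative route avoiding the resonant bookkeeping is to invoke the Coifman--Meyer theorem for bilinear Fourier multipliers, applied to the symbol of $(1-\Delta)^{\alpha/2}(vw)$ after subtracting the two pieces where the derivative falls on a single factor; but the paraproduct argument above is the most self-contained and makes transparent why $\alpha>0$ is exactly what is needed to close the resonant sum.
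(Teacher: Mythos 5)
The paper does not actually prove this theorem: it is stated as a known result with a pointer to \cite{Chem}, and the proof found there is precisely the Bony paraproduct argument you outline. Your sketch is correct and complete in its essentials --- the two paraproducts produce the two terms on the right-hand side via the pointwise maximal-function bound on the low-frequency factor, H\"older, and the Littlewood--Paley characterization of $H^{\alpha,s}$, while the resonant part closes because series of blocks with Fourier support in dyadic \emph{balls} (rather than annuli) can be resummed in $H^{\alpha,r}$ exactly when $\alpha>0$. The only points worth making explicit in a written version are cosmetic: on $\T^d$ the inhomogeneous decomposition carries a zero-frequency block that must be tracked separately, and the resummation steps should cite the standard almost-orthogonality lemmas for annulus- and ball-supported series in $L^r$, $1<r<\infty$.
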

For the proof of this theorem see e.g., \cite{Chem}.
\par
We apply this inequality to verify the following estimate.

\begin{lemma}\label{Lem.v4wHa}
Let $\alpha\in(0,2/5)$ and functions $v$ and $w$ be such that
\begin{equation}\label{A2.vw}
v\in L^{12}(\T^3)\cap H^1(\T^3),\quad w\in H^{\alpha,12}(\T^3)\cap H^{1+\alpha}(\T^3).
\end{equation}
Assume also that the function $h\in C^1(\R)$ satisfies
\begin{equation}\label{ap.h}
|h'(v)|\leq C(1+|v|^3),\ \text{for all }v\in\R,
\end{equation}
for some constant $C>0$. Then $h(v)w\in H^{\alpha}(\T^3)$ and the following estimate holds:
\begin{equation}\label{est.main}
\|h(v)w\|_{H^\alpha}\le C_\alpha\left(1+\|v\|_{L^{12}}^{4-\alpha}\right)\left(1+\|v\|_{H^1}^\alpha\right)
\|w\|^{1-\alpha}_{H^{1+\alpha}}\|w\|^{\alpha}_{H^{\alpha,12}},
\end{equation}
for some positive constant $C_\alpha$.
\end{lemma}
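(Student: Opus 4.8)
The plan is to estimate $\|h(v)w\|_{H^\alpha}$ by placing all of the (fractional) smoothness onto the factor $w$ and none onto $h(v)$. The reason is structural: since $v$ is only assumed to lie in $H^1\cap L^{12}$ and $h(v)$ behaves like $|v|^4$, the composition $h(v)$ has neither enough smoothness nor enough integrability to carry a fractional derivative. Indeed, a direct application of the fractional Leibniz rule (Theorem~\ref{th.KP}) to $h(v)\cdot w$ with target exponent $2$ produces a term of the form $\|h(v)\|_{H^{\alpha,p_1}}\|w\|_{L^{q_1}}$ in which $\tfrac1{p_1}+\tfrac1{q_1}=\tfrac12$ forces $p_1>2$, whereas $\nabla h(v)=h'(v)\nabla v$ with $\nabla v\in L^2$ can only be controlled in $L^{p}$ for $p<2$; this term is therefore uncontrollable. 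So the first step is to trade the fractional order $\alpha$ for integer orders via the interpolation inequality of Section~\ref{s.p}:
\[
\|h(v)w\|_{H^{\alpha,2}}\le C\,\|h(v)w\|_{L^{r_0}}^{1-\alpha}\,\|h(v)w\|_{H^{1,r_1}}^{\alpha},\qquad \tfrac{1-\alpha}{r_0}+\tfrac{\alpha}{r_1}=\tfrac12,
\]
with $r_0>2>r_1>1$. On the integer-order factor one may use the ordinary Leibniz rule $\nabla(h(v)w)=h'(v)\nabla v\,w+h(v)\nabla w$, and the choice $r_1<2$ leaves exactly the room in H\"older's inequality to place $h'(v)\nabla v\,w$ and $h(v)\nabla w$ into $L^{r_1}$ using only $\nabla v\in L^2$.

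Second, I would estimate the three resulting pieces, namely $\|h(v)w\|_{L^{r_0}}$, $\|h'(v)\nabla v\,w\|_{L^{r_1}}$ and $\|h(v)\nabla w\|_{L^{r_1}}$, by H\"older's inequality together with the growth bounds $|h(v)|\le C(1+|v|^4)$ and $|h'(v)|\le C(1+|v|^3)$. Every Lebesgue norm $\|v\|_{L^m}$ that arises has exponent $m\in[6,12]$, and I would split it by the interpolation inequality as $\|v\|_{L^m}\le C\|v\|_{L^6}^{\theta}\|v\|_{L^{12}}^{1-\theta}\le C\|v\|_{H^1}^{\theta}\|v\|_{L^{12}}^{1-\theta}$, using the Sobolev embedding $H^1(\T^3)\subset L^6(\T^3)$. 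The exponents $r_0,r_1$ and the H\"older splits are tuned so that, after raising to the weights $1-\alpha$ and $\alpha$ and multiplying, the total power of $\|v\|_{L^{12}}$ is exactly $4-\alpha$ and the total power of $\|v\|_{H^1}$ is exactly $\alpha$; a quick scaling check ($v\mapsto\lambda v$ produces $\lambda^4$ on both sides and $w\mapsto\lambda w$ produces $\lambda$) confirms this is the only admissible bookkeeping.

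Third, every norm of $w$ produced by these estimates, namely $\|w\|_{L^{t}}$ from the $L^{r_0}$ piece and $\|w\|_{L^{t'}}$, $\|\nabla w\|_{L^{t''}}$ from the two Leibniz pieces, is bounded by the interpolation inequality and the Sobolev embeddings through $\|w\|_{H^{1,p}}$ with $\tfrac1p=\tfrac{1-\alpha}{2}+\tfrac{\alpha}{12}=\tfrac{6-5\alpha}{12}$. Finally the same interpolation inequality gives
\[
\|w\|_{H^{1,p}}\le C\,\|w\|_{H^{1+\alpha}}^{1-\alpha}\|w\|_{H^{\alpha,12}}^{\alpha},
\]
which is legitimate since $1=(1-\alpha)(1+\alpha)+\alpha\cdot\alpha$ and $\tfrac1p=\tfrac{1-\alpha}{2}+\tfrac{\alpha}{12}$. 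Collecting the $v$- and $w$-contributions yields the desired estimate~\eqref{est.main}.

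The genuinely delicate part is not any single computation but the \emph{simultaneous} choice of $r_0,r_1$, of the H\"older exponents for each of the three pieces, and of the interpolation parameters $\theta$ for $v$, so that they are mutually consistent and all lie in the admissible ranges $(1,\infty)$; the hypothesis $\alpha\in(0,\tfrac25)$ is precisely what guarantees this, keeping $r_1>1$, $r_0<\infty$, the $L^6$–$L^{12}$ splitting parameters in $[0,1]$, and the Sobolev embeddings $H^{1,p}\hookrightarrow L^{t}$ valid. The conceptual crux, already resolved above, is recognizing that the low regularity of $h(v)$ forbids a one-shot fractional Leibniz estimate and forces both the reduction to integer order with $r_1<2$ and the $L^6$–$L^{12}$ interpolation of $v$ that simultaneously lowers the $L^{12}$-power to $4-\alpha$ and produces the factor $\|v\|_{H^1}^{\alpha}$.
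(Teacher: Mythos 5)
Your argument is correct and realizable, but it takes a genuinely different route from the paper's. The paper applies the Kato--Ponce inequality (Theorem \ref{th.KP}) directly to the product $h(v)w$ with $r=2$, $p_1=\frac{12}{4+5\alpha}$, $q_1=\frac{12}{2-5\alpha}$, $p_2=\frac{12}{4+\alpha}$, $q_2=\frac{12}{2-\alpha}$, and only afterwards interpolates each factor separately: $\|h(v)\|_{H^{\alpha,12/(4+5\alpha)}}\le C\|h(v)\|_{L^3}^{1-\alpha}\|h(v)\|_{H^{1,4/3}}^{\alpha}$ and $\|w\|_{H^{\alpha,12/(2-\alpha)}}\le C\|w\|_{H^{1+\alpha}}^{1-\alpha}\|w\|_{H^{\alpha,12}}^{\alpha}$. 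You instead interpolate the whole product between $L^{r_0}$ and $H^{1,r_1}$ and then use the classical Leibniz rule; with $r_0=\frac{12}{6-5\alpha}$ and $r_1=\frac{12}{11-5\alpha}$ one checks that $\frac{1-\alpha}{r_0}+\frac{\alpha}{r_1}=\frac12$, that $h(v)\nabla w\in L^{12/(10-5\alpha)}$ and $h'(v)\nabla v\,w\in L^{12/(11-5\alpha)}$ via $h(v)\in L^3$, $h'(v)\in L^4$, $\nabla v\in L^2$, $\nabla w\in L^{12/(6-5\alpha)}$, $w\in L^{12/(2-5\alpha)}$, and that all $w$-norms reduce to $\|w\|_{H^{1,12/(6-5\alpha)}}\le C\|w\|_{H^{1+\alpha}}^{1-\alpha}\|w\|_{H^{\alpha,12}}^{\alpha}$; so your bookkeeping closes up and produces the same powers of $\|v\|_{L^{12}}$ and $\|v\|_{H^1}$. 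What your route buys is the avoidance of the fractional Leibniz rule entirely, at the cost of a slightly more entangled system of exponents; the paper's route separates the $v$- and $w$-dependence at the very first step. One claim you should retract, however: your assertion that the direct fractional Leibniz application is ``uncontrollable'' is false, and it is precisely the paper's proof that refutes it. The term in question is $\|h(v)\|_{H^{\alpha,p_1}}$ with $p_1=\frac{12}{4+5\alpha}>2$, but since only $\alpha<1$ derivatives are involved, this norm interpolates between $L^{3}$ and $H^{1,4/3}$ (note $\frac{1-\alpha}{3}+\frac{3\alpha}{4}=\frac{4+5\alpha}{12}$), and $\|h(v)\|_{H^{1,4/3}}\le C\bigl(1+(1+\|v\|_{L^{12}}^{3})\|v\|_{H^1}\bigr)$ by H\"older applied to $h'(v)\nabla v$ with $h'(v)\in L^4$ and $\nabla v\in L^2$. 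Your objection conflates the full first-order norm $H^{1,p_1}$, which is indeed out of reach for $p_1>2$, with the fractional norm $H^{\alpha,p_1}$, which interpolation trades down to an integer-order norm at an exponent below $2$.
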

\begin{proof}
We apply Kato-Ponce inequality to the function $h(v)w$ with the following exponents
\begin{equation}
r=2,\ p_1=\frac{12}{4+5\alpha},\ q_1=\frac{12}{2-5\alpha},\ p_2=\frac{12}{4+\alpha},\ q_2=\frac{12}{2-\alpha},
\end{equation}
that gives
\begin{multline}\label{ap.0}
\|h(v)w\|_{H^\alpha}\leq C_\alpha\left(\|h(v)\|_{H^{\alpha,\frac{12}{4+5\alpha}}}\|w\|_{L^{\frac{12}{2-5\alpha}}}+
\|h(v)\|_{L^\frac{12}{4+\alpha}}\|w\|_{H^{\alpha,\frac{12}{2-\alpha}}}\right)\leq\\
2C_\alpha\|h(v)\|_{H^{\alpha,\frac{12}{4+5\alpha}}}\|w\|_{H^{\alpha,\frac{12}{2-\alpha}}},
\end{multline}
for some $C_\alpha$, where we have used continuous embeddings
\begin{equation}\label{ap.emb1}
H^{\alpha,\frac{12}{4+5\alpha}}\subset L^\frac{12}{4+\alpha}(\T^3),\quad H^{\alpha,\frac{12}{2-\alpha}}(\T^3)\subset L^{\frac{12}{2-5\alpha}}(\T^3).
\end{equation}
The resulting terms can be estimated by standard interpolation inequalities. Indeed, using growth assumption \eqref{ap.h} we derive
\begin{multline}\label{ap.1}
\|h(v)\|_{H^{\alpha,\frac{12}{4+5\alpha}}}\le
 C_\alpha\|h(v)\|^{1-\alpha}_{L^3}\|h(v)\|^\alpha_{H^{1,\frac43}}\leq \\
C_\alpha\left(1+\|v\|^{4}_{L^{12}}\right)^{1-\alpha}\left(1+\left(1+\|v\|^3_{L^{12}}\right)
\|v\|_{H^1}\right)^\alpha\\
C_\alpha\left(1+\|v\|^{4-\alpha}_{L^{12}}\right)\left(1+\|v\|^\alpha_{H^1}\right),
\end{multline}
for some $C_\alpha$. Also we have
\begin{equation}\label{ap.2}
\|w\|_{H^{\alpha,\frac{12}{2-\alpha}}}\le C_\alpha\|w\|^{1-\alpha}_{H^{\alpha,6}}\|w\|^\alpha_{H^{\alpha,12}}\leq C_\alpha\|w\|^{1-\alpha}_{H^{1+\alpha}}\|w\|^\alpha_{H^{\alpha,12}},
\end{equation}
for some $C=C_\alpha$.
Collecting \eqref{ap.0}, \eqref{ap.1}, \eqref{ap.2} we complete the proof.
\end{proof}

\begin{cor}\label{CorA2.0}Let assumptions of Lemma \ref{Lem.v4wHa} be satisfied and in addition $h(0)=0$. Then
\begin{equation}\label{est.fp0}
\|h(v)w\|_{H^\alpha}\le C_\alpha(1+\|v\|_{L^{12}})^{4-\alpha}\|v\|_{H^1}^\alpha
\|w\|^{1-\alpha}_{H^{1+\alpha}}\|w\|^{\alpha}_{H^{\alpha,12}},
\end{equation}
for some positive constant $C_\alpha$.
\end{cor}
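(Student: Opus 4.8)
The plan is to obtain \eqref{est.fp0} as a sharpening of Lemma \ref{Lem.v4wHa}: the only discrepancy between \eqref{est.main} and \eqref{est.fp0} is that the factor $(1+\|v\|_{H^1}^\alpha)$ must be improved to $\|v\|_{H^1}^\alpha$ (and $(1+\|v\|_{L^{12}}^{4-\alpha})$ rewritten as the equivalent $(1+\|v\|_{L^{12}})^{4-\alpha}$, which is harmless since $4-\alpha>0$). Accordingly I would reuse verbatim the two structural ingredients of the proof of Lemma \ref{Lem.v4wHa}, namely the Kato--Ponce splitting \eqref{ap.0} and the interpolation \eqref{ap.2} for the $w$-factor, and concentrate solely on producing a \emph{genuine} factor $\|v\|_{H^1}$ in the estimate of $\|h(v)\|_{H^{1,4/3}}$. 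What makes this possible is precisely the extra hypothesis $h(0)=0$, which upgrades the growth bound \eqref{ap.h} to $|h(v)|\le C(|v|+|v|^4)$, so that $h(v)$ carries no constant part.

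The central new computation is the bound
\begin{equation*}
\|h(v)\|_{H^{1,4/3}}\le C(1+\|v\|_{L^{12}})^3\,\|v\|_{H^1}.
\end{equation*}
For the gradient part this is already present in Lemma \ref{Lem.v4wHa}: writing $\nabla h(v)=h'(v)\nabla v$ and applying H\"older with $\tfrac34=\tfrac14+\tfrac12$ together with \eqref{ap.h} gives $\|\nabla h(v)\|_{L^{4/3}}\le\|h'(v)\|_{L^4}\|\nabla v\|_{L^2}\le C(1+\|v\|_{L^{12}}^3)\|v\|_{H^1}$. The genuinely new point is the zeroth-order part $\|h(v)\|_{L^{4/3}}$, which in Lemma \ref{Lem.v4wHa} produced the spurious additive constant. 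Here I would use $|h(v)|\le C(|v|+|v|^4)$ to write $\|h(v)\|_{L^{4/3}}\le C(\|v\|_{L^{4/3}}+\|v\|_{L^{16/3}}^4)$, then bound $\|v\|_{L^{4/3}}\le C\|v\|_{L^2}\le C\|v\|_{H^1}$ (finite measure of $\T^3$) and split the quartic term as $\|v\|_{L^{16/3}}^4\le C\|v\|_{L^{12}}^3\|v\|_{H^1}$, estimating three of the four factors through $L^{12}\subset L^{16/3}$ and the remaining one through $H^1\subset L^6\subset L^{16/3}$. This produces the clean factor $\|v\|_{H^1}$ in both pieces.

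It then remains to assemble the estimate exactly as in Lemma \ref{Lem.v4wHa}, but keeping the $L^3$-bound purely in terms of $\|v\|_{L^{12}}$: since $\|h(v)\|_{L^3}\le C(\|v\|_{L^3}+\|v\|_{L^{12}}^4)\le C(1+\|v\|_{L^{12}})^4$, interpolating as in \eqref{ap.1} with the improved $H^{1,4/3}$-bound gives
\begin{equation*}
\|h(v)\|_{H^{\alpha,\frac{12}{4+5\alpha}}}\le C\,\|h(v)\|_{L^3}^{1-\alpha}\|h(v)\|_{H^{1,4/3}}^{\alpha}\le C(1+\|v\|_{L^{12}})^{4(1-\alpha)+3\alpha}\,\|v\|_{H^1}^{\alpha},
\end{equation*}
with $4(1-\alpha)+3\alpha=4-\alpha$; combining this with \eqref{ap.0} and \eqref{ap.2} yields \eqref{est.fp0}. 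The main, and essentially only, obstacle is the observation underlying the second paragraph: in Lemma \ref{Lem.v4wHa} the additive $1$ originates entirely from the low-order term $\|h(v)\|_{L^{4/3}}$, and the hypothesis $h(0)=0$ is exactly what lets this term be absorbed into a true $\|v\|_{H^1}$-factor, by exploiting the two independent embeddings $L^{12}$ and $H^1\hookrightarrow L^6$ to distribute the four copies of $v$ in the quartic contribution. Everything else is routine H\"older and interpolation.
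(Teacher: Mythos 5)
Your proposal is correct and follows essentially the same route as the paper: both use $h(0)=0$ to upgrade the pointwise bound to $|h(v)|\le C|v|(1+|v|^3)$, thereby replacing the bound on $\|h(v)\|_{H^{1,4/3}}$ by $C(1+\|v\|_{L^{12}}^3)\|v\|_{H^1}$ with a genuine factor of $\|v\|_{H^1}$, and then rerun the Kato--Ponce splitting and interpolation of Lemma \ref{Lem.v4wHa} unchanged. Your treatment of the zeroth-order term $\|v^4\|_{L^{4/3}}$ is sound (each copy of $\|v\|_{L^{16/3}}$ is separately controlled by $C\|v\|_{L^{12}}$ or by $C\|v\|_{H^1}$ on the finite-measure torus), and the exponent bookkeeping $4(1-\alpha)+3\alpha=4-\alpha$ matches the stated estimate.
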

\begin{proof}
Indeed, in this case,
$$
|h(v)|\le C|v|(1+v^3)
$$
and, therefore,
$$
\|h(v)\|_{H^{1,4/3}}\le C\|v(1+v^3))\|_{L^{4/3}}+\|\Nx v(1+v^3)\|_{L^{4/3}}\le C(1+\|v\|_{L^{12}}^3)\|v\|_{H^1}
$$
which gives the desired estimate. Corollary \ref{CorA2.0} is proved.
\end{proof}
\begin{cor} Let the function $f\in C^2(\R)$ satisfy
$$
|f''(u)|\le C(1+|u|^3)
$$
and let the functions $v$ and $w$ satisfy \eqref{A2.vw} for some $\alpha\in[0,\frac25)$. Then, the following estimate holds:
\begin{equation}\label{A2.f}
\|f(v+w)-f(v)\|_{H^\alpha}\le C(1+\|v\|_{L^{12}}+\|w\|_{L^{12}})^{4-\alpha}(1+\|v\|_{H^1}+\|w\|_{H^1})^{\alpha}
\|w\|_{H^{1+\alpha}}^{1-\alpha}\|w\|_{H^{\alpha,12}}^\alpha.
\end{equation}
\end{cor}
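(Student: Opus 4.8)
The plan is to reduce the statement to the multiplicative estimate \eqref{est.main} of Lemma \ref{Lem.v4wHa} by means of the fundamental theorem of calculus. First I would write the difference as
\[
f(v+w)-f(v)=\(\int_0^1 f'(v+\tau w)\,d\tau\)w=\int_0^1 f'(v+\tau w)\,w\,d\tau,
\]
which is legitimate since $f\in C^2(\R)$ and the factor $w$ does not depend on $\tau$. The whole analytic content will then come from estimating the integrand $f'(v+\tau w)\,w$ in $H^\alpha$ uniformly in $\tau$.

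For each fixed $\tau\in[0,1]$ I would apply Lemma \ref{Lem.v4wHa} with $h:=f'$, and with the two functions of the lemma played by $v+\tau w$ (in the role of $v$) and $w$. The growth hypothesis \eqref{ap.h} required there, namely $|h'(u)|\le C(1+|u|^3)$, is precisely the assumed bound $|f''(u)|\le C(1+|u|^3)$, and $h=f'\in C^1(\R)$ because $f\in C^2(\R)$. Moreover $v+\tau w\in L^{12}\cap H^1$ and $w\in H^{\alpha,12}\cap H^{1+\alpha}$, so the admissibility condition \eqref{A2.vw} holds with $v+\tau w$ in place of $v$. Estimate \eqref{est.main} thus gives
\[
\|f'(v+\tau w)w\|_{H^\alpha}\le C_\alpha\(1+\|v+\tau w\|_{L^{12}}^{4-\alpha}\)\(1+\|v+\tau w\|_{H^1}^{\alpha}\)\|w\|_{H^{1+\alpha}}^{1-\alpha}\|w\|_{H^{\alpha,12}}^{\alpha}.
\]
Since $\tau\in[0,1]$, the triangle inequality yields $\|v+\tau w\|_{L^{12}}\le\|v\|_{L^{12}}+\|w\|_{L^{12}}$ and $\|v+\tau w\|_{H^1}\le\|v\|_{H^1}+\|w\|_{H^1}$, so the right-hand side is bounded, uniformly in $\tau$, by $C(1+\|v\|_{L^{12}}+\|w\|_{L^{12}})^{4-\alpha}(1+\|v\|_{H^1}+\|w\|_{H^1})^{\alpha}\|w\|_{H^{1+\alpha}}^{1-\alpha}\|w\|_{H^{\alpha,12}}^{\alpha}$. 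I would then pass the $H^\alpha$-norm through the $\tau$-integral by Minkowski's integral inequality,
\[
\Big\|\int_0^1 f'(v+\tau w)w\,d\tau\Big\|_{H^\alpha}\le\int_0^1\|f'(v+\tau w)w\|_{H^\alpha}\,d\tau,
\]
and, integrating the uniform bound over $\tau\in[0,1]$ (which produces no extra constant), obtain exactly \eqref{A2.f}.

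The argument above uses Lemma \ref{Lem.v4wHa} for $\alpha\in(0,\tfrac25)$; the endpoint $\alpha=0$ included in the statement is not covered by the lemma but is elementary, since there \eqref{A2.f} reduces to $\|f(v+w)-f(v)\|_{L^2}\le C(1+\|v\|_{L^{12}}+\|w\|_{L^{12}})^{4}\|w\|_{H^1}$, which follows directly from the pointwise bound $|f(v+w)-f(v)|\le C(1+|v|^4+|w|^4)|w|$, H\"older's inequality, and the embedding $H^1\subset L^6$. I do not anticipate any serious obstacle: the real work has already been absorbed into Lemma \ref{Lem.v4wHa} (and ultimately the Kato--Ponce inequality), and the only steps requiring a little care are the uniformity in $\tau$ of the prefactors and the justification of Minkowski's integral inequality, which is the single point where one must check that $\tau\mapsto f'(v+\tau w)w$ is a well-defined, $H^\alpha$-valued integrable map on $[0,1]$ — guaranteed precisely by the per-$\tau$ bound established above.
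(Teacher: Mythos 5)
Your proposal is correct and follows essentially the same route as the paper: writing $f(v+w)-f(v)=\int_0^1 f'(v+sw)\,w\,ds$, applying Lemma \ref{Lem.v4wHa} with $h=f'$ and $v+sw$ in place of $v$, bounding the prefactors uniformly in $s$ via the triangle inequality, and passing the $H^\alpha$-norm under the integral. Your separate elementary treatment of the endpoint $\alpha=0$ (which the lemma formally excludes) is a small extra care the paper omits, but it does not change the argument.
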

Indeed, according to \eqref{est.main} applied to $h=f'(v+sw)$, we have
\begin{multline}
\|f(v+w)-f(v)\|_{H^\alpha}\le\int_0^1\|f'(v+sw)w\|_{H^\alpha}\,ds\le\\\le
C(1+\|v\|_{L^{12}}+\|w\|_{L^{12}})^{4-\alpha}(1+\|v\|_{H^1}+\|w\|_{H^1})^{\alpha}
\|w\|_{H^{1+\alpha}}^{1-\alpha}\|w\|_{H^{\alpha,12}}^\alpha.
\end{multline}
\begin{rem} The restriction $\alpha<\frac25$ in Lemma \ref{Lem.v4wHa} is essential. Indeed, it is easy to see that estimate \eqref{est.main} fails for $\alpha=1$. On the other hand, using slightly more sharp interpolation inequality
$$
\|w\|_{L^\infty}\le C\|w\|_{H^{1+\frac25}}^{1-\frac25}\|w\|_{H^{\frac25,12}}^{\frac25},
$$
we see that estimate \eqref{est.main} remains true for $\alpha=\frac25$ as well. We expect that it fails for $\alpha>\frac25$ although the rigorous proof of this fact is out of scope of the paper.
\end{rem}

\end{document}